\titleformat*{\subsection}{\centering}
\theoremstyle{plain}                       
\newtheorem{lemma}{Lemma}[section]
\newtheorem{theorem}[lemma]{Theorem}
\newtheorem{corollary}[lemma]{Corollary}
\newtheorem{remark}[lemma]{Remark}
\newtheorem{definition}[lemma]{Definition}
\theoremstyle{remark}
\numberwithin{equation}{section}
\def\Xint#1{\mathchoice
  {\XXint\displaystyle\textstyle{#1}}%
  {\XXint\textstyle\scriptstyle{#1}}%
  {\XXint\scriptstyle\scriptscriptstyle{#1}}%
  {\XXint\scriptscriptstyle\scriptscriptstyle{#1}}%
  \!\int}
\def\XXint#1#2#3{{\setbox0=\hbox{$#1{#2#3}{\int}$}
  \vcenter{\hbox{$#2#3$}}\kern-.5\wd0}}
\def\dashint{\Xint-}
\begin{document}
\allowdisplaybreaks
\pagestyle{myheadings}
\markboth{$~$ \hfill {\rm Q. Xu,} \hfill $~$}
{$~$ \hfill {\rm  } \hfill$~$}

\author{
Li Wang,\quad Qiang Xu,\quad
Peihao Zhao\\
}

%


\title{\textbf{Suboptimal Error Estimates for Linear Elasticity
Systems on Perforated Domains}}
\maketitle
\begin{abstract}
In the present work, we established almost-sharp error
estimates for linear elasticity systems in periodically
perforated domains. The first result was
$L^{\frac{2d}{d-1-\tau}}$-error estimates
$O\big(\varepsilon^{1-\frac{\tau}{2}}\big)$ with
$0<\tau<1$ for a bounded smooth
domain. It followed from
weighted Hardy-Sobolev's inequalities
and a suboptimal error estimate
for the square function of
the first-order approximating corrector (which was
earliest investigated by C. Kenig, F. Lin, Z. Shen \cite{KLS}
under additional regularity assumption on coefficients).
The new approach  relied on the weighted quenched
Calder\'on-Zygmund estimate (initially appeared in
A. Gloria, S. Neukamm, F. Otto's work
\cite{Gloria_Neukamm_Otto_2015} for a quantitative
stochastic homogenization theory).
The second effort was $L^2$-error estimates
$O\big(\varepsilon^{\frac{5}{6}}
\ln^{\frac{2}{3}}(1/\varepsilon)\big)$ for a Lipschitz domain,
followed from a new duality scheme
coupled with interpolation inequalities.
Also, we developed a new weighted extension theorem
for perforated domains, and
a real method imposed by Z. Shen
\cite{S3} played a fundamental role in the whole project.
\\
\textbf{Key words.}
Homogenization; perforated domains; elasticity systems; error estimates.
\end{abstract}

\tableofcontents

\section{Introduction}

\subsection{Hypothesises and Main Results}
In this paper, we are aimed to establish sharp convergence rates for linear elasticity systems in periodically perforated domains.
Consider the operator
\begin{equation}
\mathcal{L_{\varepsilon}}:=-\nabla\cdot A_{\varepsilon}(x)\nabla =
-\frac{\partial}{\partial x_{i}}\bigg\{a_{ij}^{\alpha\beta}
\big(\frac{x}{\varepsilon}\big)
\frac{\partial}{\partial x_{j}}\bigg\},
\quad x\in\varepsilon\omega,~\varepsilon > 0,
\end{equation}
where $A_{\varepsilon}(x):=A(x/\varepsilon)$
and $A(y)=\{a^{\alpha\beta}_{ij}(y)
\}_{1\leq i,j,\alpha,\beta\leq d}$ for $y\in\omega$
with $d\geq 2$,
and $\omega\subset  \mathbb{R}^d$ is an unbounded Lipschitz
domain with 1-periodic structure.
We
denote the $\varepsilon$-homothetic set $\{x\in\mathbb{R}^d:x/\varepsilon\in\omega\}$ by
$\varepsilon\omega$.
For a bounded domain $\Omega\subset\mathbb{R}^d$,
we study the following mixed boundary value problem:
\begin{eqnarray}\label{pde:1.1}
\left\{\begin{aligned}
\mathcal{L}_{\varepsilon} (u_\varepsilon)
&= F &\qquad &\text{in}~~\Omega_{\epsilon}, \\
\sigma_{\varepsilon}( u_{\varepsilon})&=0
&\qquad &\text{on}~~S_{\epsilon},\\
 u_\varepsilon &= g &\qquad & \text{on}~~\Gamma_{\varepsilon},
\end{aligned}\right.
\end{eqnarray}
in which $\sigma_{\varepsilon}:=n\cdot A(\cdot/\varepsilon)\nabla$
and $n$ denotes the outward unit normal to $\Omega_{\varepsilon}$,
and $\Omega_{\varepsilon}:=\Omega\cap\varepsilon\omega$;
$S_{\varepsilon}:=\partial\Omega_{\varepsilon}\cap\Omega$;
$\Gamma_{\varepsilon}:=\partial\Omega_{\varepsilon}
\cap\partial\Omega$. Throughout the paper,
we have the following hypothesises.
\begin{enumerate}
  \item[(H1).]
  \textbf{The structure assumptions on the coefficient.}
\begin{enumerate}
  \item $A$ is real, measurable, and 1-periodic,
  i.e.,
  \begin{equation}\label{eq:1.3}
  A(y+z) = A(z) \quad
  \text{for}~y\in \omega ~\text{and}~ z\in \mathbb{Z}^d.
  \end{equation}
  \item The tensor $A$ satisfies elasticity condition:
\begin{equation}\label{eq:1.4}
\left\{\begin{aligned}
  &a_{ij}^{\alpha\beta}(y)=a_{ji}^{\beta\alpha}(y)
  =a_{\alpha j}^{i\beta}(y);\\
  &\mu_{0}|\xi|^{2}\leq a_{ij}^{\alpha\beta}(y)\xi^{\alpha}_{i}\xi_{j}^{\beta}\leq
  \mu_{1}|\xi|^{2}
\end{aligned}\right.
\end{equation}
for $y\in\omega$ and any symmetric matrix $\xi=\{\xi_{i}^{\alpha}
\}_{1\leq i,\alpha\leq d}$, where $\mu_{0},\mu_{1}>0$.
\end{enumerate}
  \item [(H2).]\textbf{The geometry assumptions on
  the reference domain $\omega$}.
  \begin{enumerate}
    \item $\omega$ is supposed to be connected.
    \item $\omega\cap Y$ is a Lipschitz domain,
    where $Y:=[-1/2,1/2)^{d}$ is the unit cube.
    \item $\mathbb{R}^d\backslash\omega$ represents
    the set of ``holes'', and any two of them
is kept a positive distance.
Specifically, let $\mathbb{R}^{d}
\backslash\omega=\bigcup_{k=1}^{\infty}H_{k}$
in which $H_k$ is connected and bounded for each $k$,
while there exists a constant $\mathfrak{g}^{\omega}$ such that
\begin{equation}\label{g^{w}}
  0<\mathfrak{g}^{\omega}\leq\inf_{i\neq j}\text{dist}(H_{i},H_j).
\end{equation}
  \end{enumerate}
\end{enumerate}

Then, the following qualitative homogenization result was well
known (see for example \cite{CDG,CP,ZR}, and
we presented it in the way of V. Zhikov and M. Rychago
\cite[Theorem 1.1]{ZR}). Let
$F\in L^{2}(\Omega;\mathbb{R}^d)$, and
$u_\varepsilon$ be the weak solution to $\eqref{pde:1.1}$
(see Definition $\ref{def:3}$). It is concluded that
$l_{\varepsilon}^{+}
u_\varepsilon \to u_0$ strongly in
$L^2(\Omega;\mathbb{R}^d)$, and
$l_{\varepsilon}^{+}A(x/\varepsilon)
\nabla u_\varepsilon\rightharpoonup\widehat{A}
\nabla u_0$ weakly in $L^2(\Omega;\mathbb{R}^{d\times d})$,
as $\varepsilon$ goes to zero,
where
$l_\varepsilon^{+}:=l^{+}(\cdot/\varepsilon)$
with $l^{+}$ being the indicator function of $\omega$,
and $u_0$ satisfies the effective (homogenized) equation:
\begin{equation}\label{pde:1.3}
\left\{\begin{aligned}
\mathcal{L}_0(u_0) :=
-\nabla\cdot\widehat{A}\nabla u_0 &= F &\qquad&\text{in}~~\Omega, \\
 u_0 &= g &\qquad& \text{on}~\partial\Omega.
\end{aligned}\right.
\end{equation}
Here,
the homogenized coefficient
$\widehat{A}=\{\widehat{a}_{ij}^{\alpha\beta}
\}_{1\leq i,j,\alpha,\beta\leq d}$ is defined by
\begin{equation}\label{eq:1.1}
\widehat{a}_{ij}^{\alpha\beta}
= \theta^{-1}\int_{Y\cap\omega} a_{ik}^{\alpha\gamma}(y)
\frac{\partial \mathbb{X}_{j}^{\gamma\beta}}{\partial y_{k}} dy
\quad
\text{and}
\quad \theta:=|Y\cap\omega|,
\end{equation}
where $\mathbb{X}^{\beta}_{j}
=\{\mathbb{X}^{\gamma\beta}_{j}\}_{1\leq\gamma\leq d}$
is the weak solution to the following cell problem:
\begin{equation}\label{pde:1.2}
\left\{\begin{aligned}
& \nabla\cdot A \nabla\mathbb{X}^{\beta}_{j} = 0
\qquad\text{in}~ Y\cap \omega,\\
& n\cdot A\nabla\mathbb{X}^{\beta}_{j}=0
\qquad\text{on}~Y\cap\partial\omega,\\
&\mathbb{X}_{j}^{\beta}-y_{j}e^{\beta}:
=\chi_{j}^{\beta}\in H^1_{\text{per}}(\omega;\mathbb{R}^d), \quad&
\int_{Y\cap \omega}&\chi_{j}^{\beta} dy = 0.
\end{aligned}\right.
\end{equation}
(The definition of
the space
$H^1_{\text{per}}(\omega;\mathbb{R}^d)$
will be found in
Subsection $\ref{notation}$.)

\vspace{0.2cm}
Now, we turn to quantitative estimates,
and state the main results of the paper as follows.
\begin{theorem}[suboptimal error estimates]\label{thm:1.1}
Let $0<\varepsilon\ll 1$, $0<\tau<1$ and $p=\frac{2d}{d-1-\tau}$.
Suppose that $\mathcal{L}_{\varepsilon}$
and $\omega$ satisfy the hypothesises
\emph{(H1)} and \emph{(H2)}. Assume that
$u_{\varepsilon}\in H^{1}(\Omega_{\varepsilon};\mathbb{R}^d)$ and
$u_{0}\in H^{1}(\Omega;\mathbb{R}^d)$ are weak
solutions to \eqref{pde:1.1} and \eqref{pde:1.3}, respectively.
Let $\Omega_{0}$ be
an extended region such that
$\bar{\Omega}\subset\Omega_{0}$
with \emph{dist}$(x,\Omega)\sim
10\varepsilon$ for every $x\in\partial \Omega_{0}$, and
$F\in H^{1}_{\emph{loc}}(\Omega_0;\mathbb{R}^d)\cap
L^2(\Omega_0;\mathbb{R}^d)$ satisfies
a finite square
function condition, i.e.,
$\big(\int_{\Omega_0}|\nabla F|^2\delta dx\big)^{1/2} <\infty$,
where $\delta(x)=\emph{dist}(x,\partial\Omega_0)$.
\begin{itemize}
\item[$\bullet$]
If $\Omega$ is a $C^{1,\eta}$ domain with $\eta\in(0,1]$, and
$g\in W^{1,q}(\partial\Omega;\mathbb{R}^d)$
with $q=p-p/d$.
Then one may derive that
\begin{equation}\label{pri:1.1}
  \|u_{\varepsilon}-u_{0}\|_{L^{p}
  (\Omega_{\varepsilon})}
  \lesssim \varepsilon^{1-\frac{\tau}{2}}
  \ln^{\frac{1}{2}}(1/\varepsilon)
\Bigg\{
\Big(\int_{\Omega_0}|\nabla F|^2
\delta dx\Big)^{\frac{1}{2}}
+
\|F\|_{L^{2}(\Omega_0)}+\|g\|_{W^{1,q}(\partial\Omega)}\Bigg\}.
\end{equation}
  \item[$\bullet$] If $\Omega$ is a Lipschitz domain
  and $g\in H^1(\partial\Omega;\mathbb{R}^d)$,
  then there holds
\begin{equation}\label{eq:1.8}
  \|u_{\varepsilon}-u_{0}\|_{L^{2}(\Omega_{\varepsilon})}
  \lesssim \varepsilon^{\frac{5}{6}} \ln^{\frac{2}{3}}(1/\varepsilon)
\Bigg\{
\Big(\int_{\Omega}|\nabla F|^2
\delta^2dx\Big)^{\frac{1}{2}}
+\|F\|_{L^{2}(\Omega)}+\|g\|_{H^{1}(\partial\Omega)}\Bigg\}.
\end{equation}
\end{itemize}
Here, $\lesssim$
means $\leq$ up to a multiplicative constant which
depend on $\mu_{0},\mu_{1},d,\mathfrak{g}^{\omega},\eta,
\tau, r_0$
and the characters of $\Omega$ and $\omega$, but never on
$\varepsilon$.
\end{theorem}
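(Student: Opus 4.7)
My plan is to study the first-order two-scale expansion error
\[
w_\varepsilon := u_\varepsilon - u_0 - \varepsilon\, \chi(\cdot/\varepsilon)\, S_\varepsilon(\psi_\varepsilon \nabla u_0),
\]
where $\chi$ is the corrector from \eqref{pde:1.2}, $S_\varepsilon$ is a smoothing operator at scale $\varepsilon$, and $\psi_\varepsilon$ is a smooth cut-off vanishing in an $O(\varepsilon)$-strip around $\partial\Omega$. The cut-off removes the incompatibility of the oscillating corrector with the Dirichlet data on $\Gamma_\varepsilon$ and with the Neumann condition on $S_\varepsilon$, at the price of a boundary layer error concentrated in $\{x\in\Omega: \text{dist}(x,\partial\Omega)\le C\varepsilon\}$. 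Before anything else I would extend $u_\varepsilon$ from $\Omega_\varepsilon$ to the slightly larger neighbourhood $\Omega_0$ via the weighted extension theorem announced in the abstract, so that mollification and interior estimates can ignore the geometry of the holes.

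\textbf{Error equation and the square function estimate.} Computing $\mathcal{L}_\varepsilon w_\varepsilon$ one arrives at an identity of the form
\[
\mathcal{L}_\varepsilon w_\varepsilon = \nabla\!\cdot\!\bigl[(A-\widehat A)(\psi_\varepsilon \nabla u_0 - S_\varepsilon(\psi_\varepsilon \nabla u_0))\bigr] + \varepsilon\, \nabla\!\cdot\!\bigl[\mathfrak{B}(\cdot/\varepsilon)\nabla S_\varepsilon(\psi_\varepsilon \nabla u_0)\bigr] + R_\varepsilon,
\]
where $\mathfrak{B}$ is a flux corrector and $R_\varepsilon$ is supported on the $\varepsilon$-strip. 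The energy estimate on $\Omega_\varepsilon$---based on a Korn inequality adapted to the perforated geometry using \eqref{g^{w}}---reduces the matter to controlling a weighted square function of $S_\varepsilon(\psi_\varepsilon\nabla u_0)$, namely quantities like $\int_{\Omega_0}|\nabla S_\varepsilon(\psi_\varepsilon\nabla u_0)|^2 \delta\,dx$. These are bounded via the square function hypothesis $\int_{\Omega_0}|\nabla F|^2\delta\,dx<\infty$ after invoking the weighted quenched Calder\'on-Zygmund estimate of \cite{Gloria_Neukamm_Otto_2015}; this is the substitute for the $C^{\alpha}$ coefficient regularity used in \cite{KLS}. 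The hard part will be reconciling the Neumann condition on the boundaries of holes with a plain mollifier: I expect to use a boundary-adapted smoothing and absorb the commutator into the weighted estimate.

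\textbf{From $L^2$ to $L^p$ via the real method.} To promote the $L^2$ bound on $w_\varepsilon$ to the $L^p$ bound on $u_\varepsilon - u_0$ in \eqref{pri:1.1}, I would combine two ingredients. First, Shen's real variable argument \cite{S3} applied on Lipschitz subscales of $\Omega_\varepsilon$---together with the $C^{1,\eta}$ regularity of $\Omega$ which supplies the reverse H\"older inequality for $\nabla u_0$ and the requisite trace estimates from $g\in W^{1,q}(\partial\Omega)$---produces the weighted $L^p$ bound for the interior part of the expansion. Second, the boundary-layer term $\varepsilon\, \chi(\cdot/\varepsilon) S_\varepsilon(\psi_\varepsilon \nabla u_0)$ is estimated in $L^p$ by a weighted Hardy-Sobolev inequality: one trades a factor of $\text{dist}(x,\partial\Omega)^{-\tau}$ against the higher integrability supplied by the chosen $p$. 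The exponent $\varepsilon^{1-\tau/2}$ arises as the natural interpolation between the interior rate $\varepsilon$ and the boundary-layer rate $\varepsilon^{1/2}$, and $\ln^{1/2}(1/\varepsilon)$ records the borderline logarithm in the Hardy-Sobolev step.

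\textbf{The Lipschitz $L^2$ estimate via duality.} For the second assertion the $C^{1,\eta}$ assumption is unavailable, and I would use duality. Given $\phi\in L^2(\Omega_\varepsilon;\mathbb{R}^d)$, let $v_\varepsilon$ and $v_0$ solve the adjoint Dirichlet problems with right-hand side $\phi$ (the adjoint system is again elasticity of the same type by the symmetry \eqref{eq:1.4}). Writing
\[
\int_{\Omega_\varepsilon} (u_\varepsilon - u_0)\cdot \phi\,dx = \int_{\Omega_\varepsilon} (u_\varepsilon - u_0)\cdot \mathcal{L}_\varepsilon^{\ast} v_\varepsilon\,dx,
\]
substituting the two-scale expansions for both $u_\varepsilon$ and $v_\varepsilon$ and integrating by parts on the perforated domain produces a bilinear form in the corrector errors. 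Bounding one factor through the first part of the theorem (with a judiciously chosen $\tau$) and the other by the elementary boundary-layer bound, then optimising over $\tau$, should deliver the exponent $5/6$ and the logarithm $\ln^{2/3}(1/\varepsilon)$. The main obstacle here is to propagate the heavier weight $\delta^2$ through the interpolation without losing a power of $\varepsilon$, which is precisely why the hypothesis in \eqref{eq:1.8} is stated with $\delta^2$ rather than $\delta$.
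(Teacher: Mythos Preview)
Your outline captures the right general ingredients---the first-order corrector $w_\varepsilon$, flux correctors, weighted extension, Hardy--Sobolev, and duality---but the way you assemble them differs from the paper at two decisive points, and in each case your version has a gap.

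\textbf{For the $L^p$ estimate \eqref{pri:1.1}.} You propose to obtain a weighted square-function bound on $\nabla w_\varepsilon$ by a direct energy estimate on the error equation, and then promote it to $L^p$ via Shen's real method applied to $w_\varepsilon$. Neither step works as stated. Testing the error equation with $w_\varepsilon$ itself yields only the unweighted $\|\nabla w_\varepsilon\|_{L^2(\Omega_\varepsilon)}$; to get $\int_{\Omega_\varepsilon}|\nabla w_\varepsilon|^2\delta^{1-\tau}\,dx$ you would have to test against $\delta^{1-\tau}w_\varepsilon$, and the commutator with the weight does not close. The paper instead proceeds by \emph{duality}: for arbitrary $f$, one writes $\int_{\Omega_\varepsilon}\nabla w_\varepsilon\cdot f = \int_{\Omega_\varepsilon}A_\varepsilon\nabla w_\varepsilon\cdot\nabla\phi_\varepsilon$ where $\phi_\varepsilon$ solves the adjoint problem $\mathcal L_\varepsilon^*\phi_\varepsilon=\nabla\cdot f$, and then the weighted quenched Calder\'on--Zygmund estimate is applied \emph{to $\phi_\varepsilon$}, giving $(\int|\nabla\phi_\varepsilon|^2\delta^{\tau-1})^{1/2}\lesssim(\int|f|^2\delta^{\tau-1})^{1/2}$. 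This is Theorem~\ref{thm:8.1}; it delivers directly the weighted bound on $\nabla w_\varepsilon$ with rate $\varepsilon^{1-\tau/2}$. Only then does Hardy--Sobolev (together with the weighted extension Theorem~\ref{thm:1.5}) convert this to $\|w_\varepsilon\|_{L^p}$. Shen's real method enters only in the proof of the quenched CZ estimate, not on $w_\varepsilon$ itself.

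\textbf{For the $L^2$ estimate \eqref{eq:1.8}.} Your plan is to pair $u_\varepsilon-u_0$ with $\phi\in L^2$, solve the adjoint problem, and bound the resulting bilinear form using the first part of the theorem. But the first part requires $\partial\Omega\in C^{1,\eta}$, which is unavailable here; and even ignoring that, with $L^2$ data the adjoint corrector error $z_\varepsilon$ satisfies only $\|\nabla z_\varepsilon\|_{L^2}\lesssim\|\phi\|_{L^2}$ (Theorem~\ref{thm:2.1} with $s=1$), so the cross term $\int A_\varepsilon\nabla w_\varepsilon\cdot\nabla z_\varepsilon$ is merely $O(\varepsilon^{1/2})$. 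No optimisation over $\tau$ recovers $\varepsilon^{5/6}$ from that. The paper's mechanism is different: one tests the \emph{extended} $\tilde w_\varepsilon$ against $\Phi\in H^{1/2}(\mathbb R^d)$, so the adjoint data lie in $H^{1/2}$ and $\|\nabla z_\varepsilon\|_{L^2}\lesssim\varepsilon^{1/2}\|\Phi\|_{H^{1/2}}$; this yields $\|\tilde w_\varepsilon\|_{H^{-1/2}}\lesssim\varepsilon\ln(1/\varepsilon)$ (Theorem~\ref{thm:3.1}). One then interpolates this with the $H^1$ bound $\|\tilde w_\varepsilon\|_{H^1}\lesssim\varepsilon^{1/2}$ from Theorem~\ref{thm:2.1}, via $\|\cdot\|_{L^2}\le\|\cdot\|_{H^{-1/2}}^{2/3}\|\cdot\|_{H^1}^{1/3}$, to obtain $\varepsilon^{5/6}\ln^{2/3}(1/\varepsilon)$. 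The exponents $5/6$ and $2/3$ are thus artefacts of interpolation between $H^{-1/2}$ and $H^1$, not of optimising a parameter $\tau$.
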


We refer the reader to
Subsection $\ref{notation}$ for the definition
of the notation like ``$r_0$'',``$\sim$'',``$\ll$''.

Although the following theorem is not
particularly new (probably well known by experts),
it still deserves to be presented in a more prominent position
because it is not only the first step towards
Theorem $\ref{thm:1.1}$ but also a proper perspective to
realize the hurdles of Theorem $\ref{thm:1.1}$.

\begin{theorem}[$H^{1}$-error estimates]\label{thm:2.1}
Let $\Omega\subset\mathbb{R}^d$ be a bounded Lipschitz
domain. Given $F\in H^{1-s}(\mathbb{R}^d;\mathbb{R}^d)$
with $0\leq s\leq 1$ and
$g\in H^{1}(\partial\Omega;\mathbb{R}^d)$,
let the weak solutions $u_{\varepsilon}$ and
$u_{0}$
be associated with the given data by \eqref{pde:1.1} and \eqref{pde:1.3},
respectively. Suppose that $\mathcal{L}_{\varepsilon}$
and $\omega$ satisfy the hypothesises
\emph{(H1)} and \emph{(H2)}.
Consider the first-order approximating
corrector
\begin{equation}\label{eq:3.1}
w_{\varepsilon} := u_{\varepsilon}-u_{0}+\varepsilon
\chi(\cdot/\varepsilon)
S_{\varepsilon}(\psi_{\varepsilon}\nabla u_{0}),
\end{equation}
in which $S_\varepsilon$ is a smoothing operator
(see Definition $\ref{def:2}$),
and $\psi_\varepsilon$ is a cut-off function
defined in $\eqref{eq:2.1}$.
Then one may derive that
\begin{equation}\label{pri:1.7}
\begin{aligned}
\|\nabla w_{\varepsilon}\|_{L^{2}(\Omega_{\varepsilon})}
\lesssim \varepsilon^{1-s}\|F\|_{H^{1-s}(\mathbb{R}^d)}
+ \varepsilon^{1/2}
\bigg\{\|F\|_{L^{2}(\Omega)}
+\|g\|_{H^{1}(\partial\Omega)}\bigg\}.
\end{aligned}
\end{equation}
Moreover, if $F=0$, then there holds
\begin{equation}\label{pri:1.4}
\|u_{\varepsilon}-u_{0}
\|_{L^{\frac{2d}{d-1}}(\Omega_{\varepsilon})}
\lesssim \varepsilon^{1/2}\|g\|_{H^{1}(\partial\Omega)},
\end{equation}
where the up to constant depends on $\mu_{0}, \mu_{1},
d, r_0$ and
the characters of $\omega$ and $\Omega$.
\end{theorem}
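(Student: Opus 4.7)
The plan is to follow the standard first-order approximation scheme adapted to the perforated setting. First I would directly compute $\mathcal{L}_{\varepsilon}(w_{\varepsilon})$: combining the cell problem \eqref{pde:1.2} with an antisymmetric flux (dual) corrector $\mathfrak{B}^{\varepsilon}$ on $\omega$---whose existence converts the zero-mean oscillation $A(I+\nabla\chi)-\widehat{A}$ into a divergence---a routine manipulation rewrites
\begin{equation*}
\mathcal{L}_{\varepsilon}(w_{\varepsilon}) = -\nabla\cdot\bigl[(A^{\varepsilon}-\widehat{A})(1-\psi_{\varepsilon})\nabla u_{0}\bigr] + \varepsilon\,\nabla\cdot\bigl[(A^{\varepsilon}\chi^{\varepsilon}+\mathfrak{B}^{\varepsilon})\nabla S_{\varepsilon}(\psi_{\varepsilon}\nabla u_{0})\bigr] + R_{\varepsilon},
\end{equation*}
where $R_{\varepsilon}$ collects lower-order remainders produced by commuting $S_{\varepsilon}$ with $\nabla$ and by the action of the cut-off. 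The Neumann condition on $Y\cap\partial\omega$ built into \eqref{pde:1.2} makes the co-normal trace of the right-hand side cancel against $\sigma_{\varepsilon}(w_{\varepsilon})$ on $S_{\varepsilon}$, and because $\psi_{\varepsilon}$ is chosen to vanish on an $\varepsilon$-collar of $\partial\Omega$, the corrector piece drops on $\Gamma_{\varepsilon}$; consequently $w_{\varepsilon}|_{\Gamma_{\varepsilon}}=u_{\varepsilon}-u_{0}=g-g=0$, so $w_{\varepsilon}$ is an admissible Dirichlet test function on $\Omega_{\varepsilon}$.

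Next I would test the error equation against $w_{\varepsilon}$ itself and apply the perforated-domain Korn inequality and the ellipticity in \eqref{eq:1.4} to obtain
\begin{equation*}
\|\nabla w_{\varepsilon}\|_{L^{2}(\Omega_{\varepsilon})} \lesssim \|(1-\psi_{\varepsilon})\nabla u_{0}\|_{L^{2}(\Omega)} + \varepsilon\,\|\nabla S_{\varepsilon}(\psi_{\varepsilon}\nabla u_{0})\|_{L^{2}(\mathbb{R}^{d})},
\end{equation*}
using the $L^{2}_{\text{per}}$-boundedness of $\chi$ and $\mathfrak{B}$ against the smoothing operator to absorb the multiplicative corrector factors. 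The first contribution is the boundary-layer term: $1-\psi_{\varepsilon}$ is supported in an $\varepsilon$-tube $\mathcal{O}_{\varepsilon}$ about $\partial\Omega$, and coupling the $H^{1/2}(\partial\Omega)$-trace theory with a Hardy inequality on the Lipschitz domain delivers $\|\nabla u_{0}\|_{L^{2}(\mathcal{O}_{\varepsilon})}\lesssim \varepsilon^{1/2}\bigl(\|F\|_{L^{2}(\Omega)}+\|g\|_{H^{1}(\partial\Omega)}\bigr)$. The second (interior) contribution is handled by splitting $\nabla S_{\varepsilon}(\psi_{\varepsilon}\nabla u_{0})$ into a commutator piece bounded by $\|\nabla u_{0}\|_{L^{2}}$ and a main piece on which the smoothing bound $\|\nabla S_{\varepsilon}f\|_{L^{2}}\lesssim \varepsilon^{-s}\|f\|_{H^{-s}(\mathbb{R}^{d})}$ applies; via the homogenized equation $-\nabla\cdot\widehat{A}\nabla u_{0}=F$ this contributes $\varepsilon^{-s}\|F\|_{H^{1-s}(\mathbb{R}^{d})}$, and multiplying through by the explicit $\varepsilon$ in front produces \eqref{pri:1.7}.

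For the second assertion, set $F=0$ and decompose $u_{\varepsilon}-u_{0}=w_{\varepsilon}-\varepsilon\chi^{\varepsilon}S_{\varepsilon}(\psi_{\varepsilon}\nabla u_{0})$. Since $w_{\varepsilon}$ vanishes on $\Gamma_{\varepsilon}$, the perforated Sobolev embedding yields $\|w_{\varepsilon}\|_{L^{2d/(d-2)}(\Omega_{\varepsilon})}\lesssim \|\nabla w_{\varepsilon}\|_{L^{2}(\Omega_{\varepsilon})}\lesssim \varepsilon^{1/2}\|g\|_{H^{1}(\partial\Omega)}$ from \eqref{pri:1.7}; interpolation against the crude $L^{2}$ bound then gives the desired $L^{2d/(d-1)}$ control on this piece. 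The remaining corrector piece is estimated by H\"older against a cell-scale Meyers bound $\chi\in L^{2d}_{\text{per}}$ together with $\|\nabla u_{0}\|_{L^{2}(\Omega)}\lesssim \|g\|_{H^{1/2}(\partial\Omega)}$, which contributes the full $\varepsilon$ factor. The step I expect to be most delicate is the boundary-layer bound: because $u_{0}$ is not globally $H^{3/2}$ on a bare Lipschitz domain, the $\varepsilon^{1/2}$ gain cannot be extracted from a naive trace theorem and must come from coupling Hardy's inequality with non-tangential/Rellich-type estimates---exactly the point at which the real method of \cite{S3} enters decisively.
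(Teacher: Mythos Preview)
Your proposal has a genuine gap: it misidentifies the source of the $\varepsilon^{1-s}\|F\|_{H^{1-s}}$ term. This term is \emph{not} produced by the interior contribution $\varepsilon\|\nabla S_{\varepsilon}(\psi_{\varepsilon}\nabla u_{0})\|_{L^{2}}$ as you claim. On a Lipschitz domain the co-layer estimate only gives $\|\nabla^{2}u_{0}\|_{L^{2}(\Omega\setminus O_{3\varepsilon})}\lesssim \varepsilon^{-1/2}\big(\|F\|_{L^{2}}+\|g\|_{H^{1}}\big)$ (see Theorem~\ref{app:thm:2}), so this piece contributes $\varepsilon^{1/2}$, not $\varepsilon^{1-s}$; and the homogenized equation fixes only one linear combination of $\nabla^{2}u_{0}$, so it cannot convert $\|\nabla^{2}u_{0}\|$ into an $H^{-s}$ norm of $F$ as your smoothing-operator argument requires. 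The $\varepsilon^{1-s}$ rate is a genuinely \emph{perforated-domain} phenomenon: in the weak formulation of Lemma~\ref{lemma:2.1}, comparing $\int_{\Omega_{\varepsilon}}F\phi$ with $\theta\int_{\Omega}F\phi$ leaves the oscillating residual $\int_{\Omega}(l_{\varepsilon}^{+}-\theta)F\phi$. The paper treats it by solving $-\Delta\Psi=l^{+}-\theta$ on the torus, integrating by parts to produce $\varepsilon\int_{\Omega}\nabla_{y}\Psi(x/\varepsilon)\cdot\nabla(F\phi)\,dx$, and then invoking the duality bound $\|\varpi(\cdot/\varepsilon)\nabla F\|_{H^{-s}}\lesssim\varepsilon^{-s}\|\nabla F\|_{H^{-s}}$ of Lemma~\ref{lemma:2.3}. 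Your error equation, written as if on an unperforated domain, never sees this term.

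There is also a secondary issue with the conormal traces on $S_{\varepsilon}$. The Neumann cell condition in \eqref{pde:1.2} only kills $n\cdot A^{\varepsilon}(I+\nabla\chi^{\varepsilon})\varphi$ on $S_{\varepsilon}$; the remaining traces $n\cdot A^{\varepsilon}(\nabla u_{0}-\varphi)$ and the flux-corrector trace do not vanish, and a flux corrector defined only on $\omega$ cannot absorb them. The paper circumvents this entirely by extending $w_{\varepsilon}$ to $H^{1}_{0}(\Omega_{0})$ via Lemma~\ref{extensiontheory} and testing against $\psi_{\varepsilon}'\tilde{w}_{\varepsilon}\in H^{1}_{0}(\Omega)$, so that every integral in \eqref{eq:3.2} lives on $\Omega$ and no $S_{\varepsilon}$ surface terms ever appear; the flux corrector $E$ of Lemma~\ref{lemma:2.6} is correspondingly built on all of $Y$ (via the zero-extension $\tilde{\chi}$), not on $\omega$ alone.
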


Compared to the case of unperforated domains,
the new phenomenon arising from the source term
$F$ will be observed from $\eqref{pri:1.7}$. On the one hand,
square-integrable $F$ as given data
is too rough to offer a convergence rate.
On the other hand, a sharp error estimate would be delivered
by $H^{-\frac{1}{2}}$-norms
(see Corollary $\ref{cor:4.1}$).

Since the boundary of $\Omega_\varepsilon$ may be
quite irregular, some fundamental analysis tools such as
Poincar\'e's inequalities and trace theorems can not be employed
anymore. Instead, people first manage to extend
functions to a larger and more regular region and then operate them
as usual. In this regard,
O. Oleinik, A. Shamaev, and G. Yosifian
\cite[pp.50, Theorem 4.3]{OSY} successfully constructed
a bounded linear extension operator
from
$H^{1}(\Omega_{\varepsilon},\Gamma_{\varepsilon};
\mathbb{R}^d)$ to $H^1_0(\mathbb{R}^d;\mathbb{R}^d)$,
in which
$H^{1}(\Omega_{\varepsilon},\Gamma_{\varepsilon};
\mathbb{R}^d)$
denotes the closure in $H^{1}(\Omega_{\varepsilon};
\mathbb{R}^d)$ of smooth vector-valued functions
vanishing on $\Gamma_{\varepsilon}$
(see Subsection $\ref{notation}$).
Here, to fulfill the new scheme on Theorem $\ref{thm:1.1}$,
their consequences have received further development in the following.


\begin{theorem}[weighted extension property]\label{thm:1.5}
Let $\Omega$ and $\Omega_{0}$ be
bounded Lipschitz domains with
$\bar{\Omega}\subset\Omega_{0}$
and \emph{dist}$(x,\Omega)\sim
10\varepsilon$ for $x\in\partial\Omega_0$.
Suppose that $\omega$ satisfies the hypothesis \emph{(H1)}
with $\Omega_\varepsilon = \Omega\cap(\varepsilon\omega)$.
Let $\delta(x)=\emph{dist}(x,\partial\Omega_0)$ be a weight.
Then there exists an extension operator
$\Lambda_\varepsilon: H^1(\Omega_\varepsilon,
\Gamma_\varepsilon;\mathbb{R}^d)\to
H_0^1(\Omega_0;\mathbb{R}^d)$
such that
$\Lambda_\varepsilon (w) = w$
a.e. on $\Omega_\varepsilon$ with
the trace of $\Lambda_\varepsilon (w)$ vanishing on
$\partial\Omega_0$. Moreover, there holds the following
weighted estimate
\begin{equation}\label{pri:1.9}
\int_{\Omega_0}|\nabla \Lambda_\varepsilon(w)|^2
\delta^\beta dx
\lesssim \int_{\Omega_\varepsilon}
|\nabla w|^2 \delta^\beta dx
\end{equation}
for $-1<\beta<1$,
where the up to constant depends
on $d$, $\mathfrak{g}^{\omega}$ and the characters
of $\Omega$ and $\omega$,
but independent of $\varepsilon$ and $r_0$.
\end{theorem}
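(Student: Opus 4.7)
The plan is to track the distance weight $\delta^{\beta}$ through the classical Oleinik--Shamaev--Yosifian (OSY) cellular extension cited above. The decisive geometric observation is that, by the hypothesis $\text{dist}(x,\Omega)\sim 10\varepsilon$ for $x\in\partial\Omega_{0}$, one has $\delta(x)\gtrsim\varepsilon$ uniformly on $\Omega$ (and on any $O(\varepsilon)$-neighborhood thereof); hence on every $\varepsilon$-cube intersecting the support of the extension, $\delta^{\beta}$ oscillates only by a factor depending on $|\beta|$, and the weight can be transferred cell by cell with uniform constants.

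I would first invoke OSY's construction to produce $\Lambda_{\varepsilon}w\in H_{0}^{1}(\Omega_{0};\mathbb{R}^{d})$ with $\Lambda_{\varepsilon}w=w$ a.e.\ on $\Omega_{\varepsilon}$. The construction is cellular: on each $\varepsilon$-cube $Q=\varepsilon(Y+k)$ contained in $\Omega$ one applies the rescaled local extension $P:H^{1}(Y\cap\omega)\to H^{1}(Y)$; on cubes meeting $\partial\Omega$ the cell extension is combined with a zero extension across $\partial\Omega$, which is permissible because $w$ vanishes on $\Gamma_{\varepsilon}$. This places $\mathrm{supp}\,\Lambda_{\varepsilon}w$ inside an $O(\varepsilon)$-thickening of $\Omega$ (hence inside $\Omega_{0}$) and delivers the cell-wise bound
\begin{equation*}
\int_{Q}|\nabla\Lambda_{\varepsilon}w|^{2}\,dx\;\lesssim\;\int_{Q^{*}\cap\Omega_{\varepsilon}}|\nabla w|^{2}\,dx,
\end{equation*}
with $Q^{*}$ a fixed $O(\varepsilon)$-enlargement of $Q$.

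To transfer the weight, the triangle inequality yields $|\delta(x)-\delta(y)|\leq|x-y|\lesssim\varepsilon$ for $x,y\in Q^{*}$, so together with $\delta\gtrsim\varepsilon$ on $\mathrm{supp}\,\Lambda_{\varepsilon}w$ one finds that $\delta(x)/\delta(y)$ is bounded above and away from zero uniformly in $k$. Consequently $\delta^{\beta}\approx\inf_{Q^{*}}\delta^{\beta}$ on $Q^{*}$ for every $\beta\in(-1,1)$, with constants depending only on $|\beta|$. Multiplying the cell-wise bound by this essentially constant weight and summing over $k$, exploiting the bounded overlap of $\{Q^{*}\}$, produces \eqref{pri:1.9}.

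The principal hurdle is the first step: establishing the cell-wise bound \emph{uniformly up to $\partial\Omega$} while respecting the vanishing trace on $\Gamma_{\varepsilon}$. For cubes straddling $\partial\Omega$ one must extend across the interior holes without corrupting the Dirichlet data on $\partial\Omega$, a matter that is delicate when a hole nearly touches $\partial\Omega$; the uniform separation $\mathfrak{g}^{\omega}$ between holes from hypothesis (H2)(c) is essential to keep the extension constant independent of $\varepsilon$. A clean implementation can be organized via a Lipschitz flattening of $\partial\Omega$ together with a periodic analogue of the OSY cell extension in the half-space, which also clarifies the independence from the Lipschitz scale $r_{0}$.
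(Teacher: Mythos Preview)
Your proposal is correct and takes a genuinely simpler route than the paper. Both proofs begin the same way: zero-extend $w$ across $\partial\Omega$ (legitimate since $w=0$ on $\Gamma_\varepsilon$) to reduce to a ``type II'' perforated domain, i.e.\ a union of $\varepsilon$-cells. The divergence comes in the cell-wise extension step. The paper develops a \emph{weighted} cell extension (Lemma~\ref{lemma:2.8}), which in turn rests on Lemma~\ref{lemma:extension}: extending across a single hole by solving an auxiliary constant-coefficient mixed boundary problem and invoking the weighted $W^{1,p}$ estimates of Theorem~\ref{app:thm:1}. The weight enters that machinery as a generic $A_2$ weight, which is the origin of the restriction $-1<\beta<1$. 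You instead use only the \emph{unweighted} OSY cell bound and transfer the weight afterwards, exploiting the specific geometry that $\delta\gtrsim\varepsilon$ on $\mathrm{supp}\,\Lambda_\varepsilon w$ (forced by the choice of $\Omega_0$), so that $\delta^\beta$ is essentially constant on each $\varepsilon$-cell. This is a real shortcut: it avoids the PDE-based weighted extension entirely, and in fact your argument works for \emph{every} real $\beta$, since a uniformly bounded ratio $\delta(x)/\delta(y)$ on each cell makes $(\delta(x)/\delta(y))^\beta$ bounded for all $\beta$. What the paper's heavier approach buys is generality --- their Lemma~\ref{lemma:extension} and Theorem~\ref{app:thm:1} apply to arbitrary $A_2$ weights, including ones (such as $\mathrm{dist}(\cdot,\partial\Omega)^\beta$) that are not flat at scale $\varepsilon$ --- but for the specific weight $\delta^\beta=\mathrm{dist}(\cdot,\partial\Omega_0)^\beta$ in the statement, your elementary transfer suffices.
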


Concerned with the above theorem,
we want to emphasize two points:
(i). Note that $\delta^\beta\in A_{2}$
(see for example \cite[Theorem 3.1]{DST}), which
guaranteed that we could rightfully employ some extension theorems
in weighted Sobolev spaces (see \cite[Theorem 1.2]{C}),
where the notation $A_p$ with $1\leq p\leq \infty$
is known as a Muckenhoupt's weight class (see
for example \cite[Chapter 7]{D}).
(ii). The stated estimates $\eqref{pri:1.9}$
is rooted in weighted $W^{1,p}$ estimates
for an auxiliary equation with mixed boundary conditions
(see $\eqref{pde:2.1}$ and Theorem $\ref{app:thm:1}$).

Although the equation $\eqref{pde:2.1}$
was constructed with constant coefficients,
its proof shared the same strategy
(repeating Shen's real methods twice) with the following theorem.
As we have mentioned in Abstract, these estimates below were
first known by A. Gloria, S. Neukamm, F. Otto
\cite{Gloria_Neukamm_Otto_2015} in a random setting.

\begin{theorem}[weighted quenched Calder\'on-Zygmund estimates]
\label{thm:1.4}
Let $0<\varepsilon\ll 1$
and $\Omega$ be a bounded $C^{1,\eta}$ domain with
$0<\eta\leq 1$.
Suppose that $\mathcal{L}_{\varepsilon}$
and $\omega$ satisfy the hypothesises
\emph{(H1)} and \emph{(H2)}.
Let $1<p<\infty$, and $\rho\in A_p$ be
a weight. For any
$f\in L^p(\Omega;\mathbb{R}^{d\times d})$,
assume that $u_\varepsilon$ is the weak solution to
$\mathcal{L}_\varepsilon(u_\varepsilon) =
\nabla\cdot f$ in $\Omega_\varepsilon$ with
$\sigma_\varepsilon(u_\varepsilon) =
-n\cdot f$ on $S_\varepsilon$
and $u_\varepsilon=0$ on $\Gamma_\varepsilon$.
Then there holds
\begin{equation}\label{pri:1.5}
\bigg(\int_{\Omega}
\Big(\dashint_{B_\varepsilon(x)\cap\Omega_\varepsilon}
|\nabla u_\varepsilon|^2
\Big)^{\frac{p}{2}}\rho dx\bigg)^{\frac{1}{p}}
\lesssim \bigg(\int_{\Omega}
\Big(\dashint_{B_{\varepsilon}(x)\cap\Omega_\varepsilon}
|f|^2
\Big)^{\frac{p}{2}}\rho dx\bigg)^{\frac{1}{p}},
\end{equation}
where $B_\varepsilon(x)$ is the ball centered at x with
radius $\varepsilon$, and the average integral
$\dashint$ is defined in $\eqref{n:1.1}$.
Moreover, if $\rho\in A_1$, then we have
\begin{equation}\label{pri:1.6}
\bigg(\int_{\Omega}
\Big(\dashint_{B_\varepsilon(x)\cap\Omega_\varepsilon}
|\nabla u_\varepsilon|^2
\Big)^{\frac{p}{2}}\rho dx\bigg)^{\frac{1}{p}}
\lesssim \bigg(\int_{\Omega_\varepsilon}
|f|^p\rho dx\bigg)^{\frac{1}{p}}.
\end{equation}
Here the up to constant depends on
$\mu_0,\mu_1,d,\mathfrak{g}^{\omega},\eta$, $p$ and
the characters of $\Omega$ and $\omega$.
\end{theorem}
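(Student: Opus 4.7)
The plan is to reduce the theorem to applying Z.~Shen's real variable method twice, with a large-scale regularity estimate on the perforated domain supplying the requisite input. The key observation, which makes the name \emph{quenched} meaningful, is that the ball-averaged quantity
$$G_\varepsilon(x) := \Big(\dashint_{B_\varepsilon(x)\cap\Omega_\varepsilon}|\nabla u_\varepsilon|^2\Big)^{1/2}$$
behaves like a regularised version of $|\nabla u_\varepsilon|$ above the microscopic scale $\varepsilon$. Thus, even though pointwise regularity of $\nabla u_\varepsilon$ across the perforation boundary $S_\varepsilon$ is hopeless, the functional $G_\varepsilon$ inherits enough regularity at scales $\gtrsim \varepsilon$ to fit into a Calder\'on–Zygmund scheme. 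The target estimates \eqref{pri:1.5} and \eqref{pri:1.6} should then follow from an appropriate weighted good-$\lambda$ inequality for $G_\varepsilon$ against Muckenhoupt weights.

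The first stage is to establish a large-scale $C^{1,\eta}$-type estimate on $\Omega_\varepsilon$: for $x_0\in\bar\Omega$ and $\varepsilon\leq r\leq R\lesssim \text{diam}(\Omega)$,
$$\Big(\dashint_{B_r(x_0)\cap\Omega_\varepsilon}|\nabla u_\varepsilon|^2\Big)^{1/2}\lesssim \Big(\dashint_{B_R(x_0)\cap\Omega_\varepsilon}|\nabla u_\varepsilon|^2\Big)^{1/2}+\Big(\dashint_{B_R(x_0)\cap\Omega_\varepsilon}|f|^2\Big)^{1/2},$$
separately in the interior and at $\partial\Omega$. The interior version is obtained by Avellaneda–Lin compactness transferred to the perforated setting, using the mixed Neumann/Dirichlet cell problem \eqref{pde:1.2} and the $C^{1,\eta}$ regularity of the homogenized equation. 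The boundary version additionally exploits the $C^{1,\eta}$ regularity of $\partial\Omega$ and is reduced to the interior one by a flattening argument together with an application of Theorem~\ref{thm:1.5} to absorb the irregular trace on $\Gamma_\varepsilon$.

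The second stage is a first use of Shen's real method: with the large-scale estimate in hand, one decomposes $u_\varepsilon = v + w$ on balls $B_R$ at scale $R\gg\varepsilon$, where $v$ solves the same equation with data restricted to a neighbourhood and $w=u_\varepsilon-v$ enjoys the large-scale bound. This yields a self-improving reverse H\"older estimate
$$\Big(\dashint_{B_R}|G_\varepsilon|^{p_0}\Big)^{1/p_0}\lesssim \Big(\dashint_{2B_R}|G_\varepsilon|^{2}\Big)^{1/2}+\Big(\dashint_{2B_R}|f|^{p_0}\Big)^{1/p_0}$$
for some $p_0>2$, which, via Shen's real variable machinery, upgrades the basic $L^2$ energy estimate to an unweighted $L^p$ estimate for all $2\leq p<p_0$. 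An interpolation/duality step extends it to $1<p<\infty$.

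The third stage is the second use of the real method, now in its weighted incarnation. The same level-set / good-$\lambda$ decomposition, combined with the open-endedness of the $A_p$ class and the reverse H\"older property for $\rho\in A_p$, promotes the unweighted $L^p$ bound to \eqref{pri:1.5}. The stronger $A_1$ conclusion \eqref{pri:1.6} follows by tracking that when $\rho\in A_1$ one can replace the average of $|f|^2$ on the right-hand side by its pointwise value, using the $A_1$ defining inequality $\mathcal{M}\rho\lesssim \rho$ to absorb the ball-averaging of $|f|^2$. The main obstacle I expect is the first stage: constructing a compactness scheme that respects simultaneously the Neumann condition on $S_\varepsilon$, the Dirichlet condition on $\Gamma_\varepsilon$, and the $C^{1,\eta}$ outer boundary $\partial\Omega$—and doing so at a scale that is only \emph{above} $\varepsilon$—requires careful use of the extension operator of Theorem~\ref{thm:1.5} together with the mixed-data cell problem, and is where the geometric hypothesis $\mathfrak{g}^\omega>0$ must be used quantitatively.
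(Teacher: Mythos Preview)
Your overall architecture---large-scale Lipschitz regularity, then Shen's real method for the unweighted quenched estimate, then a weighted good-$\lambda$ argument---is exactly what the paper does, and your identification of $G_\varepsilon$ as the right object is on target. A few points of divergence are worth noting.

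First, for the large-scale regularity (your ``first stage'') the paper does \emph{not} run an Avellaneda--Lin compactness scheme. Instead it uses the Armstrong--Shen/Campanato iteration: the $H^1$ convergence-rate estimate of Theorem~\ref{thm:2.1} feeds into a quenched boundary approximation lemma (Lemma~\ref{lemma:5.2}), which is then iterated via Shen's lemma \cite[Lemma~8.5]{S} to produce Theorem~\ref{thm:1.3}. Compactness would likely also work, but the paper's route is entirely quantitative and avoids any subsequence extraction. Second, your invocation of Theorem~\ref{thm:1.5} here is misplaced: the weighted extension operator plays no role in the proofs of Theorems~\ref{thm:1.3}, \ref{thm:1.2}, or \ref{thm:1.4}; it is used only later in the duality argument of Section~\ref{sec:6}. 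The extensions needed in Section~\ref{sec:7} are the unweighted ones from Lemma~\ref{extensiontheory}. Third, in the proof of Theorem~\ref{thm:1.2} the paper makes an explicit dichotomy between scales $r<\varepsilon/4$ (where one takes $W_B=U$, $V_B=0$ and uses the geometric Lemma~\ref{lemma:5.5}) and $r\geq\varepsilon/4$ (where one decomposes via the auxiliary problems \eqref{pde:5.2}); this scale-splitting is what makes the averaged quantity $G_\varepsilon$ amenable to Shen's lemma, and it deserves to be visible in your plan. Finally, for \eqref{pri:1.6} the mechanism is precisely what you say: once \eqref{pri:1.5} is known, the $A_1$ property combined with Fubini (Lemma~\ref{lemma:5.5}, estimate~\eqref{pri:5.9}) collapses the right-hand average to the pointwise norm.
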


Up to now,
the idea of Theorem $\ref{thm:1.4}$ is more or less
standard in harmonic analysis. One may
reduce it to a good-$\lambda$ inequality measured by weights
(see $\eqref{f:9}$),
which essentially parallels to the classical
counterparts developed for singular integrals (see \cite[Theorem 7.11]{D}).
The main difference is that
a pointwise estimate of
the composition of
sharp maximal operators with singular integrals
is displaced by Shen's real arguments \cite{S0,S3}.
Recently, Z. Shen \cite{S4} developed his kernel-free scheme
in weighted spaces.
Nevertheless, the important fact is that weighted estimates
are usually based upon non-weighted ones. Whereupon,
it is very natural to consider the next theorem.

\begin{theorem}[quenched Calder\'on-Zygmund estimate]
\label{thm:1.2}
Assume the same conditions as in Theorem $\ref{thm:1.4}$.
For any $f\in L^p(\Omega;\mathbb{R}^{d\times d})$,
let $u_\varepsilon$ be associated with $f$ by the equation
$\mathcal{L}_\varepsilon(u_\varepsilon) =
\nabla\cdot f$ in $\Omega_\varepsilon$ with
$\sigma_\varepsilon(u_\varepsilon) =
-n\cdot f$ on $S_\varepsilon$
and $u_\varepsilon=0$ on $\Gamma_\varepsilon$.
Then, for $1<p<\infty$, we obtain
\begin{equation}\label{pri:1.2}
\bigg(\int_{\Omega}
\Big(\dashint_{B_\varepsilon(x)\cap\Omega_\varepsilon}
|\nabla u_\varepsilon|^2
\Big)^{\frac{p}{2}}dx\bigg)^{\frac{1}{p}}
\lesssim \bigg(\int_{\Omega}
\Big(\dashint_{B_{\varepsilon}(x)\cap\Omega}
|f|^2
\Big)^{\frac{p}{2}}dx\bigg)^{\frac{1}{p}},
\end{equation}
where the up to constant relies on
$\mu_0,\mu_1,d,\mathfrak{g}^{\omega},\eta$, $p$ and
the characters of $\Omega$ and $\omega$.
\end{theorem}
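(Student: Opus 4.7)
My plan is to establish \eqref{pri:1.2} by applying Shen's real variable argument (\cite{S0,S3}) to the sublinear map $f\mapsto G$, where
\begin{equation*}
G(x):=\Big(\dashint_{B_{\varepsilon}(x)\cap\Omega_\varepsilon}|\nabla u_\varepsilon|^2\Big)^{1/2},\qquad H(x):=\Big(\dashint_{B_{\varepsilon}(x)\cap\Omega}|f|^2\Big)^{1/2}.
\end{equation*}
The case $p=2$ follows from Fubini together with a uniform energy estimate: since $|B_{\varepsilon}(x)\cap\Omega_\varepsilon|\gtrsim\varepsilon^d$ by the connectivity and 1-periodic structure of $\omega$ under (H2), exchanging the order of integration gives $\|G\|_{L^2(\Omega)}\lesssim\|\nabla u_\varepsilon\|_{L^2(\Omega_\varepsilon)}$ and $\|f\|_{L^2(\Omega)}\lesssim\|H\|_{L^2(\Omega)}$, while the standard Korn/Lax--Milgram estimate, uniform in $\varepsilon$ under (H1)--(H2), produces $\|\nabla u_\varepsilon\|_{L^2(\Omega_\varepsilon)}\lesssim\|f\|_{L^2(\Omega_\varepsilon)}$.

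For $p>2$, I would apply Shen's real method. Fix a ball $B=B(x_0,r)$ with $\varepsilon\lesssim r\le r_0$, either with $4B\subset\Omega$ or with $x_0\in\partial\Omega$, and split $u_\varepsilon = v+w$ on $\Omega_\varepsilon\cap 4B$: $v$ solves the mixed problem driven by $f\chi_{4B}$ with homogeneous Dirichlet data on $\partial(4B)\cap\Omega_\varepsilon$ together with $\Gamma_\varepsilon\cap 4B$ and the co-normal condition $n\cdot A_\varepsilon\nabla v=-n\cdot(f\chi_{4B})$ on $S_\varepsilon\cap 4B$, while $w:=u_\varepsilon-v$ satisfies $\mathcal{L}_\varepsilon w=0$ in $\Omega_\varepsilon\cap 4B$ with the same mixed boundary conditions but a vanishing source. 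A rescaled energy estimate for $v$ yields the ``bad-part'' bound
\begin{equation*}
\Big(\dashint_B G_v^2\Big)^{1/2}\lesssim \Big(\dashint_{4B}H^2\Big)^{1/2},
\end{equation*}
where $G_v,G_w$ are formed from $v,w$ by the same recipe as $G$. For the ``good-part'' $w$, a large-scale $C^{1,\eta}$-type Lipschitz estimate for $\mathcal{L}_\varepsilon$-harmonic functions on periodically perforated $C^{1,\eta}$ domains — valid at scales $\gtrsim\varepsilon$ and up to both $S_\varepsilon$ and $\partial\Omega$ — gives, for every $q<\infty$,
\begin{equation*}
\Big(\dashint_B G_w^{q}\Big)^{1/q}\lesssim \Big(\dashint_{2B} G_w^2\Big)^{1/2}\lesssim \Big(\dashint_{2B} G^2\Big)^{1/2}+\Big(\dashint_{4B} H^2\Big)^{1/2},
\end{equation*}
with constants independent of $\varepsilon$. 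This matches the hypotheses of Shen's real variable theorem (\cite{S3}), which upgrades the $L^2$-boundedness to $L^p$-boundedness for every $p\in(2,q)$; letting $q\to\infty$ covers the entire range $p>2$.

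The range $1<p<2$ is recovered by duality: the adjoint system inherits hypotheses (H1)--(H2) from the symmetries $a_{ij}^{\alpha\beta}=a_{ji}^{\beta\alpha}$, so the argument just given applies with exponent $p'>2$, and a standard pairing converts the bound. The principal obstacle is the large-scale Lipschitz estimate for $\mathcal{L}_\varepsilon w=0$ with simultaneous homogeneous Neumann conditions on the interior holes $S_\varepsilon$ and Dirichlet data on $\partial\Omega$: one needs an $O(r)$-decay of the relevant excess quantities down to the $\varepsilon$-scale, combining the $C^{1,\eta}$ regularity of $\partial\Omega$ with a three-step compactness/excess-decay argument adapted to the periodic perforation. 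Once that boundary large-scale regularity is secured, the remaining real-method assembly proceeds along now-standard lines.
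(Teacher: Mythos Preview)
Your plan matches the paper's proof: Shen's real method (Lemma~\ref{lemma:5.4}) fed by the large-scale Lipschitz estimate (Theorem~\ref{thm:1.3} for the boundary, \cite[Theorem~1.1]{BR} for the interior), with $1<p<2$ handled by duality. Two technical points, however, separate your sketch from the paper's execution. First, the paper lets the ``bad part'' $v_\varepsilon$ solve the problem on the \emph{whole} of $\Omega_\varepsilon$ with truncated source $\nabla\cdot(I_B f)$ (see \eqref{pde:5.2}), not on the localized domain $\Omega_\varepsilon\cap 4B$ as you propose; this sidesteps the need to establish a Korn inequality uniform in both $r$ and $\varepsilon$ on the moving, possibly badly-cut region $\Omega_\varepsilon\cap 4B$, since the global Korn on $\Omega_\varepsilon$ is already in hand (Lemma~\ref{lemma:2.4}). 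Second, your restriction to balls with $r\gtrsim\varepsilon$ leaves the case $0<r<\varepsilon/4$ unverified, and Shen's lemma requires the decomposition hypothesis for \emph{all} balls; the paper treats this case separately (``Case~1'') by observing, via the elementary geometric inequality \eqref{pri:5.3}, that the $\varepsilon$-averaged quantity $G$ is essentially constant on such small balls, so one may take $W_B=G$ and $V_B=0$ trivially. Both are small points, but without them the verification of \eqref{pri:5.6} is incomplete.
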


The terminology ``quenched'' probably came from
some mathematical physicists, which was likely used to
underline some structural assumption on $A$
at macroscopic scales.
Regarding of this paper,
it refers to the 1-periodicity
condition $\eqref{eq:1.3}$.
In other words,
some higher regularity estimates
(beyond energy estimates) can still be expected,
if the coefficient $A$ is ruled by some ``law''
such as periodicity, quasi-periodicity, almost-periodicity,
and stationarity coupled
with quantitative ergodicity.
In the absence of smoothness of coefficients,
it is reasonable to replace classical
pointwise quantity with a specific average.
Also, we strongly refer the reader to \cite{JO}
for its tight
relationship with a quantitative stochastic homogenization theory.

Recently, the estimate $\eqref{pri:1.2}$ as an intermediate step
 was developed
for elliptic systems with stationary random coefficients
of integrable correlations by
M. Duerinckx and F. Otto \cite{DO}.
In their plot,
there were two notable ingredients.
One was Shen's lemma (see Lemma $\ref{lemma:5.4}$), inspired by
L. Caffarelli and I. Peral's work \cite{CP}. The other was
the large-scale Lipschitz regularity, as developed
in \cite{AKM,AM,Gloria_Neukamm_Otto_2015}.
In terms of a quasi-linear model, a similar quenched $W^{1,p}$ estimate
with $p>2$ was received by S. Armstrong, J.-P. Daniel \cite{AD},
and their approach
was closer to S. Byun and L. Wang's framework \cite{BW},
which also enlightened by the literature \cite{CP}.
In short, although their methods have their characteristics,
the common point is the dependence on the large-scale
Lipschitz estimates.

\begin{theorem}[boundary Lipschitz estimates at large-scales]
\label{thm:1.3}
Let $0<\varepsilon\ll 1$
and $\Omega$ be a bounded $C^{1,\eta}$ domain with
$0<\eta\leq 1$.
Suppose that $\mathcal{L}_{\varepsilon}$
and $\omega$ satisfy the hypothesises
\emph{(H1)} and \emph{(H2)}. Let
$u_\varepsilon\in H^1(D_1^{\varepsilon};\mathbb{R}^d)$ be
a weak solution to
\begin{equation}\label{pde:5.1}
\left\{\begin{aligned}
\mathcal{L}_\varepsilon (u_\varepsilon) &= 0
&\quad&\emph{in}~D_1^{\varepsilon},\\
\sigma_\varepsilon (u_\varepsilon) &= 0
&\quad&\emph{on}~D_1\cap\partial(\varepsilon\omega),\\
u_\varepsilon &= 0
&\quad&\emph{on}~\Delta_1\cap\varepsilon\omega,
\end{aligned}\right.
\end{equation}
in which the notation $D_1^\varepsilon$, $D_1$ and
$\Delta_1$ are referred to Subsection $\ref{notation}$.
Then there holds
\begin{equation}\label{pri:5.5}
 \Big(\dashint_{D_r^\varepsilon}|\nabla u_\varepsilon|^2
 \Big)^{1/2}
 \lesssim \Big(\dashint_{D_{1/2}^\varepsilon}|\nabla u_\varepsilon|^2
 \Big)^{1/2}
\end{equation}
for any $\varepsilon\leq r\leq (1/2)$, where the up to constant
depends on $\mu_0,\mu_1,d,\mathfrak{g}^{\omega},\eta$ and
the characters of $\Omega$ and $\omega$.
\end{theorem}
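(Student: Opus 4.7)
The plan is to establish \eqref{pri:5.5} via the compactness-iteration scheme of Avellaneda-Lin, as systematized by Z. Shen: prove a one-step ``excess decay'' at each dyadic scale $r \in [\varepsilon, 1/2]$ and iterate from $r = 1/2$ down to $r \sim \varepsilon$. After flattening $\Delta_1$ by a $C^{1,\eta}$ change of variables, introduce the affine excess
\begin{equation*}
H(r) := \inf_{q\in\mathbb{R}^d}\frac{1}{r}\Big(\dashint_{D_r^\varepsilon}|u_\varepsilon - q\, x_d|^2\, dx\Big)^{1/2},
\end{equation*}
which quantifies how close $u_\varepsilon$ is to an affine function vanishing on the flat part. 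The target is to exhibit $\theta\in (0,1/4)$ and $\sigma>0$ so that for every $\varepsilon \leq r \leq 1/2$,
\begin{equation*}
H(\theta r) \leq \tfrac{1}{2} H(r) + C(\varepsilon/r)^{\sigma}\Big(\dashint_{D_{2r}^\varepsilon}|\nabla u_\varepsilon|^2\Big)^{1/2}.
\end{equation*}

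The key technical input is an approximation lemma: for each such $r$, there exists a weak solution $v$ of the constant-coefficient homogenized system $\mathcal{L}_0 v = 0$ on $D_r$ with $v = 0$ on $\Delta_r$ satisfying
\begin{equation*}
\Big(\dashint_{D_r^\varepsilon}|u_\varepsilon - v|^2\Big)^{1/2} \lesssim (\varepsilon/r)^{1/2}\, r\,\Big(\dashint_{D_{2r}^\varepsilon}|\nabla u_\varepsilon|^2\Big)^{1/2}.
\end{equation*}
I would obtain this by rescaling $D_r^\varepsilon$ to unit scale, extending $u_\varepsilon$ across the holes via Theorem \ref{thm:1.5} so that the two objects live on comparable domains, and applying Theorem \ref{thm:2.1} with $F = 0$ (in particular the $L^{2d/(d-1)}$-bound \eqref{pri:1.4}) to the difference, with the first-order corrector $\varepsilon\chi(\cdot/\varepsilon)S_\varepsilon(\psi_\varepsilon\nabla v)$ from \eqref{eq:3.1} playing the role of the bridge. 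Once this is in hand, the one-step decay follows from the $C^{1,\eta'}$ boundary regularity of $v$ at $\Delta_r$: one selects $\theta$ small enough that the smooth affine-approximation rate $C\theta^{\eta'}\,(\dashint_{D_r}|\nabla v|^2)^{1/2}$ for $v$ absorbs the factor $1/2$, and the Caccioppoli inequality on $D_r^\varepsilon$ (which is standard on perforated domains thanks to the Korn-Poincar\'e inequalities enabled by Theorem \ref{thm:1.5}) recovers the energy on the right-hand side. Summing the resulting recursion along a geometric sequence down to scale $\varepsilon$ yields $H(r) \lesssim H(1/2) + \big(\dashint_{D_{1/2}^\varepsilon}|\nabla u_\varepsilon|^2\big)^{1/2}$, from which \eqref{pri:5.5} follows after one more Caccioppoli bound.

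The principal obstacle is the approximation lemma. The mismatch between the perforated $D_r^\varepsilon$ and the unperforated $D_r$ forces a careful coupling of the extension of Theorem \ref{thm:1.5} with the boundary layer induced by the cut-off $\psi_\varepsilon$; this layer, of thickness $\sim \varepsilon$, is precisely the source of the suboptimal $(\varepsilon/r)^{1/2}$ rate rather than the naively expected $\varepsilon/r$. A secondary subtlety is to reconcile the Dirichlet trace of $v$ on $\Delta_r$ with that of $u_\varepsilon$ on its perforated counterpart, which I would arrange by prescribing $v|_{\partial D_r \setminus \Delta_r}$ through a harmonic extension of the extended $u_\varepsilon$ and compensating the resulting discrepancy on $\Delta_r$ by a tangential-derivative bound of the type \eqref{pri:1.4}. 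With the approximation lemma secured, the remaining compactness-iteration is entirely routine.
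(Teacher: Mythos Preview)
Your overall strategy---approximate $u_\varepsilon$ at each scale $r\in[\varepsilon,1/2]$ by a solution $v$ of the homogenized system, feed the resulting $(\varepsilon/r)^{1/2}$ error into a Campanato-type excess, and iterate via Shen's lemma---is exactly what the paper does (this is the quantitative Armstrong--Shen scheme, not Avellaneda--Lin compactness, despite your label). The approximation step is indeed the crux, and the paper derives it (Lemma~\ref{lemma:5.2}) precisely from the rate \eqref{pri:1.4} of Theorem~\ref{thm:2.1}, as you anticipate.

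Two technical points in your plan diverge from the paper and deserve attention. First, flattening $\Delta_1$ by a $C^{1,\eta}$ diffeomorphism destroys the periodic structure of the holes: after the change of variables the operator no longer lives on $\varepsilon\omega$ with periodic coefficients, so your excess $H(r)=\inf_q r^{-1}\|u_\varepsilon-q\,x_d\|_{L^2(D_r^\varepsilon)}$ is not the right object. The paper avoids flattening entirely by working with the tilted excess
\[
G_\varepsilon(r,u)=\frac{1}{r}\inf_{M\in\mathbb{R}^{d\times d}}\bigg\{\Big(\dashint_{D_r^\varepsilon}|u-Mx|^2\Big)^{1/2}+r\|\nabla_{\mathrm{tan}}(Mx)\|_{L^\infty(\Delta_r)}+\|Mx\|_{L^\infty(\Delta_r)}\bigg\},
\]
whose two boundary terms absorb the curvature of $\Delta_r$ in the Schauder step. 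Second, the $C^{1,\kappa}$ decay for $v$ is naturally expressed on the \emph{full} $D_r$, whereas the iteration lives on $D_r^\varepsilon$; passing between the two requires a separate comparison (the paper's Lemma~\ref{lemma:5.1}, proved by a Poincar\'e inequality on the $\varepsilon$-neighbourhood of each hole and a Caccioppoli bound). This ingredient is absent from your outline and is specific to the perforated setting.

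On the approximation lemma itself, your description of the boundary mismatch is in the right direction but too vague. The paper (Lemma~\ref{lemma:5.2}) extends $u_\varepsilon$ via the unweighted Lemma~\ref{extensiontheory} (Theorem~\ref{thm:1.5} is not needed here), picks a good radius $t$ by co-area so the trace is in $H^1(\partial D_t)$, solves $\mathcal{L}_0 v_h=0$ with that trace, and applies \eqref{pri:1.4}. The residual error---that $v_h$ need not vanish on the portion of $\Delta_t$ lying over the holes---is handled by solving a second $\mathcal{L}_0$-problem with zero Dirichlet data on $\Delta_t$ and controlling the difference through the nontangential maximal function estimates of Dahlberg--Kenig--Verchota together with a trace bound on $\Delta_t\setminus\Gamma_\varepsilon$; this produces a further $\varepsilon^{1/2}$. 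A ``tangential-derivative bound of the type \eqref{pri:1.4}'' does not capture this step.
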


The large-scale (uniform) Lipschitz regularity was first
obtained by M. Avellaneda, F. Lin \cite{AL} via a
three-step compactness method.
Recently, S. Armstrong, T. Kuusi, J.-C. Mourrat, Z. Shen
\cite{AKM,AM,AS}
created a new approach towards $\eqref{pri:5.5}$ in
aperiodic settings. (In this respect,
a fair statement should not ignore
A. Gloria, S. Neukamm, F. Otto's work
\cite{Gloria_Neukamm_Otto_2015}, although
theirs has not been formally published yet.)
Roughly speaking, it includes two steps.
The first one is devoted to approximating $u_\varepsilon$ by
a ``good'' function in a quantitative way, which is
finally reduced to find a convergence rate
like the estimate $\eqref{pri:1.4}$. The second one is to carry out
the so-called  Campanato's iteration, which is similar to
a classical program of Schauder's estimates.

In the matter of the iteration,
we prefer the version given by Z. Shen \cite[Lemma 8.5]{S},
so it is free to use in this step. Consequently,
to complete the whole argument for $\eqref{pri:5.5}$,
we need to review Theorem $\ref{thm:2.1}$, which
also works as the first step in the proof of
large-scale regularities.

\begin{remark}
\emph{The authors pointed out that
the scheme for $\eqref{pri:1.1}$ would be
valid to a Lipschitz domain since we merely employ Theorem $\ref{thm:1.4}$ in the
case of $p=2$. Thus, it is reasonable to believe
that Theorem $\ref{thm:1.4}$ would be right for $|p-2|\ll 1$
if $\Omega$ were merely a Lipschitz domain, and
it would be linked up to the new progress in \cite{S4}.
On the other hand,
it is not optimal for assuming $\partial\Omega\in C^{1,\eta}$
in Theorems $\ref{thm:1.4}$, and $\ref{thm:1.2}$.
The potential
substitutions might be a $C^1$ region or some non-smooth domain
with small Lipschitz constants (such as
Reifenberg flat domains in \cite{BW}).
It is obviously a separate interest.}
\end{remark}

\begin{remark}
\emph{As mentioned in Abstract, one of
the new contributions
should be suboptimal error estimates for the square function of
the first-order approximating corrector $w_\varepsilon$,
which was shown in Corollary $\ref{cor:8.1}$.
Compared to arguments developed by C. Kenig,
F. Lin, Z. Shen \cite{KLS}, ours seems to be
much easier to be generalized in other models.}
\end{remark}

\begin{remark}
\emph{In terms of given data $F$ in Theorem $\ref{thm:1.1}$,
its regularities shown in $\eqref{pri:1.1}$
and $\eqref{eq:1.8}$ seem to be a minimum requirement,
although it is still stronger than the $H^{1/2}$-norm.
At least, the hypothesis of $H^{1/2}$-smoothness
of $F$ is deemed to be necessary for the
error estimate $O(\varepsilon^{1/2})$ in
Theorem $\ref{thm:2.1}$.}
\end{remark}


\subsection{Related to Previous Works on Error Estimates}

\textbf{Pioneering studies on the present model $\eqref{pde:1.1}$.}
Homogenization in the perforated domain has been
considered for decades, and most of the papers studied
qualitative results, for example, \cite{GA,CDG,CP,OSY,ZR}.
There were also some quantitative outcomes.
O. Oleinik, A. Shamaev, and G. Yosifian
\cite[pp.124, Theorem 1.2]{OSY}
obtained the error estimate:
\begin{equation}\label{eq:1.10}
  \|u_{\varepsilon}-u_{0}-\varepsilon\chi_{\varepsilon}
  \nabla u_{0}\|_{H^{1}(\Omega_{\varepsilon})}
  \lesssim\varepsilon^{1/2}
  \Big\{\|g\|_{H^{5/2}(\partial\Omega)}
  +\|F\|_{H^{1}(\Omega)}\Big\},
\end{equation}
under additional regularity assumptions on the
coefficient $A$ and $\omega\cap Y$.
Recently, B. Russell \cite[Theorem 1.4]{BR} improved the result \eqref{eq:1.10}
by receiving
\begin{equation*}
 \|u_{\varepsilon}-u_{0}-
 \varepsilon\chi_{\varepsilon}S_\varepsilon^2
 (\psi_\varepsilon\nabla u_{0})\|_{H^{1}(\Omega_{\varepsilon})}\lesssim \varepsilon^{1/2}
  \|g\|_{H^{1}(\partial\Omega)}
\end{equation*}
in the case that $F=0$, and $A$ merely
satisfies $\eqref{eq:1.3}$, $\eqref{eq:1.4}$. Meanwhile,
an interior large-scale Lipschitz regularity
(see \cite[Theorem 1.1]{BR}) was established, i.e.,
\begin{equation}\label{pri:1.10}
 \Big(\dashint_{B(0,r)\cap\varepsilon\omega}|\nabla u_\varepsilon|^2
 \Big)^{1/2}
 \lesssim \Big(\dashint_{B(0,1)\cap\varepsilon\omega}
 |\nabla u_\varepsilon|^2
 \Big)^{1/2}
\end{equation}
for $\varepsilon\leq r\leq (1/4)$,
provided $u_\varepsilon$ satisfies
$\mathcal{L}_\varepsilon(u_\varepsilon) = 0$ in
$B(0,1)\cap\varepsilon\omega$ and
$\sigma_\varepsilon(u_\varepsilon)=0$ on
$B(0,1)\cap\partial(\varepsilon\omega)$.
From our point of view, their notable contributions
are summarized below.
\begin{itemize}
  \item The literature \cite{OSY} developed
  some extension theorems on perforated domains, and
  their central ideas stimulated the creation of Theorem
  $\ref{thm:1.5}$.
  \item The so-called flux corrector was introduced
  by \cite{OSY,BR} in a different format, and the later one
  first extended the corrector $\chi$ from $\omega\cap Y$ to $Y$
  and then defined flux correctors on $Y$
  (see Lemma $\ref{lemma:2.6}$).
  \item Concerning the estimate $\eqref{pri:1.10}$,
  the literature \cite{BR} developed some techniques to
  make the scheme on large-scale regularities
  in \cite{S} valid for perforated domains, such as
  \cite[Lemma 4.4]{BR}.
  This provided us with a blueprint to
  Theorem $\ref{thm:1.3}$, while our work focused on overcoming
  the difficulties caused by boundaries.
\end{itemize}

\textbf{Earlier works on duality arguments for
sharp error estimates.} In this part of the presentation,
we limited ourselves in the case of $\omega =\mathbb{R}^d$.
To our best knowledge, T. Suslina \cite{TS,TS1} was the first to
use a duality approach to acquire an optimal convergence rate
in $L^2$-norm
for elliptic homogenization problems with Dirichlet or
Neumann boundary conditions.
However, her literature was a little biased towards operator
algebra fields. Later on, Z. Shen \cite[Theorem 1.3]{S}
described her ideas from PDEs and
originally obtained a sharp error estimate measured by
$L^{\frac{2d}{d-1}}$-norm. It is important to note that their results
were independent of the smoothness
of the coefficients.

Regarding Lipschitz domains,
under additional smoothness assumption on coefficients,
C. Kenig, F. Lin, Z. Shen \cite{KLS} obtained almost-sharp error estimates in $L^2$-norm. Most importantly, their
results included
an error estimate
of the first-order approximating corrector
in the sense of the ``square function'', i.e.,
\begin{equation}\label{f:1.1}
 \bigg(\int_{\Omega} |\nabla v_\varepsilon|^2\text{dist}(x,\partial\Omega) dx
 \bigg)^{1/2} = O(\varepsilon),
\end{equation}
where $v_\varepsilon = u_\varepsilon-u_0
 -\varepsilon\chi(\cdot/\varepsilon)\nabla u_0$ in $\Omega$.
Considering nontangential maximal function estimates
had been established by C. Kenig, Z. Shen
\cite{SZW24}, it was possible to apply a duality
scheme to some quantity on boundaries, in which
the so-called Rellich estimates were instrumental.
On the other hand, the nontangential maximal function and
the square function are comparable if the solution satisfies a
homogeneous elliptic equation (i.e.,
its right-hand side equals zero). Also,
some crucial analysis tools and observations were introduced.
\begin{itemize}
  \item Weighted estimates for potentials.
  \item For any
  $\mathcal{L}_1$-harmonic function, its
  non-tangential maximal function must be
  equivalent to its radial maximal function.
\end{itemize}

Recently, without smoothness assumption on coefficients,
the second author \cite{Q1}
obtained the same result as in \cite{KLS}
regarding the almost-sharp error estimates in $L^2$-norm.
Roughly speaking, his project combined the duality arguments in
\cite{S,TS,TS1} and ideas on weighted estimates in \cite{KLS}.
The individual contribution in \cite{Q1} was some
weighted-type estimates
for the smoothing operator at $\varepsilon$-scales,
i.e., Lemmas $\ref{lemma:2.10}$ and $\ref{lemma:2.04}$,
which still played a vital role in this literature.
Also, we mention that the approach developed in \cite{Q1}
can be flexibly extended to other models, such as
Stokes systems,  parabolic systems,
and higher-order elliptic systems.
However, it was failed to establish
the estimate $\eqref{f:1.1}$ in \cite{Q1},
while this was precisely a concealed interest of the current work.
In this regard, we will grasp more about it in Subsection
$\ref{section4}$.

Also, we refer the reader to \cite{GSS,GSS1,SZ}
for a recent development on quantitative estimates of
elasticity systems in periodic mediums.

\subsection{Outline of the Ingredients for $\eqref{pri:1.1}$}\label{section4}

As mentioned previously,
the extension idea is conventional in the study
of homogenization problems
on perforated domains. However,
we remind the reader that the extended region $\Omega_0$
is only slightly larger than the original one $\Omega$,
whose difference in radial size can be measured by
$\varepsilon$-scales. The primary purpose is to generate
a boundary layer (see, for example
handling the term $T_3$
of $\eqref{f:4.1}$). At the same time,
the inverse of the distance function $\delta$ has no singularity
around the boundary of $\Omega$,
which benefits the computations near to $\partial\Omega$ (see,
for example, Remark $\ref{remark:8.1}$).

Recalling the weight $\rho=\delta^\beta$
with $-1<\beta<1$, as well as, $w_\varepsilon$ defined in $\eqref{eq:3.1}$,
we now describe the main ingredients for
the estimate $\eqref{pri:1.1}$ in the following
(it is fine to assume that $\partial\Omega$ and $g$
are sufficiently smooth).
\begin{enumerate}
  \item Estimate for a weak formulation (a duality argument):
\begin{equation}\label{f:1.2}
\begin{aligned}
\Big|\int_{\Omega_\varepsilon}\nabla w_\varepsilon\cdot f dx\Big|
&\lesssim \varepsilon^{\frac{1+\beta}{2}}
\bigg(\int_{\Omega}\dashint_{B_\varepsilon(x)
\cap\Omega_\varepsilon}|\nabla\phi_\varepsilon|^2
dy\rho^{-1} dx
\bigg)^{\frac{1}{2}}
\times
\Big\{\text{given~data}\Big\},
\end{aligned}
\end{equation}
where $\phi_\varepsilon$ is associated with $f$ by the adjoint
equation $\eqref{adjoint2}$ (see Theorem $\ref{thm:8.1}$).
\item Weighted quenched Calder\'on-Zygmund estimates
``$+$'' \eqref{f:1.2}
imply the square function  estimates
\begin{equation*}
 \Big(\int_{\Omega_\varepsilon}|\nabla
  w_\varepsilon|^2 \delta(x) dx\Big)^{\frac{1}{2}}
 = O\big(\varepsilon^{\frac{1+\beta}{2}}\big)
\end{equation*}
(see Corollary $\ref{cor:8.1}$). Although this result
is suboptimal compared to $\eqref{f:1.1}$, it does not
rely on any smoothness assumption on the coefficients.
\item The weighted extension theorem
``$+$'' weighted Hardy-Sobolev inequalities
(\cite[Theorem 2.1]{LV}) leads to
\begin{equation*}
\Big(\int_{\Omega_\varepsilon}|\nabla w_\varepsilon
|^{2}\delta^{\beta}dx
\Big)^{\frac{1}{2}}
\gtrsim
\Big(\int_{\Omega_0}|\nabla \Lambda_\varepsilon(w_\varepsilon)
|^{2}\delta^{\beta}dx
\Big)^{\frac{1}{2}}
\gtrsim
\Big(\int_{\Omega_0}|\Lambda_\varepsilon(w_\varepsilon)
|^{\frac{2d}{d-2+\beta}}dx
\Big)^{\frac{d-2+\beta}{2d}},
\end{equation*}
in which $0<\beta<1$, and we mention that
the critical case $\beta=1$ is invalid above,
while the case $\beta=0$ is valid for $d>2$.
\end{enumerate}



\subsection{Tricks on the Estimate $\eqref{eq:1.8}$}

 For the  interpolation inequality between
 the Hilbert spaces $H^1$ and $H^{-s}$ (with $s\geq 0$)
 was well known on the whole
 space $\mathbb{R}^d$ (see for example \cite[Proposition 1.52]{BHCD}),
 we consider the extension of
 $w_\varepsilon$, denoted by $\tilde{w}_\varepsilon$, in
 the way of Lemma $\ref{extensiontheory}$ (whose previous
 version was \cite[pp.50, Theorem 4.3]{OSY}).
 Here the improved effects
 guarantee
 that $\text{supp}(\tilde{w}_\varepsilon)\subseteq \Omega_0$
 with $\Omega_0\supseteq \bar{\Omega}$ being such that
 $\text{dist}(x,\partial\Omega)\sim 10\varepsilon$
 for $x\in\partial\Omega_0$. Let $\Phi\in
 H^{\sigma}(\mathbb{R}^d;\mathbb{R}^d)$
 with $(1/2)\leq\sigma\leq 1$ be any test function,
 and then it is natural to analyze the following quantity
 \begin{equation*}
 \begin{aligned}
 \int_{\mathbb{R}^d} \theta\tilde{w}_\varepsilon\Phi dx,
 \end{aligned}
 \end{equation*}
 which one may formulate into
 \begin{equation}\label{f:4.1}
 \begin{aligned}
\int_{\mathbb{R}^d} \theta\tilde{w}_\varepsilon\Phi dx
&=\int_{\Omega} \theta\tilde{w}_\varepsilon\Phi dx
 + \int_{\mathbb{R}^d\setminus\Omega}
 \theta\tilde{w}_\varepsilon\Phi dx\\
& = \int_{\Omega}l_\varepsilon^+\tilde{w}_\varepsilon\Phi dx
 + \int_{\Omega} (\theta-l_\varepsilon^+)
 \tilde{w}_\varepsilon\Phi dx
 +\int_{\Omega_0\setminus\Omega}
 \theta\tilde{w}_\varepsilon\Phi dx \\
&=\underbrace{\int_{\Omega_{\varepsilon}} A_{ij}(x/\varepsilon)\nabla_{x_{j}}
 w_{\varepsilon}\nabla_{x_{i}}\phi_{\varepsilon}dx}_{T_1}
 + \underbrace{\int_{\Omega} (\theta-l_\varepsilon^+)
 \tilde{w}_\varepsilon\Phi dx}_{T_2}
 +\underbrace{\int_{\Omega_0\setminus\Omega}
 \theta\tilde{w}_\varepsilon\Phi dx}_{T_3}.
 \end{aligned}
 \end{equation}
 Here $\phi_\varepsilon$ is associated with $\Phi$ by
 the adjoint equation $\eqref{adjoint}$, and
 we also use the observation that
 $\int_{\Omega}l_\varepsilon^+\tilde{w}_\varepsilon\Phi dx
 =\int_{\Omega_{\varepsilon}}w_{\varepsilon}\Phi dx$
 in the last equality.
 It is a suitable weak formulation for $\eqref{eq:1.8}$.
 Now, there are some small tricks for every term
 in the bottom line of $\eqref{f:4.1}$.
 \begin{enumerate}
   \item The quantity $\phi_\varepsilon$
   in $T_1$ will be substituted
   with two-scale expansions (first-order) of
   $\phi_\varepsilon$ (see the formula $\eqref{sharp9}$),
   and this idea was originally shown in \cite{TS,TS1}.
   \item Appeal to auxiliary equation $\eqref{auxi1}$
   to engage with the oscillating factor
   $(\theta-l_\varepsilon^{+})$ in $T_2$.
   \item As $\Omega_0\setminus\Omega$
   is a boundary layer part of
   $\Omega_0$, it is reduced to apply
   Lemma $\ref{lemma:2.8}$ to $T_3$.
 \end{enumerate}



 \subsection{Organization of the Paper}

In Section $\ref{sec:2}$, we first introduced
some properties of homogenized coefficients and flux correctors.
A basic and essential fact is that the effective operator
$\mathcal{L}_0$ and $\mathcal{L}_\varepsilon$
belong to the same type class of operators
(see Lemma $\ref{lemma:2.7}$), which
is crucial to the idea of ``borrowing''
good regularities from effective equations in all scales.
In terms of the antisymmetry (see Lemma $\ref{lemma:2.6}$),
the flux corrector played a central role purely in the
algebraical level, which finally leads to
an informative weak formulation for the first-order approximating
corrector (see $\eqref{eq:2.2}$).
Then, the so-called periodic cancellations
measured by different norms were presented in Subsection
$\ref{subsec:2.1}$, in which Lemma $\ref{lemma:2.3}$
seems to be newly imposed. Subsection $\ref{subsec:2.2}$
was devoted to some extension theories.
The first notable consequence was stated in Lemma
$\ref{extensiontheory}$, which
served as a technical starting point in the scheme
to $\eqref{eq:1.8}$, and therefore played a fundamental role
in Sections $\ref{section3}$ and $\ref{sec:4}$.
Then Lemmas $\ref{lemma:extension}$,
$\ref{lemma:2.8}$ as preparations were used
to show a proof for Theorem $\ref{thm:1.5}$,
which was assigned at the end of this section.

\vspace{0.1cm}

In Section $\ref{section3}$,
the most important result was a proper weak formulation
for $w_\varepsilon$ defined in $\eqref{eq:3.1}$, which
was shown in Lemma $\ref{lemma:2.1}$. The
reason why we call it
weak formulation instead of bilinear form was
that the formula $\eqref{eq:2.2}$ was weaker than
the bilinear form given in Definition $\ref{def:3}$.
In other words, our computations in Sections
$\ref{section3}, \ref{sec:4}, \ref{sec:6}$
were based upon
the weak formulation instead of the bilinear form as usual.
The proof of Theorem $\ref{thm:2.1}$ was reached in this
section.
In Section $\ref{sec:4}$,
we proved the estimate $\eqref{eq:1.8}$
(see Corollary $\ref{cor:4.1}$). However, the most
exciting result around $\eqref{f:4.1}$
was summarized in Theorem $\ref{thm:3.1}$.
Technically, it together with Lemma $\ref{lemma:3.2}$ was
most highly consistent with \cite[Lemma 3.5]{Q1}
throughout the paper.

In Section $\ref{sec:6}$, the main innovation of this work
would be reflected in Theorem $\ref{thm:8.1}$, which was, in fact,
parallel to Theorem $\ref{thm:3.1}$
from the duality. Another impressive
result was the suboptimal error estimates for
the square function of the first-order approximating corrector,
which was presented in Corollary $\ref{cor:8.1}$.
However, arguments in this section relied on quenched
regularities that we would discuss later on, which might
bring the reader some uncomfortable feeling. So,
we recommended that the reader would skip this section
for a moment if he or she were not familiar with
quenched regularity theories.

In Section $\ref{sec:5}$, Shen's real methods were
presented in Lemma $\ref{lemma:5.4}$. Other useful
results were related to primary geometry on integrals (see
Lemma $\ref{lemma:5.5}$). Then, we provided
the proofs of Theorems $\ref{thm:1.2}$ and $\ref{thm:1.4}$,
respectively,
based upon the large-scale Lipschitz estimate $\eqref{pri:5.5}$.
In Section $\ref{sec:7}$, we completed the main plot
for Theorem $\ref{thm:1.3}$. Compared with the case
on unperforated domains, people had first to
investigate an equivalence
of two quantities defined in $\eqref{eq:5.1}$ at large-scales
(see Lemma $\ref{lemma:5.1}$). Also,
a quenched boundary $L^2$-error estimate
(see Lemma $\ref{lemma:5.2}$)
involved more techniques than those engaged in interior cases.
The rest parts were standard nowadays,
and we preferred to omit them.

In Section $\ref{sec:8}$,
there were two significant results. One was
weighted $W^{1,p}$ estimates for a
mixed boundary problem on Lipschitz domains
and stated in Theorem $\ref{app:thm:1}$. The other
was layer and co-layer type estimates for
effective equations, which we
summarized in Theorem $\ref{app:thm:2}$.
They should be known by experts most likely,
but there were no precise references for
Theorem $\ref{app:thm:1}$ and we merely
outlined some necessary steps and computations
for the reader's convenience.



\subsection{Notation}\label{notation}

\begin{enumerate}
  \item Notation for estimates.
\begin{enumerate}
  \item $\lesssim$ and $\gtrsim$ stand for $\leq$ and $\geq$
  up to a multiplicative constant,
  which may depend on some given parameters imposed in the paper,
  but never on $\varepsilon$.
  \item We use $\gg$ instead of $\gtrsim$ to indicate that the multiplicative constant is much larger than 1 (but still finite).
  \item We write $\sim$ when both $\lesssim$ and $\gtrsim$ hold.
\end{enumerate}
  \item Notation for derivatives.
  \begin{enumerate}
    \item
  $\nabla v = (\nabla_1 v, \cdots, \nabla_d v)$ is the gradient of $v$, where
  $\nabla_i v = \partial v /\partial x_i$ denotes the
  $i^{\text{th}}$ derivative of $v$.
  $\nabla^2 v = (\nabla^2_{ij} v)_{d\times d}$  denotes the Hessian matrix of $v$, where
  $\nabla^2_{ij} v = \frac{\partial^2 v}{\partial x_i\partial x_j}$.
   \item
  $\nabla\cdot v=\sum_{i=1}^d \nabla_i v_i$
  denotes the divergence of $v$, where
  $v = (v_1,\cdots,v_d)$ is a vector-valued function.
  \item $\nabla_y v$ indicates the gradient of $v$ with respective
  to the variable $y$, while $\Delta_{x} v$ denotes
  the Laplace operator with respective to the variable $x$,
  where $\Delta :=\nabla\cdot\nabla$.
  \item $(\nabla v)^{T}$ represents
  the transpose of $\nabla v$, and
  $e(v):=\frac{1}{2}\big[\nabla v + (\nabla v)^{T}\big]$.
  \end{enumerate}

  \item Geometric notation.
  \begin{enumerate}
  \item $d\geq 2$ is the dimension.
    \item Let $B:=B(x,r)=B_r(x)$, and $nB=B(x,nr)$
    denote the concentric balls as $n>0$ varies.
    \item The layer set of $\Omega$ is denoted
    by $O_{n\varepsilon}
    :=\{x\in\Omega:\text{dist}(x,\partial\Omega)<n\varepsilon\}$.
    The co-layer set is defined by $\Sigma_{n\varepsilon}:=
    \Omega\setminus O_{n\varepsilon}$.
  \item $r_0$ represents
  the diameter of $\Omega$.
\item $\Omega_0$ is the extended region such that
$\Omega_0\supseteq\bar{\Omega}$ and
$\text{dist}(x,\Omega)\sim 10\varepsilon$ for every
$x\in\partial\Omega_0$.
    \item $\delta(x) := \text{dist}(x,\partial\Omega_0)$
    represents the distance function to $\partial\Omega_0$.
    \item Let $\vartheta:\mathbb{R}^{d-1}\to\mathbb{R}$ be
    a Lipschitz function (or $C^{1,\eta}$ function)
    such that $\vartheta(0) = 0$ and
$\|\nabla\vartheta\|_{L^\infty(\mathbb{R}^{d-1})}\leq M_0$
(or $\|\nabla\vartheta\|_{C^{0,\eta}(\mathbb{R}^{d-1})}
\leq M_0$).
For any $r>0$, let
\begin{equation*}
\begin{aligned}
&\Delta_r = \big\{(x^\prime,\vartheta(x^\prime))
\in\mathbb{R}^d:|x^\prime|<r\big\};\\
& D_r= \big\{(x^\prime,t)\in\mathbb{R}^d:
|x^\prime|<r~\text{and}~\vartheta(x^\prime)<t<
\vartheta(x^\prime)+10(M_0+1)r\big\}.
\end{aligned}
\end{equation*}
One may roughly write $D_r = B(0,r)\cap\Omega$, and
$\Delta_r = B(0,r)\cap\partial\Omega$.
    \item $D_r^{\varepsilon}:= D_r \cap \varepsilon\omega$
(half-ball with holes);
 $\Delta_r^\varepsilon :=
\partial D_r\cap \partial\Omega\cap\varepsilon\omega$
(lower bottom of half-ball).
  \end{enumerate}

  \item Notation for spaces and functions.
  \begin{enumerate}
    \item $H^{1}(\Omega_{\varepsilon},\Gamma_{\varepsilon};
\mathbb{R}^d)$
denotes the closure in $H^{1}(\Omega_{\varepsilon};
\mathbb{R}^d)$ of smooth vector-valued functions
vanishing on $\Gamma_{\varepsilon}$
(see \cite[pp.3]{OSY}).
    \item $H^1_{\text{per}}(\omega;\mathbb{R}^d)$
    denotes
the closure in $H^1(Y\cap\omega;\mathbb{R}^d)$
of the set of 1-periodic
$C^{\infty}(\bar{\omega};\mathbb{R}^d)$ functions
(see \cite[pp.5]{OSY}).
    \item The average integral is defined as
\begin{equation}\label{n:1.1}
\dashint_{U}f := \frac{1}{|U|}\int_{U} f(x) dx.
\end{equation}
    \item
Let ${\psi}_{\varepsilon}^{\prime},
{\psi}_{\varepsilon}\in C_{0}^{\infty}(\Omega)$
be cut-off functions, satisfying
\begin{equation}\label{eq:2.1}
\left\{\begin{aligned}
  & 0\leq \psi_{\varepsilon},\psi_{\varepsilon}^{\prime}\leq 1\quad \text{for}~~x\in\Omega ,\\
  & \text{supp}(\psi_{\varepsilon})
  \subset \Omega\backslash O_{3\varepsilon},~~
  \text{supp}(\psi_{\varepsilon}^{\prime})\subset
  \Omega\backslash O_{7\varepsilon},\\
  & \psi_{\varepsilon}=1\quad \text{in}~~
  \Omega\backslash O_{4\varepsilon},\quad
  \psi_{\varepsilon}^{\prime}=1\quad \text{in}~~
  \Omega\backslash O_{8\varepsilon},\\
  & \max\{|\nabla \psi_{\varepsilon}|,
  |\nabla \psi_{\varepsilon}^{\prime}|\}\leq C \varepsilon^{-1}.
  \end{aligned}\right.
\end{equation}
(By the above definition, it's known that
$(1-\psi_{\varepsilon})\psi_{\varepsilon}^{\prime}
=0~\text{in}~ \Omega$.)
\item
The radial maximal operator is defined as
\begin{equation}\label{def:4}
\mathrm{M}_{\text{r}}(\phi)(Q)
= \sup_{t\in(0,c_0)}\big|\phi(Q-t n(Q))\big|
\qquad \text{for~a.e.~} Q\in\partial\Omega,
\end{equation}
in which $c_0$ is a very small number but
$c_0\gg \varepsilon$.
\item The non-tangential maximal function of $u$ is defined by
\begin{equation}\label{def:5}
(\phi)^*(Q) = \sup\big\{ |\phi(x)|:x\in \Gamma_{N_0}(Q)\big\}
\qquad \text{for~a.e.}~ Q\in\partial\Omega,
\end{equation}
where $\Gamma_{N_0}(Q) = \{x\in\Omega:|x-Q|\leq N_0
\text{dist}(x,\partial\Omega)\}$ is the cone with
vertex $Q$ and aperture $N_0$,
and $N_0>1$ depends on the character of $\Omega$.
  \end{enumerate}
\end{enumerate}

Finally, we mention that:
(1)
when we say that the multiplicative constant depends on the character of the domain,
it means that the constant relies on $M_0$;
(2) we shall make a little effort
to distinguish vector-valued functions or
function spaces from their real-valued counterparts,
and they will be clear from the context;
(3) the Einstein's summation convention for repeated indices is
used throughout.


\section{Preliminaries}\label{sec:2}

\begin{definition}\label{def:3}
We call $u_\varepsilon$ the weak solution to
the equation $\eqref{pde:1.1}$, if
there holds
\begin{equation*}
\int_{\Omega_\varepsilon}A(x/\varepsilon)\nabla u_\varepsilon
\nabla\phi dx
= \int_{\Omega_\varepsilon} F\phi dx
\end{equation*}
for any $\phi\in H^1(\Omega_\varepsilon,
\Gamma_\varepsilon;\mathbb{R}^d)$, and
$u_\varepsilon-\tilde{g}\in H^1(\Omega_\varepsilon,
\Gamma_\varepsilon;\mathbb{R}^d)$, where
$\tilde{g}$ is the $H^1$-extension function of $g$, satisfying
$\tilde{g}=g$ on $\partial\Omega$ in the sense of the
trace.
\end{definition}

\begin{lemma}\label{lemma:2.7}
 Suppose that $A$ satisfies $\eqref{eq:1.3}$ and
 \eqref{eq:1.4}, then the effective matrix $\widehat{A}=(\widehat{a}_{ij}^{\alpha\beta})$ defined in \eqref{eq:1.1} satisfies
\begin{equation}\label{a:2}
\left\{\begin{aligned}
  &\widehat{a}_{ij}^{\alpha\beta}=\widehat{a}_{ji}^{\beta\alpha}
  =\widehat{a}_{\alpha j}^{i\beta};\\
  &\widehat{\mu}_{0}|\xi|^{2}\leq
  \widehat{a}_{ij}^{\alpha\beta}\xi^{\alpha}_{i}\xi_{j}^{\beta}
  \leq  \widehat{\mu}_{1}|\xi|^{2}
\end{aligned}\right.
\end{equation}
for any symmetric matrix
$\xi=\{\xi_{i}^{\alpha}\}_{1\leq i,\alpha\leq d}$,
where $\widehat{\mu}_{0},\widehat{\mu}_{1}>0$
depend on $\mu_{0} $ and $\mu_{1}$.
\end{lemma}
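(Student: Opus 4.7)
The plan is to recast $\widehat{a}_{ij}^{\alpha\beta}$ as a genuinely symmetric bilinear expression in the cell correctors, from which both symmetries and the ellipticity follow in the usual manner. Throughout, I will repeatedly use that $\chi_i^\alpha\in H^1_{\text{per}}(\omega;\mathbb{R}^d)$ is an admissible test function in the weak formulation of the cell problem \eqref{pde:1.2} (the mixed Dirichlet/Neumann conditions on $Y\cap\partial\omega$ coupled with periodicity on $\partial Y$ produce no boundary term).

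First, I would derive the fundamental bilinear identity
\begin{equation*}
\theta\,\widehat{a}_{ij}^{\alpha\beta}
=\int_{Y\cap\omega} a_{mn}^{\tau\sigma}(y)\,
\frac{\partial\mathbb{X}_j^{\sigma\beta}}{\partial y_n}\,
\frac{\partial\mathbb{X}_i^{\tau\alpha}}{\partial y_m}\,dy,
\end{equation*}
by writing $\partial_m\mathbb{X}_i^{\tau\alpha}=\partial_m\chi_i^{\tau\alpha}+\delta_{mi}\delta^{\tau\alpha}$ in the definition \eqref{eq:1.1} and absorbing the $\partial_m\chi_i^{\tau\alpha}$ piece against the cell equation tested with $\chi_i^\alpha$. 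The major symmetry $\widehat{a}_{ij}^{\alpha\beta}=\widehat{a}_{ji}^{\beta\alpha}$ then falls out immediately: swap $(i,\alpha,m,\tau)\leftrightarrow(j,\beta,n,\sigma)$ as dummy indices and invoke $a_{mn}^{\tau\sigma}=a_{nm}^{\sigma\tau}$ from \eqref{eq:1.4}.

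For the elasticity-type minor symmetry $\widehat{a}_{ij}^{\alpha\beta}=\widehat{a}_{\alpha j}^{i\beta}$, I would compute
\begin{equation*}
\theta\big[\widehat{a}_{ij}^{\alpha\beta}-\widehat{a}_{\alpha j}^{i\beta}\big]
=\int_{Y\cap\omega} a_{mn}^{\tau\sigma}\,
\frac{\partial\mathbb{X}_j^{\sigma\beta}}{\partial y_n}\,
\partial_m\!\Big[\mathbb{X}_i^{\tau\alpha}-\mathbb{X}_\alpha^{\tau i}\Big]dy,
\end{equation*}
and split $\mathbb{X}_i^{\tau\alpha}-\mathbb{X}_\alpha^{\tau i}=(\chi_i^{\tau\alpha}-\chi_\alpha^{\tau i})+(y_i\delta^{\tau\alpha}-y_\alpha\delta^{\tau i})$. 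The periodic $\chi$-piece tests against the cell equation for $\mathbb{X}_j^\beta$ and vanishes. For the polynomial piece, observe that the matrix $M_m^\tau:=\delta_{mi}\delta^{\tau\alpha}-\delta_{m\alpha}\delta^{\tau i}$ (with $i,\alpha$ fixed) is antisymmetric in $(m,\tau)$; the identity $a_{mn}^{\tau\sigma}=a_{\tau n}^{m\sigma}$ in \eqref{eq:1.4} forces $a_{mn}^{\tau\sigma}M_m^\tau\equiv 0$, killing this term as well. This is the only place where the full elasticity symmetry of $A$ is used.

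Finally, I would address the bounds. Writing $U_\xi^\sigma:=\xi_j^\beta\mathbb{X}_j^{\sigma\beta}$ for any symmetric $\xi=\{\xi_i^\alpha\}$, the bilinear identity gives
\begin{equation*}
\theta\,\widehat{a}_{ij}^{\alpha\beta}\xi_i^\alpha\xi_j^\beta
=\int_{Y\cap\omega} a_{mn}^{\tau\sigma}(y)\,\partial_n U_\xi^\sigma\,\partial_m U_\xi^\tau\,dy,
\end{equation*}
so the minor-symmetry of $a$ lets me replace $\nabla U_\xi$ by $e(U_\xi)$, and the pointwise bounds in \eqref{eq:1.4} yield $\mu_0\int|e(U_\xi)|^2\le\theta\widehat{A}\xi\cdot\xi\le\mu_1\int|e(U_\xi)|^2$. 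The upper bound $\widehat{A}\xi\cdot\xi\le\widehat{\mu}_1|\xi|^2$ then follows from the energy estimate for the cell problem (testing with $\chi_j^\beta$ itself), combined with Korn's inequality on the Lipschitz domain $Y\cap\omega$ applied to the periodic space $H^1_{\text{per}}(\omega;\mathbb{R}^d)$. For the lower bound, I would use the equivalent variational characterization
\begin{equation*}
\widehat{a}_{ij}^{\alpha\beta}\xi_i^\alpha\xi_j^\beta
=\theta^{-1}\inf_{\phi\in H^1_{\text{per}}(\omega;\mathbb{R}^d)}
\int_{Y\cap\omega} a(y)\big(\xi+\nabla\phi\big)\cdot\big(\xi+\nabla\phi\big)\,dy,
\end{equation*}
and apply the minor symmetry of $a$ together with Korn and Jensen inequalities on $Y\cap\omega$ to extract $|\xi|^2$ from $\int|e(\xi\cdot y+\phi)|^2$. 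This coercivity step is the only real obstacle: unlike the unperforated case, the averages $\int_{Y\cap\omega}\nabla\phi$ of periodic functions need not vanish because of the boundary traces on $Y\cap\partial\omega$, so the Jensen-type lower bound must be routed through Korn's inequality on the perforated unit cell (valid under (H2) since $Y\cap\omega$ is a connected Lipschitz domain). Once coercivity on symmetric $\xi$ is obtained, the constants $\widehat{\mu}_0,\widehat{\mu}_1$ depend only on $\mu_0,\mu_1$ and the Korn/Poincar\'e constants of $\omega\cap Y$, completing the proof.
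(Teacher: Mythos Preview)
The paper does not actually prove this lemma; it simply cites \cite{JKO} and \cite{OSY}. Your proposal is a correct and well-organized sketch of the standard argument found in those references: the bilinear rewriting of $\theta\widehat{a}_{ij}^{\alpha\beta}$ via the cell equation, the deduction of both symmetries from it, and the variational characterization for the ellipticity bounds are all exactly as in the classical treatment. One small comment on the lower bound: the clean way to extract $|\xi|^2$ from $\inf_\phi\int_{Y\cap\omega}|\xi+e(\phi)|^2$ is a compactness/contradiction argument (normalize $|\xi|=1$, use Korn's second inequality in $H^1_{\text{per}}$ to get weak compactness of $\phi$, and conclude from $\xi+e(\phi)=0$ that $\xi y+\phi$ is a rigid motion on the connected set $Y\cap\omega$, forcing $\xi=0$ by periodicity). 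A direct Jensen step is indeed obstructed for the reason you identify, so your instinct to route through Korn on the perforated cell is correct; just be aware that ``Jensen'' is not really what finishes the job. Your final remark that $\widehat{\mu}_0,\widehat{\mu}_1$ also depend on the Korn constant of $Y\cap\omega$ is accurate and slightly sharper than the paper's statement.
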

\begin{proof}
  See either \cite{JKO} or \cite{OSY}.
\end{proof}

\begin{remark}
The estimate $\eqref{a:2}$ is significant both in
qualitative and quantitative homogenization theories,
especially for some nonlinear models,
and we refer the reader to a recent work \cite{WXZ},
as well as \cite{APMP,ZR}, for this attention.
\end{remark}

\begin{lemma}[flux corrector]\label{lemma:2.6}
  Suppose $B=\{b_{ij}^{\alpha\beta}\}_{1\leq i,j,\alpha,\beta\leq d}
  $ is 1-periodic and satisfies
  $b_{ij}^{\alpha\beta}\in L^{2}_{\emph{loc}}(\mathbb{R}^d)$ with
\begin{equation*}
\emph{(i)}~~\frac{\partial}{\partial y_{i}}b_{ij}^{\alpha\beta}=0,\qquad\qquad
\emph{(ii)}~~\int_{Y}b_{ij}^{\alpha\beta}=0.
\end{equation*}
Then
there exists $E=\{E_{kij}^{\alpha\beta}\}_{1\leq i,j,k,
\alpha,\beta\leq d}$
with $E_{kij}^{\alpha\beta}\in H^{1}_{\emph{loc}}(\mathbb{R}^d)$,
which  is 1-periodic and satisfies
\begin{equation*}
 \frac{\partial}{\partial y_{k}}E_{kij}^{\alpha\beta}=
 b_{ij}^{\alpha\beta}\quad \text{and}\quad
 E_{kij}^{\alpha\beta}=-E_{ikj}^{\alpha\beta}.
\end{equation*}
\end{lemma}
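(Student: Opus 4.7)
The plan is to reduce the construction of $E$ to solving a Poisson problem on the flat torus $\mathbb{T}^d = \mathbb{R}^d/\mathbb{Z}^d$ and then antisymmetrizing. Fix the indices $j,\alpha,\beta$ and regard $\mathbf{b} = (b_{1j}^{\alpha\beta},\ldots,b_{dj}^{\alpha\beta})$ as a $1$-periodic vector field in $L^2_{\mathrm{loc}}(\mathbb{R}^d)$. Condition (ii) is precisely the compatibility (zero-mean) condition needed to solve
\begin{equation*}
\Delta f_{ij}^{\alpha\beta} = b_{ij}^{\alpha\beta} \quad \text{on } \mathbb{T}^d, \qquad \int_{Y} f_{ij}^{\alpha\beta}\,dy = 0,
\end{equation*}
in each index $i$ separately. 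Standard Fourier-series theory on $\mathbb{T}^d$ (or Lax--Milgram applied in $H^1_{\mathrm{per}}(Y)/\mathbb{R}$ to the bilinear form $\int \nabla u\cdot\nabla v$) produces a unique $1$-periodic solution $f_{ij}^{\alpha\beta}$, and elliptic regularity upgrades it to $H^2_{\mathrm{loc}}(\mathbb{R}^d)$ since $b_{ij}^{\alpha\beta}\in L^2_{\mathrm{loc}}$.

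The candidate flux corrector is then defined by the antisymmetrization
\begin{equation*}
E_{kij}^{\alpha\beta} := \frac{\partial f_{ij}^{\alpha\beta}}{\partial y_k} - \frac{\partial f_{kj}^{\alpha\beta}}{\partial y_i}.
\end{equation*}
The required antisymmetry $E_{kij}^{\alpha\beta} = -E_{ikj}^{\alpha\beta}$ is obvious from the definition, the membership $E_{kij}^{\alpha\beta}\in H^1_{\mathrm{loc}}(\mathbb{R}^d)$ follows from $f_{ij}^{\alpha\beta}\in H^2_{\mathrm{loc}}$, and $1$-periodicity is inherited from that of $f_{ij}^{\alpha\beta}$. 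It remains to check the divergence identity $\partial_k E_{kij}^{\alpha\beta} = b_{ij}^{\alpha\beta}$.

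Computing directly,
\begin{equation*}
\frac{\partial}{\partial y_k} E_{kij}^{\alpha\beta}
= \Delta f_{ij}^{\alpha\beta} - \frac{\partial}{\partial y_i}\!\left(\frac{\partial f_{kj}^{\alpha\beta}}{\partial y_k}\right)
= b_{ij}^{\alpha\beta} - \frac{\partial}{\partial y_i}\!\left(\frac{\partial f_{kj}^{\alpha\beta}}{\partial y_k}\right),
\end{equation*}
so the task reduces to showing that the scalar function $g^{\alpha\beta}_j := \partial_k f_{kj}^{\alpha\beta}$ is constant. To see this, differentiate the defining PDE: $\Delta g^{\alpha\beta}_j = \partial_k \Delta f_{kj}^{\alpha\beta} = \partial_k b_{kj}^{\alpha\beta} = 0$ by hypothesis (i). Hence $g^{\alpha\beta}_j$ is harmonic and $1$-periodic, therefore constant by the Liouville-type property on $\mathbb{T}^d$ (or by the maximum principle, or by inspecting the zero Fourier mode), and so $\partial_i g^{\alpha\beta}_j = 0$. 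This yields $\partial_k E_{kij}^{\alpha\beta} = b_{ij}^{\alpha\beta}$ and completes the proof. The only mildly subtle step is the Liouville argument that $g^{\alpha\beta}_j$ must be constant; everything else is bookkeeping. A reference such as \cite{JKO,OSY} covers the same construction.
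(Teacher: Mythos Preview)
Your proof is correct and is precisely the standard construction given in the references the paper cites (\cite{JKO}, \cite[Lemma 3.1]{KLS}): solve the periodic Poisson problem for an auxiliary potential and antisymmetrize its gradient, then use the divergence-free hypothesis (i) to show the residual term $\partial_k f^{\alpha\beta}_{kj}$ is harmonic, hence constant. There is nothing to add.
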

\begin{proof}
See \cite{JKO} and also \cite[Lemma 3.1]{KLS}.
\end{proof}



\subsection{Periodic Cancellations}\label{subsec:2.1}

\begin{definition}\label{def:2}
Fix a nonnegative function $\zeta\in C_0^\infty(B(0,1/2))$,
and $\int_{\mathbb{R}^d}\zeta(x)dx = 1$. Define the smoothing
operator for $f\in L^p(\mathbb{R}^d)$ with $1\leq p<\infty$ as
\begin{equation*}
S_\varepsilon(f)(x) := f*\zeta_\varepsilon(x) = \int_{\mathbb{R}^d} f(x-y)\zeta_\varepsilon(y) dy,
\end{equation*}
where $\zeta_\varepsilon(y)=\varepsilon^{-d}\zeta(y/\varepsilon)$.
\end{definition}

\begin{lemma}\label{lemma:2.3}
Let $f\in C^{\infty}_0(\mathbb{R}^d)$
and $0\leq s\leq 1$.
Then, for any $\varpi\in W^{1,\infty}_{\emph{per}}(Y)$, there holds
\begin{equation}\label{pri:2.2}
\big\|\varpi(\cdot/\varepsilon)f
\big\|_{H^{-s}(\mathbb{R}^d)}
\leq C\varepsilon^{-s}\big\|\varpi\big\|_{W^{1,\infty}(Y)}
\big\|f\big\|_{H^{-s}(\mathbb{R}^d)},
\end{equation}
where the constant $C$ depends only on $d$.
\end{lemma}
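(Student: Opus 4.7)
My plan is to treat the estimate as a statement about the multiplication operator $T_\varepsilon:f\mapsto\varpi(\cdot/\varepsilon)f$ acting on the Bessel potential scale, and to obtain it by interpolating between the two endpoints $s=0$ and $s=1$.

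For the endpoint $s=0$, the bound is immediate: $\|\varpi(\cdot/\varepsilon)f\|_{L^2(\mathbb{R}^d)}\leq\|\varpi\|_{L^\infty(Y)}\|f\|_{L^2(\mathbb{R}^d)}$, which is stronger than what is needed (no factor $\varepsilon^{-s}=1$ is lost). For the endpoint $s=1$, I would use duality. Since $H^{-1}(\mathbb{R}^d)=(H^1(\mathbb{R}^d))^{*}$, we have
\begin{equation*}
\|\varpi(\cdot/\varepsilon)f\|_{H^{-1}(\mathbb{R}^d)}
=\sup_{\|\phi\|_{H^1(\mathbb{R}^d)}\leq 1}
\Big|\int_{\mathbb{R}^d}f\,\big(\varpi(\cdot/\varepsilon)\phi\big)\,dx\Big|
\leq \|f\|_{H^{-1}(\mathbb{R}^d)}\sup_{\|\phi\|_{H^1}\leq1}\|\varpi(\cdot/\varepsilon)\phi\|_{H^1(\mathbb{R}^d)}.
\end{equation*}
The chain rule gives $\nabla\big(\varpi(\cdot/\varepsilon)\phi\big)=\varepsilon^{-1}(\nabla\varpi)(\cdot/\varepsilon)\phi+\varpi(\cdot/\varepsilon)\nabla\phi$, so
\begin{equation*}
\|\varpi(\cdot/\varepsilon)\phi\|_{H^1(\mathbb{R}^d)}
\leq \|\varpi\|_{L^\infty(Y)}\|\phi\|_{L^2}
+\|\varpi\|_{L^\infty(Y)}\|\nabla\phi\|_{L^2}
+\varepsilon^{-1}\|\nabla\varpi\|_{L^\infty(Y)}\|\phi\|_{L^2}
\leq C\varepsilon^{-1}\|\varpi\|_{W^{1,\infty}(Y)}\|\phi\|_{H^1(\mathbb{R}^d)},
\end{equation*}
absorbing constants thanks to $0<\varepsilon\ll 1$. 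This yields the $s=1$ bound $\|\varpi(\cdot/\varepsilon)f\|_{H^{-1}}\leq C\varepsilon^{-1}\|\varpi\|_{W^{1,\infty}(Y)}\|f\|_{H^{-1}}$.

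With both endpoints in hand, I would conclude by complex (or real $K$-method) interpolation. Viewing $T_\varepsilon$ as a bounded linear operator on $L^2(\mathbb{R}^d)$ and on $H^{-1}(\mathbb{R}^d)$, and identifying $H^{-s}(\mathbb{R}^d)$ as the interpolation space $[L^2(\mathbb{R}^d),H^{-1}(\mathbb{R}^d)]_s$ (equivalently, as the dual of $H^s(\mathbb{R}^d)=[L^2,H^1]_s$), the Riesz--Thorin-type interpolation theorem gives the operator norm bound
\begin{equation*}
\|T_\varepsilon\|_{H^{-s}\to H^{-s}}
\leq \|T_\varepsilon\|_{L^2\to L^2}^{1-s}\cdot\|T_\varepsilon\|_{H^{-1}\to H^{-1}}^{s}
\leq \|\varpi\|_{L^\infty(Y)}^{1-s}\big(C\varepsilon^{-1}\|\varpi\|_{W^{1,\infty}(Y)}\big)^{s}
\leq C\varepsilon^{-s}\|\varpi\|_{W^{1,\infty}(Y)},
\end{equation*}
which is exactly \eqref{pri:2.2}. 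The density of $C_0^\infty(\mathbb{R}^d)$ in $H^{-s}(\mathbb{R}^d)$ justifies restricting the test function $f$ at the outset.

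The only real subtlety, and thus the main step to verify carefully, is the $s=1$ endpoint: one must keep track of the $\varepsilon^{-1}$ factor produced by differentiating $\varpi(x/\varepsilon)$, and make sure the duality pairing is applied on the right side (i.e., push $\varpi(\cdot/\varepsilon)$ onto the test function $\phi\in H^1$ rather than onto $f$). Once this is done, the $L^\infty$-bound on $\varpi$ and the $L^\infty$-bound on $\nabla\varpi$ enter together in the natural $W^{1,\infty}(Y)$ norm, and interpolation delivers the intermediate scaling $\varepsilon^{-s}$ without further work. Note in particular that no periodic cancellation is exploited here — the lemma is a scale-sensitive multiplication estimate, and the 1-periodicity of $\varpi$ merely ensures that the relevant $W^{1,\infty}$ norm on $Y$ coincides with the global $W^{1,\infty}(\mathbb{R}^d)$ norm of $\varpi(\cdot/\varepsilon)$ up to the explicit $\varepsilon^{-1}$ scaling of the gradient.
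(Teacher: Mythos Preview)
Your proof is correct and follows essentially the same approach as the paper: endpoint bounds for the multiplication operator $T_\varepsilon$ combined with complex interpolation and duality. The only cosmetic difference is the order of operations---the paper first interpolates on the positive scale ($L^2$ and $H^1$) to obtain $\|T_\varepsilon\|_{H^s\to H^s}\leq C\varepsilon^{-s}\|\varpi\|_{W^{1,\infty}(Y)}$ and then applies duality once to pass to $H^{-s}$, whereas you apply duality first to get the $H^{-1}$ endpoint and then interpolate on the negative scale; the underlying $H^1$ computation via the chain rule is identical.
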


\begin{proof}
The idea relies on complex interpolation inequalities and a duality
argument. First of all, we define
$T_\varepsilon (f) := \varpi(\cdot/\varepsilon)f$ on $\mathbb{R}^d$.
Then it is not hard to derive that
\begin{equation*}
\begin{aligned}
\|T_\varepsilon(f)\|_{L^2(\mathbb{R}^d)}
&\leq \|\varphi\|_{L^{\infty}(Y)}\|f\|_{L^{2}(\mathbb{R}^d)};\\
\|T_\varepsilon(f)\|_{H^1(\mathbb{R}^d)}
&\leq C\varepsilon^{-1}
\|\varphi\|_{W^{1,\infty}(Y)}\|f\|_{H^1(\mathbb{R}^d)}.
\end{aligned}
\end{equation*}
This implies the operator norms:
$\|T_\varepsilon\|_{L^2\to L^2}\leq M_1$
and $\|T_\varepsilon\|_{H^1\to H^1}\leq M_2/\varepsilon$,
where $\max\{M_1,M_2\}\lesssim \|\varphi\|_{W^{1,\infty}(Y)}$.
By the complex interpolation inequality (see
for example \cite[Theorem 2.6]{L}), there holds
$$\|T_\varepsilon \|_{H^s\to H^{s}} \leq
\varepsilon^{-s} M_1^{1-s} M_2^{s}
\leq C\varepsilon^{-s} \|\varpi\|_{W^{1,\infty}(Y)},$$
where we use the fact that $H^s(\mathbb{R}^d) :=
\big[L^2(\mathbb{R}^d),H^1(\mathbb{R}^d)\big]_{s}$
(see \cite[pp.57]{L}). For any
$\zeta\in H^{s}(\mathbb{R}^d)$ one may obtain
\begin{equation*}
\int_{\mathbb{R}^d} \varphi(\cdot/\varepsilon)f\zeta dx
\leq \|f\|_{H^{-s}(\mathbb{R}^d)}
\|\varphi(\cdot/\varepsilon)\zeta\|_{H^{s}(\mathbb{R}^d)}
\leq C\varepsilon^{-s}\|\varpi\|_{W^{1,\infty}(Y)}
\|f\|_{H^{-s}(\mathbb{R}^d)}\|\zeta\|_{H^{s}(\mathbb{R}^d)},
\end{equation*}
and this gives the stated estimate $\eqref{pri:2.2}$.
We have completed the proof.
\end{proof}

\begin{lemma}\label{lemma:2.2}
Let $f\in L^p(\mathbb{R}^d)$ with $1\leq p<\infty$
and $\varpi\in L_{\emph{per}}^p(\mathbb{R}^d)$.
Then there holds
\begin{equation}\label{pri:2.3}
\big\|\varpi(\cdot/\varepsilon)S_\varepsilon(f)\big\|_{L^p(\mathbb{R}^d)}
\leq C\big\|\varpi\big\|_{L^p(Y)}\big\|f\big\|_{L^p(\mathbb{R}^d)},
\end{equation}
where $C$ depends on $d$. Moreover,
if $f\in W^{1,p}(\mathbb{R}^d)$ for some $1<p<\infty$, then we have
\begin{equation}\label{pri:2.4}
\big\|S_\varepsilon(f)-f\big\|_{L^p(\mathbb{R}^d)}
\leq C\varepsilon\big\|\nabla f\big\|_{L^p(\mathbb{R}^d)},
\end{equation}
where the constant $C$ depends only on $d$.
\end{lemma}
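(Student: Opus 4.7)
The plan is to handle the two estimates independently. For the first bound \eqref{pri:2.3}, since $\zeta_\varepsilon$ is supported in $B(0,\varepsilon/2)$ and has unit mass, Jensen's inequality (in the form $|S_\varepsilon(f)(x)|^p \lesssim \varepsilon^{-d}\int_{B(x,\varepsilon/2)}|f(y)|^p dy$) produces a pointwise estimate of $|S_\varepsilon(f)(x)|^p$ by a localized average of $|f|^p$. Multiplying by $|\varpi(x/\varepsilon)|^p$, integrating over $\mathbb{R}^d$, and exchanging the order of integration via Fubini, the whole quantity is reduced to bounding $\int_{B(y,\varepsilon/2)}|\varpi(x/\varepsilon)|^p dx$ uniformly in $y$. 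Rescaling by $z = x/\varepsilon$ turns this into $\varepsilon^d\int_{B(y/\varepsilon,1/2)}|\varpi(z)|^p dz$, and the $1$-periodicity of $\varpi$ allows one to cover the small ball by a bounded number (depending only on $d$) of translates of the cube $Y$, so the integral is bounded by a dimensional constant times $\|\varpi\|_{L^p(Y)}^p$. Putting the pieces together and using Fubini once more yields \eqref{pri:2.3}.

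For the second bound \eqref{pri:2.4}, I would use the translation representation
\begin{equation*}
S_\varepsilon(f)(x)-f(x) = \int_{\mathbb{R}^d}\zeta_\varepsilon(y)\big[f(x-y)-f(x)\big]dy
\end{equation*}
together with Minkowski's integral inequality to obtain
\begin{equation*}
\big\|S_\varepsilon(f)-f\big\|_{L^p(\mathbb{R}^d)}
\leq \int_{\mathbb{R}^d}\zeta_\varepsilon(y)
\big\|f(\cdot-y)-f(\cdot)\big\|_{L^p(\mathbb{R}^d)}dy.
\end{equation*}
The translation estimate $\|f(\cdot-y)-f\|_{L^p(\mathbb{R}^d)}\leq |y|\,\|\nabla f\|_{L^p(\mathbb{R}^d)}$, valid for $f\in W^{1,p}$ and $1<p<\infty$ (proved by writing $f(x-y)-f(x)=-\int_0^1 \nabla f(x-ty)\cdot y\, dt$ and applying Minkowski again), then reduces the right-hand side to $\|\nabla f\|_{L^p(\mathbb{R}^d)}\int\zeta_\varepsilon(y)|y|dy$. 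Since $\zeta_\varepsilon$ is supported in $B(0,\varepsilon/2)$ and integrates to $1$, the last integral is bounded by $\varepsilon/2$, delivering \eqref{pri:2.4}.

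No genuine obstacle is anticipated: both estimates are essentially bookkeeping exercises with standard tools. The one subtle point is ensuring in \eqref{pri:2.3} that the constant remains dimensional, which is why the periodicity argument must be formulated in terms of covering the rescaled ball $B(y/\varepsilon,1/2)$ by translates of $Y$; the number of such translates is bounded purely in terms of $d$, and this prevents any implicit dependence on $\varepsilon$ or $y$ from entering.
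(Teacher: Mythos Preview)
Your proposal is correct; both arguments are the standard ones and there are no gaps. The paper itself does not give a proof but simply cites \cite[Lemmas~2.1 and~2.2]{S}, and the approach there is exactly the one you outline (Jensen/H\"older plus Fubini and periodicity for \eqref{pri:2.3}, and the translation estimate via the fundamental theorem of calculus for \eqref{pri:2.4}).
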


\begin{proof}
See \cite[Lemmas 2.1 and 2.2]{S}.
\end{proof}

Recall the notation $\delta$ and $\Sigma_{2\varepsilon}$
defined in Subsection $\ref{notation}$.

\begin{lemma}\label{lemma:2.10}
Let $f\in L^2(\Omega)$ be supported in $\Sigma_{2\varepsilon}$,
and $\varpi\in L^{2}_{\emph{per}}(Y)$. Then we obtain
\begin{equation}\label{pri:2.11}
\bigg(\int_{\Sigma_{2\varepsilon}}
|\varpi(x/\varepsilon)S_{\varepsilon}(f)|^2
\delta^{\pm 1}dx\bigg)^{1/2}
\leq C \|\varpi\|_{L^{2}(Y)}\bigg(\int_{\Sigma_{2\varepsilon}}
|f|^2\delta^{\pm 1}dx\bigg)^{1/2},
\end{equation}
in which the constant $C$ depends only on $d$.
\end{lemma}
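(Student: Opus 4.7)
The plan is to reduce $\eqref{pri:2.11}$ to the unweighted bound of Lemma $\ref{lemma:2.2}$ by freezing the weight on the $\varepsilon$-periodic lattice. The key structural observation is that, by the construction of $\Omega_0$, $\delta(x)=\text{dist}(x,\partial\Omega_0)\gtrsim \varepsilon$ throughout $\Omega$, and in particular on $\Sigma_{2\varepsilon}$ and on any $\varepsilon/2$-neighborhood of it. Since the smoothing kernel $\zeta_\varepsilon$ is supported in $B(0,\varepsilon/2)$ and $\delta$ is $1$-Lipschitz, this forces $\delta^{\pm 1}$ to be essentially constant (up to multiplicative constants independent of $\varepsilon$) on every ball of radius $\lesssim\varepsilon$ that intersects $\Sigma_{2\varepsilon}$. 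This is the only geometric input the argument will require.

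Concretely, I would partition $\mathbb{R}^d$ into the lattice of cubes $Q_z:=\varepsilon(z+Y)$, $z\in\mathbb{Z}^d$, introduce the enlargements $Q_z^{*}:=\{y:\text{dist}(y,Q_z)<\varepsilon/2\}$, and retain only those indices $z$ for which $Q_z\cap\Sigma_{2\varepsilon}\neq\emptyset$. For each such $z$ one may pick $\delta_z>0$ with $\delta_z\sim \delta(y)$ uniformly for $y\in Q_z^{*}$, by the previous paragraph. On each cube, Cauchy--Schwarz applied to $\zeta_\varepsilon\geq 0$ with $\int\zeta_\varepsilon=1$ yields $|S_\varepsilon(f)(x)|^2\leq (\zeta_\varepsilon*|f|^2)(x)$; then Fubini together with the periodicity identity $\int_{Q_z}|\varpi(x/\varepsilon)|^2 dx=\varepsilon^d\|\varpi\|_{L^2(Y)}^2$ and the crude bound $\|\zeta_\varepsilon\|_{L^\infty}\lesssim \varepsilon^{-d}$ give
\begin{equation*}
\int_{Q_z}|\varpi(x/\varepsilon)|^2|S_\varepsilon(f)(x)|^2 dx
\;\lesssim\; \|\varpi\|_{L^2(Y)}^2 \int_{Q_z^{*}}|f(y)|^2 dy.
\end{equation*}
Inserting $\delta^{\pm 1}$ and exploiting $\delta\sim\delta_z$ on $Q_z^{*}$ promotes this to the weighted inequality
\begin{equation*}
\int_{Q_z}|\varpi(x/\varepsilon)|^2|S_\varepsilon(f)(x)|^2 \delta^{\pm 1}\,dx
\;\lesssim\; \|\varpi\|_{L^2(Y)}^2 \int_{Q_z^{*}}|f(y)|^2\delta(y)^{\pm 1} dy,
\end{equation*}
since the factor $\delta_z^{\pm 1}\cdot\delta_z^{\mp 1}$ is trivial.

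Summing over the relevant $z$, using the bounded overlap of the $Q_z^{*}$'s and recalling $\text{supp}\,f\subset\Sigma_{2\varepsilon}$, yields $\eqref{pri:2.11}$ for both sign choices of the weight. The only point requiring attention is the uniform comparison $\delta(x)\sim\delta(y)$ on the enlarged cells $Q_z^{*}$, and it is precisely the hypothesis $\text{supp}\,f\subset\Sigma_{2\varepsilon}$ together with the $\sim 10\varepsilon$-separation of $\partial\Omega$ from $\partial\Omega_0$ that makes this comparison hold with constants independent of $\varepsilon$; no genuine obstacle beyond this bookkeeping is expected.
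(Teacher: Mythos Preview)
Your argument is correct: freezing the weight on the $\varepsilon$-lattice, using Jensen's inequality $|S_\varepsilon(f)|^2\le \zeta_\varepsilon*|f|^2$, the periodicity identity for $\int_{Q_z}|\varpi(\cdot/\varepsilon)|^2$, and bounded overlap of the $Q_z^*$ gives $\eqref{pri:2.11}$ cleanly, and your justification that $\delta\sim\delta_z$ on each $Q_z^*$ (via $\delta\gtrsim\varepsilon$ on $\Omega$ and the $1$-Lipschitz property of $\delta$) is exactly the right geometric input.

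As for comparison: the paper does not give its own proof here but simply cites \cite[Lemma 3.2]{Q1}. Your self-contained argument is the natural one and almost certainly coincides in spirit with the cited reference; in any case it stands on its own.
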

\begin{proof}
See \cite[Lemma 3.2]{Q1}.
\end{proof}
\begin{lemma}\label{lemma:2.04}
Let $f\in H^1(\Omega)$ be supported
in $\Sigma_{\varepsilon}$.
Then one may acquire that
\begin{equation}\label{pri:2.7}
\bigg(\int_{\Sigma_{2\varepsilon}}
|f-S_{\varepsilon}(f)|^2\delta dx\bigg)^{1/2}
\leq C \varepsilon\bigg(\int_{\Sigma_{\varepsilon}}
|\nabla f|^2\delta dx\bigg)^{1/2},
\end{equation}
where the constant $C$ depends only on $d$.
\end{lemma}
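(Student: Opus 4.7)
The plan is to reduce the weighted $L^2$-norm of $f-S_\varepsilon(f)$ to the weighted $L^2$-norm of $\nabla f$ by the usual ``fundamental theorem of calculus plus Jensen'' representation of the commutator with the mollifier, combined with a Fubini--change-of-variables step; the only non-routine ingredient is a weight-comparability statement for $\delta$ on the $\varepsilon$-scale.

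First I would use $\int\zeta_\varepsilon=1$ and the absolute continuity of $f\in H^1$ to write
\[
f(x)-S_\varepsilon(f)(x)=\int_{\mathbb{R}^d}\zeta_\varepsilon(y)\bigl[f(x)-f(x-y)\bigr]\,dy=\int_{\mathbb{R}^d}\zeta_\varepsilon(y)\int_0^1 y\cdot\nabla f(x-sy)\,ds\,dy.
\]
Since $\operatorname{supp}\zeta_\varepsilon\subset B(0,\varepsilon/2)$, the Cauchy--Schwarz inequality applied in both $y$ and $s$ yields
\[
|f(x)-S_\varepsilon(f)(x)|^{2}\lesssim \varepsilon^{2}\int_0^1\!\int_{\mathbb{R}^d}\zeta_\varepsilon(y)\,|\nabla f(x-sy)|^{2}\,dy\,ds.
\]

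Next, I would multiply by $\delta(x)$, integrate over $\Sigma_{2\varepsilon}$, apply Fubini, and change variables $z=x-sy$ with $s,y$ fixed to swap things onto the $\nabla f$ side. Because $\operatorname{supp}\nabla f\subset\operatorname{supp}f\subset\Sigma_\varepsilon$, only $z\in\Sigma_\varepsilon$ contribute, so
\[
\int_{\Sigma_{2\varepsilon}}|f-S_\varepsilon(f)|^{2}\delta\,dx\;\lesssim\;\varepsilon^{2}\int_0^1\!\!\int_{\mathbb{R}^d}\zeta_\varepsilon(y)\int_{\Sigma_\varepsilon}|\nabla f(z)|^{2}\,\delta(z+sy)\,dz\,dy\,ds.
\]

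The crucial last step is the weight comparison. The function $\delta$ is $1$-Lipschitz, and any $z\in\Sigma_\varepsilon\subset\Omega\subset\Omega_0$ satisfies $\delta(z)\geq\operatorname{dist}(z,\partial\Omega)\geq\varepsilon$, because any straight segment from $z$ to a point of $\partial\Omega_0$ must first cross $\partial\Omega$. Hence, for $|sy|\leq\varepsilon/2$,
\[
\tfrac{1}{2}\delta(z)\;\leq\;\delta(z)-\tfrac{\varepsilon}{2}\;\leq\;\delta(z+sy)\;\leq\;\delta(z)+\tfrac{\varepsilon}{2}\;\leq\;\tfrac{3}{2}\delta(z),
\]
so $\delta(z+sy)\sim\delta(z)$. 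Substituting this comparability and using $\int\zeta_\varepsilon(y)\,dy=1$ produces the claimed bound. The main (and essentially only) obstacle is this weight-comparison step, which is where the specific geometry of the extended domain $\Omega_0$ (the $\sim 10\varepsilon$ gap between $\partial\Omega$ and $\partial\Omega_0$, together with the support condition on $f$) ensures that $\delta$ does not degenerate on the relevant set; everything preceding it is a routine mollifier-commutator computation already used throughout Subsection \ref{subsec:2.1}.
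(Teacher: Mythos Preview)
Your proof is correct and complete. The paper itself does not give a proof of this lemma but merely cites \cite[Lemma 3.3]{Q1}; your argument is precisely the standard mollifier-commutator computation that one expects to find in that reference, so there is nothing to compare beyond noting that you have supplied the details the paper omits.
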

\begin{proof}
See \cite[Lemma 3.3]{Q1}.
\end{proof}

\subsection{Extension Theories}\label{subsec:2.2}

Our next goal is to extend function in
$H^{1}(\Omega_{\varepsilon},\Gamma_{\varepsilon};\mathbb{R}^d)$
(see Subsection $\ref{notation}$)
to become functions in Sobolev space $H^1_0(\Omega_0;\mathbb{R}^d)$.
Later on, we generalize it with a homogenous weight.

\begin{lemma}[improved extension property]\label{extensiontheory}
Let $0<\varepsilon<1$.
Suppose that $\Omega$ and $\Omega_{0}$ are
bounded Lipschitz domains satisfying
$\bar{\Omega}\subset\Omega_{0}$
and \emph{dist}$(x,\Omega)\sim 10\varepsilon$
for $x\in\partial\Omega_0$.
Then there exists a linear extension operator
$P_{\varepsilon}:H^{1}(\Omega_{\varepsilon},
\Gamma_{\varepsilon};\mathbb{R}^d)
\rightarrow H^{1}_{0}(\Omega_{0};\mathbb{R}^d)$ such that
\begin{equation}\label{pri:2.1}
  \begin{aligned}
  & \|P_{\varepsilon}w\|_{H^{1}_{0}(\Omega_{0})}
  \leq C_{1} \|w\|_{H^{1}(\Omega_{\varepsilon})};\\
  & \|\nabla P_{\varepsilon}w\|_{L^{2}(\Omega_{0})}
  \leq C_{2} \|\nabla w\|_{L^{2}(\Omega_{\varepsilon})};\\
  &\|e(P_{\varepsilon}w)\|_{L^{2}(\Omega_{0})}\leq C_{3} \|e( w)\|_{L^{2}(\Omega_{\varepsilon})}
  \end{aligned}
\end{equation}
hold for some constants $C_{1},C_{2},C_{3}$
depending only on the characters of $\Omega$ and $\omega$,
where $e(w)$ denotes the symmetric part of $\nabla w$, defined in
Subsection $\ref{notation}$.
\end{lemma}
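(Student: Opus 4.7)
The plan is to reduce the statement to the classical Oleinik--Shamaev--Yosifian extension \cite[Theorem 4.3, pp.50]{OSY} and to sharpen the support control so that the extended function lies compactly in $\Omega_0$. The key observation is that because $w$ has vanishing trace on $\Gamma_\varepsilon=\partial\Omega_\varepsilon\cap\partial\Omega$, one may extend $w$ by zero across $\partial\Omega$ without creating an $H^1$-jump; the subsequent cell-by-cell local fill of the remaining holes then yields an extension supported in a $C\varepsilon$-neighborhood of $\bar\Omega$, which by the hypothesis $\text{dist}(\partial\Omega_0,\Omega)\sim 10\varepsilon$ sits compactly inside $\Omega_0$.

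First, I would extend $w$ by zero to obtain $w_\ast\in H^1(\varepsilon\omega;\mathbb{R}^d)$ with $w_\ast=w$ on $\Omega_\varepsilon$ and $w_\ast=0$ on $(\varepsilon\omega)\setminus\bar\Omega$; the trace condition on $\Gamma_\varepsilon$ makes this legitimate, and the $L^2$-norms of $w_\ast$, $\nabla w_\ast$, and $e(w_\ast)$ on $\varepsilon\omega$ equal those of $w$ on $\Omega_\varepsilon$. Second, I would fill the holes via a periodic local extension, built from a reference operator $T:H^1(Y\cap\omega;\mathbb{R}^d)\to H^1(Y;\mathbb{R}^d)$. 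One constructs $T$ by subtracting the rigid motion of $Y\cap\omega$ that best approximates the input (possible by Korn's second inequality on the Lipschitz set $Y\cap\omega$, whose constant depends only on the reference geometry in (H2)), applying a Stein-type extension to the rigid-motion-free part, and finally adding the rigid motion back. Rescaling $T$ to each $\varepsilon$-cell and summing over cells with finite overlap delivers both the full $H^1$ estimate and the gradient estimate in \eqref{pri:2.1}. On any cell $\varepsilon(Y+z)$ disjoint from $\bar\Omega$ the function $w_\ast$ is identically zero, so both the best-approximating rigid motion and the local fill vanish; consequently the global extension $\tilde{w}$ is supported in a $C\varepsilon$-neighborhood of $\bar\Omega$, and setting $P_\varepsilon w:=\tilde{w}$ gives the desired element of $H^1_0(\Omega_0;\mathbb{R}^d)$.

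The main obstacle will be the symmetric-gradient estimate $\|e(P_\varepsilon w)\|_{L^2(\Omega_0)}\leq C_3\|e(w)\|_{L^2(\Omega_\varepsilon)}$. A naive Poincar\'e-type fill controls only $\|\nabla(\text{fill})\|_{L^2}$ by $\|\nabla w\|_{L^2}$ and discards all information about $e(\cdot)$. The rigid-motion subtraction is precisely the device that preserves the symmetric gradient, since rigid motions are annihilated by $e(\cdot)$ and Korn's inequality on $Y\cap\omega$ bounds the full gradient of the rigid-motion-free part by its symmetric gradient. Once the reference operator $T$ is shown to satisfy $\|e(Tv)\|_{L^2(Y)}\leq C\|e(v)\|_{L^2(Y\cap\omega)}$ with a constant depending only on the reference geometry, the cell-wise summation closes the argument in exactly the same way as for the plain gradient, and the support property is automatically preserved because cells not meeting $\bar\Omega$ contribute nothing.
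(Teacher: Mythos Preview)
Your approach is correct but takes a genuinely different route from the paper. The paper does not rebuild the cell-by-cell extension; instead it invokes the classical operator $\tilde P_\varepsilon:H^{1}(\Omega_\varepsilon,\Gamma_\varepsilon;\mathbb{R}^d)\to H^1_0(\tilde\Omega_0;\mathbb{R}^d)$ from \cite[Theorem~4.3]{OSY} (into a \emph{fixed} large domain $\tilde\Omega_0$ with $\text{dist}(\partial\tilde\Omega_0,\Omega)>1$) as a black box and simply sets $P_\varepsilon w:=\psi_\varepsilon\,\tilde P_\varepsilon w$, where $\psi_\varepsilon\in C_0^1(\Omega_0)$ equals $1$ on $\Omega$ and satisfies $|\nabla\psi_\varepsilon|\lesssim\varepsilon^{-1}$. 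The cross term $\varepsilon^{-1}\|\tilde P_\varepsilon w\|_{L^2(\Omega_0\setminus\Omega)}$ is absorbed by a Poincar\'e inequality on the $\varepsilon$-thin layer (using that $\tilde P_\varepsilon w$ vanishes on $\Gamma_\varepsilon$, cf.\ Lemma~\ref{lemma:2.5}), and the symmetric-gradient bound follows the same pattern with Korn's inequality on $\Omega_\varepsilon$ (Lemma~\ref{lemma:2.4}) closing the loop. Your construction instead bakes the support control into the extension itself via the zero-extension across $\partial\Omega$, so no cutoff and no layer estimate are needed; it is more self-contained and makes transparent why rigid-motion subtraction is precisely what carries the $e(\cdot)$-bound through the hole-filling step. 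The paper's argument is shorter because all the cell-level analysis is outsourced to \cite{OSY}, at the cost of the extra cutoff-and-layer manoeuvre.
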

\begin{proof}
Compared to \cite[pp.50, Theorem 4.3]{OSY}, the only
modification is the condition
``$\text{dist}(x,\partial\Omega)\sim 10\varepsilon$ for
$x\in\partial\Omega_0$''.
We denote the corresponding notation
in \cite[Theorem 4.3]{OSY} by $\tilde{P}_\varepsilon$,
$\tilde{\Omega}_0$, where $\tilde{\Omega}_0\supseteq \Omega$ with
$\text{dist}(\partial\tilde{\Omega}_0,\Omega)>1$. To achieve our
goal, define
\begin{equation*}
 P_\varepsilon (w) := \psi_{\varepsilon}\tilde{P}_\varepsilon(w)
 \quad \text{and}\quad
 \Omega_0^{2\varepsilon} :=
 \{x\in\tilde{\Omega}_0: \text{dist}(x,\partial\Omega)<2\varepsilon\}
 \cup\Omega,
\end{equation*}
where $\psi_\varepsilon\in C_0^1(\Omega_0)$ be a cut-off function,
satisfying $\psi_{\varepsilon} = 1$ on $\Omega$,
$\psi_{\varepsilon} = 0$ outside $\Omega_0^{2\varepsilon}$ and
$|\nabla\psi_{\varepsilon}|\lesssim 1/\varepsilon$.
Then we have $P_\varepsilon(w)\in H_0^1(\Omega_0)$, and
\begin{equation}\label{f:2.1}
\begin{aligned}
\|\nabla P_\varepsilon(w)\|_{L^2(\Omega_0)}
&\lesssim \varepsilon^{-1}
\|\tilde{P}_\varepsilon (w)
\|_{L^2(\Omega_0^{2\varepsilon}\setminus\Omega)}
+ \|\nabla \tilde{P}_\varepsilon (w)\|_{L^2(\Omega_0)}\\
&\lesssim \|\nabla \tilde{P}_\varepsilon (w)
\|_{L^2(\Omega_0^{2\varepsilon}\setminus\Omega)}
+ \|\nabla \tilde{P}_\varepsilon (w)\|_{L^2(\Omega_0)}
\lesssim \|\nabla w\|_{L^2(\Omega_\varepsilon)},
\end{aligned}
\end{equation}
where we employ Poincar\'e's inequality (noting that
$w=0$ on $\Gamma_\varepsilon$ and
using the same argument as in Lemma $\ref{lemma:2.5}$) in the second step, and
\cite[Theorem 4.3]{OSY} in the last one.
This proved the second inequality of $\eqref{pri:2.1}$.
The third one
of $\eqref{pri:2.1}$ follows from the same approach given for
$\eqref{f:2.1}$, in which Poincar\'e's inequality is replaced by
Korn's inequality. We have completed the proof.
\end{proof}

\begin{lemma}\label{lemma:2.4}
There exists a constant $C$, independent of $\varepsilon$, such that
\begin{equation*}
\|w\|_{H^1(\Omega_{\varepsilon})}\leq C\|e(w)\|_{L^2(\Omega_{\varepsilon})}
\end{equation*}
for any $w\in H^{1}(\Omega_{\varepsilon},\Gamma_{\varepsilon};\mathbb{R}^d)$.
\end{lemma}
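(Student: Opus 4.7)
The plan is to reduce the Korn--Poincar\'e inequality on the perforated domain $\Omega_\varepsilon$ to classical inequalities on the regular domain $\Omega_0$ by means of the extension operator $P_\varepsilon$ supplied by Lemma $\ref{extensiontheory}$. The decisive point is that the hypothesis $w\in H^1(\Omega_\varepsilon,\Gamma_\varepsilon;\mathbb{R}^d)$ is precisely what makes the extended function vanish on $\partial\Omega_0$, so the standard ``Dirichlet-type'' versions of Korn's and Poincar\'e's inequalities become applicable.

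More concretely, I would proceed as follows. First, apply $P_\varepsilon$ to obtain $P_\varepsilon w\in H_0^1(\Omega_0;\mathbb{R}^d)$. By the construction in Lemma $\ref{extensiontheory}$ (namely $P_\varepsilon w = \psi_\varepsilon \widetilde{P}_\varepsilon w$ with $\psi_\varepsilon \equiv 1$ on $\Omega$, and $\widetilde{P}_\varepsilon w = w$ on $\Omega_\varepsilon$), we have $P_\varepsilon w = w$ a.e.\ on $\Omega_\varepsilon$, so
\begin{equation*}
\|w\|_{H^1(\Omega_\varepsilon)} \;\leq\; \|P_\varepsilon w\|_{H^1(\Omega_0)}.
\end{equation*}
Next, since $P_\varepsilon w$ has vanishing trace on $\partial\Omega_0$, Poincar\'e's inequality yields $\|P_\varepsilon w\|_{L^2(\Omega_0)}\lesssim \|\nabla P_\varepsilon w\|_{L^2(\Omega_0)}$, and Korn's first inequality on the bounded Lipschitz domain $\Omega_0$ (valid for $H_0^1$ fields) gives $\|\nabla P_\varepsilon w\|_{L^2(\Omega_0)}\lesssim \|e(P_\varepsilon w)\|_{L^2(\Omega_0)}$. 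Finally, the third estimate in $\eqref{pri:2.1}$ provides $\|e(P_\varepsilon w)\|_{L^2(\Omega_0)}\leq C_3 \|e(w)\|_{L^2(\Omega_\varepsilon)}$. Chaining these four bounds delivers the claimed inequality.

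The only delicate issue, and the one I would expect to be the main obstacle, is the $\varepsilon$-independence of the Poincar\'e and Korn constants on $\Omega_0$, since $\Omega_0$ itself depends mildly on $\varepsilon$ through the condition $\mathrm{dist}(x,\Omega)\sim 10\varepsilon$ on $\partial\Omega_0$. To handle this uniformly, I would fix once and for all a reference bounded Lipschitz domain $\Omega_0^\ast\supset\overline{\Omega}$ whose Lipschitz character depends only on that of $\Omega$ and satisfies $\Omega_0\subset\Omega_0^\ast$ for every sufficiently small $\varepsilon$. Extending $P_\varepsilon w$ by zero to $\Omega_0^\ast$ preserves membership in $H_0^1$, and the Poincar\'e and Korn constants on $\Omega_0^\ast$ then depend only on $d$ and the character of $\Omega$, hence are independent of $\varepsilon$. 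This furnishes the uniform constant $C$ in the statement and completes the plan.
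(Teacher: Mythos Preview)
Your argument is correct and is essentially the approach of the reference the paper cites (\cite[pp.~53, Theorem 4.5]{OSY}): push $w$ to a fixed regular domain via an extension operator that controls symmetric gradients, and invoke the standard $H_0^1$ Korn and Poincar\'e inequalities there. One small simplification: Korn's first inequality for $H_0^1$ fields has the universal constant $\sqrt{2}$ (from $\int|e(v)|^2=\tfrac12\int|\nabla v|^2+\tfrac12\int(\nabla\cdot v)^2$), so only the Poincar\'e constant actually needs the zero-extension trick to $\Omega_0^\ast$.
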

\begin{proof}
  See \cite[pp.53, Theorem 4.5]{OSY}.
\end{proof}


\begin{lemma}\label{lemma:2.5}
For $w\in H^{1}(\Omega_{\varepsilon},
\Gamma_{\varepsilon};\mathbb{R}^d)$, let
$\tilde{w}$ be the extension of $w$ in the way of
Lemma $\ref{extensiontheory}$. Then there holds
\begin{equation}\label{pri:2.8}
  \|\tilde{w}\|_{L^{2}(O_{4\varepsilon})}\leq C \varepsilon
   \|\nabla\tilde{w}\|_{L^{2}(\Omega)},
\end{equation}
where $C$ depends on $d,r_0$ and the characters of
$\Omega$ and $\omega$.
\end{lemma}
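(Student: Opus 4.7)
The plan is to chain two ingredients: a boundary Poincar\'e inequality that exploits the vanishing of $\tilde w$ on $\partial\Omega_0$, and the gradient bound already supplied by Lemma \ref{extensiontheory}. First I would establish the intermediate estimate
\begin{equation*}
\|\tilde w\|_{L^2(O_{4\varepsilon})}\lesssim \varepsilon\,\|\nabla\tilde w\|_{L^2(\Omega_0)},
\end{equation*}
and then upgrade the right-hand side to $\|\nabla\tilde w\|_{L^2(\Omega)}$ by combining the extension estimate in Lemma \ref{extensiontheory} with the identity $\tilde w=w$ on $\Omega_\varepsilon\subset\Omega$.

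To prove the intermediate estimate, I would localize via a finite cover of $\partial\Omega$ by Lipschitz coordinate cylinders (the number of which is controlled by $r_0$ and the character of $\Omega$). In each chart $\partial\Omega$ is realized as a Lipschitz graph $x_d=\vartheta(x')$ with $|\nabla\vartheta|\leq M_0$, and the hypothesis $\text{dist}(x,\Omega)\sim 10\varepsilon$ for $x\in\partial\Omega_0$ allows one to represent $\partial\Omega_0$ locally as another Lipschitz graph $x_d=\vartheta_0(x')$ satisfying $\vartheta_0(x')<\vartheta(x')$ and $\vartheta(x')-\vartheta_0(x')\sim\varepsilon$. For every $x=(x',x_d)$ in the local piece of $O_{4\varepsilon}$, the vertical segment joining $x$ to $(x',\vartheta_0(x'))\in\partial\Omega_0$ has length $\lesssim\varepsilon$ and lies entirely in $\Omega_0$. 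Since $\tilde w\in H^1_0(\Omega_0)$, the fundamental theorem of calculus gives
\begin{equation*}
\tilde w(x',x_d)=\int_{\vartheta_0(x')}^{x_d}\partial_d\tilde w(x',t)\,dt,
\end{equation*}
and Cauchy--Schwarz yields $|\tilde w(x)|^2\lesssim \varepsilon\int_{\vartheta_0(x')}^{x_d}|\nabla\tilde w(x',t)|^2\,dt$. Integrating over the local piece of $O_{4\varepsilon}$, applying Fubini, and summing over the finite cover produces the intermediate bound. The gradient estimate in Lemma \ref{extensiontheory} then supplies $\|\nabla\tilde w\|_{L^2(\Omega_0)}\lesssim\|\nabla w\|_{L^2(\Omega_\varepsilon)}$, and because $\tilde w=w$ on $\Omega_\varepsilon\subset\Omega$ the right-hand side equals $\|\nabla\tilde w\|_{L^2(\Omega_\varepsilon)}\leq\|\nabla\tilde w\|_{L^2(\Omega)}$. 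Chaining the three inequalities delivers \eqref{pri:2.8}.

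The only real subtlety is geometric: one must ensure that each vertical segment used in the boundary Poincar\'e step genuinely reaches $\partial\Omega_0$ while remaining in $\Omega_0$, which is guaranteed by the quantitative hypothesis $\text{dist}(x,\Omega)\sim 10\varepsilon$ together with the Lipschitz constant $M_0$ controlling the tilt of both $\vartheta$ and $\vartheta_0$. Once this compatibility is verified, everything else is routine Fubini and an appeal to Lemma \ref{extensiontheory}.
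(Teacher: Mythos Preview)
Your proposal is correct. The paper does not give its own proof but simply cites \cite[Lemma 3.4]{BR}; the argument you supply---a boundary Poincar\'e inequality in Lipschitz coordinate charts using that $\tilde w\in H^1_0(\Omega_0)$ vanishes on $\partial\Omega_0$, followed by the gradient bound from Lemma~\ref{extensiontheory}---is the standard route and almost certainly what lies behind that citation. Indeed, the paper itself alludes to this mechanism in the proof of Lemma~\ref{extensiontheory}, where it invokes ``Poincar\'e's inequality \dots\ using the same argument as in Lemma~\ref{lemma:2.5}.''
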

\begin{proof}
  See \cite[Lemma 3.4]{BR}.
\end{proof}

For the ease of the statement, we impose
the following notation in this subsection.
Let $\rho$ be a weight, and the $L^2$-weighted norm
is defined by $\|\cdot\|_{L^2_\rho(\Omega)}
:=(\int_{\Omega}|\cdot|^2\rho dx)^{1/2}$. Then
we denote the homogenous weighted
space by $H_\rho^1(\Omega):=\{f\in  H^1_{\text{loc}}(\Omega):\|\nabla f\|_{L^2_\rho(\Omega)}<\infty\}$.

\begin{lemma}\label{lemma:extension}
Let $G\subset D\subset\mathbb{R}^d$ and let
each of the sets $G,D$ and $D\setminus\bar{G}$ be a non-empty
bounded Lipschitz domain.
Suppose that $\partial G\cap D$ is
non-empty.
Let $\rho\in A_{2}$. Then for vector-valued functions in
$H^1_{\rho}(D\setminus\bar{G})$ there is a linear extension operator
$\Lambda: H^1_{\rho}(D\setminus\bar{G})
\to H_{\rho}^1(D)$ such that
\begin{equation}\label{pri:2.6}
\|\nabla \Lambda(w)\|_{L^2_\rho(D)}
\lesssim \|\nabla w\|_{L^2_\rho(D\setminus\bar{G})},
\end{equation}
where the up to constant depends on  $d$ and
the characters of $D\setminus\bar{G}$ and $D$.
\end{lemma}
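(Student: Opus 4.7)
The plan is to deduce the weighted extension from Chua's extension theorem \cite[Theorem 1.2]{C}, whose hypotheses are exactly met since $\rho\in A_2$ and $D\setminus\bar G$ is a bounded Lipschitz (hence $(\varepsilon,\delta)$-) domain. Chua's theorem supplies a bounded linear operator $T:W^{1,2}_\rho(D\setminus\bar G)\to W^{1,2}_\rho(\mathbb{R}^d)$ with $Tw=w$ a.e.\ on $D\setminus\bar G$ and
\[
\|Tw\|_{L^2_\rho(\mathbb{R}^d)}+\|\nabla Tw\|_{L^2_\rho(\mathbb{R}^d)}\lesssim \|w\|_{L^2_\rho(D\setminus\bar G)}+\|\nabla w\|_{L^2_\rho(D\setminus\bar G)},
\]
the implicit constant depending only on the Lipschitz character of $D\setminus\bar G$ and the $A_2$-constant of $\rho$. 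Restricting $Tw$ to $D$ already yields a linear map into $H^1_\rho(D)$ that agrees with $w$ on $D\setminus\bar G$.

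The remaining task is to upgrade this full-norm bound to the homogeneous (gradient-only) bound demanded by \eqref{pri:2.6}. For this I would subtract a $\rho$-weighted mean
\[
c_w:=\Bigl(\textstyle\int_{D\setminus\bar G}\rho\,dx\Bigr)^{-1}\int_{D\setminus\bar G}w\,\rho\,dx,
\]
which is a constant vector obtained componentwise, and then define $\Lambda(w):=T(w-c_w)|_D+c_w$. The resulting operator is still linear, still satisfies $\Lambda(w)=w$ on $D\setminus\bar G$, and has gradient $\nabla T(w-c_w)|_D$, which is untouched by the additive constant. Combining Chua's estimate applied to $w-c_w$ with the weighted Poincar\'e inequality
\[
\|w-c_w\|_{L^2_\rho(D\setminus\bar G)}\lesssim \|\nabla w\|_{L^2_\rho(D\setminus\bar G)},\qquad \rho\in A_2,
\]
valid on bounded Lipschitz domains by the classical Muckenhoupt-weighted theory of Chanillo--Wheeden and Fabes--Kenig--Serapioni, one obtains
\[
\|\nabla\Lambda(w)\|_{L^2_\rho(D)}\leq \|\nabla T(w-c_w)\|_{L^2_\rho(\mathbb{R}^d)}\lesssim \|\nabla w\|_{L^2_\rho(D\setminus\bar G)},
\]
which is precisely \eqref{pri:2.6}.

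The main technical point is ensuring that the implicit constants in both Chua's theorem and the weighted Poincar\'e inequality depend only on $d$, on the Lipschitz characters of $D$ and $D\setminus\bar G$, and on the $A_2$-constant of $\rho$, so that the final estimate has the claimed dependence. The hypothesis that $\partial G\cap D$ is non-empty enters implicitly here, since it guarantees $D\setminus\bar G$ has a non-degenerate interior boundary piece and therefore a genuine Lipschitz structure on which both Chua's extension and the weighted Poincar\'e inequality admit the desired uniform constants. The vector-valued setting introduces no new difficulty, as both statements apply componentwise without loss.
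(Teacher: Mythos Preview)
Your proof is correct and takes a genuinely different, more elementary route than the paper. The paper follows the template of \cite[Lemma 4.1]{OSY}: it first invokes Chua's theorem to produce an ambient extension $\tilde w$ on $\mathbb{R}^d$, and then \emph{replaces} $\tilde w$ inside $G$ by the solution $W$ of an auxiliary constant-coefficient mixed boundary value problem \eqref{pde:2.1} (Dirichlet data $\tilde w$ on $\partial G\cap D$, Neumann on $\partial G\cap\partial D$). The needed weighted gradient bound $\int_G|\nabla W|^2\rho\lesssim\int_G|\nabla\tilde w|^2\rho$ is then drawn from the paper's Theorem~\ref{app:thm:1} on weighted $W^{1,p}$ estimates for mixed problems. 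By contrast, you bypass the PDE entirely: you take Chua's extension, subtract a $\rho$-weighted mean, and close with the weighted Poincar\'e inequality for $A_2$ weights on bounded Lipschitz domains. Your argument is shorter and self-contained; the paper's construction yields an extension that is ``harmonic'' inside $G$ and dovetails with the machinery (Theorem~\ref{app:thm:1}) it develops anyway, mirroring the classical OSY construction. Note also that the paper already asserts the homogeneous Chua bound \eqref{f:2.2} directly; your mean-subtraction step is precisely what one would use to justify that bound if Chua's theorem is quoted only in its inhomogeneous form.
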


\begin{proof}
The main idea in the proof is paralleled to \cite[Lemma 4.1]{OSY},
while we impose some weighted estimates here, and
we provide a proof for the reader's convenience.
To do so, we construct an auxiliary equation
\begin{equation}\label{pde:2.1}
\left\{\begin{aligned}
\nabla\cdot\bar{a}\nabla W &= 0 &\quad&\text{in}~G,\\
n\cdot\bar{a}\nabla W &= 0 &\quad&\text{on}~
\partial G\cap\partial D,\\
W &= \tilde{w} &\quad&\text{on}~
\partial G\cap  D,
\end{aligned}\right.
\end{equation}
where $\tilde{w}$ is an weighted $H_{\rho}^1$-extension of $w$ such that
$\tilde{w} = w$ on $D\setminus\bar{G}$ and
\begin{equation}\label{f:2.2}
 \int_{\mathbb{R}^d} |\nabla\tilde{w}|^2\rho dx
 \lesssim \int_{D\setminus G}
 |\nabla w|^2\rho dx,
\end{equation}
in which the up to constant is independent of radius of
$D\setminus G$ (see \cite[Theorem 1.2]{C}). Then we construct
the extension map as
\begin{equation}\label{}
 \Lambda (w)(x) = \left\{
 \begin{aligned}
 w(x) &~ &\quad&\text{if}~~ x\in D\setminus \bar{G};\\
 W(x) &~ &\quad&\text{if}~~ x\in \bar{G}.
 \end{aligned}\right.
\end{equation}
It is known that $W\in H^1(G)$, and one may further derive
the weighted estimate
\begin{equation}\label{f:2.3}
 \int_{G}|\nabla \Lambda(w)|^2\rho dx
 \lesssim \int_{G}|\nabla \tilde{w}|^2\rho dx
\end{equation}
for any $\rho\in A_2$. Admitting the estimate
$\eqref{f:2.3}$ for a while and this together with
the estimate $\eqref{f:2.2}$ gives the desired estimate
$\eqref{pri:2.6}$. In fact,
the estimate $\eqref{f:2.3}$ is
included in Theorem $\ref{app:thm:1}$, and
we have completed the proof.
\end{proof}

The terminology of
``perforated domains of type II'' is taken from
\cite[pp.43-44]{OSY}.

\begin{lemma}[extension of functions in
perforated domains of type II]\label{lemma:2.8}
Let $\Omega_\varepsilon$ be a perforated domain of type II.
Let $\delta_{\partial\Omega}(x)=\emph{dist}(x,\partial\Omega)$ be a distance function
and $\rho =\delta^{\beta}_{\partial\Omega}$ with $-1<\beta<1$.
Then there exits a linear extension operator
$\Lambda_\varepsilon: H^1_{\rho}(\Omega_\varepsilon)\to
H_{\rho}^1(\Omega)$ such that
\begin{equation}\label{pri:2.9}
 \int_{\Omega}|\nabla \Lambda_\varepsilon(w)|^2\rho dx
 \lesssim \int_{\Omega_\varepsilon}|\nabla w|^2\rho dx,
\end{equation}
where the up to constant is independent of
$\varepsilon, \beta$ and
$w$.
\end{lemma}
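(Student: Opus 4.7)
The strategy is to apply Lemma~\ref{lemma:extension} locally to each hole of $\varepsilon\omega$ lying in $\Omega$ and then paste these local extensions together, exploiting the type II assumption that every connected component of $\Omega\setminus\Omega_\varepsilon$ is of the form $\varepsilon H_k$ with $\overline{\varepsilon H_k}\subset\Omega$. By the separation hypothesis (H2c), the holes $\{\varepsilon H_k\}$ in $\Omega$ are pairwise at distance at least $\varepsilon\mathfrak{g}^{\omega}$, so for each such hole I can select a neighborhood $D_k$ of diameter comparable to $\varepsilon$ satisfying $\overline{\varepsilon H_k}\subset D_k$ and $D_k\setminus\overline{\varepsilon H_k}\subset\Omega_\varepsilon$; the separation allows me to take the collection $\{D_k\}$ with a uniformly bounded overlap number depending only on the reference geometry of $\omega$.

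On each cell, Lemma~\ref{lemma:extension} applied with $D=D_k$ and $G=\varepsilon H_k$ produces a linear extension $\Lambda_k w\in H^1_\rho(D_k)$ of $w|_{D_k\setminus\overline{\varepsilon H_k}}$ satisfying
\begin{equation*}
\int_{D_k}|\nabla\Lambda_k w|^2\rho\,dx \lesssim \int_{D_k\setminus\overline{\varepsilon H_k}}|\nabla w|^2\rho\,dx.
\end{equation*}
The constant produced by Lemma~\ref{lemma:extension} depends on the $A_2$ characteristic of $\rho|_{D_k}$ and on the characters of $(D_k,\varepsilon H_k)$. To absorb the latter into a constant free of $\varepsilon$, I perform a change of variables $y=(x-x_k)/\varepsilon$ with $x_k\in\varepsilon H_k$: this turns $(D_k,\varepsilon H_k)$ into a fixed pair $(\widetilde D,\widetilde G)$ of unit size depending only on $\omega$. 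To remove the $\beta$-dependence, I observe that the pulled-back weight $\tilde\rho(y)=\rho(x_k+\varepsilon y)$ satisfies $\tilde\rho(y)\sim\delta_{\partial\Omega}(x_k)^\beta$ uniformly on $\widetilde D$ whenever $\delta_{\partial\Omega}(x_k)\gtrsim \varepsilon$, so after pulling the constant factor $\delta_{\partial\Omega}(x_k)^\beta$ outside the integrals, the weighted local estimate reduces to an unweighted extension estimate on $(\widetilde D,\widetilde G)$ whose constant depends only on the reference geometry.

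Finally I define $\Lambda_\varepsilon(w)=w$ on $\Omega_\varepsilon$ and $\Lambda_\varepsilon(w)=\Lambda_k w$ on $\varepsilon H_k$; the two formulas agree on $\partial(\varepsilon H_k)\cap\Omega$ by the construction of $\Lambda_k$ in Lemma~\ref{lemma:extension}, so the patched function lies in $H^1_\rho(\Omega)$. Summing the local estimates over $k$ and invoking the bounded overlap yields \eqref{pri:2.9} with a constant independent of $\varepsilon$, $\beta$ and $w$. The main technical hurdle I expect is exactly the uniformity in $\beta$ for holes sitting at distance $O(\varepsilon)$ from $\partial\Omega$: in that regime $\delta_{\partial\Omega}^\beta$ is no longer comparable to a constant on $D_k$ and the pull-out trick fails, so one must instead keep the weight inside and exploit the scale invariance of the $A_2$ constant $[\delta_{\partial\Omega}^\beta]_{A_2}$ on $\varepsilon$-balls abutting $\partial\Omega$, together with a careful counting of such boundary cells to prevent the $\beta$-dependence of $[\rho]_{A_2}$ from accumulating in the final sum.
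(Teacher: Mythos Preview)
Your plan is essentially correct and in fact somewhat cleaner than the paper's argument, but your final paragraph misidentifies the difficulty.

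First, the comparison. The paper rescales the \emph{whole} domain, setting $y=x/\varepsilon$, $\tilde w(y)=w(\varepsilon y)$, and observes that by homogeneity $\tilde\rho(y):=[\mathrm{dist}(y,\partial(\Omega/\varepsilon))]^\beta=\varepsilon^{-\beta}\rho(x)$ is again an $A_2$ weight. It then applies the \emph{weighted} Lemma~\ref{lemma:extension} on each unit cube $Q+z$ with weight $\tilde\rho$, and afterwards has to treat separately the holes that straddle a face of $Q$ (the two-step extension through the parts $\gamma_m^0,\gamma_m^1$). Your approach instead picks a custom neighborhood $D_k$ around each connected hole $\varepsilon H_k$, pulls the weight out as a constant, and applies the \emph{unweighted} case of Lemma~\ref{lemma:extension}. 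Working with the actual components $H_k$ rather than with cells eliminates the ``hole crossing $\partial Q$'' case altogether, which is a genuine simplification; the price is that you must check the reference pairs $(\widetilde D,\widetilde G)$ come in only finitely many congruence classes (periodicity gives this).

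Second, the purported hurdle. For a type~II perforation every hole sits at distance $\ge c_0\varepsilon$ from $\partial\Omega$ (this is exactly what the paper uses when it writes $T_\varepsilon=\{z:\varepsilon(z+Q)\subset\Omega,\ \mathrm{dist}(\varepsilon(z+Q),\partial\Omega)\ge\varepsilon\}$). If $\mathrm{diam}(D_k)\le C_0\varepsilon$ and $\mathrm{dist}(D_k,\partial\Omega)\ge c_0\varepsilon$, then for any $x,x'\in D_k$ one has $|\delta_{\partial\Omega}(x)-\delta_{\partial\Omega}(x')|\le C_0\varepsilon$ and $\delta_{\partial\Omega}(x')\ge c_0\varepsilon$, hence $\delta_{\partial\Omega}(x)/\delta_{\partial\Omega}(x')\in[(1+C_0/c_0)^{-1},\,1+C_0/c_0]$. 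Raising to the power $\beta\in(-1,1)$ keeps the ratio in the \emph{same} interval, so $\rho(x)\sim\rho(x_k)$ on $D_k$ with a comparison constant depending only on $C_0/c_0$, uniformly in $\beta$ and in $k$. In other words, the pull-out trick already works for \emph{all} holes in a type~II domain, including those at distance exactly of order $\varepsilon$; there is no boundary layer of ``bad'' holes and no need to invoke $[\rho]_{A_2}$ or any counting argument. You can simply delete the hedge in your last paragraph and the proof goes through.
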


\begin{proof}
The main idea is similar to that in \cite[Lemma 4.2]{OSY}.
Let $\Lambda_\varepsilon(w)(x) = \Lambda_1(\tilde{w})(x/\varepsilon)$,
where $\tilde{w}(y)=w(\varepsilon y) = w(x)$ and
$y\in\Omega/\varepsilon$ and $x\in\Omega$.
Let $\tilde{\rho}(y) = \big[
\text{dist}(y,\partial\Omega/\varepsilon)\big]^\beta$.
Due to the homogeneity of the Euclidean distance,
we have $ \tilde{\rho}(y) = \varepsilon^{-\beta}\rho(x)$,
and therefore there still holds $\tilde{\rho}\in A_2$. Hence, if
the holes have no intersection with $\partial Q$, then
it follows from Lemma $\ref{lemma:extension}$ that
\begin{equation*}
\int_{Q+z}|\nabla_y\Lambda_1(\tilde{w})(y)|^2
\tilde{\rho}(y)dy
\lesssim \int_{(Q+z)\cap\omega}|\nabla_y\tilde{w}(y)|^2
\tilde{\rho}(y)dy,
\end{equation*}
where the up to constant is independent of $z$.
By changing variable $y=x/\varepsilon$, we obtain
\begin{equation}\label{f:2.5}
\int_{\Omega}|\nabla \Lambda_\varepsilon(w)|^2\rho(x)dx
\lesssim \int_{\Omega_\varepsilon}|\nabla w|^2\rho(x)dx,
\end{equation}
in which we employ the fact that $ \tilde{\rho}(y) =
\varepsilon^{-\beta}\rho(x)$.
This proved the estimate $\eqref{pri:2.9}$ in such the case.

Now, we turn to the case that the hole $\bar{Q}\setminus \omega$
has a non-empty intersection with $\partial Q$. It is known that
$\Lambda_1(\tilde{w})$ may not
belong to $H^1(\Omega/\varepsilon)$, since its traces on
the adjacent faces of the cubes
(taking limitation from different sides) may not be equal.
However, we can mimic the idea in the proof of
\cite[Lemma 4.2]{OSY} to overcome the same difficulty, and
we provide a proof for the sake of the completeness.
Let $\gamma_1,\cdots,\gamma_{N}$ be these holes
which has non-empty intersection with $\partial Q$, where
$l=0,1$, and $N$ goes to infinity as $\varepsilon\to 0$.
Moreover, for any $m=1,\cdots,N$,
the notation $\gamma_m^{l}$ with $l=0,1$ represents one of
two parts of $\gamma_m$, which is divided by $\partial Q$.
It is fine to assume that we first
extend the function from $\gamma_m^{1}$ parts to the
related boundary $\gamma_m^{1}\cap\partial Q$, and then
setting $\gamma_0 = \cup_{m=1}^{N}\gamma_{m}^{0}$ one may
extend the function from the region
$\Omega^\prime:=(\Omega/\varepsilon)\setminus \gamma_0$ to the whole
domain $\Omega/\varepsilon$. Although $N$ tends to infinity
as $\varepsilon$ goes to zero, the family of $\gamma_{m}^0$
comes from the shifts of a finite number of bounded Lipschiz
domains. So, the up to constant in the later computations
is independent of $N$.

According to the previous step and the result of Lemma
$\ref{lemma:extension}$, one may construct
the extension function of $\Lambda_1(\tilde{w})(y)$ such that
$\Lambda_1(\tilde{w})$ belongs to $H^1(\Omega^\prime)$, and
\begin{equation}\label{f:2.4}
\int_{\Omega^\prime}|\nabla_y\Lambda_1(\tilde{w})(y)|^2
\tilde{\rho}(y)dy
\lesssim \int_{U\cap\omega}|\nabla_y\tilde{w}(y)|^2
\tilde{\rho}(y)dy,
\end{equation}
where we use the notation $\bar{U}
:= \cup_{z\in T_\varepsilon}(z+\bar{Q})$,
and $T_\varepsilon$ is the subset of
$\mathbb{Z}^n$ consisting of all $z$ such that
$\varepsilon(z+Q)\subset\Omega$ and
$\text{dist}(\varepsilon(z+Q),\partial \Omega)\geq\varepsilon$.
Then for some $m=1,\cdots,N$,
we may choose the neighbourhood of $\gamma_{m}^0$, denoted by
$\tilde{\gamma}_{m}^0$, such that
$\tilde{\gamma}_{m}^0\supset \gamma_m^0$,
and it follows from Lemma $\ref{lemma:extension}$ that
\begin{equation*}
\int_{\gamma_m^0}|\nabla_y\Lambda\Lambda_1(\tilde{w})(y)|^2
\tilde{\rho}(y)dy
\lesssim \int_{\tilde{\gamma}_m^0\setminus\gamma_{m}^0}
|\nabla_y\Lambda_1(\tilde{w})(y)|^2
\tilde{\rho}(y)dy.
\end{equation*}
On account of the periodicity, the above estimate implies
\begin{equation*}
\int_{\gamma_0}
|\nabla_y\Lambda\Lambda_1(\tilde{w})(y)|^2
\tilde{\rho}(y)dy
\lesssim
\int_{\Omega^\prime}|\nabla_y\Lambda_1(\tilde{w})(y)|^2
\tilde{\rho}(y)dy
\end{equation*}
and this together with $\eqref{f:2.4}$ and
$\Lambda\Lambda_1(\tilde{w}) = \Lambda_1(\tilde{w})$ on
$\Omega^\prime$ gives
\begin{equation*}
\int_{\Omega/\varepsilon}
|\nabla_y\Lambda\Lambda_1(\tilde{w})(y)|^2
\tilde{\rho}(y)dy
\lesssim \int_{U\cap\omega}|\nabla_y\tilde{w}(y)|^2
\tilde{\rho}(y)dy.
\end{equation*}
In such the case, set $\Lambda_\varepsilon (w)(x)
 := \Lambda\Lambda_1(\tilde{w})(x/\varepsilon)$ and
this completes the whole argument.
\end{proof}

\noindent \textbf{The proof of Theorem \ref{thm:1.5}}.
The main idea comes from \cite[Theorem 4.3]{OSY}, and
we still take the notation imposed in
Lemmas $\ref{lemma:extension}$ and $\ref{lemma:2.8}$.
For any $z\in\mathbb{Z}^d$ such that
$\varepsilon(z+Q\cap\omega)\cap\Omega\not=\emptyset$,
collect such $z$ to be the index set $T_\varepsilon$.
Then recalling the interior of
$\cup_{z\in T_\varepsilon}
(z+\bar{Q})$ written
by $U$,
we denote the interior of $\cup_{z\in T_\varepsilon}
\varepsilon(z+\overline{Q\cap\omega})$ by $U_\varepsilon$.
For any $w\in H^1(\Omega_\varepsilon,\Gamma_\varepsilon;\mathbb{R}^d)$,
we impose the following zero-extension:
\begin{equation*}
\bar{w}(x)
= \left\{\begin{aligned}
&w(x), &\quad& x\in\Omega_\varepsilon;\\
&0, &\quad& x\in U_\varepsilon\setminus\Omega;\\
&0, &\quad& x\in\Omega_0\setminus (\varepsilon U),
\end{aligned}\right.
\end{equation*}
and $\bar{w}\in H^1(U_\varepsilon;\mathbb{R}^d)$.
Hence, the problem has been reduced to the perforated domain of
type II, and
$\bar{w}$ may be further extended
to the whole region $\Omega_0$ via Lemma $\ref{lemma:2.8}$,
denoted by $\Lambda_\varepsilon(\bar{w})$.
Setting $\bar{\Lambda}_\varepsilon(w):
=\Lambda_\varepsilon(\bar{w})$,
it is clear to see $\bar{\Lambda}_\varepsilon(w)
= 0$ on $\partial\Omega_0$. Moreover, we have
\begin{equation*}
 \int_{\Omega_0}|\nabla \bar{\Lambda}_\varepsilon(w)|^2\delta^{\beta} dx
 \lesssim^{\eqref{pri:2.9}}
 \int_{U_\varepsilon}|\nabla \bar{w}|^2\delta^{\beta} dx
 \lesssim
 \int_{\Omega_\varepsilon}|\nabla w|^2\delta^{\beta} dx.
\end{equation*}
This yields the desired estimate $\eqref{pri:1.9}$.
Abusing
notation to rewrite $\bar{\Lambda}_\varepsilon$ as
$\Lambda_\varepsilon$, we
have completed the whole proof.
\qed


\section{Convergence Rates in $H^1$-norm}\label{section3}

\begin{lemma}[weak formulation]\label{lemma:2.1}
Assume the same conditions as in Theorem $\ref{thm:2.1}$.
Let $w_\varepsilon$ be given in $\eqref{eq:3.1}$,
and $E = (E_{kij})$ be the flux
corrector defined in Lemma $\ref{lemma:2.6}$,
and $\theta$ defined in $\eqref{eq:1.1}$. Then,
for any $\phi\in H^{1}_{0}(\Omega;\mathbb{R}^d)$, we have
\begin{equation}\label{eq:2.2}
\begin{aligned}
\int_{\Omega_{\varepsilon}}A(x/\varepsilon)
\nabla w_{\varepsilon}\nabla \phi dx&
=\int_{\Omega}(l_{\varepsilon}^{+}-\theta)F\phi dx\\
&+\int_{\Omega}(\theta\widehat{A}-l_{\varepsilon}^{+}A^{\varepsilon})
(\nabla u_{0}-\varphi)\nabla\phi dx
-\varepsilon\int_{\Omega}\varpi(x/\varepsilon)
\nabla\varphi\nabla\phi dx,
\end{aligned}
\end{equation}
where the notation $\varpi
:= E +
l^{+}A
\tilde{\chi}$ belongs to $L^2_{\emph{per}}(Y)$,
and $\tilde{\chi}$ is an extension of $\chi$
satisfying $\tilde{\chi}=\chi$ in $\omega$ and
$\tilde{\chi}=0$ in $\mathbb{R}^d\backslash\omega$.
\end{lemma}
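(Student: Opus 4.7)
The plan is to substitute $w_\varepsilon = u_\varepsilon - u_0 + \varepsilon\chi(x/\varepsilon)\varphi$ (writing $\varphi := S_\varepsilon(\psi_\varepsilon\nabla u_0)$) into the bilinear form and expand the gradient by the Leibniz rule into four pieces: $\nabla u_\varepsilon$, $-\nabla u_0$, $(\nabla\chi)(x/\varepsilon)\varphi$, and $\varepsilon\chi(x/\varepsilon)\nabla\varphi$. I would then test the equation for $u_\varepsilon$ with $\phi$ (which is a valid test function per Definition \ref{def:3}, since $\phi\in H^1_0(\Omega)$ vanishes on $\partial\Omega\supset\Gamma_\varepsilon$) to obtain $\int_{\Omega_\varepsilon} A(x/\varepsilon)\nabla u_\varepsilon\nabla\phi\,dx = \int_\Omega l_\varepsilon^+ F\phi\,dx$, pair this with the homogenized identity $\int_\Omega \widehat{A}\nabla u_0\nabla\phi\,dx = \int_\Omega F\phi\,dx$ through the split $l_\varepsilon^+ F\phi = (l_\varepsilon^+ - \theta)F\phi + \theta\widehat{A}\nabla u_0\nabla\phi$, and subtract $\int_\Omega l_\varepsilon^+ A(x/\varepsilon)\nabla u_0\nabla\phi\,dx$. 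This assembles the first two terms $\int(l_\varepsilon^+ - \theta)F\phi$ and $\int(\theta\widehat{A} - l_\varepsilon^+ A_\varepsilon)\nabla u_0\nabla\phi$ of \eqref{eq:2.2}.

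The heart of the argument will be to handle the ``awkward'' term $\int_\Omega l_\varepsilon^+ A(x/\varepsilon)(\nabla\chi)(x/\varepsilon)\varphi\nabla\phi\,dx$ by introducing a flux corrector. I would define the periodic tensor $b_{ij}^{\alpha\beta}(y) := l^+(y)\,a_{ik}^{\alpha\gamma}(y)(\partial \mathbb{X}_j^{\gamma\beta}/\partial y_k)(y) - \theta\,\widehat{a}_{ij}^{\alpha\beta}$. By \eqref{eq:1.1}, $b$ has vanishing mean on $Y$. The nontrivial point is to check $\partial_i b_{ij}^{\alpha\beta} = 0$ in $\mathcal{D}'(Y)$: distributing the derivative, the volume contribution $l^+\partial_i[a_{ik}^{\alpha\gamma}\partial_k \mathbb{X}_j^{\gamma\beta}]$ vanishes by the cell equation inside $\omega$, and the surface distribution $\partial_i l^+\cdot a_{ik}^{\alpha\gamma}\partial_k \mathbb{X}_j^{\gamma\beta}$ on $\partial\omega$ is killed precisely by the Neumann condition $n\cdot A\nabla\mathbb{X}_j^\beta = 0$ in \eqref{pde:1.2}. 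Lemma \ref{lemma:2.6} then produces $E_{kij}^{\alpha\beta}\in H^1_{\text{per}}$ with $\partial_k E_{kij}^{\alpha\beta} = b_{ij}^{\alpha\beta}$ and $E_{kij}^{\alpha\beta} = -E_{ikj}^{\alpha\beta}$. The algebraic identity $l^+ A\nabla\chi = b - (l^+ A - \theta\widehat{A})$ decomposes the $(\nabla\chi)^\varepsilon$-integral as $\int_\Omega b(x/\varepsilon)\varphi\nabla\phi + \int_\Omega(\theta\widehat{A} - l_\varepsilon^+ A_\varepsilon)\varphi\nabla\phi$; the latter combines with the $\nabla u_0$-piece to yield the $(\theta\widehat{A} - l_\varepsilon^+ A_\varepsilon)(\nabla u_0 - \varphi)\nabla\phi$ factor in \eqref{eq:2.2} (up to sign bookkeeping).

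Finally, I would exploit $b(x/\varepsilon) = \varepsilon\partial_k[E(x/\varepsilon)]$ and compute the divergence of the vector field $V_k := E_{kij}(x/\varepsilon)\varphi_j^\beta\partial_i\phi^\alpha$ over $\Omega$. Antisymmetry of $E$ in $(k,i)$ forces the second-derivative term $E_{kij}(x/\varepsilon)\varphi_j^\beta\partial_k\partial_i\phi^\alpha$ to vanish (first for smooth $\phi$, then by density in $H^1_0$). The main subtlety is the boundary integral $\int_{\partial\Omega} E_{kij}(x/\varepsilon)n_k\varphi_j^\beta\partial_i\phi^\alpha\,dS$, which does not vanish a priori from $\phi|_{\partial\Omega} = 0$ alone since the normal derivative of $\phi$ is not controlled; however, antisymmetry converts $E_{kij}(x/\varepsilon)n_k\partial_i\phi^\alpha$ into $\tfrac{1}{2}E_{kij}(x/\varepsilon)(n_k\partial_i - n_i\partial_k)\phi^\alpha$, a tangential derivative of $\phi$ along $\partial\Omega$ that vanishes identically there. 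The resulting identity $\int_\Omega b(x/\varepsilon)\varphi\nabla\phi\,dx = -\varepsilon\int_\Omega E(x/\varepsilon)\nabla\varphi\nabla\phi\,dx$, combined with the fourth gradient piece $\varepsilon\int_\Omega l_\varepsilon^+ A(x/\varepsilon)\chi(x/\varepsilon)\nabla\varphi\nabla\phi\,dx$ (rewritten via $l^+\chi = l^+\tilde\chi$), packages into $-\varepsilon\int_\Omega \varpi(x/\varepsilon)\nabla\varphi\nabla\phi\,dx$ with $\varpi = E + l^+ A\tilde\chi$; the claim $\varpi\in L^2_{\text{per}}(Y)$ follows from $E\in H^1_{\text{per}}\subset L^2$ and $\tilde\chi\in L^2_{\text{per}}$ with $A\in L^\infty$. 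The main obstacle is the divergence-free verification of $b$, which crucially exploits both the cell equation and its Neumann boundary condition — this is the essential new feature of the perforated geometry; a secondary subtlety is the tangential-derivative trick that makes the boundary integral vanish in the integration by parts.
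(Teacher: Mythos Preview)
Your approach is correct and mirrors the paper's proof: expand $A_\varepsilon\nabla w_\varepsilon$, invoke the weak formulations of $u_\varepsilon$ and $u_0$, isolate the term carrying $b(y)=\theta\widehat A - l^{+}A - l^{+}A\nabla\tilde\chi$ (your $b$ is its negative, which is harmless), and use the antisymmetry of the flux corrector $E$ to convert $\int_\Omega b(x/\varepsilon)\varphi\nabla\phi$ into $-\varepsilon\int_\Omega E(x/\varepsilon)\nabla\varphi\nabla\phi$. One simplification you missed: since $\varphi=S_\varepsilon(\psi_\varepsilon\nabla u_0)$ with $\mathrm{supp}(\psi_\varepsilon)\subset\Omega\setminus O_{3\varepsilon}$, the function $\varphi$ is compactly supported in $\Omega$, so $E_{kij}(x/\varepsilon)\varphi_j\in H^1_0(\Omega)$ and the boundary integral in your integration by parts vanishes trivially --- your tangential-derivative argument is valid but unnecessary, and the paper accordingly marks that term ``$=0$'' without further comment.
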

\begin{proof}
In terms the definition of the weak solution to
\eqref{pde:1.1} and \eqref{pde:1.3}, respectively, there hold
\begin{equation}\label{eq:2.3}
 \int_{\Omega_{\varepsilon}} A(x/\varepsilon)\nabla u_\varepsilon\nabla \phi dx=\int_{\Omega_{\varepsilon}}F\phi dx
=\int_{\Omega}l_{\varepsilon}^{+}F\phi dx
\end{equation}
(see Definition $\ref{def:3}$) and
\begin{equation}\label{eq:2.4}
\int_{\Omega} \widehat{A}\nabla u_0\nabla\phi dx
=\int_{\Omega}F\phi dx
\end{equation}
for any $\phi\in H_0^1(\Omega;\mathbb{R}^d)$,
where we mention that $\phi\in H^{1}(\Omega_{\varepsilon},
\Gamma_{\varepsilon};\mathbb{R}^d)$ in \eqref{eq:2.3}.
Recalling $w_{\varepsilon} = u_\varepsilon - v_\varepsilon$ with
$v_\varepsilon = u_0 + \varepsilon\chi_\varepsilon\varphi$,
it follows that
 \begin{equation}\label{sharp1}
 \begin{aligned}
 &\quad\int_{\Omega_{\varepsilon}}A_{\varepsilon}
 \nabla w_{\varepsilon}\nabla \phi dx \\
 & =\int_{\Omega_{\varepsilon}}A_{\varepsilon}\nabla u_{\varepsilon}\nabla \phi dx-\int_{\Omega_{\varepsilon}}A_{\varepsilon}\nabla v_{\varepsilon}\nabla \phi dx\\
 & =\int_{\Omega_{\varepsilon}}A_{\varepsilon}\nabla u_{\varepsilon}\nabla \phi dx
 -\int_{\Omega}\theta\widehat{A}\nabla u_{0}\nabla \phi dx+\int_{\Omega}\theta\widehat{A}\nabla u_{0}\nabla \phi dx
 -\int_{\Omega_{\varepsilon}}A_{\varepsilon}\nabla v_{\varepsilon}\nabla \phi dx\\
 & =^{\eqref{eq:2.3},\eqref{eq:2.4}}
 \int_{\Omega}l_{\varepsilon}^{+}F\phi dx
 -\int_{\Omega}\theta F\phi dx+\int_{\Omega}
 \theta\widehat{A}\nabla u_{0}\nabla \phi dx
 -\int_{\Omega}\theta\widehat{A}\varphi\nabla \phi dx
 +\int_{\Omega}\theta\widehat{A}\varphi\nabla \phi dx\\
 &\qquad\quad -\int_{\Omega}l_{\varepsilon}^{+}A_{\varepsilon}\nabla u_{0}\nabla\phi dx-\int_{\Omega}l_{\varepsilon}^{+}A_{\varepsilon}\nabla \tilde{\chi}_{\varepsilon}\varphi\nabla\phi dx-\varepsilon\int_{\Omega}l_{\varepsilon}^{+}A_{\varepsilon} \tilde{\chi}_{\varepsilon}\nabla \varphi\nabla\phi dx
 \end{aligned}
 \end{equation}
By a routine calculation, the right-hand side of \eqref{sharp1} is equal to
\begin{equation*}
\begin{aligned}
\int_{\Omega}(l_{\varepsilon}^{+}-\theta)F\phi dx
  &+\int_{\Omega}(\theta\widehat{A}-l_{\varepsilon}^{+}A^{\varepsilon})
  (\nabla u_{0}-\varphi)\nabla\phi dx\\
  &\quad
  +\underbrace{\int_{\Omega}(\theta\widehat{A}-l_{\varepsilon}^{+}
  A^{\varepsilon}
  -l_{\varepsilon}^{+}
  A^{\varepsilon}\nabla\tilde{\chi}_{\varepsilon})
  \varphi\nabla\phi dx}_{T}
  -\varepsilon\int_{\Omega}l_{\varepsilon}^{+}A^{\varepsilon} \tilde{\chi}_{\varepsilon}\nabla \varphi\nabla\phi dx,
  \end{aligned}
\end{equation*}
and so the remainder of the proof is to handle the term
$T$. Let
$b(y) = \theta \widehat{A}-l^{+}A(y)-l^+A(y)\nabla\tilde{\chi}$.
On account of
the antisymmetry property of flux corrector in
Lemma $\ref{lemma:2.6}$, one may derive that
\begin{equation*}
\begin{aligned}
T=\int_{\Omega}b(x/\varepsilon)
  \varphi\nabla\phi dx
& =\varepsilon\int_{\Omega}\frac{\partial}{\partial x_{k}}\{E_{kij}(x/\varepsilon)\}\varphi_{j}
  \frac{\partial\phi }{\partial x_{i}}dx \\
& = \varepsilon\underbrace{\int_{\Omega}\frac{\partial}{\partial x_{k}}
\{E_{kij}(x/\varepsilon)\varphi_{j}\}
\frac{\partial\phi }{\partial x_{i}}dx}_{=0}
-\varepsilon\int_{\Omega}
E_{kij}(x/\varepsilon)\frac{\partial}{\partial x_{k}}
\big\{\varphi_{j}\big\}
\frac{\partial\phi }{\partial x_{i}}dx.
\end{aligned}
\end{equation*}
and then set $\varpi := E+l^{+}A\tilde{\chi}$ on $Y$.
This completes the proof.
\end{proof}

\begin{lemma}\label{lemma:3.3}
Assume the same conditions as in Theorem $\ref{thm:2.1}$.
Let $w_\varepsilon$ be given in $\eqref{eq:3.1}$
and the periodic tensor $\varpi$ be defined in Lemma $\ref{lemma:2.1}$.
Then
there holds
\begin{equation}\label{pri:2.5}
\begin{aligned}
&\|\nabla w_{\varepsilon}\|_{L^{2}(\Omega_{\varepsilon})}
\lesssim\varepsilon^{1-s}\|F\|_{H^{1-s}(\mathbb{R}^d)}\\
&\qquad\quad+ \Bigg\{\varepsilon \bigg(\|F\|_{L^2(\Omega)}
+\|\varpi(\cdot/\varepsilon)\nabla\varphi
\big\|_{L^{2}(\Omega)}\bigg)
+\|\nabla u_{0}-\varphi\|_{L^{2}(\Omega)}+\|\nabla u_{0}\|_{L^{2}(O_{2\varepsilon})}\Bigg\},
\end{aligned}
\end{equation}
where
the up to constant depends on $\mu_{0},\mu_{1},d,\omega$ and
$\Omega$.
\end{lemma}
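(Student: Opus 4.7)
The plan is to test the weak formulation \eqref{eq:2.2} from Lemma \ref{lemma:2.1} against a suitable $H_0^1(\Omega)$-version of the extension of $w_\varepsilon$ itself, exploit the elasticity condition \eqref{eq:1.4} together with the Korn-type inequality for perforated domains (Lemma \ref{lemma:2.4}) to produce the coercive bound $\|\nabla w_\varepsilon\|_{L^2(\Omega_\varepsilon)}^2$ on the left, and then estimate the three right-hand-side terms $T_1$, $T_2$, $T_3$ of \eqref{eq:2.2} separately. Since $\varphi = S_\varepsilon(\psi_\varepsilon\nabla u_0)$ is supported in $\Sigma_{2\varepsilon}$, the corrector piece $\varepsilon\chi_\varepsilon\varphi$ vanishes near $\partial\Omega$, so $w_\varepsilon\in H^1(\Omega_\varepsilon,\Gamma_\varepsilon;\mathbb{R}^d)$ and Lemma \ref{extensiontheory} yields an extension $\tilde w_\varepsilon\in H_0^1(\Omega_0;\mathbb{R}^d)$ with $\|\nabla\tilde w_\varepsilon\|_{L^2(\Omega_0)}\lesssim\|\nabla w_\varepsilon\|_{L^2(\Omega_\varepsilon)}$. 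Multiplying by an additional cut-off $\eta\in C_0^\infty(\Omega)$ with $\eta\equiv 1$ on $\Sigma_{2\varepsilon}$ and $|\nabla\eta|\lesssim\varepsilon^{-1}$ then produces an admissible $\phi := \eta\tilde w_\varepsilon\in H_0^1(\Omega;\mathbb{R}^d)$, whose commutator $\int_{\Omega_\varepsilon}A^\varepsilon\nabla w_\varepsilon\cdot\tilde w_\varepsilon\nabla\eta\,dx$ is controlled, via Lemma \ref{lemma:2.5}, by $\|\nabla w_\varepsilon\|_{L^2(\Omega_\varepsilon)}\bigl(\|\nabla w_\varepsilon\|_{L^2(O_{2\varepsilon}\cap\Omega_\varepsilon)}+\|\nabla u_0\|_{L^2(O_{2\varepsilon})}\bigr)$; this accounts for the boundary-layer term $\|\nabla u_0\|_{L^2(O_{2\varepsilon})}$ in \eqref{pri:2.5}.

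For the two easy terms, Hölder's inequality together with the uniform boundedness of $\theta\widehat{A}-l_\varepsilon^+ A^\varepsilon$ (from \eqref{eq:1.4} and Lemma \ref{lemma:2.7}) gives
$$|T_2|\lesssim\|\nabla u_0 - \varphi\|_{L^2(\Omega)}\|\nabla\phi\|_{L^2(\Omega)},\qquad |T_3|\lesssim \varepsilon\|\varpi(\cdot/\varepsilon)\nabla\varphi\|_{L^2(\Omega)}\|\nabla\phi\|_{L^2(\Omega)},$$
which account for the third and the second summands inside the brace of \eqref{pri:2.5}, respectively.

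The central step is to bound $T_1=\int_\Omega(l_\varepsilon^+ - \theta)F\phi\,dx$ with the fractional scaling $\varepsilon^{1-s}\|F\|_{H^{1-s}(\mathbb{R}^d)}$. The periodic scalar $h:=l^+-\theta\in L^\infty_{\text{per}}(Y)$ has zero mean on $Y$, so the Bogovski\u{\i} equation $\nabla_y\cdot H = h$ on the torus admits a $1$-periodic solution $H\in W^{1,p}_{\text{per}}(Y)\hookrightarrow L^\infty(Y)$ for any $p>d$, giving the representation $h(x/\varepsilon) = \varepsilon\,\nabla_x\cdot(H(x/\varepsilon))$. Integration by parts yields the endpoint bound $|T_1|\lesssim\varepsilon\|F\|_{H^1(\mathbb{R}^d)}\|\nabla\phi\|_{L^2(\Omega)}$ at $s=0$, while the trivial Hölder bound $|T_1|\lesssim\|F\|_{L^2(\Omega)}\|\nabla\phi\|_{L^2(\Omega)}$ (after Poincaré for $\phi\in H_0^1(\Omega)$) covers the endpoint $s=1$. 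Complex interpolation of the linear functional $F\mapsto T_1(F;\phi)$ between $L^2(\mathbb{R}^d)$ and $H^1(\mathbb{R}^d)$ then gives
$$|T_1|\lesssim\varepsilon^{1-s}\|F\|_{H^{1-s}(\mathbb{R}^d)}\|\nabla\phi\|_{L^2(\Omega)},\qquad 0\leq s\leq 1.$$

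Collecting the bounds, using $\|\nabla\phi\|_{L^2(\Omega)}\lesssim\|\nabla w_\varepsilon\|_{L^2(\Omega_\varepsilon)}$, and dividing out one factor of $\|\nabla w_\varepsilon\|_{L^2(\Omega_\varepsilon)}$, produces \eqref{pri:2.5}. The main obstacle is the fractional endpoint $0<s<1$ in $T_1$: a direct integration by parts only furnishes the $s=0$ bound, so the claimed $\varepsilon^{1-s}$ scaling must be obtained through the operator-interpolation detour above, and one must verify a priori that Bogovski\u{\i}'s antiderivative $H$ can be taken in $L^\infty_{\text{per}}(Y)$, since the naive Hodge choice $H=-\nabla G$ with $-\Delta G = h$ only delivers $H\in H^1_{\text{per}}$ for merely $L^\infty$ data $h$.
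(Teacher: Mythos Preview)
Your treatment of $T_2$, $T_3$ and of $T_1$ via interpolation of the functional $F\mapsto T_1(F;\phi)$ is correct, and in fact your $T_1$ argument is slightly cleaner than the paper's route through Lemma~\ref{lemma:2.3}. (Incidentally, your worry about Bogovski\u{\i} versus Hodge is unfounded: for $h=l^+-\theta\in L^\infty_{\text{per}}(Y)$ the naive choice $H=-\nabla G$ with $-\Delta G=h$ already yields $G\in W^{2,p}_{\text{per}}$ for every $p<\infty$ by Calder\'on--Zygmund on the torus, hence $H\in W^{1,p}\hookrightarrow L^\infty$.)

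The genuine gap is in the coercivity step. Testing \eqref{eq:2.2} with $\phi=\eta\tilde w_\varepsilon$ produces on the left
\[
\int_{\Omega_\varepsilon}\eta\,A^\varepsilon\nabla w_\varepsilon\cdot\nabla w_\varepsilon\,dx
\;+\;\int_{\Omega_\varepsilon}A^\varepsilon\nabla w_\varepsilon\cdot w_\varepsilon\nabla\eta\,dx.
\]
The first integral is only $\geq\mu_0\|e(w_\varepsilon)\|_{L^2(\Sigma_{2\varepsilon}\cap\Omega_\varepsilon)}^2$, since $\eta$ drops to zero on $O_{2\varepsilon}$; Korn's inequality (Lemma~\ref{lemma:2.4}) needs the \emph{full} domain $\Omega_\varepsilon$ to recover $\|\nabla w_\varepsilon\|_{L^2(\Omega_\varepsilon)}^2$. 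The missing piece $\|\nabla w_\varepsilon\|_{L^2(O_{2\varepsilon}\cap\Omega_\varepsilon)}$ equals $\|\nabla(u_\varepsilon-u_0)\|_{L^2(O_{2\varepsilon}\cap\Omega_\varepsilon)}$ (since $\varphi=0$ there), and this carries the uncontrolled term $\|\nabla u_\varepsilon\|_{L^2(O_{2\varepsilon}\cap\Omega_\varepsilon)}$: there is no a~priori $\varepsilon^{1/2}$-smallness of $\nabla u_\varepsilon$ in the boundary layer. Your commutator bound has the same defect---it reduces to $\|\nabla w_\varepsilon\|_{L^2(O_{2\varepsilon}\cap\Omega_\varepsilon)}\|\nabla w_\varepsilon\|_{L^2(\Omega_\varepsilon)}$, which cannot be absorbed.

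The paper avoids this by \emph{not} applying Lemma~\ref{lemma:2.1} with a single test function. Instead it tests the perforated equation \eqref{eq:2.3} with $w_\varepsilon$ itself (admissible since $w_\varepsilon\in H^1(\Omega_\varepsilon,\Gamma_\varepsilon;\mathbb R^d)$, no cut-off needed), and tests the homogenized equation \eqref{eq:2.4} with $\psi_\varepsilon'\tilde w_\varepsilon\in H_0^1(\Omega)$. This asymmetric choice delivers $\int_{\Omega_\varepsilon}A^\varepsilon\nabla w_\varepsilon\cdot\nabla w_\varepsilon\,dx$ exactly on the left, and the boundary-layer loss due to the cut-off lands entirely on the harmless term $\theta\int_\Omega\widehat A\nabla u_0\cdot\nabla[(1-\psi_\varepsilon')\tilde w_\varepsilon]\,dx$, i.e.\ on $\|\nabla u_0\|_{L^2(O_{8\varepsilon})}$ rather than on $\nabla u_\varepsilon$.
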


\begin{proof}
Firstly, we note that $w_{\varepsilon}\in H^{1}(\Omega_{\varepsilon},
\Gamma_{\varepsilon};\mathbb{R}^d)$.
By the extension results stated in Lemma \ref{extensiontheory},
one may extend $w_{\varepsilon}$ from
$H^{1}(\Omega_{\varepsilon}, \Gamma_{\varepsilon};
\mathbb{R}^d)$ to $H^{1}_{0}(\Omega_{0};\mathbb{R}^d)$,
denoted by $\tilde{w}_{\varepsilon}$.
 Then we take $w_{\varepsilon}$ and
 $\psi_{\varepsilon}^{\prime}\tilde{w}_{\varepsilon}$
 as the test function in \eqref{eq:2.3} and \eqref{eq:2.4},
 respectively, in which
 the cut-off function $\psi^\prime_\varepsilon$ is defined in $\eqref{eq:2.1}$.
 By a similar calculation as we did in Lemma \ref{lemma:2.1}, we obtain
\begin{equation}\label{eq:3.2}
\begin{aligned}
  \int_{\Omega_{\varepsilon}}A(x/\varepsilon)\nabla w_{\varepsilon}\nabla w_{\varepsilon}dx
  =&\int_{\Omega}(l_{\varepsilon}^{+}
  -\theta\psi_{\varepsilon}^{\prime})
  F\tilde{w}_{\varepsilon}dx\\
  &~\quad-\theta\int_{\Omega} \widehat{A}\nabla u_{0}
  \nabla[(1-\psi_{\varepsilon}^{\prime})\tilde{w}_{\varepsilon}]dx\\
  &~\qquad \quad+\int_{\Omega}
  [\theta\widehat{A}-l_{\varepsilon}^+A(x/\varepsilon)]
  [\nabla u_{0}-\varphi]\nabla \tilde{w}_{\varepsilon}dx\\
  &\qquad \qquad \qquad-\varepsilon\int_{\Omega} \varpi(x/\varepsilon)
  \nabla \varphi\nabla \tilde{w}_{\varepsilon}dx
  =:I_{1}+I_{2}+I_{3}+I_{4},
  \end{aligned}
\end{equation}
recalling $\varpi= E+
  l^{+}A\tilde{\chi}$ defined in Lemma $\ref{lemma:2.1}$.

We will compute each $I_{i}$ for $i=1,2,3,4$.
The arguments developed for the first term $I_1$ seem to
be new while the techniques used in the remainder terms
are similar to those shown in the case of unperforated domains.
We first handle the term $I_{1}$, and
\begin{equation*}
  I_{1}=\int_{\Omega}(l_{\varepsilon}^{+}-\theta)
  F\tilde{w}_{\varepsilon}\psi_{\varepsilon}^{\prime}dx
  +\int_{\Omega}(1-\psi_{\varepsilon}^{\prime})l_{\varepsilon}^{+}
  F\tilde{w}_{\varepsilon}dx:=I_{11}+I_{12}.
\end{equation*}
Since $\text{supp}(1-\psi_{\varepsilon}^{\prime})
\subseteq O_{8\varepsilon}$,
we have
\begin{equation*}
  \begin{aligned}
  |I_{12}|& \leq \int_{O_{8\varepsilon}}|F\tilde{w}_{\varepsilon}|dx
  \leq \|F\|_{L^{2}(O_{8\varepsilon})}
  \|\tilde{w}_{\varepsilon}\|_{L^{8}(O_{8\varepsilon})}
  \lesssim^{\eqref{pri:2.8}} C\varepsilon \|F\|_{L^{2}({\Omega})}
  \|\nabla \tilde{w}_{\varepsilon}\|_{L^{2}(\Omega)}.
  \end{aligned}
\end{equation*}
To deal with the first term $I_{11}$, we consider the
auxiliary equation:
\begin{equation}\label{auxi1}
  \left\{\begin{aligned}
   -\Delta \Psi(y)&=l^{+}(y)-\theta~~\text{in}~ Y,\\
   \dashint_{Y}\Psi dy&=0,~\Psi\in H^{1}_{\text{per}}(Y).
  \end{aligned}\right.
\end{equation}

On account of $\int_{Y}(l^{+}(y)-\theta) dy=0,$
it is known that \eqref{auxi1} owns an unique solution
$\Psi\in H^{1}_{\text{per}}(Y)$. By interior Schauder's estimates
we obtain $\|\nabla\Psi\|_{C^{1,\alpha}(Y)}\lesssim 1$. This gives
\begin{equation*}
  \begin{aligned}
  |I_{11}|
  & =\Big|-\varepsilon^2
  \int_{\Omega}\Delta_{x}\Psi(x/\varepsilon)
  F\tilde{w}_{\varepsilon}\psi_{\varepsilon}^{\prime}dx\Big|
   =\varepsilon\Big|\int_{\Omega}
  \nabla_y \Psi(y)
  \cdot\nabla(F\tilde{w}_{\varepsilon}
  \psi_{\varepsilon}^{\prime})dx
  \Big|\\
  &\leq \varepsilon\underbrace{\Big|\int_{\Omega}
  \nabla_y \Psi(y)
  \cdot\nabla F\tilde{w}_{\varepsilon}\psi_{\varepsilon}^{\prime}dx
  \Big|}_{I_{11a}}
  +\varepsilon\underbrace{\Big|\int_{\Omega}
  \nabla_y \Psi(y)
  \cdot \nabla(\tilde{w}_{\varepsilon}\psi_{\varepsilon}^{\prime})Fdx
  \Big|}_{I_{11b}},
  \end{aligned}
\end{equation*}
in which $y=x/\varepsilon$.
The easier term is
\begin{equation*}
\begin{aligned}
I_{11b}
&\lesssim
\|F\|_{L^{2}(\Omega)}\Big\{
  \|\nabla\tilde{w}_{\varepsilon}\|_{L^{2}(\Omega)}
+ \varepsilon^{-1}
  \|\tilde{w}_{\varepsilon}\|_{L^{2}(O_{8\varepsilon})}\Big\}
\lesssim^{\eqref{pri:2.8}}
\|F\|_{L^{2}(\Omega)}
  \|\nabla\tilde{w}_{\varepsilon}\|_{L^{2}(\Omega)},
\end{aligned}
\end{equation*}
while we proceed to address the term $I_{11a}$, and
\begin{equation*}
\begin{aligned}
I_{11a}
&\leq  \|\nabla\Psi(\cdot/\varepsilon)
  \cdot\nabla F\|_{H^{-s}(\mathbb{R}^d)}\|\tilde{w}_{\varepsilon}
  \psi_{\varepsilon}^{\prime}\|_{H^{s}_{0}(\Omega)}\\
&\lesssim^{\eqref{pri:2.2}} \varepsilon^{-s} \|\nabla F\|_{H^{-s}(\mathbb{R}^d)}
\|\tilde{w}_{\varepsilon}
  \psi_{\varepsilon}^{\prime}\|_{H^1(\Omega)}\\
&\lesssim \varepsilon^{-s} \|F\|_{H^{1-s}(\mathbb{R}^d)}
\bigg\{
\|\nabla \tilde{w}_{\varepsilon}\|_{L^2(\Omega)}
+\varepsilon^{-1}\|\tilde{w}_\varepsilon\|_{L^2(O_{8\varepsilon})}
\bigg\}\\
&\lesssim^{\eqref{pri:2.8}} \varepsilon^{-s} \|F\|_{H^{1-s}(\mathbb{R}^d)}
\|\nabla \tilde{w}_{\varepsilon}\|_{L^2(\Omega)},
\end{aligned}
\end{equation*}
where we also employ \cite[Proposition 2.2]{DPV} in the second
step, as well as,
Poincar\'e's inequality in the last one.
(In the third step, we use
that $\|\nabla F\|_{H^{-s}(\mathbb{R}^d)}
\leq \|F\|_{H^{1-s}(\mathbb{R}^d)}$,
and it may be observed from Plancherel's identity coupled
with Fourier transform, while the definition of $H^{s}(\mathbb{R}^d)$
with $s\in\mathbb{R}$ via
Fourier transform is equivalent to that given by Gagliardo norm
(see for example \cite[Proposition 3.4]{DPV})).
Plugging
the terms $I_{11a}, I_{11b}$ back into $I_{11}$
and then combining $I_{12}$ leads to
\begin{equation}\label{f:3.3}
I_1 \lesssim \varepsilon^{1-s} \|F\|_{H^{1-s}(\mathbb{R}^d)}
\|\nabla \tilde{w}_{\varepsilon}\|_{L^2(\Omega)}
+ \varepsilon
\|F\|_{L^{2}(\Omega)}
\|\nabla \tilde{w}_{\varepsilon}\|_{L^2(\Omega)}.
\end{equation}

By the upper boundedness of $\widehat{A}$ in Lemma \ref{lemma:2.7},
we have
\begin{equation}\label{f:3.4}
  \begin{aligned}
  |I_{2}|
  &\lesssim \int_{\Omega} |\nabla u_{0}|
  \big|\nabla\big((1-\psi_{\varepsilon}^{\prime})
  \tilde{w}_{\varepsilon}\big)\big|dx
  \lesssim^{\eqref{pri:2.8}}\|\nabla u_{0}\|_{L^{2}(O_{2\varepsilon})}
  \|\nabla\tilde{w}_{\varepsilon}\|_{L^{2}(\Omega)};\\
  |I_{3}|
  &\lesssim \big\|\nabla u_{0}-\varphi\big\|_{L^{2}(\Omega)}
  \|\nabla\tilde{w}_{\varepsilon}\|_{L^{2}(\Omega)};\\
  |I_{4}|
  &\lesssim \varepsilon
  \big\|\varpi(\cdot/\varepsilon)\nabla\varphi
\big\|_{L^2(\Omega)}
\|\nabla\tilde{w}_{\varepsilon}\|_{L^2(\Omega)},
  \end{aligned}
\end{equation}
where we merely use H\"{o}lder's inequality.

Inserting the estimates $\eqref{f:3.3}$, $\eqref{f:3.4}$
into $\eqref{eq:3.2}$,
the desired estimate \eqref{pri:2.5} finally follows from
the elasticity \eqref{eq:1.4} and the estimate $\eqref{pri:2.1}$.
We have completed the proof.
\end{proof}

\noindent\textbf{Proof of Theorem $\ref{thm:2.1}$}.
We first address the estimate $\eqref{pri:1.7}$, and
thanks to Lemma $\ref{lemma:3.3}$ it is reduced to handle
the right-hand side of $\eqref{pri:2.5}$. Recalling
the notation $\varpi=E+l^{+}A\tilde{\chi}$ imposed in
Lemma $\ref{lemma:2.1}$, as well as, the cut-off function
$\psi_\varepsilon$ satisfying $\eqref{eq:2.1}$,  we have
\begin{equation}\label{f:3.1}
\begin{aligned}
\|\varpi(\cdot/\varepsilon)\nabla
S_{\varepsilon}(\psi_{\varepsilon}\nabla u_{0})
\|_{L^{2}(\Omega)}
&\lesssim^{\eqref{pri:2.3}}
\|\nabla(\psi_\varepsilon\nabla u_0)\|_{L^2(\mathbb{R}^d)}\\
&\lesssim
\|\nabla^{2} u_{0}\|_{L^{2}
(\Omega\backslash O_{3\varepsilon})}
+\varepsilon^{-1}\|\nabla u_{0}\|_{L^{2}(O_{4\varepsilon})},
\end{aligned}
\end{equation}
and
\begin{equation}\label{f:3.2}
  \begin{aligned}
 \|\nabla u_{0}-\varphi\|_{L^{2}(\Omega)}
 & \lesssim
 \|\psi_{\varepsilon}\nabla u_{0}-S_{\varepsilon}(\psi_{\varepsilon}\nabla u_{0})\|_{L_{2}(\Omega)}+\|(1-\psi_{\varepsilon})\nabla u_{0}\|_{L_{2}(\Omega)}\\
&\lesssim^{\eqref{pri:2.4}} \varepsilon
\|\nabla(\psi_{\varepsilon}\nabla u_{0})\|_{L_{2}(\Omega)}
+\|(1-\psi_{\varepsilon})\nabla u_{0}\|_{L_{2}(\Omega)}\\
&\lesssim \varepsilon\|\nabla^{2}
u_{0}\|_{L^{2}(\Omega\backslash O_{3\varepsilon})}+
\|\nabla  u_{0}\|_{L^{2}(O_{4\varepsilon})}.
  \end{aligned}
\end{equation}

Thus, plugging the estimates $\eqref{f:3.1}$ and $\eqref{f:3.2}$
back into $\eqref{pri:2.5}$ leads to
\begin{equation}\label{pri:3.1}
\|\nabla w_\varepsilon\|_{L^2(\Omega_{\varepsilon})}
\lesssim
\varepsilon^{1-s}\|F\|_{H^{1-s}(\mathbb{R}^d)}
+\Big\{\|\nabla u_0\|_{L^2(O_{4\varepsilon})}
+ \varepsilon\|\nabla^2 u_0\|_{L^2(
\Omega\backslash O_{3\varepsilon})}
+\varepsilon\|F\|_{L^2(\Omega)}\Big\},
\end{equation}
and this coupled with estimates $\eqref{pri:9.2}$,
$\eqref{pri:9.1}$ gives
the desired estimate
$\eqref{pri:1.7}$.

Then we proceed
to show the estimate $\eqref{pri:1.4}$. To do so, setting
$\tilde{w}_\varepsilon = P_\varepsilon(w_\varepsilon)$,
it follows from the estimate $\eqref{pri:2.1}$ that
\begin{equation*}
\begin{aligned}
\|\nabla w_\varepsilon\|_{L^2(\Omega_\varepsilon)}
\gtrsim\|\nabla \tilde{w}_\varepsilon\|_{L^2(\Omega_0)}
&\geq \|\nabla \tilde{w}_\varepsilon\|_{L^{\frac{2d}{d+1}}(\Omega_0)}\\
&\gtrsim\|\tilde{w}_\varepsilon\|_{L^{\frac{2d}{d-1}}(\Omega_0)}
\geq \|\tilde{w}_\varepsilon\|_{L^{\frac{2d}{d-1}}(\Omega_\varepsilon)}
=\|w_\varepsilon\|_{L^{\frac{2d}{d-1}}(\Omega_\varepsilon)},
\end{aligned}
\end{equation*}
where the second line is due to Sobolev's inequality. This implies
\begin{equation*}
\begin{aligned}
\|u_\varepsilon - u_0\|_{L^{\frac{2d}{d-1}}(\Omega_\varepsilon)}
&\leq \|\nabla w_\varepsilon\|_{L^2(\Omega_\varepsilon)}
+ \varepsilon\|\chi_{\varepsilon}
S_{\varepsilon}(\psi_{\varepsilon}\nabla u_{0})
\|_{L^{\frac{2d}{d-1}}(\Omega_\varepsilon)}\\
&\lesssim^{\eqref{pri:3.1},\eqref{pri:2.3}}
\Big\{\|\nabla u_{0}\|_{L^2(O_{4\varepsilon})}
+\varepsilon\|\nabla^{2} u_{0}\|_{L^2(\Omega\backslash O_{3\varepsilon})}
+\varepsilon\|\nabla u_0\|_{L^{\frac{2d}{d-1}}
(\Omega)}\Big\}\\
&\lesssim \varepsilon^{\frac{1}{2}}\|g\|_{H^1(\partial\Omega)},
\end{aligned}
\end{equation*}
where the last inequality follows from
Theorem \ref{app:thm:2} and this completes the whole proof.
\qed

\begin{corollary}
Assume the same conditions as in Theorem $\ref{thm:2.1}$.
Let $\tilde{w}_\varepsilon$ be the extension of $w_\varepsilon$
in the way of Lemma $\ref{extensiontheory}$, and
$\|F\|_{H^{1-s}(\mathbb{R}^d)} + \|g\|_{H^1(\partial\Omega)}
= 1$ with $0\leq s\leq 1$.
Then there holds
\begin{equation}\label{pri:3.2}
 \|\tilde{w}_\varepsilon\|_{H^{1}(\mathbb{R}^d)}
 \lesssim \max\{\varepsilon^{1-s},\varepsilon^{1/2}\},
\end{equation}
where the up to constant is independent of $s$ and $\varepsilon$.
\end{corollary}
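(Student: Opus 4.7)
The proof is essentially a direct chaining of the extension property with the $H^{1}$-error estimate $\eqref{pri:1.7}$. My plan is to first control the full $H^{1}(\mathbb{R}^{d})$-norm of the extension by the gradient of $w_{\varepsilon}$ on the perforated domain, and then invoke Theorem~$\ref{thm:2.1}$ together with a trivial Sobolev embedding.

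First, since $\tilde{w}_{\varepsilon} = P_{\varepsilon}(w_{\varepsilon}) \in H_{0}^{1}(\Omega_{0};\mathbb{R}^{d})$, extending by zero gives $\tilde{w}_{\varepsilon}\in H^{1}(\mathbb{R}^{d};\mathbb{R}^{d})$ with $\|\tilde{w}_{\varepsilon}\|_{H^{1}(\mathbb{R}^{d})}=\|\tilde{w}_{\varepsilon}\|_{H^{1}(\Omega_{0})}$. Because $\Omega_{0}$ is bounded and $\tilde{w}_{\varepsilon}$ has vanishing trace on $\partial\Omega_{0}$, Poincar\'e's inequality yields $\|\tilde{w}_{\varepsilon}\|_{L^{2}(\Omega_{0})} \lesssim \|\nabla \tilde{w}_{\varepsilon}\|_{L^{2}(\Omega_{0})}$, with constant depending only on $\mathrm{diam}(\Omega_{0})$ (hence independent of $\varepsilon$ up to a harmless factor since $\mathrm{dist}(\partial\Omega_{0},\Omega)\sim 10\varepsilon$ and $\Omega$ is fixed). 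Then the second inequality in $\eqref{pri:2.1}$ from Lemma~$\ref{extensiontheory}$ gives
\begin{equation*}
\|\tilde{w}_{\varepsilon}\|_{H^{1}(\mathbb{R}^{d})}
\;\lesssim\; \|\nabla \tilde{w}_{\varepsilon}\|_{L^{2}(\Omega_{0})}
\;\lesssim\; \|\nabla w_{\varepsilon}\|_{L^{2}(\Omega_{\varepsilon})}.
\end{equation*}

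Next, Theorem~$\ref{thm:2.1}$, specifically the estimate $\eqref{pri:1.7}$, bounds the right-hand side by
\begin{equation*}
\|\nabla w_{\varepsilon}\|_{L^{2}(\Omega_{\varepsilon})}
\;\lesssim\; \varepsilon^{1-s}\|F\|_{H^{1-s}(\mathbb{R}^{d})}
+ \varepsilon^{1/2}\bigl(\|F\|_{L^{2}(\Omega)}+\|g\|_{H^{1}(\partial\Omega)}\bigr).
\end{equation*}
The only point to verify is that the $\varepsilon^{1/2}$-term is also controlled by $\max\{\varepsilon^{1-s},\varepsilon^{1/2}\}$ after imposing the normalization $\|F\|_{H^{1-s}(\mathbb{R}^{d})}+\|g\|_{H^{1}(\partial\Omega)}=1$. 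Since $0\leq s\leq 1$, the Sobolev embedding $H^{1-s}(\mathbb{R}^{d})\hookrightarrow L^{2}(\mathbb{R}^{d})$ (trivially via the Fourier multiplier $(1+|\xi|^{2})^{(1-s)/2}\geq 1$) yields $\|F\|_{L^{2}(\Omega)}\leq \|F\|_{L^{2}(\mathbb{R}^{d})}\leq \|F\|_{H^{1-s}(\mathbb{R}^{d})}$. Combining these observations,
\begin{equation*}
\|\nabla w_{\varepsilon}\|_{L^{2}(\Omega_{\varepsilon})}
\;\lesssim\; \varepsilon^{1-s}+\varepsilon^{1/2}
\;\lesssim\; \max\bigl\{\varepsilon^{1-s},\varepsilon^{1/2}\bigr\},
\end{equation*}
which, together with the previous display, gives $\eqref{pri:3.2}$.

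There is no genuine obstacle; the only sensitive point is to avoid using the first inequality of $\eqref{pri:2.1}$ (which would require estimating $\|w_{\varepsilon}\|_{L^{2}(\Omega_{\varepsilon})}$ directly and force us to argue Poincar\'e with a constant uniform in the perforation) and instead rely on the gradient-only inequality combined with the global Poincar\'e estimate on the fixed bounded domain $\Omega_{0}$. Once this is observed, the corollary is essentially a reformulation of Theorem~$\ref{thm:2.1}$ with the chosen normalization.
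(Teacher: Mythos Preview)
Your proof is correct and follows essentially the same route as the paper, which simply states that the result ``directly follows from the extension result $\eqref{pri:2.1}$ and the estimate $\eqref{pri:1.7}$''. Your version is a faithful unpacking of that sentence: you use the gradient bound in $\eqref{pri:2.1}$ together with Poincar\'e on the fixed enlarged domain $\Omega_0$, then invoke $\eqref{pri:1.7}$ and the trivial embedding $H^{1-s}\hookrightarrow L^2$ to absorb the $\|F\|_{L^2(\Omega)}$ term under the normalization.
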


\begin{proof}
The estimate $\eqref{pri:3.2}$ directly follows from the extension
result $\eqref{pri:2.1}$ and the estimate $\eqref{pri:1.7}$,
and we complete the proof.
\end{proof}


\section{Convergence Rates in $H^{-s}$-norm}\label{sec:4}
In this section, we manage to calculate some sharp error
estimates by a duality argument, and then appealing to
interpolation techniques one may accelerate the convergence
rates derived from
energy estimates.
This method is insensitive to the smoothness assumption
on domains.
The adjoint operator of $\mathcal{L}_{\varepsilon}$
is written by $\mathcal{L^{*}_{\varepsilon}}
:=-\nabla\cdot A^{*}(\cdot/\varepsilon)\nabla$,
and there holds $A^*=A$
according to \eqref{eq:1.4}. In order to show the duality
argument (independent of the symmetry condition), we still keep the notation $\mathcal{L}^*_\varepsilon$ in the proof.
For any $\Phi\in H^{\sigma}(\mathbb{R}^d;\mathbb{R}^d)$ with
$(1/2)\leq \sigma\leq 1$,
we have the adjoint problem:
\begin{equation}\label{adjoint}
\left\{\begin{aligned}
  \mathcal{L^{*}_{\varepsilon}}(\phi_{\varepsilon})&=\Phi &\qquad& \text{in~}\Omega_{\varepsilon},\\
  \sigma^*_\varepsilon(\phi_{\varepsilon})&
  =0&\qquad&\text{on~}S_{\varepsilon},\\
  \phi_{\varepsilon}&=0 &\qquad&\text{on~} \Gamma_{\varepsilon},
  \end{aligned}\right.
\end{equation}
where $\sigma^*_\varepsilon:=n\cdot
A^*(\cdot/\varepsilon)\nabla$ is
the related conormal derivative operator.
The corresponding homogenized equation is given by
\begin{equation}\label{eq:4.2}
 \left\{\begin{aligned}
\mathcal{L}^*_{0} \phi_0 \equiv
-\nabla\cdot \widehat{A^{*}}\nabla \phi_0 &= \Phi &\qquad&\text{in}~~\Omega, \\
 \phi_0 &= 0 &\qquad& \text{on}~\partial\Omega,
\end{aligned}\right.
\end{equation}
where the matrix $\widehat{A^{*}}$ is defined by
\begin{equation*}
\widehat{a^{*}}_{ij}^{\alpha\beta} = \dashint_{Y\cap\omega} a_{ik}^{*\alpha\gamma}(y)\frac{\partial \mathbb{X}_{j}^{*\gamma\beta}}{\partial y_{k}} dy,
\end{equation*}
in which $\mathbb{X^*}^{\beta}_{j}=\{\mathbb{X^*}^{\gamma\beta}_{j}\}_{1\leq\gamma\leq d}$ is the weak solution to the following cell problem
\begin{equation}\label{pde:1.2*}
\left\{\begin{aligned}
& \nabla \cdot A^*\nabla\mathbb{X}^{*\beta}_{j} = 0 \qquad\text{in}~ Y\cap \omega,\\
& \vec{n}\cdot A^*\nabla\mathbb{X}^{*\beta}_{j}=0 \qquad\text{on}~Y\cap\partial\omega,\\
&\mathbb{X}_{j}^{*\beta}-y_{j}e^{\beta}:=\chi_{j}^{*\beta}\in
H^1_{\text{per}}(\omega;\mathbb{R}^d), \quad& \dashint_{Y\cap \omega}&\chi_{j}^{*\beta} dy = 0.
\end{aligned}\right.
\end{equation}

Let $z_{\varepsilon}=\phi_{\varepsilon}-\phi_{0}-\varepsilon
\chi^{*}(x/\varepsilon)\varphi^*$
be the related first order approximating corrector with $\varphi^*
=S_{\varepsilon}(\psi_{\varepsilon}\nabla \phi_{0})$,
and it follows from the estimate $\eqref{pri:1.7}$ that
\begin{equation}\label{sharp0}
\|\nabla z_{\varepsilon}\|_{L^{2}(\Omega_{\varepsilon})}
\lesssim \varepsilon^{1/2}\|\Phi\|_{H^{\sigma}(\mathbb{R}^d)}.
\end{equation}

Later on, we will employ the equality
$\phi_\varepsilon=z_{\varepsilon}
+\phi_{0}+\varepsilon
\chi^{*}(x/\varepsilon)\varphi^*$ to show
\begin{equation}\label{sharp9}
\begin{aligned}
  \int_{\Omega_{\varepsilon}}w_{\varepsilon}\Phi dx
 &=\int_{\Omega_{\varepsilon}} A_{ij}(x/\varepsilon)\nabla_{x_{j}}
 w_{\varepsilon}\nabla_{x_{i}}\phi_{\varepsilon}dx\\
 &=
 \underbrace{\int_{\Omega_{\varepsilon}}A(x/\varepsilon)
 \nabla w_{\varepsilon}
 \nabla[\phi_{0}+
 \varepsilon\chi^*_{\varepsilon}\varphi^{*}]dx}_{R_1}
 +\underbrace{\int_{\Omega_{\varepsilon}}A(x/\varepsilon)
 \nabla w_{\varepsilon}\nabla z_{\varepsilon}dx}_{R_2},
  \end{aligned}
\end{equation}
where $w_{\varepsilon}=u_{\varepsilon}
 -u_{0}-\varepsilon\chi_{\varepsilon}\varphi$
 with $\varphi=S_{\varepsilon}(\psi_{\varepsilon}\nabla u_{0})$
 is defined in $\eqref{eq:3.1}$.
 This together with $\eqref{f:4.1}$ leads to
 \begin{equation}\label{eqkey}
 \begin{aligned}
 \int_{\mathbb{R}^d} \theta\tilde{w}_\varepsilon\Phi dx
&= \int_{\Omega_{\varepsilon}}A(x/\varepsilon)
 \nabla w_{\varepsilon}
 \nabla[\phi_{0}+
 \varepsilon\chi^*_{\varepsilon}\varphi^{*}]dx
 +\int_{\Omega_{\varepsilon}}A(x/\varepsilon)
 \nabla w_{\varepsilon}\nabla z_{\varepsilon}dx\\
&+ \int_{\Omega} (\theta-l_\varepsilon^+)
 \tilde{w}_\varepsilon\Phi dx
 + \int_{\Omega_0\setminus\Omega}
 \theta\tilde{w}_\varepsilon\Phi dx\\
&:= R_1 + R_2 + R_3 + R_4.
 \end{aligned}
 \end{equation}

 Obviously, obtaining the quantity
 $\|\tilde{w}_\varepsilon\|_{H^{-\sigma}(\mathbb{R}^d)}$
 may be reduced
 to estimate the right-hand side of $\eqref{eqkey}$,
 and we
 state the main result of this section as follows.
\begin{theorem}[duality argument I]\label{thm:3.1}
Let $\Omega\subset\mathbb{R}^d$ be a Lipschitz domain.
Suppose that $\mathcal{L}_{\varepsilon}$
and $\omega$ satisfy the hypothesises
\emph{(H1)} and \emph{(H2)}. The
given data $F\in H^{1}(\Omega;\mathbb{R}^d)$
and $g\in H^1(\partial\Omega;\mathbb{R}^d)$ are assumed to
meet the condition
$$
\Big(
\int_{\Omega}|\nabla F|^2\delta^2 dx\Big)^{1/2}
+\|F\|_{L^{2}(\Omega)}
+ \|g\|_{H^{1}(\partial\Omega)} = 1.$$
Let $w_\varepsilon$ be given in $\eqref{eq:3.1}$, and
$\tilde{w}_\varepsilon$ be the extension
of $w_\varepsilon$ in the way of Lemma
$\ref{extensiontheory}$.
Then, for any
$\Phi\in H^{\sigma}(\mathbb{R}^d;\mathbb{R}^d)$ with
$(1/2)\leq \sigma\leq 1$ and
$\|\Phi\|_{H^{\sigma}(\mathbb{R}^d)} = 1$, there hold
the weak formulation $\eqref{eqkey}$
and
the following estimate
\begin{equation}\label{pri:4.1}
  \bigg|\int_{\mathbb{R}^d} \theta\tilde{w}_\varepsilon\Phi dx\bigg|
  \lesssim \varepsilon\ln(1/\varepsilon),
\end{equation}
where the up to constant depends on
$\mu_0,\mu_1,d$ and the characters of $\Omega$ and $\omega$.
 \end{theorem}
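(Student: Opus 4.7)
My plan is to first establish the decomposition $\eqref{eqkey}$ and then bound each of the four summands $R_1,\dots,R_4$ by $O(\varepsilon\ln(1/\varepsilon))$. The decomposition is purely algebraic and follows the outline already sketched in $\eqref{f:4.1}$--$\eqref{sharp9}$: separate the integration region into $\Omega$ and $\Omega_0\setminus\Omega$ (producing $R_4$), exchange $\theta\tilde w_\varepsilon\mapsto l_\varepsilon^+\tilde w_\varepsilon=w_\varepsilon$ on $\Omega$ (producing the oscillating remainder $R_3$), test the adjoint problem $\eqref{adjoint}$ against $w_\varepsilon\in H^1(\Omega_\varepsilon,\Gamma_\varepsilon;\mathbb{R}^d)$ to convert the main piece to $\int_{\Omega_\varepsilon}A(x/\varepsilon)\nabla w_\varepsilon\nabla\phi_\varepsilon\,dx$, and finally insert the splitting $\phi_\varepsilon=(\phi_0+\varepsilon\chi^*_\varepsilon\varphi^*)+z_\varepsilon$ to obtain $R_1+R_2$.

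The three auxiliary pieces are comparatively easy. For $R_2$, Cauchy--Schwarz together with $\eqref{pri:1.7}$ for $w_\varepsilon$ and $\eqref{sharp0}$ for $z_\varepsilon$ yields $|R_2|\lesssim\varepsilon^{1/2}\cdot\varepsilon^{1/2}=\varepsilon$, where the weighted hypothesis on $F$ furnishes the $\varepsilon^{1/2}$-decay of $\|\nabla w_\varepsilon\|_{L^2}$ (after interpolating to an $H^{1-s}$-bound with $s=1/2$). For $R_3$, I would mimic the treatment of $I_{11}$ in Lemma $\ref{lemma:3.3}$: solve the cell problem $\eqref{auxi1}$, write $\theta-l_\varepsilon^+=\varepsilon^2\Delta_x\Psi(x/\varepsilon)$, integrate by parts twice, and absorb the resulting oscillating factor via Lemma $\ref{lemma:2.3}$ against $\|\Phi\|_{H^{1/2}}\lesssim 1$. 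For $R_4$, Lemma $\ref{lemma:2.5}$ gives $\|\tilde w_\varepsilon\|_{L^2(\Omega_0\setminus\Omega)}\lesssim\varepsilon\|\nabla\tilde w_\varepsilon\|_{L^2}\lesssim\varepsilon^{3/2}$, which paired with a boundary trace--type bound $\|\Phi\|_{L^2(\Omega_0\setminus\Omega)}\lesssim\varepsilon^{1/2}\|\Phi\|_{H^{1/2}}$ yields $|R_4|\lesssim\varepsilon^2$.

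The bulk of the argument is $R_1$. I would apply Lemma $\ref{lemma:2.1}$ with the test function $\phi:=\phi_0+\varepsilon\chi^*_\varepsilon\varphi^*\in H_0^1(\Omega;\mathbb{R}^d)$, which is admissible because $\varphi^*=S_\varepsilon(\psi_\varepsilon\nabla\phi_0)$ is supported in the co-layer $\Sigma_{5\varepsilon/2}$ (forcing the corrector piece to vanish near $\partial\Omega$). This rewrites
\begin{equation*}
R_1=\int_\Omega(l_\varepsilon^+-\theta)F\phi\,dx+\int_\Omega(\theta\widehat{A}-l_\varepsilon^+A^\varepsilon)(\nabla u_0-\varphi)\nabla\phi\,dx-\varepsilon\int_\Omega\varpi(x/\varepsilon)\nabla\varphi\nabla\phi\,dx.
\end{equation*}
The first summand is handled again through the auxiliary equation $\eqref{auxi1}$, the weighted hypothesis $\int|\nabla F|^2\delta^2dx<\infty$ being precisely calibrated so that the resulting integration by parts balances with the weighted second-derivative estimate on $\phi_0$ from Theorem $\ref{app:thm:2}$. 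For the second and third summands, I would split into the co-layer $\Sigma_{4\varepsilon}$ (where $\psi_\varepsilon=1$ and Lemmas $\ref{lemma:2.10}$, $\ref{lemma:2.04}$ provide the full two-scale cancellation against the weight $\delta$) and the boundary layer $O_{4\varepsilon}$ (where one pays with $\|\nabla u_0\|_{L^2(O_{4\varepsilon})}+\|\nabla\phi_0\|_{L^2(O_{4\varepsilon})}\lesssim\varepsilon^{1/2}$ from Theorem $\ref{app:thm:2}$).

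The central obstacle is the logarithmic loss. The factor $\ln(1/\varepsilon)$ emerges from the weighted second-order bounds
\begin{equation*}
\int_{\Sigma_\varepsilon}|\nabla^2 u_0|^2\delta^2\,dx+\int_{\Sigma_\varepsilon}|\nabla^2\phi_0|^2\delta^2\,dx\lesssim\ln(1/\varepsilon),
\end{equation*}
which are sharp for effective elasticity systems on a Lipschitz domain and which are supplied by Theorem $\ref{app:thm:2}$. This single $\ln(1/\varepsilon)$ propagates through $R_1$ and is the precise reason the method delivers $O(\varepsilon\ln(1/\varepsilon))$ rather than the clean $O(\varepsilon)$ available when $\partial\Omega$ is smoother.
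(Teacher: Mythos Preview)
Your proposal is correct and follows the paper's route exactly: the same four-term decomposition $\eqref{eqkey}$, with $R_2,R_3,R_4$ handled just as you describe and $R_1$ reduced via Lemma~\ref{lemma:2.1} (this is precisely the content of Lemma~\ref{lemma:3.2}) to the three integrals that are then split into layer and co-layer parts and controlled through Lemmas~\ref{lemma:2.10},~\ref{lemma:2.04} together with Theorem~\ref{app:thm:2}. One small correction to your closing paragraph: the logarithm actually arises from the $\delta$-weighted bound $\big(\int_{\Sigma_{\varepsilon}}|\nabla^2 u_0|^2\,\delta\,dx\big)^{1/2}\lesssim\ln^{1/2}(1/\varepsilon)$ paired with the $\delta^{-1}$-weighted bound on $\nabla\phi$ (each contributing a factor $\ln^{1/2}$), not from a $\delta^{2}$-weighted second-order estimate, which is in fact $O(1)$.
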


\begin{remark}
If $\Omega\subset\mathbb{R}^d$ is a $C^{1,1}$ domain, then
there holds the sharp error estimate
$\big|\int_{\mathbb{R}^d} \theta\tilde{w}_\varepsilon\Phi dx\big|
\lesssim \varepsilon$, provided
the boundary data $g\in H^{3/2}(\partial\Omega;\mathbb{R}^d)$.
\end{remark}

\begin{corollary}\label{cor:4.1}
Assume the same conditions as in Theorem $\ref{thm:3.1}$,
and fix $\sigma=1/2$ therein.
Let $w_\varepsilon$ be given in $\eqref{eq:3.1}$, and
$\tilde{w}_\varepsilon$ be the corresponding extension.
Then we have the following results.
\begin{enumerate}
  \item Convergence rates in $H^{-1/2}(\mathbb{R}^d)$, i.e.,
\begin{equation}\label{pri:4.2}
\|\tilde{w}_\varepsilon\|_{H^{-1/2}(\mathbb{R}^d)}
\lesssim \varepsilon\ln(1/\varepsilon).
\end{equation}
  \item Convergence rates in $L^{2}(\mathbb{R}^d)$, i.e.,
\begin{equation}\label{pri:4.3}
\|\tilde{w}_\varepsilon\|_{L^{2}(\mathbb{R}^d)}
\lesssim \varepsilon^{\frac{5}{6}}\ln^{\frac{2}{3}}(1/\varepsilon).
\end{equation}
\end{enumerate}
Here the up to constant is independent of $\varepsilon$.
\end{corollary}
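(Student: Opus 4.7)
The plan is to derive both estimates from Theorem~\ref{thm:3.1} with the endpoint choice $\sigma=1/2$, supplemented by the $H^1$ energy bound \eqref{pri:3.2} on $\tilde{w}_\varepsilon$ and standard Sobolev interpolation on $\mathbb{R}^d$. Throughout, I keep the same normalization on $F$ and $g$ as in Theorem~\ref{thm:3.1}, so that all the estimates below fit together with constants independent of $\varepsilon$.

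For part (1), I would argue by straight duality. Since $\tilde{w}_\varepsilon\in H_0^1(\Omega_0;\mathbb{R}^d)$ is compactly supported, it defines a tempered distribution on $\mathbb{R}^d$, and the $H^{-1/2}$-norm can be computed as
\[
\|\tilde{w}_\varepsilon\|_{H^{-1/2}(\mathbb{R}^d)}
= \sup_{\substack{\Phi\in H^{1/2}(\mathbb{R}^d;\mathbb{R}^d)\\ \|\Phi\|_{H^{1/2}}=1}}
\Bigl|\int_{\mathbb{R}^d}\tilde{w}_\varepsilon\,\Phi\,dx\Bigr|.
\]
Applying \eqref{pri:4.1} with $\sigma=1/2$ to each admissible $\Phi$ and dividing by the fixed positive constant $\theta=|Y\cap\omega|$ yields \eqref{pri:4.2}. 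This step uses nothing beyond the already-established weak formulation \eqref{eqkey} and the bound in Theorem~\ref{thm:3.1}.

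For part (2), I would invoke the real/complex interpolation identity $L^2(\mathbb{R}^d)=[H^{-1/2}(\mathbb{R}^d),H^1(\mathbb{R}^d)]_{1/3}$, read off from the Bessel-potential characterization $\|f\|_{H^s}^2=\int(1+|\xi|^2)^s|\widehat{f}(\xi)|^2 d\xi$; the index $1/3$ is forced by $0=(1-\theta)(-\tfrac12)+\theta\cdot 1$. This delivers the interpolation inequality
\[
\|\tilde{w}_\varepsilon\|_{L^2(\mathbb{R}^d)}
\lesssim
\|\tilde{w}_\varepsilon\|_{H^{-1/2}(\mathbb{R}^d)}^{2/3}\,
\|\tilde{w}_\varepsilon\|_{H^{1}(\mathbb{R}^d)}^{1/3}.
\]
Inserting \eqref{pri:4.2} for the first factor and \eqref{pri:3.2} with $s=0$ (i.e.\ $\|\tilde{w}_\varepsilon\|_{H^1(\mathbb{R}^d)}\lesssim\varepsilon^{1/2}$) for the second, the arithmetic $\tfrac{2}{3}\cdot 1+\tfrac{1}{3}\cdot\tfrac{1}{2}=\tfrac{5}{6}$ on the powers of $\varepsilon$ and $\tfrac{2}{3}$ on the logarithm produces exactly $\varepsilon^{5/6}\ln^{2/3}(1/\varepsilon)$, which is \eqref{pri:4.3}.

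Neither step involves any real obstacle beyond bookkeeping: the content is already carried by Theorem~\ref{thm:3.1} and the energy estimate \eqref{pri:3.2}. The only mild subtlety is to check that the hypotheses in Corollary~\ref{cor:4.1} (in particular the normalization $\|F\|_{H^{1-s}(\mathbb{R}^d)}+\|g\|_{H^1(\partial\Omega)}=1$ for $s=0$ together with the square-function condition on $F$) are simultaneously compatible with both inputs, so that the interpolated constant is genuinely independent of $\varepsilon$; this is clear because Theorem~\ref{thm:3.1} is stated under the stronger square-function normalization while \eqref{pri:3.2} only needs the $H^{1-s}$ bound with $s=0$, which is weaker.
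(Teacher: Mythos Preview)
Your proposal is correct and follows essentially the same route as the paper: part~(1) by duality from \eqref{pri:4.1} (absorbing the constant $\theta$), and part~(2) by the interpolation inequality $\|\tilde{w}_\varepsilon\|_{L^2}\le\|\tilde{w}_\varepsilon\|_{H^{-1/2}}^{2/3}\|\tilde{w}_\varepsilon\|_{H^1}^{1/3}$ combined with \eqref{pri:4.2} and \eqref{pri:3.2}. Your remark on the normalization compatibility is a fair bookkeeping point; the paper handles it the same way (implicitly relying on $\|F\|_{H^{1/2}}\lesssim 1$, which is already used inside the proof of Theorem~\ref{thm:3.1} for the $R_2$ term).
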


\begin{proof}
The estimate $\eqref{pri:4.2}$ follows from
$\eqref{pri:4.1}$ immediately by noting the arbitrariness of $\Phi$,
and the up to constant additionally depends
on the constant $\theta$.
To show the estimate $\eqref{pri:4.3}$, we seek for
an interpolation's inequality argument
(see \cite[Proposition 1.52]{BHCD}). Thus,
there holds
\begin{equation*}
\|\tilde{w}_\varepsilon\|_{L^{2}(\mathbb{R}^d)}
\leq \|\tilde{w}_\varepsilon\|_{H^{-1/2}(\mathbb{R}^d)}^{\frac{2}{3}}
\|\tilde{w}_\varepsilon\|_{H^{1}(\mathbb{R}^d)}^{\frac{1}{3}}
\lesssim \varepsilon^{\frac{5}{6}}\ln^{\frac{2}{3}}(1/\varepsilon),
\end{equation*}
where we employ the estimates $\eqref{pri:3.2}$
and $\eqref{pri:4.2}$ in the last inequality,
and this ends the proof.
\end{proof}


To handle $R_1$ in the right-hand side of $\eqref{eqkey}$,
we have the following result.

\begin{lemma}\label{lemma:3.2}
Given $F, \Phi\in L^2(\Omega;\mathbb{R}^d)$ and
$g\in H^1(\partial\Omega;\mathbb{R}^d)$,
let $w_\varepsilon$ be given in $\eqref{eq:3.1}$,
and the weak solution $\phi_0$ be associated
with $\Phi$ by $\eqref{eq:4.2}$.
Assume $\chi^{*}_{\varepsilon}$ is the corrector satisfying
the equation $\eqref{pde:1.2*}$ and
$\varphi^{*}=S_{\varepsilon}
(\psi_{\varepsilon}^\prime\nabla \phi_{0})$,
where $\psi_{\varepsilon}^\prime$ is cut-off function
given in $\eqref{eq:2.1}$.
Then
one may have
\begin{equation}\label{pri:4.5}
\begin{aligned}
\Big|\int_{\Omega_{\varepsilon}}
A(x/\varepsilon)
&\nabla w_{\varepsilon}
\nabla \big(\phi_{0}
+ \varepsilon\chi^{*}_{\varepsilon}\varphi^{*}\big)dx
\Big|\\
&\lesssim \varepsilon \ln(1/\varepsilon)\|\Phi\|_{L^{2}(\Omega)}
\bigg\{
\Big(\int_{\Omega}|\nabla F|^2\delta^2 dx
+\|F\|_{L^{2}(\Omega)}
+\|g\|_{H^{1}(\partial\Omega)}
\bigg\},
\end{aligned}
\end{equation}
where the up to constant is independent of $\varepsilon$.
\end{lemma}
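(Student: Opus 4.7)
The plan is to invoke the weak formulation of Lemma~\ref{lemma:2.1} with the specific choice of test function $\phi := \phi_0 + \varepsilon\chi^{*}(\cdot/\varepsilon)\varphi^{*}$. This lies in $H_0^1(\Omega;\mathbb{R}^d)$ since the cut-off $\psi_\varepsilon^{\prime}$ embedded in $\varphi^{*} = S_\varepsilon(\psi_\varepsilon^{\prime}\nabla\phi_0)$ forces $\varphi^{*}$ (and hence the second summand) to vanish in a boundary strip of width comparable to $\varepsilon$. Substituting $\phi$ into \eqref{eq:2.2} decomposes the left-hand side of \eqref{pri:4.5} as $T_1 + T_2 + T_3$ corresponding to the three oscillating structures: $T_1$ involves the scalar $l_\varepsilon^+ - \theta$ paired with $F\phi$, $T_2$ involves the matrix $\theta\widehat{A} - l_\varepsilon^+A^\varepsilon$ against $(\nabla u_0 - \varphi)\nabla\phi$, and $T_3 = -\varepsilon\int \varpi(\cdot/\varepsilon)\nabla\varphi\nabla\phi$ is the explicit remainder.

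For $T_1$, I would split according to the two summands of $\phi$. On the principal piece $\int(l_\varepsilon^+ - \theta)F\phi_0$ I introduce the auxiliary periodic function $\Psi$ from \eqref{auxi1}, writing $l_\varepsilon^+-\theta = -\varepsilon^2\Delta_x\Psi(\cdot/\varepsilon)$, and integrate by parts twice to extract a factor $\varepsilon$ and distribute one derivative onto either $F$ or $\phi_0$ via Leibniz. The $\nabla F$-piece, paired with $\phi_0$, is handled by Hardy's inequality $\|\phi_0/\delta\|_{L^2(\Omega)}\lesssim\|\nabla\phi_0\|_{L^2(\Omega)}$, which is precisely what exposes the weighted data $\bigl(\int_\Omega|\nabla F|^2\delta^2\,dx\bigr)^{1/2}$; the complementary $F$-piece paired with $\nabla\phi_0$ yields the $\|F\|_{L^2}$ contribution. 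The remaining $\varepsilon\int(l_\varepsilon^+-\theta)F\chi^{*}(\cdot/\varepsilon)\varphi^{*}$ already carries an explicit $\varepsilon$ and is dispatched by Lemmas~\ref{lemma:2.2} and~\ref{lemma:2.10}. Term $T_3$ is similar: its explicit $\varepsilon$, the bound $\|\varpi(\cdot/\varepsilon)\nabla\varphi\|_{L^2}\lesssim\|\nabla(\psi_\varepsilon\nabla u_0)\|_{L^2}$ from Lemma~\ref{lemma:2.2}, and the elementary $\|\nabla\phi\|_{L^2}\lesssim\|\Phi\|_{L^2}$ combine to give the desired bound without any logarithmic loss.

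The decisive term is $T_2$, which carries the heart of the duality. I would expand $\nabla\phi = \nabla\phi_0 + (\nabla_y\chi^{*})(\cdot/\varepsilon)\varphi^{*} + \varepsilon\chi^{*}(\cdot/\varepsilon)\nabla\varphi^{*}$ and rearrange so that the oscillating combination
\[
b(y) := \theta\widehat{A} - l^+(y)A(y) - l^+(y)A(y)(\nabla_y\chi^{*})(y)
\]
appears contracted against $(\nabla u_0-\varphi)\otimes\varphi^{*}$. By the same algebraic identity underlying the proof of Lemma~\ref{lemma:2.1}, now applied to the adjoint corrector $\chi^{*}$, the tensor $b$ has zero mean and vanishing divergence in the dual direction, so Lemma~\ref{lemma:2.6} furnishes an antisymmetric flux corrector whose divergence equals $b$; a single integration by parts then produces an extra factor $\varepsilon$. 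The residual pieces $(\theta\widehat{A} - l_\varepsilon^+A^\varepsilon)(\nabla u_0 - \varphi)(\nabla\phi_0 - \varphi^{*})$ and $\varepsilon(\theta\widehat{A} - l_\varepsilon^+A^\varepsilon)(\nabla u_0-\varphi)\chi^{*}(\cdot/\varepsilon)\nabla\varphi^{*}$ are treated by Cauchy--Schwarz together with the smoothing bound of Lemma~\ref{lemma:2.2} and the layer/co-layer estimates of Theorem~\ref{app:thm:2} applied to $\nabla^2 u_0$ and $\nabla^2\phi_0$ under the regularity hypotheses on $F, g, \Phi$.

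The main obstacle is the careful bookkeeping in $T_2$: one must confirm that every rearranged integrand either gains an $\varepsilon$ through the flux corrector or factorizes into two boundary-layer quantities each of size $O(\varepsilon^{1/2}\ln^{1/2}(1/\varepsilon))$. The logarithm originates in weighted boundary integrals of the form $\int_{\Omega\setminus O_{c\varepsilon}}\delta^{-1}\,dx\sim\ln(1/\varepsilon)$, which appear when controlling $\nabla^2\phi_0$ under the hypothesis $\Phi\in L^2$ (and similarly $\nabla^2 u_0$ under $g\in H^1(\partial\Omega)$); its square root enters each boundary-layer norm, and the product $\varepsilon^{1/2}\ln^{1/2}(1/\varepsilon)\cdot \varepsilon^{1/2}\ln^{1/2}(1/\varepsilon)$ yields precisely the stated rate $\varepsilon\ln(1/\varepsilon)$.
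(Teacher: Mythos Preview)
Your treatment of $T_3$ contains a genuine gap. You write that the explicit $\varepsilon$, the bound $\|\varpi(\cdot/\varepsilon)\nabla\varphi\|_{L^2}\lesssim\|\nabla(\psi_\varepsilon\nabla u_0)\|_{L^2}$ from Lemma~\ref{lemma:2.2}, and $\|\nabla\phi\|_{L^2}\lesssim\|\Phi\|_{L^2}$ combine to give the desired bound. But $\|\nabla(\psi_\varepsilon\nabla u_0)\|_{L^2}$ is \emph{not} uniformly bounded in $\varepsilon$: the cutoff contribution $\nabla\psi_\varepsilon\otimes\nabla u_0$ costs $\varepsilon^{-1}\|\nabla u_0\|_{L^2(O_{4\varepsilon})}\sim\varepsilon^{-1/2}$, and the interior Hessian $\|\nabla^2 u_0\|_{L^2(\Omega\setminus O_{3\varepsilon})}$ is likewise only $O(\varepsilon^{-1/2})$ by \eqref{pri:9.1} (recall $\Omega$ is merely Lipschitz and $g\in H^1(\partial\Omega)$, so $u_0\notin H^2(\Omega)$ in general). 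Your plain Cauchy--Schwarz therefore yields only $|T_3|\lesssim\varepsilon\cdot\varepsilon^{-1/2}\cdot 1=\varepsilon^{1/2}$. The paper repairs this by inserting the weight $\delta^{\pm1}$ \emph{before} Cauchy--Schwarz: Lemma~\ref{lemma:2.10} gives $\bigl(\int|\varpi(\cdot/\varepsilon)\nabla\varphi|^2\delta\bigr)^{1/2}\lesssim\bigl(\int|\nabla(\psi_\varepsilon\nabla u_0)|^2\delta\bigr)^{1/2}$, which by the weighted co-layer estimate in \eqref{pri:9.1} is $O(\ln^{1/2}(1/\varepsilon))$ rather than $O(\varepsilon^{-1/2})$; the complementary factor $\bigl(\int|\nabla\phi|^2\delta^{-1}\bigr)^{1/2}$ is then computed separately (Part~2 of the paper's proof) and is also $O(\ln^{1/2}(1/\varepsilon))$. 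This weighted splitting is the essential mechanism, and your own final paragraph identifies it correctly for $T_2$ --- you simply failed to apply it to $T_3$.

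Your route through $T_2$ is different from the paper's and more elaborate. The paper never expands $\nabla\phi$ in $J_2$; it applies the same weighted Cauchy--Schwarz directly, using Lemma~\ref{lemma:2.04} to gain the $\varepsilon$ from $\nabla u_0-\varphi$. Your adjoint--flux-corrector manoeuvre can be made to work, but two points are glossed over: first, the contraction $(\theta\widehat{A}-l_\varepsilon^+A^\varepsilon)(I+\nabla_y\chi^*)$ equals your $b$ only up to the extra term $\theta\widehat{A}\,\nabla_y\chi^*$, which is nonzero pointwise and must be handled separately (it does gain an $\varepsilon$ after one more integration by parts); second, after the $E^*$--integration by parts the dangerous piece $E^*_{ljk}\partial_l(\partial_j u_0-\varphi_j)\varphi^*_k$ survives unless you invoke both the antisymmetry $E^*_{ljk}=-E^*_{jlk}$ and the disjointness of $\operatorname{supp}(\nabla\psi_\varepsilon)$ and $\operatorname{supp}(\varphi^*)$ --- this is precisely why the cutoffs $\psi_\varepsilon,\psi_\varepsilon'$ in \eqref{eq:2.1} are staggered.
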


\begin{proof}
The main idea of the proof is inspired by \cite{Q1}, while
we provide a proof for the sake of the completeness.
Observing that $\phi:=\phi_{0}
+ \varepsilon\chi^{*}_{\varepsilon}\varphi^{*}
\in H_0^1(\Omega;\mathbb{R}^d)$, one may treat it
as a whole in computations at first,
and then handle each term of $\phi$. So, the proof
is divided into two parts.

\textbf{Part 1.} The main job is to establish the following
estimate
\begin{equation}\label{f:4.5}
\begin{aligned}
&\int_{\Omega_{\varepsilon}}A(x/\varepsilon)
\nabla w_{\varepsilon}
\nabla \phi dx
\lesssim \varepsilon^{1/2}\big\|\mathrm{M}_{\text{r}}(\nabla u_0)
\big\|_{L^2(\partial\Omega)}
\|\nabla \phi\|_{L^2(O_{4\varepsilon})} \\
& + \varepsilon\Bigg\{\Big(\int_{\Omega}|\nabla F|^2\delta^2 dx
\Big)^{\frac{1}{2}}
+\Big(\int_{\Omega}|F|^2dx\Big)^{\frac{1}{2}}\Bigg\}
\Big(\int_{\Omega}|\nabla \phi|^2 dx\Big)^{\frac{1}{2}}\\
& + \varepsilon\Bigg\{
\|\mathrm{M}_{\text{r}}(\nabla u_0)\|_{L^2(\partial\Omega)}
+\Big(\int_{\Omega\setminus O_{3\varepsilon}}
|\nabla^2 u_0|^2\delta dx\Big)^{\frac{1}{2}}\Bigg\}
\Big(\int_{\Omega\setminus O_{3\varepsilon}}
|\nabla\phi|^2\delta^{-1} dx\Big)^{\frac{1}{2}},
\end{aligned}
\end{equation}
where the notation $\mathrm{M}_{\text{r}}$ is referred to
as the radial maximal operator, defined in
$\eqref{def:4}$. First of all,
in view of Lemma $\ref{lemma:2.1}$ and
the same trick employed to deal with $I_1$ in
$\eqref{eq:3.2}$, we start from
\begin{equation}\label{f:4.4}
\begin{aligned}
&\qquad\int_{\Omega_{\varepsilon}}A(x/\varepsilon)
\nabla w_{\varepsilon}\nabla \phi dx\\
&=^{\eqref{eq:2.2}}\int_{\Omega}(l_{\varepsilon}^{+}-\theta)F\phi dx
+\int_{\Omega}(\theta\widehat{A}-l_{\varepsilon}^{+}A^{\varepsilon})
(\nabla u_{0}-\varphi)\nabla\phi dx
-\varepsilon\int_{\Omega}\varpi(x/\varepsilon)
\nabla\varphi\nabla\phi dx\\
&=^{\eqref{auxi1}}\varepsilon
\int_{\Omega}\nabla_y\Psi(y)
\cdot \big(\nabla F\phi+\nabla\phi F\big) dx\\
&\qquad
+\int_{\Omega}(\theta\widehat{A}-l_{\varepsilon}^{+}A^{\varepsilon})
(\nabla u_{0}-\varphi)\nabla\phi dx
-\varepsilon\int_{\Omega}\varpi(x/\varepsilon)
\nabla\varphi\nabla\phi dx
:= J_1 + J_2 + J_3,
\end{aligned}
\end{equation}
where $y=x/\varepsilon$,
and we remark that $\|\nabla\Psi\|_{L^\infty(Y)}\lesssim 1$.
Then we have
\begin{equation}\label{f:4.3}
\begin{aligned}
|\varepsilon^{-1}J_1|
&\lesssim \Big(\int_{\Omega}|\nabla F|^2\delta^2 dx\Big)^{1/2}
\Big(\int_{\Omega}|\phi|^2\delta^{-2} dx\Big)^{1/2}
+ \Big(\int_{\Omega}|F|^2dx\Big)^{1/2}
\Big(\int_{\Omega}|\nabla \phi|^2 dx\Big)^{1/2}\\
&\lesssim \bigg\{\Big(\int_{\Omega}|\nabla F|^2\delta^2 dx
\Big)^{1/2}
+\Big(\int_{\Omega}|F|^2dx\Big)^{1/2}\bigg\}
\Big(\int_{\Omega}|\nabla \phi|^2 dx\Big)^{1/2},
\end{aligned}
\end{equation}
where the second line follows from Hardy's inequality
(see for example \cite[Proposition III.2.40]{BF}).

In order to handle the terms $J_2$ and $J_3$,
we appeal to the
radial maximal function (see $\eqref{def:4}$), again.
Now,
we proceed to study the term $J_2$, and
\begin{equation*}
\begin{aligned}
|J_2|
&\lesssim \int_{O_{4\varepsilon}}
|\nabla u_\varepsilon||\nabla \phi| dx
+ \int_{\Omega\setminus O_{3\varepsilon}}
|\psi_\varepsilon\nabla u_0
-S_\varepsilon(\psi_\varepsilon\nabla u_0)||\nabla\phi|dx\\
&\lesssim \|\nabla u_0\|_{L^2(O_{4\varepsilon})}
\|\nabla \phi\|_{L^2(O_{4\varepsilon})}
+\Big(\int_{\Omega\setminus O_{3\varepsilon}}
|\psi_\varepsilon\nabla u_0
-S_\varepsilon(\psi_\varepsilon\nabla u_0)|^2\delta dx
\Big)^{\frac{1}{2}}
\Big(\int_{\Omega\setminus O_{3\varepsilon}}
|\nabla\phi|^2\delta^{-1} dx\Big)^{\frac{1}{2}}\\
&\lesssim^{\eqref{pri:2.7}}
 \|\nabla u_0\|_{L^2(O_{4\varepsilon})}
\|\nabla \phi\|_{L^2(O_{4\varepsilon})}
+ \varepsilon \Big(\int_{\Omega\setminus O_{2\varepsilon}}
|\nabla (\psi_\varepsilon\nabla u_0)|^2\delta dx
\Big)^{\frac{1}{2}}
\Big(\int_{\Omega\setminus O_{3\varepsilon}}
|\nabla\phi|^2\delta^{-1} dx\Big)^{\frac{1}{2}}.
\end{aligned}
\end{equation*}
By the co-area formula \cite[Theorem 3.13]{LCE1} coupled
with the definition of the radial maximal
operator, the right-hand side above is controlled by
\begin{equation}\label{f:4.2}
\begin{aligned}
&\varepsilon^{1/2}\|\mathrm{M}_{\text{r}}(\nabla u_0)
\|_{L^2(\partial\Omega)}
\|\nabla \phi\|_{L^2(O_{4\varepsilon})}\\
&\qquad\qquad + \varepsilon\bigg\{
\|\mathrm{M}_{\text{r}}(\nabla u_0)\|_{L^2(\partial\Omega)}
+\Big(\int_{\Omega\setminus O_{3\varepsilon}}
|\nabla^2 u_0|^2\delta dx\Big)^{1/2}\bigg\}
\Big(\int_{\Omega\setminus O_{3\varepsilon}}
|\nabla\phi|^2\delta^{-1} dx\Big)^{\frac{1}{2}}
\end{aligned}
\end{equation}
(up to an universal constant). Finally, since
$\text{supp}(\varphi) \subseteq \Omega\setminus O_{2\varepsilon}$
according to $\eqref{eq:2.1}$,
we acquire
\begin{equation*}
\begin{aligned}
|J_3|
&\leq \varepsilon
\Big(\int_{\Omega\setminus O_{2\varepsilon}}|\varpi(x/\varepsilon)
\nabla S_\varepsilon(\psi_\varepsilon\nabla u_0)|^2\delta dx\Big)^{1/2}
\Big(\int_{\Omega\setminus O_{2\varepsilon}}
|\nabla\varphi|^2\delta^{-1}dx\Big)^{1/2}\\
&\lesssim^{\eqref{pri:2.11}}
\varepsilon
\Big(\int_{\Omega\setminus O_{2\varepsilon}}
|\nabla(\psi_\varepsilon\nabla u_0)|^2\delta dx\Big)^{1/2}
\Big(\int_{\Omega\setminus O_{2\varepsilon}}
|\nabla\varphi|^2\delta^{-1}dx\Big)^{1/2}.
\end{aligned}
\end{equation*}
By the same token, the right-hand side of
the above estimate is dominated by the second line
of $\eqref{f:4.2}$. Thus, one may conclude that
\begin{equation*}
\begin{aligned}
|J_2| &+ |J_3| \\
&\lesssim \varepsilon^{1/2}\|\mathrm{M}_{\text{r}}(\nabla u_0)
\|_{L^2(\partial\Omega)}
\|\nabla \phi\|_{L^2(O_{4\varepsilon})}\\
&\qquad\qquad + \varepsilon\bigg\{
\|\mathrm{M}_{\text{r}}(\nabla u_0)\|_{L^2(\partial\Omega)}
+\Big(\int_{\Omega\setminus O_{3\varepsilon}}
|\nabla^2 u_0|^2\delta dx\Big)^{1/2}\bigg\}
\Big(\int_{\Omega\setminus O_{3\varepsilon}}
|\nabla\phi|^2\delta^{-1} dx\Big)^{\frac{1}{2}},
\end{aligned}
\end{equation*}
and this together with $\eqref{f:4.3}$ and $\eqref{f:4.4}$
yields the desired estimate $\eqref{f:4.5}$.

\textbf{Part 2.}
Recalling $\phi = \phi_{0}
+ \varepsilon\chi^{*}_{\varepsilon}
\varphi^{*}$,
and $\varphi^{*}
= S_{\varepsilon}
(\psi_{\varepsilon}^\prime\nabla \phi_{0})$
with $\psi_{\varepsilon}^\prime$ being cut-off function
given in $\eqref{eq:2.1}$,
we need to compute the following quantities:
\begin{equation}\label{f:4.6}
\|\nabla \phi\|_{L^2(O_{4\varepsilon})};
\quad \Big(\int_{\Omega}|\nabla \phi|^2 dx\Big)^{1/2};
\quad
\Big(\int_{\Omega\setminus O_{3\varepsilon}}
|\nabla\phi|^2\delta^{-1} dx\Big)^{1/2}.
\end{equation}
Since $\text{supp}(\varphi^{*})\cap O_{4\varepsilon} = \emptyset$,
there simply holds
\begin{equation}\label{f:4.7}
\|\nabla \phi\|_{L^2(O_{4\varepsilon})}
\lesssim \varepsilon^{1/2}
\|\mathrm{M}_{\text{r}}(\nabla\phi_0)\|_{L^2(\partial\Omega)}.
\end{equation}

Then we are interested in the third term of $\eqref{f:4.6}$,
and its leading term is
$\int_{\Omega\setminus O_{3\varepsilon}}
|\nabla(\varepsilon\chi_\varepsilon^{*}
\varphi^*)|^2\delta^{-1}dx$.
So we calculate it
as follows:
\begin{equation*}
\begin{aligned}
&\quad\int_{\Omega\setminus O_{3\varepsilon}}
|\nabla (\varepsilon\chi_\varepsilon^*
S_\varepsilon(\psi_\varepsilon^\prime\nabla \phi_0))|^2
\delta^{-1}dx\\
&\lesssim
\int_{\Omega\setminus O_{3\varepsilon}}
|\nabla\chi^*(y)S_\varepsilon
(\psi_\varepsilon^\prime\nabla\phi_0)|^2\delta^{-1}dx
+
\varepsilon^2\int_{\Omega\setminus O_{3\varepsilon}}
\big|\chi^*(y)S_\varepsilon
\big(\nabla(\psi_\varepsilon^\prime\nabla\phi_0)\big)
\big|^2\delta^{-1}dx\\
&\lesssim^{\eqref{pri:2.11}}
\int_{\Omega\setminus O_{7\varepsilon}}
|\nabla\phi_0|^2\delta^{-1}dx
+\varepsilon^2
\int_{\Omega\setminus O_{3\varepsilon}}
\big|\nabla(\psi_\varepsilon^\prime\nabla\phi_0)
\big|^2\delta^{-1}dx\\
&\lesssim
\int_{\Omega\setminus O_{7\varepsilon}}
|\nabla\phi_0|^2\delta^{-1}dx
+ \int_{O_{8\varepsilon}\setminus O_{3\varepsilon}}
|\nabla\phi_0|^2\delta^{-1}dx
+\varepsilon^2
\int_{\Omega\setminus O_{3\varepsilon}}
|\nabla^2\phi_0
|^2\delta^{-1}dx.
\end{aligned}
\end{equation*}
Using the co-area formula again, the above estimate
implies that
\begin{equation}\label{f:4.8}
\begin{aligned}
&\qquad\Big(\int_{\Omega\setminus O_{3\varepsilon}}
|\nabla \phi|^2 \delta^{-1}dx\Big)^{\frac{1}{2}}\\
&\lesssim
\Big(\int_{\Omega\setminus O_{3\varepsilon}}
|\nabla\phi_0|^2\delta^{-1}dx\Big)^{\frac{1}{2}}
+ \Big(\int_{O_{8\varepsilon}\setminus O_{3\varepsilon}}
|\nabla\phi_0|^2\delta^{-1}dx\Big)^{\frac{1}{2}}
+\varepsilon
\Big(\int_{\Omega\setminus O_{3\varepsilon}}
|\nabla^2\phi_0
|^2\delta^{-1}dx\Big)^{\frac{1}{2}}\\
&\lesssim
\|\nabla\phi_0\|_{L^2(\Omega\setminus O_{c_0})}
+\ln^{\frac{1}{2}}(c_0/\varepsilon)
\|\mathrm{M}_{\text{r}}(\nabla\phi_0)\|_{L^2(\partial\Omega)}
+\varepsilon^{\frac{1}{2}}\|
\nabla^2\phi_0\|_{L^2(\Omega\setminus O_{3\varepsilon})},
\end{aligned}
\end{equation}
where we use the fact that $\varepsilon\leq \delta(x)\leq r_0$
for $x\in\Omega\setminus O_{3\varepsilon}$ in the last inequality.
Then, the same arguments that we used for $\eqref{f:4.8}$ leads to
\begin{equation}\label{f:4.9}
\begin{aligned}
\Big(\int_{\Omega}|\nabla \phi|^2 dx\Big)^{1/2}
\lesssim^{\eqref{pri:2.3}}
\|\nabla\phi_0\|_{L^2(\Omega)}
+\varepsilon^{\frac{1}{2}}
\|\mathrm{M}_{\text{r}}(\nabla\phi_0)\|_{L^2(\partial\Omega)}
+ \varepsilon
\|\nabla^2\phi_0\|_{L^2(\Omega\setminus O_{7\varepsilon})}.
\end{aligned}
\end{equation}
Thus, in view of the estimates
$\eqref{f:4.7}, \eqref{f:4.8}, \eqref{f:4.9}$
and $\eqref{f:4.6}$ we consequently obtain
\begin{equation}\label{f:4.10}
\begin{aligned}
\max\bigg\{
&\|\nabla \phi\|_{L^2(O_{4\varepsilon})},
~\Big(\int_{\Omega}|\nabla \phi|^2 dx\Big)^{1/2},
~\Big(\int_{\Omega\setminus O_{3\varepsilon}}
|\nabla\phi|^2\delta^{-1} dx\Big)^{1/2}\bigg\}\\
&\lesssim
\|\nabla\phi_0\|_{L^2(\Omega)}
+\ln^{\frac{1}{2}}(1/\varepsilon)
\|\mathrm{M}_{\text{r}}(\nabla\phi_0)\|_{L^2(\partial\Omega)}
+\varepsilon^{\frac{1}{2}}\|
\nabla^2\phi_0\|_{L^2(\Omega\setminus O_{3\varepsilon})}\\
&\lesssim^{\eqref{pri:9.3},\eqref{pri:9.4},\eqref{pri:9.1}}
\ln^{\frac{1}{2}}(1/\varepsilon)\|\Phi\|_{L^2(\mathbb{R}^d)}.
\end{aligned}
\end{equation}

To complete the whole arguments, we still need
\begin{equation}\label{f:4.11}
\begin{aligned}
\quad\max\bigg\{
\|\mathrm{M}_{\text{r}}(\nabla u_0)\|_{L^2(\partial\Omega)},
~&\Big(\int_{\Omega\setminus O_{3\varepsilon}}
|\nabla^2 u_0|^2\delta dx\Big)^{1/2}\bigg\}\\
&\lesssim^{\eqref{pri:9.4},\eqref{pri:9.1}}
\ln^{\frac{1}{2}}(1/\varepsilon)
\bigg\{\|F\|_{L^2(\mathbb{R}^d)}
+\|g\|_{H^{1}(\partial\Omega)}\bigg\}.
\end{aligned}
\end{equation}

Consequently, plugging the estimates
$\eqref{f:4.10}$ and $\eqref{f:4.11}$ back into
$\eqref{f:4.5}$, we have proved the stated estimate
$\eqref{pri:4.5}$, and therefore completed the whole proof.
\end{proof}

\noindent \textbf{The proof of Theorem \ref{thm:3.1}}.
To show the estimate $\eqref{pri:4.1}$, it suffices to
study the right-hand side of the identity $\eqref{eqkey}$
term by term. The tricky one is $R_1$ and
it has already been studied in Lemma $\ref{lemma:3.2}$.
From the estimate $\eqref{pri:4.5}$, we have
\begin{equation}\label{f:4.13}
  |R_1|
 \lesssim \varepsilon\ln(1/\varepsilon).
\end{equation}
Obviously, the easy one in $\eqref{eqkey}$ is the term $R_2$, and
it follows from $\eqref{pri:1.7}$, $\eqref{sharp0}$ and
H\"older's inequality that
\begin{equation}\label{f:4.14}
|R_2|=\bigg|\int_{\Omega_{\varepsilon}}A(x/\varepsilon)
 \nabla w_{\varepsilon}\nabla z_{\varepsilon}dx\bigg|
 \lesssim \varepsilon
 \|\Phi\|_{H^{1/2}(\mathbb{R}^d)}\Big\{
 \|F\|_{H^{1/2}(\mathbb{R}^d)}+
 \|g\|_{H^{1}(\partial\Omega)}\Big\}
 \lesssim \varepsilon.
\end{equation}

We now address the estimates for  $R_3$ and $R_4$.
In terms of $R_3$, we first observe that
\begin{equation*}
\begin{aligned}
\int_{\Omega} (\theta-l_\varepsilon^+)
 \tilde{w}_\varepsilon\Phi dx
 = \int_{\Omega} \psi_\varepsilon^\prime
 (\theta-l_\varepsilon^+)
 \tilde{w}_\varepsilon\Phi dx
 + \int_{\Omega} (1-\psi_\varepsilon^\prime)
 (\theta-l_\varepsilon^+)
 \tilde{w}_\varepsilon\Phi dx
 := R_{31} + R_{32},
\end{aligned}
\end{equation*}
where the cut-off function $\psi_\varepsilon^\prime$ satisfies
$\eqref{eq:2.1}$.
Thanks to the auxiliary equation $\eqref{auxi1}$,
the arguments used for $I_{11}$ in the proof
of Lemma $\ref{lemma:3.3}$ lead to
\begin{equation*}
\begin{aligned}
R_{31}
&\lesssim
\varepsilon^{\sigma}
\Big\{\|\Phi\|_{H^{\sigma}(\mathbb{R}^d)}
+ \varepsilon^{1-\sigma}
\|\Phi\|_{L^{2}(\Omega)}\Big\}
\|\nabla \tilde{w}_{\varepsilon}\|_{L^2(\Omega)}\\
&\lesssim^{\eqref{pri:2.1},\eqref{pri:1.7}}
\varepsilon^{\sigma+\frac{1}{2}}
\|\Phi\|_{H^{1/2}(\mathbb{R}^d)}\Big\{
 \|F\|_{H^{1/2}(\mathbb{R}^d)}+
 \|g\|_{H^{1}(\partial\Omega)}\Big\}
 \lesssim \varepsilon^{\frac{1}{2}+\sigma},
\end{aligned}
\end{equation*}
while one may have
 \begin{equation*}
   R_{32}
   \lesssim
   \int_{O_{4\varepsilon}}
   |\Phi\tilde{w}_\varepsilon| dx
   \leq \|\Phi\|_{L^2(\Omega)}
   \|\tilde{w}_\varepsilon\|_{L^2(O_{4\varepsilon})}
   \lesssim^{\eqref{pri:2.8}}
   \varepsilon
   \|\Phi\|_{L^2(\Omega)}
   \|\nabla\tilde{w}_\varepsilon\|_{L^2(\Omega)}
   \lesssim^{\eqref{pri:2.1},\eqref{pri:1.7}} \varepsilon.
 \end{equation*}
 This together with the estimates on $R_{31}$ yields
 \begin{equation}\label{f:4.12}
 |R_3| = \Big|\int_{\Omega} (\theta-l_\varepsilon^+)
 \tilde{w}_\varepsilon\Phi dx\Big|
 \lesssim \varepsilon^{\frac{1}{2}+\sigma}.
 \end{equation}

 Since $\Omega_0\setminus\Omega\subseteq \tilde{O}_{20\varepsilon}
 =:\{x\in\Omega_0:\text{dist}(x,\partial\Omega_0)
 \leq 20\varepsilon\}$, we similarly arrive at
 \begin{equation}\label{f:4.15}
  |R_4|
 \leq
  \theta\int_{\tilde{O}_{20\varepsilon}}
 |\tilde{w}_\varepsilon\Phi| dx
   \lesssim
   \|\tilde{w}_\varepsilon\|_{L^2(\tilde{O}_{20\varepsilon})}
   \|\Phi\|_{L^2(\mathbb{R}^d)}
   \lesssim^{\eqref{pri:2.8}} \varepsilon
   \|\nabla \tilde{w}_\varepsilon\|_{L^2(\Omega_0)}
   \lesssim^{\eqref{pri:2.1},
   \eqref{pri:1.7}} \varepsilon^{\frac{3}{2}}.
 \end{equation}

Hence, plugging the estimates $\eqref{f:4.13}$,
$\eqref{f:4.14}$, $\eqref{f:4.12}$ and
$\eqref{f:4.15}$ back into $\eqref{eqkey}$, we have
the desired estimate $\eqref{pri:4.1}$.
We have completed the whole proof.
\qed


\section{Estimates for Weak Formulation from Duality}\label{sec:6}

\begin{theorem}[duality argument II]\label{thm:8.1}
Let $\Omega$ be a bounded
$C^{1,\eta}$ domain with $\eta\in(0,1]$. Suppose that
$\mathcal{L}_\varepsilon$ and $\omega$ satisfy
the hypothesises \emph{(H1)} and \emph{(H2)}. Given
$F\in H^{1}(\Omega;\mathbb{R}^d)$ and $g\in
H^{1}(\partial\Omega;\mathbb{R}^d)$, let
$u_\varepsilon$ and $u_0$ be the weak solutions to
the equations $\eqref{pde:1.1}$ and $\eqref{pde:1.3}$, respectively.
Let $w_\varepsilon$ be given in $\eqref{eq:3.1}$,
and $\rho = \delta^{1-\tau}$ with $0<\tau<1$.
Then, for any $f\in L^2
(\Omega;\mathbb{R}^{d\times d})$,
we obtain
\begin{equation}\label{pri:8.4}
\begin{aligned}
&\quad\Big|\int_{\Omega_\varepsilon}
 \nabla w_\varepsilon\cdot f dx\Big| \\
& \lesssim \varepsilon^{1-\frac{\tau}{2}}
\ln^{\frac{1}{2}}(1/\varepsilon)
 \Bigg\{
 \Big(\int_{\Omega}|\nabla F|^2
\delta^{3-\tau}dx\Big)^{\frac{1}{2}}
 +\|F\|_{H^{1/2}(\Omega)}
 +\|g\|_{H^1(\partial\Omega)}\Bigg\}
 \Big(\int_{\Omega_\varepsilon}|f|^2\rho^{-1} dx\Big)^{\frac{1}{2}},
\end{aligned}
\end{equation}
where the up to constant is independent of $\varepsilon$.
\end{theorem}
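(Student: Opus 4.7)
The plan is to run the duality argument of Lemma \ref{lemma:3.2} against the adjoint problem driven by $\nabla\cdot f$, and to upgrade every $L^2$ bookkeeping to a weighted square-function norm by applying the weighted quenched Calder\'on-Zygmund estimate of Theorem \ref{thm:1.4}. Concretely, I would introduce $\phi_{\varepsilon}\in H^{1}(\Omega_\varepsilon,\Gamma_\varepsilon;\mathbb{R}^d)$ solving
\begin{equation*}
\mathcal{L}^{*}_{\varepsilon}(\phi_{\varepsilon})=\nabla\cdot f\text{ in }\Omega_\varepsilon,\qquad \sigma^{*}_{\varepsilon}(\phi_{\varepsilon})=-n\cdot f\text{ on }S_\varepsilon,\qquad \phi_{\varepsilon}=0\text{ on }\Gamma_\varepsilon,
\end{equation*}
together with the homogenized companion $\mathcal{L}^{*}_{0}\phi_{0}=\nabla\cdot f$ in $\Omega$ with $\phi_{0}=0$ on $\partial\Omega$. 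Setting $\varphi^{*}:=S_\varepsilon(\psi'_\varepsilon\nabla\phi_0)$, $\phi:=\phi_{0}+\varepsilon\chi^{*}_\varepsilon\varphi^{*}\in H^{1}_0(\Omega;\mathbb{R}^d)$ and $z_\varepsilon:=\phi_\varepsilon-\phi$, one has $w_\varepsilon\in H^{1}(\Omega_\varepsilon,\Gamma_\varepsilon;\mathbb{R}^d)$, and testing the weak form of the adjoint equation against $w_\varepsilon$ (using $A^{*}=A$) yields
\begin{equation*}
\int_{\Omega_\varepsilon}\nabla w_\varepsilon\cdot f\, dx \;=\; -\int_{\Omega_\varepsilon}A(x/\varepsilon)\nabla w_\varepsilon\,\nabla\phi\, dx \;-\; \int_{\Omega_\varepsilon}A(x/\varepsilon)\nabla w_\varepsilon\,\nabla z_\varepsilon\, dx.
\end{equation*}

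For the first integral I would invoke the weak formulation of Lemma \ref{lemma:2.1} with this $\phi$ as test function and repeat the three-term analysis of Lemma \ref{lemma:3.2}: the $F$-term is re-expressed through the auxiliary equation \eqref{auxi1} as $\varepsilon\int\nabla_y\Psi\cdot(\nabla F\,\phi+F\,\nabla\phi)\,dx$ and handled by Hardy's inequality together with the hypothesis $\int|\nabla F|^2\delta^{3-\tau}dx<\infty$; the terms involving $(\theta\widehat{A}-l_\varepsilon^{+}A^{\varepsilon})(\nabla u_0-\varphi)$ and $\varepsilon\varpi(\cdot/\varepsilon)\nabla\varphi$ are estimated through the radial-maximal-function framework and the weighted smoothing estimates of Lemmas \ref{lemma:2.10} and \ref{lemma:2.04}. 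The only structural change from Lemma \ref{lemma:3.2} is that $\phi_0$ is now driven by $\nabla\cdot f$ rather than a smooth $\Phi$, so every previous occurrence of $\|\Phi\|_{L^2(\Omega)}$ must be replaced by the weighted norm $(\int|f|^2\rho^{-1}dx)^{1/2}$ using the weighted $W^{1,2}$ regularity of $\phi_0$ from Theorem \ref{app:thm:1} together with the layer/co-layer estimates of Theorem \ref{app:thm:2}; this is where $\rho=\delta^{1-\tau}$ (equivalently $\rho^{-1}\in A_2$ for $0<\tau<1$) enters the bookkeeping, and the logarithmic factor $\ln^{1/2}(1/\varepsilon)$ arises from the co-layer integration of $\delta^{-1}$ exactly as in Lemma \ref{lemma:3.2}.

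For the remainder integral I would split via Cauchy-Schwarz against the weight pair $(\rho,\rho^{-1})$: bound $\int_{\Omega_\varepsilon}|\nabla w_\varepsilon|^2\rho\,dx$ by a weighted version of Theorem \ref{thm:2.1} obtained from Theorem \ref{thm:1.4} with $p=2$ applied to the $w_\varepsilon$-equation that issues from Lemma \ref{lemma:2.1}, and bound $\int_{\Omega_\varepsilon}|\nabla z_\varepsilon|^2\rho^{-1}\,dx$ by the parallel weighted estimate for the first-order corrector of the adjoint problem, which produces an $O(\varepsilon^{1-\tau/2})$ decay. The final translation from $\phi_\varepsilon$-quantities to $\|f\|_{L^2_{\rho^{-1}}}$ is carried out by Theorem \ref{thm:1.4} with $p=2$ and $\rho^{-1}\in A_2$, together with the observation that $\delta$ is slowly varying on $\varepsilon$-scales, so
\begin{equation*}
\int_\Omega\Big(\dashint_{B_\varepsilon(x)\cap\Omega_\varepsilon}|f|^2\,dy\Big)\rho^{-1}(x)\,dx \;\sim\; \int_{\Omega_\varepsilon}|f|^2\rho^{-1}dx.
\end{equation*}

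The principal obstacle is the weighted accounting in the boundary layer $O_{c\varepsilon}$, where $\delta\sim\varepsilon$ forces $\rho=\delta^{1-\tau}$ to be of order $\varepsilon^{1-\tau}$ and where the smoothed corrector $\chi^{*}_\varepsilon\varphi^{*}$ interacts most delicately with the cutoff $\psi'_\varepsilon$; one must carefully pair the radial maximal function bounds of Lemma \ref{lemma:3.2} with the weighted Hardy inequality so that the $\tau$-dependence is absorbed cleanly and no step degenerates at the borderline $\tau=1$ (which would drop $\rho^{-1}$ out of $A_2$ and destroy Theorem \ref{thm:1.4}). Once this is in place, the proof is a weighted reprise of Lemma \ref{lemma:3.2} and introduces no new analytic phenomena beyond those already available in Sections \ref{section3} and \ref{sec:5}.
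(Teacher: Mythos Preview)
Your plan follows the template of Lemma~\ref{lemma:3.2} (duality argument I) by introducing the homogenized adjoint $\phi_0$ and the corrector error $z_\varepsilon=\phi_\varepsilon-\phi_0-\varepsilon\chi^*_\varepsilon\varphi^*$. The paper's proof takes a structurally different route: it does \emph{not} homogenize the adjoint at all. Instead, the test function fed into Lemma~\ref{lemma:2.1} is $\phi:=\psi'_\varepsilon\,\Lambda_\varepsilon(\phi_\varepsilon)$, i.e.\ a cutoff of the weighted extension (Theorem~\ref{thm:1.5}) of $\phi_\varepsilon$ itself. The remainder $K_1=\int A\nabla w_\varepsilon\,\nabla(\phi_\varepsilon-\phi)$ is then supported in the boundary layer $O_{8\varepsilon}$ and is bounded by $\|\nabla w_\varepsilon\|_{L^2}\cdot\|\rho\|_{L^\infty(O_{4\varepsilon})}^{1/2}\cdot(\int|\nabla\phi_\varepsilon|^2\rho^{-1})^{1/2}$, and every occurrence of $\nabla\phi$ or $\nabla\phi_\varepsilon$ is controlled directly by $\|f\|_{L^2_{\rho^{-1}}}$ through Theorem~\ref{thm:1.5} followed by Theorem~\ref{thm:1.4}. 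No $\phi_0$, no $z_\varepsilon$, no second-derivative or radial-maximal-function estimates for the adjoint are needed.

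Your approach has a genuine gap in the treatment of the $z_\varepsilon$ remainder. You propose to split $\int A\nabla w_\varepsilon\nabla z_\varepsilon$ by Cauchy--Schwarz with the weight pair $(\rho,\rho^{-1})$ and to bound $\int_{\Omega_\varepsilon}|\nabla w_\varepsilon|^2\rho$ by ``a weighted version of Theorem~\ref{thm:2.1}''. But that weighted square-function bound is precisely the content of Corollary~\ref{cor:8.1}, which is \emph{derived from} Theorem~\ref{thm:8.1}; invoking it here is circular. If instead you use unweighted Cauchy--Schwarz, you need $\|\nabla z_\varepsilon\|_{L^2(\Omega_\varepsilon)}$ for the adjoint corrector, but the source of the adjoint is $\nabla\cdot f\in H^{-1}$, not an $H^\sigma$ function with $\sigma\geq 1/2$ as in \eqref{sharp0}; Theorem~\ref{thm:2.1} does not deliver the needed decay in that regime. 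Similarly, the quantities $\|\mathrm{M}_{\mathrm r}(\nabla\phi_0)\|_{L^2(\partial\Omega)}$ and $\|\nabla^2\phi_0\|_{L^2(\Omega\setminus O_\varepsilon)}$ that drive the Lemma~\ref{lemma:3.2} machinery are supplied by Theorem~\ref{app:thm:2} only for $L^2$ right-hand sides, not for divergence-form data $\nabla\cdot f$, and there is no clean route from those to $\|f\|_{L^2_{\rho^{-1}}}$. The paper circumvents all of this by never leaving $\phi_\varepsilon$.
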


\begin{corollary}[square function estimates]\label{cor:8.1}
Assume the same conditions as in Theorem $\ref{thm:8.1}$.
Given $F\in H^{1}(\Omega_0;\mathbb{R}^d)$ and $g\in
H^{1}(\partial\Omega;\mathbb{R}^d)$,
let $w_\varepsilon$ be given in $\eqref{eq:3.1}$.
Then there holds the square function estimate
\begin{equation}\label{pri:8.2}
\bigg(\int_{\Omega_\varepsilon}
|\nabla w_\varepsilon|^2 \delta dx\bigg)^{\frac{1}{2}}
\lesssim \varepsilon^{1-\frac{\tau}{2}}
\ln^{\frac{1}{2}}(1/\varepsilon)
 \Bigg\{
 \Big(\int_{\Omega_0}|\nabla F|^2
\delta dx\Big)^{\frac{1}{2}}
 +\|F\|_{L^{2}(\Omega_0)}
 +\|g\|_{H^1(\partial\Omega)}\Bigg\},
\end{equation}
where the up to constant is independent of $\varepsilon$.
\end{corollary}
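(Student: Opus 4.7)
The plan is to deduce \eqref{pri:8.2} from Theorem \ref{thm:8.1} by a direct self-pairing duality, exploiting that the square function is itself a weighted $L^2$ quantity. The first move is to realize $\int_{\Omega_\varepsilon}|\nabla w_\varepsilon|^2\delta\,dx$ as the pairing $\int_{\Omega_\varepsilon}\nabla w_\varepsilon\cdot f\,dx$ by choosing the test field $f:=\delta\,\nabla w_\varepsilon$ on $\Omega_\varepsilon$, extended by zero to $\Omega$. Since $\delta$ is bounded and $\nabla w_\varepsilon\in L^2(\Omega_\varepsilon)$ by the energy bound of Theorem \ref{thm:2.1}, we have $f\in L^2(\Omega;\mathbb{R}^{d\times d})$, so Theorem \ref{thm:8.1} is applicable.

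Substituting this $f$ into \eqref{pri:8.4} with $\rho=\delta^{1-\tau}$, and noting that $|f|^2\rho^{-1}=|\nabla w_\varepsilon|^2\delta^{1+\tau}$, yields
$$\int_{\Omega_\varepsilon}|\nabla w_\varepsilon|^2\delta\,dx\ \lesssim\ \varepsilon^{1-\tau/2}\ln^{1/2}(1/\varepsilon)\,\Theta\,\Big(\int_{\Omega_\varepsilon}|\nabla w_\varepsilon|^2\delta^{1+\tau}\,dx\Big)^{1/2},$$
where $\Theta$ denotes the data factor on the right of \eqref{pri:8.4}. The key absorption step is the trivial pointwise inequality $\delta(x)^{1+\tau}\le r_0^{\tau}\,\delta(x)$ on $\Omega_\varepsilon$ (valid because $\delta\le\text{diam}(\Omega_0)\lesssim r_0$), which bounds the last factor by $r_0^{\tau/2}(\int_{\Omega_\varepsilon}|\nabla w_\varepsilon|^2\delta\,dx)^{1/2}$. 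Dividing both sides by this common factor --- finite a priori by Theorem \ref{thm:2.1} together with $\delta\le r_0$ --- leaves
$$\Big(\int_{\Omega_\varepsilon}|\nabla w_\varepsilon|^2\delta\,dx\Big)^{1/2}\ \lesssim\ \varepsilon^{1-\tau/2}\ln^{1/2}(1/\varepsilon)\,\Theta.$$

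It remains to compare $\Theta$ with the right-hand side of \eqref{pri:8.2}. The weighted-gradient term reduces at once via $\delta^{3-\tau}\le r_0^{2-\tau}\delta$ and domain monotonicity, giving $(\int_\Omega|\nabla F|^2\delta^{3-\tau}\,dx)^{1/2}\lesssim r_0^{1-\tau/2}(\int_{\Omega_0}|\nabla F|^2\delta\,dx)^{1/2}$. For $\|F\|_{H^{1/2}(\Omega)}$, I would use the restriction $\|F\|_{H^{1/2}(\Omega)}\le\|F\|_{H^{1/2}(\Omega_0)}$ together with the classical Hardy--Besov characterization of $H^{1/2}$ on a Lipschitz domain,
$$\|F\|_{H^{1/2}(\Omega_0)}^2\ \lesssim\ \|F\|_{L^2(\Omega_0)}^2+\int_{\Omega_0}|\nabla F|^2\,\text{dist}(x,\partial\Omega_0)\,dx\ =\ \|F\|_{L^2(\Omega_0)}^2+\int_{\Omega_0}|\nabla F|^2\delta\,dx,$$
which controls the fractional norm exactly by the data appearing on the right of \eqref{pri:8.2}. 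The principal technical step is this Hardy--Besov embedding; it is classical but must be invoked with some care on Lipschitz domains (and is why the hypothesis $F\in H^1(\Omega_0)$ is comfortable to work with). The rest of the argument is pure book-keeping once Theorem \ref{thm:8.1} is in hand, and the $\varepsilon^{1-\tau/2}\ln^{1/2}(1/\varepsilon)$ prefactor is simply inherited unchanged.
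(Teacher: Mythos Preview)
Your proof is correct and follows essentially the same route as the paper: self-pairing duality in Theorem~\ref{thm:8.1}, then the pointwise comparison $\delta^{1-\tau}\gtrsim\delta$ on $\Omega_\varepsilon$, and finally the Hardy--Besov characterization of $H^{1/2}$ (the paper cites this as \cite[Lemma 8.11.3]{S}). The only cosmetic difference is that the paper takes $f=\rho\,\nabla w_\varepsilon=\delta^{1-\tau}\nabla w_\varepsilon$ rather than your $f=\delta\,\nabla w_\varepsilon$; this yields $\big(\int_{\Omega_\varepsilon}|\nabla w_\varepsilon|^2\rho\,dx\big)^{1/2}\lesssim\varepsilon^{1-\tau/2}\ln^{1/2}(1/\varepsilon)\,\Theta$ directly without an absorption step, and only afterward uses $\rho\gtrsim\delta$ --- but the content is identical.
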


\begin{proof}
Recall that $\rho = \delta^{1-\tau}$ with
$\delta(x)=\text{dist}(x,\partial\Omega_0)$ and $0<\tau<1$.
On the one hand,
it follows from the estimate $\eqref{pri:8.4}$ that
\begin{equation}\label{f:8.2}
\begin{aligned}
\Big(\int_{\Omega_\varepsilon}|\nabla w_\varepsilon|^2\rho
dx\Big)^{\frac{1}{2}}
&\lesssim \varepsilon^{1-\frac{\tau}{2}}
\ln^{\frac{1}{2}}(1/\varepsilon)
 \Bigg\{
 \Big(\int_{\Omega}|\nabla F|^2
\delta^{3-\tau}dx\Big)^{\frac{1}{2}}
 +\|F\|_{H^{1/2}(\Omega)}
 +\|g\|_{H^1(\partial\Omega)}\Bigg\}\\
&\lesssim
\varepsilon^{1-\frac{\tau}{2}}
\ln^{\frac{1}{2}}(1/\varepsilon)
 \Bigg\{
 \Big(\int_{\Omega_0}|\nabla F|^2
\delta dx\Big)^{\frac{1}{2}}
 +\|F\|_{L^{2}(\Omega_0)}
 +\|g\|_{H^1(\partial\Omega)}\Bigg\},
\end{aligned}
\end{equation}
where we employ \cite[Lemma 8.11.3]{S} in the second step. On the
other hand, we observe that
\begin{equation*}
\Big(\int_{\Omega_\varepsilon}|\nabla w_\varepsilon|^2\rho
dx\Big)^{\frac{1}{2}}
\gtrsim \Big(\int_{\Omega_\varepsilon}
|\nabla w_\varepsilon|^2\delta dx\Big)^{\frac{1}{2}},
\end{equation*}
since  $\delta\leq r_0$ on $\Omega_\varepsilon$
with $r_0 = \text{diam}(\Omega)$.
This together with $\eqref{f:8.2}$ gives the stated
estimate $\eqref{pri:8.2}$ and we have completed the proof.
\end{proof}

\noindent \textbf{The proof of Theorem \ref{thm:8.1}}.
The main idea is the duality argument.
For any $f\in L^2(\Omega;\mathbb{R}^{d\times d})$,
we construct the adjoint equation:
\begin{equation}\label{adjoint2}
\left\{\begin{aligned}
  \mathcal{L^{*}_{\varepsilon}}(\phi_{\varepsilon})
  &=\nabla\cdot f &\qquad& \text{in~}\Omega_{\varepsilon},\\
  \sigma_\varepsilon^*(\phi_{\varepsilon})&
  =-n\cdot f&\qquad&\text{on~}S_{\varepsilon},\\
  \phi_{\varepsilon}&=0 &\qquad&\text{on~} \Gamma_{\varepsilon}.
  \end{aligned}\right.
\end{equation}
Integration by parts, we have
\begin{equation}
\begin{aligned}
  \int_{\Omega_{\varepsilon}}\nabla w_{\varepsilon}
  \cdot f dx
  &=-\int_{\Omega_{\varepsilon}}
  w_{\varepsilon}\mathcal{L}_\varepsilon^*(\phi_\varepsilon)  dx
  - \int_{S_\varepsilon}n\cdot f w_\varepsilon dS\\
 &=\int_{\Omega_{\varepsilon}} A_{ij}(x/\varepsilon)
 \nabla_{x_{j}}w_{\varepsilon}\nabla_{x_{i}}\phi_{\varepsilon}dx\\
 &= \underbrace{\int_{\Omega_{\varepsilon}} A(x/\varepsilon)
 \nabla w_{\varepsilon}\nabla(\phi_\varepsilon-\phi) dx}_{K_1}
 +
\underbrace{ \int_{\Omega_{\varepsilon}} A(x/\varepsilon)
 \nabla w_{\varepsilon}\nabla\phi dx}_{K_2},
  \end{aligned}
\end{equation}
where $\phi = \psi_{\varepsilon}^{\prime}\tilde{\phi}_\varepsilon$
with $\tilde{\phi}_\varepsilon:
=\Lambda_\varepsilon(\phi_\varepsilon)$ being
the related extension function
in the way of Theorem $\ref{thm:1.5}$.
Owning to the equality $\eqref{eq:2.2}$ and
the auxiliary equation $\eqref{auxi1}$, one may  have
\begin{equation}\label{f:8.6}
\begin{aligned}
K_2
&=\varepsilon\int_{\Omega}\nabla_y\Psi(y)\cdot\nabla F
\phi dx
+\varepsilon\int_{\Omega}\varpi_{1}(y)
\nabla\varphi\nabla\phi dx\\
&+\int_{\Omega}
\varpi_{2}(y)(\nabla u_0 - \varphi)\nabla\phi dx
+\varepsilon\int_{\Omega}
\nabla_y\Psi(y) F \nabla\phi dx
=:K_{21} + K_{22} + K_{23} + K_{24},
\end{aligned}
\end{equation}
where $y=x/\varepsilon$, and $\varpi_1 := -\varpi$ (see
Lemma $\ref{lemma:2.1}$) with
$\varpi_2:=\theta\widehat{A}-l_{\varepsilon}^{+}A^{\varepsilon}$.

We proceed to handle the term $K_1$, by definition of $\phi$ we have
\begin{equation*}
\begin{aligned}
K_1
&\lesssim
\frac{1}{\varepsilon}\int_{O_\varepsilon\cap\Omega_\varepsilon}
|\nabla w_\varepsilon||\phi_\varepsilon|
+\int_{O_{2\varepsilon}\cap\Omega_\varepsilon}
|\nabla w_\varepsilon||\nabla\phi_\varepsilon|\\
&\lesssim \|\nabla w_\varepsilon\|_{L^2(\Omega_\varepsilon)}
\|\nabla\phi_{\varepsilon}\|_{L^2(O_{2\varepsilon}\cap\Omega_\varepsilon)}
\lesssim
\|\nabla w_\varepsilon\|_{L^2(\Omega_\varepsilon)}
\|\rho\|_{L^\infty(O_{4\varepsilon})}^{1/2}
\Big(\int_{O_{2\varepsilon}\cap\Omega_\varepsilon}
|\nabla\phi_\varepsilon|^2\rho^{-1}dx\Big)^{1/2}\\
&\lesssim^{\eqref{pri:8.3}}
\|\nabla w_\varepsilon\|_{L^2(\Omega_\varepsilon)}
\|\rho\|_{L^\infty(O_{4\varepsilon})}^{1/2}
\Big(\int_{\Omega}\dashint_{B_\varepsilon(x)
\cap\Omega_\varepsilon}|\nabla\phi_\varepsilon|^2dy\rho^{-1} dx
\Big)^{1/2},
\end{aligned}
\end{equation*}
where we recall $\rho(x) = [\delta(x)]^{1-\tau}$
with $\delta(x)=\text{dist}(x,\partial\Omega_0)$.
It is not hard to see that $\rho(x)\sim
\varepsilon^{1-\tau}$ whenever
$x\in O_{2\varepsilon}$.
Then it follows from the weighted
quenched Calder\'on-Zygmund estimate
$\eqref{pri:1.6}$ that
\begin{equation*}
\begin{aligned}
\Big(\int_{\Omega}\dashint_{B_\varepsilon(x)
\cap\Omega_\varepsilon}|\nabla\phi_\varepsilon|^2
dy\rho^{-1} dx
\Big)^{1/2}
&\lesssim
\Big(\int_{\Omega_\varepsilon}
|f|^2\rho^{-1} dx
\Big)^{1/2}.
\end{aligned}
\end{equation*}
Together with this, we have
\begin{equation}\label{f:8.11}
\begin{aligned}
K_1 &\lesssim
\|\nabla w_\varepsilon\|_{L^2(\Omega_\varepsilon)}
\|\rho\|_{L^\infty(O_{4\varepsilon})}^{1/2}
\Big(\int_{\Omega_\varepsilon}
|f|^2\rho^{-1} dx
\Big)^{1/2}\\
&\lesssim^{\eqref{pri:1.7}}
\varepsilon^{1-\frac{\tau}{2}}
\Big\{\|F\|_{H^{1/2}(\Omega)}
 +\|g\|_{H^1(\partial\Omega)}\Big\}
\Big(\int_{\Omega_\varepsilon}
|f|^2\rho^{-1} dx
\Big)^{1/2}.
\end{aligned}
\end{equation}

Then on account of $\eqref{f:8.6}$
we turn to address the expression $K_2$ term by term.
The first one is $K_{21}$, and
\begin{equation}\label{f:8.7}
\begin{aligned}
K_{21}
&\leq \varepsilon\Big(\int_{\Omega}|\nabla F|^2
\delta^{3-\tau}dx\Big)^{\frac{1}{2}}
\Big(\int_{\Omega_0}|\tilde{\phi}_\varepsilon|^2
\delta^{\tau-3}dx\Big)^{\frac{1}{2}}\\
&\lesssim \varepsilon\Big(\int_{\Omega}|\nabla F|^2
\delta^{3-\tau}dx\Big)^{\frac{1}{2}}
\Big(\int_{\Omega_0}|\nabla\tilde{\phi}_\varepsilon|^2
\delta^{\tau-1} dx\Big)^{\frac{1}{2}}\\
&\lesssim^{\eqref{pri:1.9}} \varepsilon\Big(\int_{\Omega}|\nabla F|^2
\delta^{3-\tau}dx\Big)^{\frac{1}{2}}
\Big(\int_{\Omega_\varepsilon}
|\nabla\tilde{\phi}_\varepsilon|^2\rho^{-1} dx\Big)^{\frac{1}{2}}\\
&\lesssim^{\eqref{f:8.12}} \varepsilon
\Big(\int_{\Omega}|\nabla F|^2
\delta^{3-\tau}dx\Big)^{\frac{1}{2}}
\Big(\int_{\Omega_\varepsilon}|f|^2\rho^{-1}dx\Big)^{\frac{1}{2}},
\end{aligned}
\end{equation}
where we use weighted Hardy's inequality
(see for example \cite[Theorem 1.1]{Le} or \cite{N}) in the second step.
Since the support of $\nabla\varphi$ is included in the set
$\Omega\setminus O_{4\varepsilon}$, there holds
\begin{equation*}\label{}
\begin{aligned}
K_{22}
&\leq
\varepsilon\Big(\int_{\Omega\setminus O_{2\varepsilon}}
\big|\varpi_1(x/\varepsilon)
\nabla\varphi\big|^2 \delta(x)dx\Big)^{\frac{1}{2}}
\Big(\int_{\Omega\setminus O_{2\varepsilon}}
|\nabla\tilde{\phi}_\varepsilon|^2[\delta(x)]^{-1} dx
\Big)^{\frac{1}{2}}\\
&\leq
\varepsilon^{1-\frac{\tau}{2}}
\Big(\int_{\Omega\setminus O_{2\varepsilon}}
\big|
\nabla(\psi_{\varepsilon}\nabla u_0)\big|^2
\delta(x)dx\Big)^{\frac{1}{2}}
\Big(\int_{\Omega_0}
|\nabla\tilde{\phi}_\varepsilon|^2\rho^{-1} dx
\Big)^{\frac{1}{2}}\\
&\lesssim \varepsilon^{-\frac{\tau}{2}}
\bigg\{
\Big(\int_{O_{4\varepsilon}}
\big|\nabla u_0\big|^2 \delta dx\Big)^{\frac{1}{2}}
+\varepsilon
\Big(\int_{\Omega\setminus O_{2\varepsilon}}
\big|\nabla^2 u_0\big|^2 \delta dx\Big)^{\frac{1}{2}}
\bigg\}
\Big(\int_{\Omega_\varepsilon}
|\nabla\phi_\varepsilon|^2\rho^{-1} dx
\Big)^{\frac{1}{2}},
\end{aligned}
\end{equation*}
where we employ Lemma $\ref{lemma:2.10}$ in the second inequality,
and the last one follows from the weighted extension result
$\eqref{pri:1.9}$.
Note that
\begin{equation}\label{f:8.12}
\Big(\int_{\Omega_\varepsilon}
|\nabla\phi_\varepsilon|^2\rho^{-1} dx
\Big)^{\frac{1}{2}}
\lesssim^{\eqref{pri:8.3}} \bigg(\int_{\Omega}
\dashint_{B_\varepsilon(x)\cap\Omega_\varepsilon}
|\nabla\phi_\varepsilon|^2dy\rho^{-1}dx
\bigg)^{\frac{1}{2}}
\lesssim^{\eqref{pri:1.6}}
\Big(\int_{\Omega_\varepsilon}|f|^2\rho^{-1}dx\Big)^{\frac{1}{2}},
\end{equation}
and this together with the estimates
$\eqref{pri:9.2}$, $\eqref{pri:9.1}$
and the previous computations leads to
\begin{equation}\label{f:8.8}
K_{22}
\lesssim \varepsilon^{1-\frac{\tau}{2}}
\ln^{\frac{1}{2}}(1/\varepsilon)
\Big\{\|F\|_{L^{2}(\Omega)}
 +\|g\|_{H^1(\partial\Omega)}\Big\}
\Big(\int_{\Omega_\varepsilon}|f|^2\rho^{-1}dx
\Big)^{\frac{1}{2}}.
\end{equation}
By the same token, we have
\begin{equation*}\label{}
\begin{aligned}
K_{23}
&\lesssim
\int_{\Omega}
|(\nabla u_0 - \varphi)\nabla\phi| dx\\
&\lesssim
\int_{O_{4\varepsilon}\setminus O_\varepsilon}
|\nabla u_0||\nabla\tilde{\phi}_\varepsilon|dx
+\frac{1}{\varepsilon}\int_{O_{2\varepsilon}\setminus O_\varepsilon}
|\nabla u_0||\tilde{\phi}_\varepsilon|dx
+\int_{\Omega\setminus O_{2\varepsilon}}
|\psi_\varepsilon \nabla u_0-
S_\varepsilon(\psi_\varepsilon\nabla u_0)|
|\nabla\tilde{\phi}_\varepsilon|dx\\
&\lesssim^{\eqref{pri:2.7}}
\varepsilon^{-\frac{\tau}{2}}
\bigg\{
\Big(\int_{O_{4\varepsilon}}
|\nabla u_0|^2\delta dx\Big)^{\frac{1}{2}}
+\varepsilon\Big(\int_{\Omega}
|\nabla(\psi_\varepsilon\nabla u_0)|^2\delta dx
\Big)^{\frac{1}{2}}
\bigg\}
\Big(\int_{\Omega}
|\nabla\tilde{\phi}_\varepsilon|\rho^{-1}dx\Big)^{\frac{1}{2}}\\
&\lesssim^{\eqref{pri:1.9}}
\varepsilon^{-\frac{\tau}{2}}
\bigg\{
\Big(\int_{O_{4\varepsilon}}
|\nabla u_0|^2\delta dx\Big)^{\frac{1}{2}}
+\varepsilon\Big(\int_{\Omega}
|\nabla^2 u_0|^2\delta dx
\Big)^{\frac{1}{2}}
\bigg\}
\Big(\int_{\Omega_\varepsilon}
|\nabla\phi_\varepsilon|\rho^{-1}dx\Big)^{\frac{1}{2}},
\end{aligned}
\end{equation*}
in which we also employ the following computation
\begin{equation*}
\begin{aligned}
\frac{1}{\varepsilon}\int_{O_{2\varepsilon}\setminus O_\varepsilon}
|\nabla u_0||\tilde{\phi}_\varepsilon|dx
&\leq
\varepsilon^{-\frac{3}{2}}
\Big(\int_{O_{2\varepsilon}}|\nabla u_0|^2\delta dx
\Big)^{\frac{1}{2}}
\Big(\int_{O_{2\varepsilon}}|\tilde{\phi}_\varepsilon|^2dx
\Big)^{\frac{1}{2}}\\
&\lesssim^{\eqref{pri:2.8}}
\varepsilon^{-\frac{1}{2}}
\Big(\int_{O_{2\varepsilon}}|\nabla u_0|^2\delta dx
\Big)^{\frac{1}{2}}
\Big(\int_{O_{2\varepsilon}}|\nabla\tilde{\phi}_\varepsilon|^2dx
\Big)^{\frac{1}{2}}\\
&\lesssim
\varepsilon^{-\frac{\tau}{2}}
\Big(\int_{O_{2\varepsilon}}|\nabla u_0|^2\delta dx
\Big)^{\frac{1}{2}}
\Big(\int_{O_{2\varepsilon}}|\nabla\tilde{\phi}_\varepsilon|^2
\rho^{-1}dx
\Big)^{\frac{1}{2}}
\end{aligned}
\end{equation*}
in the third step.
Moreover, it follows from the estimates $\eqref{pri:9.2}$, $\eqref{pri:9.1}$
and $\eqref{pri:8.3}$ that
\begin{equation}\label{f:8.9}
\begin{aligned}
K_{23}
&\lesssim
\varepsilon^{1-\frac{\tau}{2}}
\ln^{\frac{1}{2}}(1/\varepsilon)
\Big\{\|F\|_{L^{2}(\Omega)}
 +\|g\|_{H^1(\partial\Omega)}\Big\}
\int_{\Omega}
\dashint_{B_\varepsilon(x)\cap\Omega_\varepsilon}
|\nabla\phi_\varepsilon|^2
dy\rho^{-1}dx\\
&\lesssim^{\eqref{pri:1.6}}
\varepsilon^{1-\frac{\tau}{2}}\ln^{\frac{1}{2}}(1/\varepsilon)
\Big\{\|F\|_{L^{2}(\Omega)}
 +\|g\|_{H^1(\partial\Omega)}\Big\}
\Big(\int_{\Omega_\varepsilon}|f|^2\rho^{-1}dx
\Big)^{\frac{1}{2}}.
\end{aligned}
\end{equation}
Now, we turn to study the term $K_{24}$, and
\begin{equation}\label{f:8.10}
\begin{aligned}
K_{24}
&\lesssim \varepsilon\int_{\Omega}|F|
|\nabla\tilde{\phi}_\varepsilon| dx
+ \int_{O_{2\varepsilon}}|F||\tilde{\phi}_\varepsilon|dx\\
&\lesssim^{\eqref{pri:2.8}}  \varepsilon\|F\|_{L^2(\Omega)}
\|\nabla\tilde{\phi}_\varepsilon\|_{L^2(\Omega)}
+ \varepsilon\|F\|_{L^2(\Omega)}
\|\nabla\tilde{\phi}_\varepsilon\|_{L^2(O_{2\varepsilon})}\\
&\lesssim
\varepsilon\|F\|_{L^2(\Omega)}
\|\nabla\tilde{\phi}_\varepsilon\|_{L^2(\Omega)}
\lesssim \varepsilon
\|F\|_{L^2(\Omega)}
\Big(\int_{\Omega}|\nabla\tilde{\phi}_\varepsilon|^2
\rho^{-1}dx
\Big)^{\frac{1}{2}}\\
&\lesssim^{\eqref{pri:1.9}} \varepsilon
\|F\|_{L^2(\Omega)}
\Big(\int_{\Omega_\varepsilon}|\nabla\phi_\varepsilon|^2
\rho^{-1}dx
\Big)^{\frac{1}{2}}\\
&\lesssim^{\eqref{f:8.12}}
\varepsilon\|F\|_{L^2(\Omega)}
\Big(\int_{\Omega_\varepsilon}|f|^2\rho^{-1}dx
\Big)^{\frac{1}{2}}.
\end{aligned}
\end{equation}

Consequently, combining the estimates
$\eqref{f:8.7}$, $\eqref{f:8.8}$, $\eqref{f:8.9}$
and $\eqref{f:8.10}$ leads to
\begin{equation*}
 K_2
 \lesssim
 \varepsilon^{1-\frac{\tau}{2}}
 \ln^{\frac{1}{2}}(1/\varepsilon)
\bigg\{
\Big(\int_{\Omega}|\nabla F|^2
\delta^{3-\tau}dx\Big)^{\frac{1}{2}}
+\|F\|_{L^{2}(\Omega)}
 +\|g\|_{H^1(\partial\Omega)}\bigg\}
\Big(\int_{\Omega_\varepsilon}|f|^2\rho^{-1}dx
\Big)^{\frac{1}{2}}.
\end{equation*}
This together with the estimate $\eqref{f:8.11}$ shows
the desired estimate $\eqref{pri:8.4}$ and we have
completed the whole proof.
\qed

\begin{remark}\label{remark:8.1}
Set $\rho(x) = [\delta(x)]^{1-\tau}$
with $\delta(x)=\text{dist}(x,\partial\Omega_0)$
and $0<\tau<1$.
Assume that $\phi_\varepsilon$ is the solution to
$\eqref{adjoint2}$ and let $\tilde{\phi}_\varepsilon$ be
the related extension function of $\phi_\varepsilon$ according
to Theorem $\ref{thm:1.5}$. Then for any fixed
$U\subseteq\Omega$ we have
\begin{equation}\label{pri:8.3}
\begin{aligned}
\int_{U\cap\Omega_\varepsilon}|\nabla\phi_\varepsilon|^2
\rho^{-1}dx
&=\int_{U}l_\varepsilon^{+}|\nabla\tilde{\phi}_\varepsilon|^2
\rho^{-1}dx\\
&\lesssim^{\eqref{pri:5.4}} \int_{\Omega}
\dashint_{B_\varepsilon(x)}
l_\varepsilon^{+}|\nabla\tilde{\phi}_\varepsilon|^2
\rho^{-1}dydx
\lesssim \int_{\Omega}
\dashint_{B_\varepsilon(x)\cap\Omega_\varepsilon}
|\nabla\phi_\varepsilon|^2
dy\rho^{-1}dx,
\end{aligned}
\end{equation}
where the last inequality follows from the fact that
$\rho(x)\sim\rho(y)$ since $|x-y|<\varepsilon$.
\end{remark}

\noindent \textbf{The proof of Theorem \ref{thm:1.1}.}
We first handle the estimate $\eqref{pri:1.1}$, which
is in fact based upon the duality arguments sated in
Theorem $\ref{thm:8.1}$.
On the one hand,  recalling $\rho=\delta^{1-\tau}$ with
$\delta(x)=\text{dist}(x,\partial\Omega_0)$,
for any fixed $\tau\in(0,1)$, one may derive that
\begin{equation}\label{f:8.14}
\begin{aligned}
\int_{\Omega_\varepsilon}
|\nabla w_\varepsilon|^2 \delta^{1-\tau} dx
&\gtrsim^{\eqref{pri:1.9}}
\int_{\Omega_0}|\nabla
\Lambda_\varepsilon(w_\varepsilon)|^2
\delta^{1-\tau} dx\\
&\gtrsim
\Big(\int_{\Omega_0}
|\Lambda_\varepsilon(w_\varepsilon)
|^{\frac{2d}{d-1-\tau}} dx\Big)^{\frac{d-1-\tau}{d}}
\geq
\Big(\int_{\Omega_\varepsilon}
|w_\varepsilon|^{\frac{2d}{d-1-\tau}} dx\Big)^{\frac{d-1-\tau}{d}},
\end{aligned}
\end{equation}
in which we apply the
weighted Hardy-Sobolev inequality \cite[Theorem 2.1]{LV}
to the last step for the extended function
$\Lambda_\varepsilon(w_\varepsilon)$
vanishes near $\partial\Omega_0$. Also,
the condition $\tau\not=0$ is very important, which
avoided the critical case of the weighted Hardy inequality
(see \cite[Theorem 1.1]{Le}).
On the other hand,
it follows from the estimate
$\eqref{f:8.2}$ that
\begin{equation*}
\begin{aligned}
\Big(\int_{\Omega_\varepsilon}|\nabla w_\varepsilon|^2\rho
dx\Big)^{\frac{1}{2}}
\lesssim \varepsilon^{1-\frac{\tau}{2}}\ln^{\frac{1}{2}}(1/\varepsilon)
 \Bigg\{
 \Big(\int_{\Omega_0}|\nabla F|^2
\delta dx\Big)^{\frac{1}{2}}
 +\|F\|_{L^{2}(\Omega_0)}
 +\|g\|_{H^1(\partial\Omega)}\Bigg\},
\end{aligned}
\end{equation*}
and this together with $\eqref{f:8.14}$ leads to
\begin{equation*}
\|w_\varepsilon\|_{L^{\frac{2d}{d-1-\tau}}(\Omega_\varepsilon)}
\lesssim \varepsilon^{1-\frac{\tau}{2}}
\ln^{\frac{1}{2}}(1/\varepsilon)
 \Bigg\{
 \Big(\int_{\Omega_0}|\nabla F|^2
\delta dx\Big)^{\frac{1}{2}}
 +\|F\|_{L^{2}(\Omega_0)}
 +\|g\|_{H^1(\partial\Omega)}\Bigg\}.
\end{equation*}
Thus, setting $p=\frac{2d}{d-1-\tau}$ and
$q=\frac{2(d-1)}{d-1-\tau}$,
the above estimate together with
\begin{equation*}
\|\varepsilon\chi_{\varepsilon}S_\varepsilon(\psi_\varepsilon
\nabla u_0)\|_{L^{p}(\Omega)}
\lesssim^{\eqref{pri:2.3}} \varepsilon\|\nabla u_0\|_{L^{p}(\Omega)}
\lesssim^{\eqref{pri:9.5}} \varepsilon
\Big\{\|F\|_{L^{2}(\Omega)}
  +\|g\|_{W^{1,q}(\partial\Omega)}\Big\}
\end{equation*}
and a triangle inequality leads to the
almost-sharp error estimate $\eqref{pri:1.1}$.

By virtue of $\eqref{pri:4.3}$,
we have
the desired estimate $\eqref{eq:1.8}$,
where we employed the estimate $\eqref{pri:9.3}$
instead of $\eqref{pri:9.5}$ in the computations.
This completes the whole proof.
\qed


\section{Weighted Quenched Calder\'on-Zygmund Estimates}
\label{sec:5}

\begin{lemma}[Shen's lemma]\label{lemma:5.4}
Let $q>2$ and $\Omega$ be a bounded Lipschitz domain.
Let $F\in L^2(\Omega)$ and $f\in L^p(\Omega)$
for some $2<p<q$. Suppose that for each ball $B$
with the property that $|B|\leq  c_0|\Omega|$
and either $4B\subset\Omega$ or $B$ is centered on
$\partial\Omega$, there exist two measurable functions
$F_B$ and $R_B$ on $\Omega\cap2B$,
such that $|F|\leq |F_B| + |R_B|$ on $\Omega\cap2B$,
\begin{equation}\label{pri:5.6}
\begin{aligned}
\Big(\dashint_{2B\cap\Omega}|R_B|^q\Big)^{\frac{1}{q}}
&\leq N_1 \bigg\{
\Big(\dashint_{4B\cap\Omega}|F|^2\Big)^{\frac{1}{2}}
+\sup_{4B_0\supseteq B^\prime\supseteq B}
\Big(\dashint_{B^\prime\cap\Omega}|f|^2\Big)^{\frac{1}{2}}
\bigg\}\\
\Big(\dashint_{2B\cap\Omega}|F_B|^2\Big)^{\frac{1}{2}}
&\leq N_2
\sup_{4B_0\supseteq B^\prime\supseteq B}
\Big(\dashint_{B^\prime\cap\Omega}|f|^2\Big)^{\frac{1}{2}}
\end{aligned}
\end{equation}
where $N_1, N_2>0$
and $0<c_0<1$. Then $F\in L^p(\Omega)$
and
\begin{equation}\label{}
\Big(\int_{\Omega}|F|^p\Big)^{\frac{1}{p}}
\leq C\bigg\{\Big(\int_{\Omega}|F|^2\Big)^{\frac{1}{2}}
+\Big(\int_{\Omega}|f|^p\Big)^{\frac{1}{p}}\bigg\},
\end{equation}
where $C$ depends at most on $N_1$,
 $N_2$, $c_0$, $p$, $q$ and the Lipschitz character of $\Omega$.
\end{lemma}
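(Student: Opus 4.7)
The plan is to deduce Lemma \ref{lemma:5.4} from a good-$\lambda$ inequality for a truncated Hardy-Littlewood maximal operator, which is then integrated via the layer-cake formula. This is the Calder\'on-Zygmund-type scheme developed by Z.~Shen, adapted to Lipschitz domains by restricting to the two admissible ball configurations (interior balls with $4B\subset\Omega$, or balls centered on $\partial\Omega$). Concretely, I would introduce
\begin{equation*}
\mathcal{M}_{\Omega}(h)(x) := \sup_{B\ni x,\ |B|\leq c_0|\Omega|}
\dashint_{B\cap\Omega}|h|\,dy,
\end{equation*}
which is bounded on $L^{s}(\Omega)$ for every $s>1$, and set
$\lambda_0^{2} := \dashint_{\Omega}|F|^{2}\,dx+(\dashint_{\Omega}|f|^{p}\,dx)^{2/p}$.

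The heart of the argument is to establish, for some large $A_{0}$, every $\lambda>\lambda_0$, and every $\eta\in(0,1)$, the good-$\lambda$ inequality
\begin{equation*}
\Big|\{\mathcal{M}_{\Omega}(|F|^{2})>A_{0}\lambda^{2}\}\Big|
\leq C(A_{0}^{-q/2}+\eta)\,
\Big|\{\mathcal{M}_{\Omega}(|F|^{2})>\lambda^{2}\}\Big|
+ C_{\eta}\,\Big|\{\mathcal{M}_{\Omega}(|f|^{2})>\eta\lambda^{2}\}\Big|.
\end{equation*}
To prove it, I would apply a Vitali-type decomposition to the superlevel set $\{\mathcal{M}_{\Omega}(|F|^{2})>\lambda^{2}\}$, producing a maximal disjoint family of admissible balls $\{B_{k}\}$ with $\dashint_{4B_{k}\cap\Omega}|F|^{2}\leq\lambda^{2}$ and $\dashint_{2B_{k}\cap\Omega}|F|^{2}\gtrsim\lambda^{2}$. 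On each $2B_{k}$, the hypothesis supplies $|F|\leq|F_{B_{k}}|+|R_{B_{k}}|$ with $L^{q}$-control of $R_{B_{k}}$ at level $\lambda$ and $L^{2}$-control of $F_{B_{k}}$ by the $f$-contribution only. Chebyshev at exponent $q$ applied to $R_{B_{k}}$ yields the gain factor $A_{0}^{-q/2}$, whereas $F_{B_{k}}$ together with the $f$-part of the $R_{B_{k}}$-bound is absorbed into $|\{\mathcal{M}_{\Omega}(|f|^{2})>\eta\lambda^{2}\}|$ after Chebyshev at exponent $2$; the supremum in the hypothesized bound on $f$ is dominated pointwise by $\mathcal{M}_{\Omega}(|f|^{2})^{1/2}$ on $B_{k}$ once the admissible scales are calibrated.

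Multiplying the good-$\lambda$ inequality by $p\lambda^{p-1}$, integrating on $\lambda>\lambda_0$, and invoking $\mathcal{M}_{\Omega}\colon L^{p/2}\to L^{p/2}$ (valid since $p/2>1$), one arrives at
\begin{equation*}
\|F\|_{L^{p}(\Omega)}^{p}
\leq C(A_{0}^{-q/2}+\eta)\,\|F\|_{L^{p}(\Omega)}^{p}
+ C_{\eta}\,\|f\|_{L^{p}(\Omega)}^{p}
+ C\lambda_{0}^{p}|\Omega|,
\end{equation*}
so choosing $A_{0}$ large and $\eta$ small (possible precisely because $p<q$) permits absorption of the first term on the right and, after unwinding $\lambda_0$, delivers the stated bound. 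The main obstacle I expect is the geometry of the Vitali covering near the boundary: the stopping-time procedure must yield only balls of the two admissible types, so a stopping ball that is too close to $\partial\Omega$ has to be replaced by a boundary-centered ball at comparable scale without loss in the mass comparison. Here the scale cap $|B|\leq c_0|\Omega|$ together with the Lipschitz character of $\partial\Omega$ enters decisively, ensuring that the underlying doubling, covering, and rescaling constants remain uniform in $\lambda$ and compatible with the hypothesized reverse-H\"older estimate.
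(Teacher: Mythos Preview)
Your proposal is correct and follows the standard good-$\lambda$/Calder\'on-Zygmund scheme due to Shen. The paper does not supply its own proof of this lemma; it simply cites \cite[Theorem 4.2.6]{S0} and \cite[Theorem 4.13]{S3}, whose arguments are precisely the one you have outlined.
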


\begin{proof}
See \cite[Theorem 4.2.6]{S0} or \cite[Theorem 4.13]{S3}.
\end{proof}

\begin{remark}
Recently, we noticed that Z. Shen \cite{S4} extended
real arguments to the weighted Sobolev spaces as
we were preparing this project.
So, it is very likely to follow his new scheme to have a
proof for Theorem $\ref{thm:1.4}$ concerned with a Lipschitz domain.
In fact, the idea on the proof of Theorem $\ref{thm:1.4}$
was inspired by he and his cooperator's work \cite{SZ1}.
Moreover, weighted quenched Calder\'on-Zygmund estimates
were noticed by the second author because of F. Otto's personal
interests.
\end{remark}

\begin{lemma}[primary geometry on integrals]
\label{lemma:5.5}
Let $f\in L^1_{\emph{loc}}(\mathbb{R}^d)$, and
$\Omega\subset\mathbb{R}^d$ be a bounded domain. Then
there hold the following inequalities:
\begin{itemize}
  \item If $0<r<(\varepsilon/4)$ and
  $D_r(x_0)$ is given, then for any $x\in D_r(x_0)$ we have
  \begin{equation}\label{pri:5.3}
  \dashint_{B_\varepsilon(x)\cap\Omega}|f|
  \lesssim
  \dashint_{D_{4r}(x_0)}
  \dashint_{B_\varepsilon(x)\cap\Omega}|f| dx.
  \end{equation}
  \item If $r\geq(\varepsilon/4)$ and $D_r(x_0)$ is given,
  then there holds
  \begin{equation}\label{pri:5.4}
  \dashint_{D_r(x_0)} |f|
  \lesssim \dashint_{D_{2r}(x_0)}
  \dashint_{B_\varepsilon(x)\cap\Omega}|f|
  \lesssim \dashint_{D_{6r}(x_0)} |f|.
  \end{equation}
  \item If $\rho\in A_1$ and $0<r<r_0/10$,
  then one may derive that
  \begin{equation}\label{pri:5.9}
  \int_{\tilde{\Omega}}
  \dashint_{B_r(x)\cap\tilde{\Omega}}|f(y)|dy \rho(x) dx
  \lesssim \int_{\Omega}
  |f|\rho dx,
  \qquad \tilde{\Omega}:=\{x\in\Omega:\emph{dist}(x,\partial\Omega)>r\},
  \end{equation}
\end{itemize}
where the up to constant depends only on $d$.
\end{lemma}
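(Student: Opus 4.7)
The strategy across all three parts is to apply Fubini's theorem in order to convert each iterated average into a single integral of $|f|$ weighted by a geometric factor, and then control that factor using volume estimates for a Lipschitz domain.

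For part (i), I would fix $x\in D_r(x_0)$ and note that since $r<\varepsilon/4$, every $z\in D_{4r}(x_0)$ satisfies $|x-z|\lesssim r\ll\varepsilon$, so the balls $B_\varepsilon(z)$ all overlap $B_\varepsilon(x)$ on a set of measure comparable to $\varepsilon^d$, and $|B_\varepsilon(z)\cap\Omega|\sim|B_\varepsilon(x)\cap\Omega|\sim\varepsilon^d$ by the Lipschitz character of $\Omega$. Writing the RHS via Fubini as a weighted integral of $|f(y)|$ over $y$, with the weight $|D_{4r}(x_0)\cap B_\varepsilon(y)|/(|D_{4r}(x_0)||B_\varepsilon(z)\cap\Omega|)$, one shows that the weight is $\gtrsim\varepsilon^{-d}$ whenever $y\in B_\varepsilon(x)\cap\Omega$, which recovers the LHS after integrating $|f|$ over $B_\varepsilon(x)\cap\Omega$.

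For part (ii), apply Fubini to write
\begin{equation*}
\dashint_{D_{2r}(x_0)}\dashint_{B_\varepsilon(z)\cap\Omega}|f(y)|\,dy\,dz
=\frac{1}{|D_{2r}(x_0)|}\int_{\Omega}|f(y)|\int_{D_{2r}(x_0)\cap B_\varepsilon(y)}\frac{dz}{|B_\varepsilon(z)\cap\Omega|}\,dy.
\end{equation*}
Since $|B_\varepsilon(z)\cap\Omega|\sim\varepsilon^d$ uniformly for $z\in\overline\Omega$, the inner integral is bounded above by $\varepsilon^{-d}|B_\varepsilon(y)|\lesssim 1$, and only $y$ with $\mathrm{dist}(y,x_0)\lesssim r$ (hence $y\in D_{6r}(x_0)$) can contribute, which together with $|D_{2r}|\sim|D_{6r}|$ yields the right-hand estimate. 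For the left-hand estimate, restrict $y$ to $D_r(x_0)$ and observe that because $\varepsilon\leq 4r$ and $\Omega$ satisfies an interior cone condition, $|D_{2r}(x_0)\cap B_\varepsilon(y)|\gtrsim\varepsilon^d$; the inner integral is therefore $\gtrsim 1$ and the conclusion follows from $|D_r|\sim|D_{2r}|$.

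For part (iii), apply Fubini once more:
\begin{equation*}
\int_{\tilde\Omega}\dashint_{B_r(x)\cap\tilde\Omega}|f(y)|\,dy\,\rho(x)\,dx
=\int_{\tilde\Omega}|f(y)|\int_{B_r(y)\cap\tilde\Omega}\frac{\rho(x)}{|B_r(x)\cap\tilde\Omega|}\,dx\,dy.
\end{equation*}
Using the uniform lower bound $|B_r(x)\cap\tilde\Omega|\gtrsim r^d$ for $x\in\tilde\Omega$, the inner integral is at most $Cr^{-d}\int_{B_r(y)}\rho\,dx$. Since $\rho\in A_1$ implies $M\rho\lesssim\rho$ almost everywhere, $\int_{B_r(y)}\rho\,dx\leq|B_r|\,M\rho(y)\lesssim r^d\rho(y)$, so the whole expression is dominated by $\int_{\tilde\Omega}|f|\rho\,dy\leq\int_{\Omega}|f|\rho\,dy$.

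The main technical point I expect to be the obstacle is the uniform volume bound $|B_r(x)\cap\tilde\Omega|\gtrsim r^d$ used in part (iii) (and the analogous bound for $|D_{2r}(x_0)\cap B_\varepsilon(y)|$ in part (ii)). When $x$ lies far from $\partial\tilde\Omega$ this is trivial, but near the inner boundary $\{\mathrm{dist}(\cdot,\partial\Omega)=r\}$ one must exploit the fact that $\tilde\Omega$ inherits an interior cone condition from the Lipschitz character of $\Omega$, so the cone constant $c(M_0)$ becomes implicit in the final multiplicative constant. Once this geometric input is in place, the remaining estimates are routine applications of Fubini and volume comparison.
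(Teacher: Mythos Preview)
Your strategy—Fubini to convert each iterated average into a weighted integral of $|f|$, then control the geometric weight by volume comparison (together with the $A_1$ inequality $\mathcal M\rho\lesssim\rho$ in part (iii))—is exactly the paper's, and you are in fact more explicit than the paper about the interior-cone inputs that make the volume bounds work.

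One point to tighten in part (i): the overlap fact $|B_\varepsilon(z)\cap B_\varepsilon(x)|\sim\varepsilon^d$ for $z\in D_{4r}(x_0)$ is not what yields your weight bound. What you actually need is $|D_{4r}(x_0)\cap B_\varepsilon(y)|\gtrsim r^d$ for \emph{every} $y\in B_\varepsilon(x)\cap\Omega$, and when $|y-x|$ is close to $\varepsilon$ the ball $B_\varepsilon(y)$ contains no fixed ball $B_{cr}(x)$. The fix is a short cone estimate: for $z\in B_r(x)\subset B_{4r}(x_0)$ making angle at most $\pi/3$ with $y-x$ one computes
\[
|z-y|^2=|z-x|^2-2(z-x)\cdot(y-x)+|y-x|^2<|y-x|^2<\varepsilon^2
\]
whenever $|z-x|<|y-x|$, which holds in the nontrivial range $|y-x|>\varepsilon-r>3r$ (the range $|y-x|\le\varepsilon-r$ being trivial since then $B_r(x)\subset B_\varepsilon(y)$); this sector has volume $\sim r^d$, and its intersection with $\Omega$ is handled by the same Lipschitz cone input you already flag for parts (ii)--(iii). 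The paper's own argument for part (i) is no more careful—its intermediate step $\gtrsim\dashint_{D_{2\varepsilon}(x_0)}|f|$ does not follow as written.
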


\begin{proof}
The proof is standard and we provide a proof for the sake of
the completeness, while in the case of $\Omega=\mathbb{R}^d$ we
refer the reader to \cite[Lemma 6.5]{DO} for a detail.
We first show the estimate $\eqref{pri:5.3}$.
For any set $K\subset\mathbb{R}^d$,
let $I_K$ be the indicator function of $K$.
There holds
\begin{equation*}
\begin{aligned}
\dashint_{D_{4r}(x_0)}
  \dashint_{B(x,\varepsilon)\cap\Omega}|f| dx
&=\frac{1}{|D_{4r}||B_\varepsilon|}\int_{\mathbb{R}^d}|f(y)|
\int_{\mathbb{R}^d} I_{D_{4r}(x_0)}(x)
I_{\{(x,y)\in\Omega\times\Omega:|x-y|<\varepsilon\}}(x,y)dx dy\\
&\gtrsim \dashint_{D_{2\varepsilon}(x_0)}|f(y)|dy
\geq \dashint_{B_{\varepsilon}(x)\cap\Omega}|f(y)|dy
\end{aligned}
\end{equation*}
for any $x\in D_r(x_0)$,
where we note that $B_\varepsilon(x)\cap\Omega\subset
D_{2\varepsilon}(x_0)$, provided $x\in D_r(x_0)$.
Then we turn to the inequality $\eqref{pri:5.4}$.
On the one hand,
\begin{equation}\label{}
\begin{aligned}
\dashint_{D_{2r}(x_0)}
  \dashint_{B_\varepsilon(x)\cap\Omega}|f|
&= \frac{1}{|D_{2r}||B_\varepsilon|}
\int_{\mathbb{R}^d}|f(y)|
\int_{\mathbb{R}^d} I_{D_{2r}(x_0)}(x)
I_{\{(x,y)\in\Omega\times\Omega:|x-y|<\varepsilon\}}(x,y)dx dy\\
&\gtrsim \dashint_{D_{r}(x_0)}|f(y)|dy.
\end{aligned}
\end{equation}
On the other hand,
\begin{equation}\label{}
\begin{aligned}
\dashint_{D_{2r}(x_0)}
  \dashint_{B_\varepsilon(x)\cap\Omega}|f|
&= \frac{1}{|D_{2r}||B_\varepsilon|}
\int_{\mathbb{R}^d}\underbrace{|f(y)|
\int_{\mathbb{R}^d} I_{D_{2r}(x_0)}(x)
I_{\{(x,y)\in\Omega\times\Omega:|x-y|<\varepsilon\}}(x,y)dx}_{
:=\tilde{f}(y)} dy\\
&\lesssim \dashint_{D_{6r}(x_0)}|f(y)|dy,
\end{aligned}
\end{equation}
since the support of $\tilde{f}$ is included in $D_{6r}(x_0)$.
Then we proceed to show the estimate $\eqref{pri:5.9}$, and
\begin{equation*}
\begin{aligned}
\int_{\tilde{\Omega}}
  \dashint_{B_r(x)\cap\tilde{\Omega}}|f(y)|dy \rho(x) dx
&\leq
\int_{\Omega}|f(y)|
  \dashint_{B_r(y)} \rho(x)dx dy\\
&\leq \int_{\Omega}|f(y)|
  \mathcal{M}(\rho)(y) dy
\lesssim \int_{\Omega}|f(y)|\rho(y) dy,
\end{aligned}
\end{equation*}
where we use the Fubini's theorem in the first inequality
and the definition of $A_1$
(see for example \cite[pp.134]{D}) in the last one.
We have completed the proof.
\end{proof}

\noindent \textbf{The proof of Theorem \ref{thm:1.2}}.
The main idea is based upon Prof. Felix Otto's unpublished
lectures given in CIMI (Toulouse).
We first consider $p\geq 2$, while the case $1<p<2$ would be done
by a duality argument. The core ingredient of the proof
is Shen's real methods \cite{S3}.

Let $B := B_{r}(x_0)$ be any ball with $0<r<r_0/10$, such that
$x_0\in\partial\Omega$ or $4B\subset\Omega$.
To
achieve the target, we impose the following quantities:
\begin{equation*}
\begin{aligned}
&U(x):=
\Big(\dashint_{B_\varepsilon(x)\cap\Omega_\varepsilon}
|\nabla u_\varepsilon|^2\Big)^{\frac{1}{2}},
\quad F(x):=
\Big(\dashint_{B_\varepsilon(x)\cap\Omega_\varepsilon}
|f|^2\Big)^{\frac{1}{2}}, \\
&W_B(x): = \Big(\dashint_{B_\varepsilon(x)\cap\Omega_\varepsilon}
|\nabla w_\varepsilon|^2\Big)^{\frac{1}{2}},
\quad V_B(x): = \Big(\dashint_{B_\varepsilon(x)\cap\Omega_\varepsilon}
|\nabla v_\varepsilon|^2\Big)^{\frac{1}{2}}
\end{aligned}
\end{equation*}
for any $x\in\Omega$, and later on one may prefer
$W_B$ and $V_B$ according to $B$, such that
$|U|\leq |W_B|+|V_B|$. For the ease of the statement,
$\tilde{u}_\varepsilon$, $\tilde{v}_\varepsilon$ and
$\tilde{w}_\varepsilon$ are
corresponding extension function
(in the way of Lemma $\ref{extensiontheory}$).

Case 1. If $0<r<\varepsilon/4$, then it is fine to fix
$W_B = U$ and $V_B = 0$. For any $x\in B\cap\Omega$,
one may show that
\begin{equation}\label{f:5.5}
\begin{aligned}
W_B^2(x) = \dashint_{B_\varepsilon(x)\cap\Omega_\varepsilon}
|\nabla u_\varepsilon|^2
&= \dashint_{B_\varepsilon(x)\cap\Omega}
l_\varepsilon^{+}|\nabla \tilde{u}_\varepsilon|^2\\
&
\lesssim \dashint_{4B\cap\Omega}
\dashint_{B_\varepsilon(x)\cap\Omega}l_\varepsilon^{+}
|\nabla \tilde{u}_\varepsilon|^2
= \dashint_{4B\cap\Omega} U^2,
\end{aligned}
\end{equation}
where we employ the estimate $\eqref{pri:5.3}$ in the
inequality.
Thus, we have
\begin{equation}\label{f:5.8}
\Big(\dashint_{B\cap\Omega} W_B^p\Big)^{\frac{1}{p}}
\leq \sup_{x\in B\cap\Omega}|W_B(x)|
\lesssim \Big(\dashint_{4B\cap\Omega} U^2\Big)^{\frac{1}{2}}
+ \Big(\dashint_{B\cap\Omega} F^2\Big)^{\frac{1}{2}}
\end{equation}
and trivially,
\begin{equation}\label{f:5.9}
\Big(\dashint_{B\cap\Omega} V_B^p\Big)^{\frac{1}{p}}
\lesssim
\Big(\dashint_{B\cap\Omega} F^2\Big)^{\frac{1}{2}}
\end{equation}
in such the case.

Case 2. For $r\geq (\varepsilon/4)$, let $u_\varepsilon
= v_\varepsilon+w_\varepsilon$, and
$v_\varepsilon$ with $w_\varepsilon$
satisfies the following equations:
\begin{equation}\label{pde:5.2}
(\text{i})~\left\{\begin{aligned}
\mathcal{L}_\varepsilon(v_\varepsilon) &= \nabla\cdot (I_Bf)
&\quad&\text{in}~\Omega_{\varepsilon},\\
\sigma_\varepsilon(v_\varepsilon) &= n\cdot (I_Bf)
&\quad&\text{on}~ S_\varepsilon,\\
v_\varepsilon &= 0
&\quad&\text{on}~ \Gamma_\varepsilon,\\
\end{aligned}\right.
\qquad
(\text{ii})~\left\{\begin{aligned}
\mathcal{L}_\varepsilon(w_\varepsilon) &= 0
&\quad&\text{in}~D_r^\varepsilon(x_0),\\
\sigma_\varepsilon(w_\varepsilon) &= 0
&\quad&\text{on}~ D_r(x_0)\cap S_\varepsilon,\\
w_\varepsilon &= 0
&\quad&\text{on}~ \Delta_r(x_0)\cap\Gamma_\varepsilon,\\
\end{aligned}\right.
\end{equation}
respectively. For any $x\in B\cap\Omega$, in terms of (ii) above,
it follows from
boundary (and interior) Lipschitz estimates $\eqref{pri:5.5}$
(and \cite[Theorem 1.1]{BR}) that
\begin{equation}\label{f:5.16}
\begin{aligned}
\dashint_{B_\varepsilon(x)\cap\Omega_\varepsilon}
|\nabla w_\varepsilon|^2
\lesssim
\dashint_{D_{5r}^\varepsilon(x_0)}
|\nabla w_\varepsilon|^2
&=\dashint_{D_{5r}(x_0)}
l_{\varepsilon}^{+}|\nabla \tilde{w}_\varepsilon|^2\\
&\lesssim
\dashint_{D_{10r}(x_0)}
\dashint_{B_\varepsilon(x)\cap\Omega}
l_{\varepsilon}^{+}|\nabla \tilde{w}_\varepsilon|^2
\lesssim \dashint_{B_{10r}(x_0)\cap\Omega} W_B^2,
\end{aligned}
\end{equation}
where we have use the estimate $\eqref{pri:5.4}$ in the
second inequality, and this implies
\begin{equation}\label{f:5.6}
\begin{aligned}
\sup_{x\in B\cap\Omega}|W_B(x)|^2
\lesssim \dashint_{B_{10r}(x_0)\cap\Omega} \big(
U^2+V_B^2\big).
\end{aligned}
\end{equation}
Then in view of
(i) in $\eqref{pde:5.2}$, we have
\begin{equation*}
\begin{aligned}
\dashint_{D_{10r}(x_0)} V_B^2
\lesssim \dashint_{D_{14r}(x_0)}
l_\varepsilon^{+}|\nabla \tilde{v}_\varepsilon|^2
&=\dashint_{D_{14r}^\varepsilon(x_0)}
|\nabla v_\varepsilon|^2\\
&\lesssim \dashint_{D_{r}^\varepsilon(x_0)}
|f|^2 =  \dashint_{D_{r}(x_0)}l_\varepsilon^{+}
|f|^2 \lesssim
\dashint_{D_{2r}(x_0)} F^2,
\end{aligned}
\end{equation*}
where we employ the estimate $\eqref{pri:5.4}$ in
the first and last inequalities and
energy estimates in the second one. This together with
$\eqref{f:5.6}$ implies that
\begin{equation}\label{f:5.7}
\begin{aligned}
&\Big(\dashint_{B\cap\Omega} W_B^p\Big)^{\frac{1}{p}}
\leq
\sup_{x\in B\cap\Omega}|W_B(x)|
\lesssim \Big(\dashint_{10B\cap\Omega} U^2\Big)^{\frac{1}{2}}
+ \Big(\dashint_{2B\cap\Omega} F^2\Big)^{\frac{1}{2}},\\
&\Big(\dashint_{B\cap\Omega} V_B^2\Big)^{\frac{1}{2}}
\lesssim \Big(\dashint_{2B\cap\Omega} F^2\Big)^{\frac{1}{2}}.
\end{aligned}
\end{equation}

Hence, the condition $\eqref{pri:5.6}$ has been verified by
the estimates $\eqref{f:5.7}$, $\eqref{f:5.8}$
and $\eqref{f:5.9}$, and so one may obtain
\begin{equation*}
\begin{aligned}
\Big(\int_{\Omega} U^p\Big)^{1/p}
\lesssim \Big(\int_{\Omega} U^2\Big)^{1/2}
+ \Big(\int_{\Omega} F^p\Big)^{1/p}
\end{aligned}
\end{equation*}
for any $p\geq 2$,
and this together with
\begin{equation*}
\begin{aligned}
\int_{\Omega} U^2
=
\int_{\Omega}
\dashint_{B_\varepsilon(x)\cap\Omega_\varepsilon}
|\nabla u_\varepsilon|^2 dx
&\lesssim \int_{\Omega_0}
|\nabla \tilde{u}_\varepsilon|^2 dx\\
&\lesssim \int_{\Omega_\varepsilon}
|\nabla u_\varepsilon|^2 dx
\lesssim  \int_{\Omega_\varepsilon}
|f|^2 dx \lesssim
\int_{\Omega}
F^{2} dx
\lesssim
\Big(\int_{\Omega}
F^{p} dx\Big)^{2/p}
\end{aligned}
\end{equation*}
implies the desired estimate $\eqref{pri:5.1}$,
where we employ Lemma $\ref{extensiontheory}$ in the
second inequality, and also use the zero extension
of $f$ and $\eqref{pri:5.4}$ in the fourth one.
We have completed the whole proof.
\qed


\vspace{0.2cm}

\noindent \textbf{The proof of Theorem \ref{thm:1.4}.}
For the ease of the statement, we still use
the notation imposed in the proof of Theorem $\ref{thm:1.2}$.
The main idea is to reuse the estimate $\eqref{pri:1.2}$ in the
case of $1<p<2$ and maximal function arguments. Roughly speaking,
we use the arguments developed in Lemma $\ref{lemma:5.4}$ twice.
To achieve the goal, we define the localized Hardy-Littlewood maximal function
as
\begin{equation}\label{def:1}
\mathcal{M}_{\Omega}(U)(x) = \sup_{Q\subset\Omega\atop
Q\ni x} \dashint_{Q} |U(y)|dy,
\end{equation}
where $Q$ is cubes in $\mathbb{R}^d$. Let $\mathcal{M}$ be
the Hardy-Littlewood maximal function, and it is not hard
to see $\mathcal{M}_{\Omega}(U)(x)\leq \mathcal{M}(I_{\Omega}U)(x)$
for any $x\in\Omega$.
Due to the relationship between
the Hardy-Littlewood maximal functions and $A_p$ weights,
there holds
\begin{equation}\label{f:5.12}
\begin{aligned}
\Big(\int_{\Omega} \big[\mathcal{M}_{\Omega}(F^s)\big]^{\frac{p}{s}}\rho
dx\Big)^{1/p}
\leq
\Big(\int_{\mathbb{R}^d}
\big[\mathcal{M}(I_{\Omega}F^s)\big]^{\frac{p}{s}}\rho
dx\Big)^{1/p}
\lesssim
\Big(\int_{\Omega} |F|^p\rho dx\Big)^{1/p},
\end{aligned}
\end{equation}
where
we use \cite[Theorem 2.5]{D} in the second inequality, and
where we ask for $1<s<p<\infty$ and
$\rho\in A_{p/s}$.
For $\rho$ satisfies the reverse H\"older property, i.e.,
\begin{equation}\label{f:8}
 \Big(\dashint_{Q} \rho^{1+\epsilon}
 \Big)^{\frac{1}{1+\epsilon}}
 \lesssim \dashint_Q \rho,
\end{equation}
(In fact, the above estimate holds for any $\rho\in A_p$
with $1\leq p<\infty$.) and
this property implies that if
$\rho\in A_{p/s}$ then it will belong to $A_p$ whenever
$s$ is close to 1, which plays an important role in the
whole arguments.
To show the estimate $\eqref{pri:1.5}$, it will be accomplished
through
\begin{equation}\label{f:1}
\int_{\Omega} \big[\mathcal{M}_{\Omega}(U^s)(x)
\big]^{\frac{p}{s}}\rho(x) dx
\lesssim
\rho(\Omega)
\bigg(\dashint_{\Omega} U^s\bigg)^{\frac{p}{s}}
+
\int_{\Omega}
\big[\mathcal{M}_{\Omega}(F^s)(x)\big]^{\frac{p}{s}}\rho(x)
dx.
\end{equation}
On the one hand, from the fact that
$[U(x)]^p\leq
[\mathcal{M}_{\Omega}(U^s)(x)]^{\frac{p}{s}}$
for a.e. $x\in\Omega$, there holds
\begin{equation}\label{f:5.14}
\int_{\Omega} [U(x)]^p\rho(x)dx
\leq \int_{\Omega}
\big[\mathcal{M}_{\Omega}(U^s)(x)\big]^{\frac{p}{s}}\rho(x) dx.
\end{equation}
On the other hand, it follows from the quenched
Calder\'on-Zygmund estimate
$\eqref{pri:1.2}$ that
\begin{equation}\label{f:5.15}
\begin{aligned}
\rho(\Omega)
\bigg(\dashint_{\Omega} U^s\bigg)^{\frac{p}{s}}
&\lesssim
\rho(\Omega)
\bigg(\dashint_{\Omega} F^s\bigg)^{\frac{p}{s}}\\
&\lesssim\rho(\Omega)\Big(\dashint_{\Omega}
F^p\rho dx\Big)\Big(\dashint_{\Omega}\rho^{-\frac{s}{p-s}}dx
\Big)^{\frac{p-s}{s}}
\lesssim \int_{\Omega} F^p\rho dx
\end{aligned}
\end{equation}
in which we use
the fact that there exists an
universal constant $C$ such that
\begin{equation*}
 \Big(\dashint_{\Omega}\rho dx\Big)
 \Big(\dashint_{\Omega}\rho^{-\frac{s}{p-s}}dx
\Big)^{\frac{p-s}{s}}
\leq C,
\end{equation*}
since $\rho$ belongs to $A_{p/s}$ classes.
Hence combining the estimates $\eqref{f:1}$, $\eqref{f:5.12}$,
$\eqref{f:5.14}$ and $\eqref{f:5.15}$ we arrive at the
desired estimate $\eqref{pri:1.5}$.

Thus, our task is reduced to establish the estimate
$\eqref{f:1}$. By a real method developed by Z. Shen,
it suffices to verify
the following two conditions:
for any ball $B:=B_r(x_0)\subset\mathbb{R}^d$ with
$x_0\in\partial\Omega$ or $4B\subset\Omega$, there exists
$W_B$ and $V_B$ satisfying
\begin{equation}\label{f:2}
        \Big(\dashint_{B\cap\Omega}|V_B|^s\Big)^{1/s}
        \lesssim \Big(\dashint_{5B\cap\Omega}|F|^s)^{1/s}
\end{equation}
and
\begin{equation}\label{f:3}
        \sup_{B\cap\Omega}
        W_B \lesssim \Big(\dashint_{3B\cap\Omega}|W_B|^s\Big)^{1/s}
\end{equation}
where $1<s<\infty$. Obviously, we are
interested in the case
$1<s<2$. As we have shown in the proof of Theorem $\ref{thm:1.2}$,
we take $W_B = U$ for $0<r<(\varepsilon/4)$, while we set
$u_\varepsilon=v_\varepsilon + w_\varepsilon$ in the case of
$r\geq (\varepsilon/4)$, where $v_\varepsilon$ and $w_\varepsilon$
are the solutions of $\eqref{pde:5.2}$, respectively.
Therefore, $U\leq V_B + W_B$.

We first to show the estimate $\eqref{f:3}$.
For any $r>0$, it follows from the estimates $\eqref{f:5.5}$
and $\eqref{f:5.16}$ that
\begin{equation}\label{f:5}
\sup_{B\cap\Omega} W_B \lesssim \Big(
\dashint_{2B\cap\Omega} W_B^2\Big)^{1/2}
\end{equation}
This in fact implies that for any $0<s<2$ we have
\begin{equation*}
\Big(\dashint_{\frac{1}{2}B\cap\Omega} W_B^2\Big)^{1/2}
\lesssim
\Big(\dashint_{B\cap\Omega} W_B^s\Big)^{1/s}
\end{equation*}
where we employ an convexity argument (see \cite[pp.173]{FS}),
and this together with $\eqref{f:5}$ leads to
the stated estimate $\eqref{f:3}$.
Now, we proceed to verify the estimate $\eqref{f:2}$.
The case $0<r<(\varepsilon/4)$ is trivial since one may
prefer $V_B =0$, while in the case of $r>(\varepsilon/4)$,
it follows from the quenched Calder\'on-Zygmund estimate
$\eqref{pri:1.2}$ that
\begin{equation}\label{f:7}
\begin{aligned}
\dashint_{B} |V_B|^s dx
\lesssim \frac{1}{|B|}\int_{\Omega}
|V_B|^s dx
\lesssim
\frac{1}{|B|}\int_{\Omega}
\Big(\dashint_{B_\varepsilon(x)\cap\Omega_\varepsilon} |fI_B|^2
\Big)^{\frac{s}{2}} dx
\lesssim
\dashint_{5B\cap\Omega}
F^s dx
\end{aligned}
\end{equation}
and its last inequality follows from
the observation that
$\text{supp}\big(I_B* I_{B_\varepsilon}\big)\subset
\text{supp}(I_B)\cup\text{supp}(I_{B_\varepsilon})$.
Moreover, we have $|x-x_0|\leq |x-y|+|y-x_0|\leq \varepsilon
+ r \leq 5r$ for any $y\in B$ and some $x\in\Omega$. This
yields the estimate $\eqref{f:2}$.

Hence, the remainder of the proof is devoted to show the estimate $\eqref{f:1}$ under
the conditions $\eqref{f:2}$ and $\eqref{f:3}$.
Let $E(\lambda) = \{x\in\Omega:\mathcal{M}_{\Omega}(U^s)(x)>\lambda\}$, and
$K(\lambda) = \{x\in\Omega:\mathcal{M}_{\Omega}(F^s)(x)>\lambda\}$.
Let $\epsilon$ be given in $\eqref{f:8}$, it suffices
to prove the following good-$\lambda$ inequality
\begin{equation}\label{f:9}
\rho(E(T\lambda)) \leq \delta^\sigma \rho(E(\lambda))
+ \rho(K(\theta\lambda))
\end{equation}
for any $\lambda\geq\lambda_0$, where $T = (2\delta^\sigma)^{-\frac{s}{p}}$,
$\sigma = \epsilon/(1+\epsilon)$, and
$\delta,\theta\in(0,1)$ will be determined later. There exists
$C_0=C_d/\delta$, such that
\begin{equation*}
  \lambda_0 = C_0\dashint_{\Omega} U^{s}(y)dy
  \quad
  \text{and}
  \quad |E(\lambda)|\leq \delta|\Omega|.
\end{equation*}
Observe that
\begin{equation*}
E(T\lambda)\subset \big\{ E(T\lambda)\cap K^{c}(\theta\lambda)\big\}
\cup K(\theta\lambda),
\end{equation*}
where the notation $K^c(\theta\lambda)$ represents the complementary set of
$K(\theta\lambda)$. Due to the property
\begin{equation}\label{f:11}
\rho(S)\leq \Big(\frac{|S|}{|Q|}\Big)^\sigma\rho(Q)
\end{equation}
for any $S\subset Q\subset\mathbb{R}^d$,
(which is true for any $\rho\in A_p$ with
$1\leq p<\infty$,) it suffices to show
$\big|E(T\lambda)\cap K^{c}(\theta\lambda)
\big|\leq \delta |E(\lambda)|$ as $\lambda\geq \lambda_0$.
It will be done by  the Calder\'on-Zygmund
decomposition, that means the previous
estimate is reduced to show
\begin{equation}\label{f:10}
  \big|E(T\lambda)\cap K^{c}(\theta\lambda)\cap Q_j
  \big| \leq \delta|Q_j|,
\end{equation}
where $\{Q_j\}$ is a family of dyadic cubes, satisfying $E(\lambda) = \cup_j Q_j$
and $Q_j \cap Q_i = \emptyset$ if $i\not=j$,
and
\begin{equation}\label{f:5.13}
\lambda<\dashint_{Q_j}U^s
 \leq 2^d \lambda.
\end{equation}
Since $T\lambda < \mathcal{M}_{\Omega}(U^s)(x)
\leq \mathcal{M}(U^{s}I_{2Q_j})(x)$ if $x\in Q_j$,
we have
\begin{equation*}
Q_j\cap E(T\lambda)\subset
\{x\in Q_j:\mathcal{M}(U^s I_{2Q_j})>T\lambda\},
\end{equation*}
and therefore,
\begin{equation*}
\begin{aligned}
&\big\{x\in Q_j: \mathcal{M}(U^s I_{2Q_j})>T\lambda\big\} \\
&\subset \big\{x\in Q_j: \mathcal{M}((U-W_{B_j})^sI_{2Q_j})>\frac{T\lambda}{2^s}\big\}
\cup \big\{x\in Q_j: \mathcal{M}((W_{B_j})^s I_{2Q_j})>\frac{T\lambda}{2^s}\big\}
=:A \cup B.
\end{aligned}
\end{equation*}
Then it follows from the estimate $\eqref{f:2}$ and
weak (1,1) property of the Hardy-Littlewood maximal function
(see for example \cite[Theorem 2.5]{D}) that
\begin{equation}
\begin{aligned}
|A| \leq \frac{C_d}{T\lambda}\int_{2B_j\cap\Omega}|U-W_{B_j}|^s dy
\leq \frac{C_d|Q_j|}{T\lambda}\dashint_{6B_j\cap\Omega} F^s dy
\leq C_d|Q_j|T^{-1}\theta,
\end{aligned}
\end{equation}
where the last inequality follows from the condition
$Q_j\cap K^{c}(\theta\lambda)\not=\emptyset$.
In view of the estimates $\eqref{f:2}$ and $\eqref{f:3}$,
we also acquire
\begin{equation}
\begin{aligned}
|B| &\leq \frac{C_d}{(T\lambda)^{\frac{q}{s}}}
\int_{2B_j\cap\Omega}|W_{B_j}|^q dy \\
&\leq \frac{C_d|Q_j|}{(T\lambda)^{\frac{q}{s}}}
\bigg\{\Big(\dashint_{4B_j\cap\Omega} U^s dy\Big)^{\frac{1}{s}}
+  \Big(\dashint_{4B_j\cap\Omega} F^s dy\Big)^{\frac{1}{s}}\bigg\}^q \\
&\leq C_d|Q_j|(T\lambda)^{-\frac{q}{s}}
\Big\{\lambda^{\frac{q}{s}} + (\theta\lambda)^{\frac{q}{s}}\Big\} \\
&\leq C_d|Q_j|T^{-\frac{q}{s}},
\end{aligned}
\end{equation}
where $q>\frac{p}{\sigma}$, and we use the conditions
$\eqref{f:5.13}$ and $Q_j\cap K^{c}(\theta\lambda)
\not=\emptyset$ in the third
inequality.
Now, it is not hard to see
\begin{equation*}
\big|E(T\lambda)\cap K^{c}(\theta\lambda)\cap Q_j\big|
\leq \delta|Q_j|\Big\{C_d \delta^{\sigma(s/p)-1}\theta
+ C_d\delta^{\sigma(q/p)-1}\Big\}
\end{equation*}
by recaling $T=(2\delta^\sigma)^{-\frac{s}{p}}$.
Therefore one may choose
$\delta\in(0,1)$ small enough such that
$C_d\delta^{\sigma(q/p)-1}\leq 1/2$, and
then by choosing $\theta\in(0,1)$ we can also have
$C_d \delta^{\sigma(s/p)-1}\theta\leq 1/2$. Thus,
we have proved the desired estimate $\eqref{f:10}$ and
this together with the estimate
$\eqref{f:11}$ leads to
\begin{equation*}
   \rho(E(T\lambda)\cap K^{c}(\theta\lambda))
\leq \delta^\sigma \rho(E(\lambda))
\end{equation*}
and consequently yields $\eqref{f:9}$.

In the last step, we show the estimate $\eqref{f:1}$
by a routine computation. On account of $\eqref{f:9}$, it is not hard to see
\begin{equation*}
\int_{\lambda_0}^{N_0}\lambda^{\frac{p}{s}-1}
\rho(E(T\lambda)) d\lambda
\leq \delta^{\sigma}\int_{\lambda_0}^{N_0}\lambda^{\frac{p}{s}-1}
\rho(E(\lambda)) d\lambda
+ \int_{0}^{N_0}\lambda^{\frac{p}{s}-1}
\rho(K(\theta\lambda)) d\lambda
\end{equation*}
holds for any $N_0>T\lambda_0$. Moreover, by changing variable and
merger of similar items,
\begin{equation*}
\big(T^{-\frac{p}{s}}-\delta^\sigma\big)
\int_{T\lambda_0}^{N_0}
\lambda^{\frac{p}{s}-1}\rho(E(\lambda)) d\lambda
\leq C\rho(\Omega)\lambda_0^{\frac{p}{s}}
+C\int_{\Omega} \big[\mathcal{M}_{\Omega}(F^s)(x)
\big]^{\frac{p}{s}} \rho dx.
\end{equation*}
Noting that $T^{-\frac{p}{s}}-\delta^\sigma = \delta^\sigma$,
we obtain
\begin{equation*}
\int_{0}^{N_0}
\lambda^{\frac{p}{s}-1}\rho(E(\lambda)) d\lambda
\leq C\rho(\Omega)\lambda_0^{\frac{p}{s}}
+C\int_{\Omega} \big[\mathcal{M}_{\Omega}(F^s)(x)\big]^{\frac{p}{s}} \rho dx
\end{equation*}
and then
let $N_0\to\infty$. This consequently yields the stated estimate
$\eqref{f:1}$, and we have completed the whole argument.
\qed


\section{Boundary Estimates at Large-scales}\label{sec:7}

\begin{lemma}[boundary Caccioppoli's inequality]
Suppose that the coefficient $A$ satisfies $\eqref{eq:1.4}$. Let
$u_\varepsilon\in H^1(D_4^\varepsilon;\mathbb{R}^d)$
be the solution to
$\mathcal{L}_\varepsilon(u_\varepsilon)=0$ in
$D_4^{\varepsilon}$ and $\sigma_\varepsilon(u_\varepsilon) = 0$
on $D_4\cap\partial(\varepsilon\omega)$ and
$u_\varepsilon = 0$ on $\Delta_4\cap \varepsilon\omega$. Then,
for any $0<r\leq 1$,
one may have
\begin{equation}\label{pri:5.2}
\bigg(\dashint_{D_r^{\varepsilon}} |\nabla u_\varepsilon|^2
\bigg)^{1/2}
\lesssim \frac{1}{r}\bigg(\dashint_{D_{2r}^{\varepsilon}}
|u_\varepsilon|^2\bigg)^{1/2},
\end{equation}
where the constant depends only on $\mu_0$, $\mu_1$ and $d$.
\end{lemma}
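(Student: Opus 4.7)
The plan is to carry out the standard Caccioppoli argument for elasticity systems, adapted to perforated half-balls. The two essential ingredients beyond the usual scalar case are (i) the coefficient symmetries in $\eqref{eq:1.4}$, which reduce the quadratic form to one controlled only by the symmetric gradient $e(u_\varepsilon)$, and (ii) Korn's inequality on the perforated domain (Lemma $\ref{lemma:2.4}$), needed to convert the symmetric gradient back into the full gradient.

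First, I would choose a cutoff $\eta \in C_c^\infty(D_{2r})$ with $\eta \equiv 1$ on $D_r$, $0 \le \eta \le 1$, and $|\nabla\eta| \lesssim 1/r$. Since $u_\varepsilon$ vanishes on $\Delta_4 \cap \varepsilon\omega$ (hence on $\Delta_{2r}\cap\varepsilon\omega$), the function $\phi := \eta^2 u_\varepsilon$ is admissible in $H^1(D_{2r}^\varepsilon, \Delta_{2r}^\varepsilon;\mathbb{R}^d)$, and the Neumann condition $\sigma_\varepsilon(u_\varepsilon)=0$ on the hole surfaces eliminates the boundary contribution along $D_4 \cap \partial(\varepsilon\omega)$. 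Expanding $\nabla\phi = \eta^2 \nabla u_\varepsilon + 2\eta\, u_\varepsilon \otimes \nabla\eta$ in the weak formulation and exploiting the full coefficient symmetries $a_{ij}^{\alpha\beta} = a_{ji}^{\beta\alpha} = a_{\alpha j}^{i\beta}$ to replace $a_{ij}^{\alpha\beta} \partial_j u_\varepsilon^\beta \partial_i u_\varepsilon^\alpha$ by $a_{ij}^{\alpha\beta} e(u_\varepsilon)_i^\alpha e(u_\varepsilon)_j^\beta$, the lower ellipticity bound then yields, after Cauchy--Schwarz on the cross term,
\begin{equation*}
\mu_0 \int_{D_{2r}^\varepsilon} \eta^2 |e(u_\varepsilon)|^2 \, dx
\;\lesssim\; \int_{D_{2r}^\varepsilon} \eta \, |\nabla\eta| \, |u_\varepsilon| \, |\nabla u_\varepsilon| \, dx.
\end{equation*}

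Second, I would apply Korn's inequality (Lemma $\ref{lemma:2.4}$, rescaled to $D_{2r}^\varepsilon$) to $\eta u_\varepsilon$, which vanishes on $\Delta_{2r}\cap\varepsilon\omega$. Expanding $e(\eta u_\varepsilon) = \eta\, e(u_\varepsilon) + \tfrac{1}{2}(\nabla\eta \otimes u_\varepsilon + u_\varepsilon \otimes \nabla\eta)$ and using the triangle inequality gives
\begin{equation*}
\int_{D_{2r}^\varepsilon} \eta^2 |\nabla u_\varepsilon|^2 \, dx
\;\lesssim\; \int_{D_{2r}^\varepsilon} \eta^2 |e(u_\varepsilon)|^2 \, dx
+ \int_{D_{2r}^\varepsilon} |\nabla\eta|^2 |u_\varepsilon|^2 \, dx.
\end{equation*}
Combining this with the previous display and absorbing the $\eta^2 |\nabla u_\varepsilon|^2$ term on the right-hand side by Young's inequality with a small parameter leads to $\int_{D_r^\varepsilon} |\nabla u_\varepsilon|^2 \lesssim r^{-2} \int_{D_{2r}^\varepsilon} |u_\varepsilon|^2$, which is $\eqref{pri:5.2}$ after dividing by $|D_r^\varepsilon|$.

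The principal obstacle is securing Korn's inequality on $D_{2r}^\varepsilon$ with a constant uniform in both $\varepsilon$ and $r$. Rescaling $x \mapsto x/(2r)$ turns $D_{2r}^\varepsilon$ into a unit-scale perforated half-ball with period $\varepsilon/(2r)$; for $r \gtrsim \varepsilon$ this falls under Lemma $\ref{lemma:2.4}$ (or its half-ball analogue, cf.~\cite{OSY}), while for $r \ll \varepsilon$ the rescaled domain is essentially a fixed piece of $\omega$ on which classical Korn applies with a constant depending only on the character of $\omega$. This uniformity is what delivers the Caccioppoli constant with the stated dependence on $\mu_0,\mu_1,d$ and the characters of $\omega$ and $\vartheta$.
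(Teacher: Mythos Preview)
Your proposal is correct and follows exactly the approach the paper indicates: the paper's proof is a one-line sketch stating ``let the test function be $\varphi^2 u_\varepsilon$ and then use the elasticity conditions $\eqref{eq:1.4}$ (here we also need to use Korn's inequality),'' and you have filled in precisely these steps. Your discussion of the uniformity of the Korn constant in $\varepsilon$ and $r$ is a welcome clarification that the paper omits; note also that this dependence on the character of $\omega$ (and of $\vartheta$) should arguably appear in the statement, though the paper lists only $\mu_0,\mu_1,d$.
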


\begin{proof}
The proof is standard, and we just let the test function
be $\varphi^2 u_\varepsilon$ and then use the elasticity
conditions $\eqref{eq:1.4}$
(here we also need to use Korn's inequality).
\end{proof}

\begin{lemma}[quenched $L^2$-error estimates]\label{lemma:5.2}
Let $\varepsilon\leq r\leq 1$.
Suppose that $\mathcal{L}_{\varepsilon}$
and $\omega$ satisfy the hypothesises
\emph{(H1)} and \emph{(H2)}. Let $u_\varepsilon$ be
a weak solution to $\mathcal{L}_\varepsilon(u_\varepsilon) = 0$
in $D_{4r}^{\varepsilon}$,
$\sigma_\varepsilon(u_\varepsilon)=0$ on $D_{4r}
\cap\partial(\varepsilon\omega)$ and $u_\varepsilon = 0$ on
$\Delta_{4r}
\cap\varepsilon\omega$.
Then there exist a weak solution $v\in H^1(D_r;\mathbb{R}^d)$
such that $\mathcal{L}_0(v) = 0$ in $D_{r}$ and
$v=0$
on $\Delta_{r}$, and
\begin{equation}\label{pri:5.1}
 \bigg(\dashint_{D_r^\varepsilon}
 |u_\varepsilon - v|^2\bigg)^{1/2}
 \lesssim \Big(\frac{\varepsilon}{r}\Big)^{1/2}
 \bigg(\dashint_{D_{3r}^\varepsilon}|u_\varepsilon|^2\bigg)^{1/2}.
\end{equation}
\end{lemma}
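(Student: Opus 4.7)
\textbf{Proof plan for Lemma \ref{lemma:5.2}.} The strategy is to rescale to unit scale, extend $u_\varepsilon$ off the holes to a genuine $H^1$-function, and then invoke the sharp $L^{2d/(d-1)}$-error estimate of Theorem \ref{thm:2.1} (in the case $F=0$) on a localized Dirichlet problem whose boundary data are prescribed by the extension.

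First I would rescale: set $\tilde{u}_\varepsilon(y):=u_\varepsilon(ry)$ and $\tilde{\varepsilon}:=\varepsilon/r\in(0,1]$, so that $\tilde u_\varepsilon$ solves an analogous homogeneous mixed boundary problem on $D_4^{\tilde\varepsilon}$ with $\tilde\varepsilon$ in place of $\varepsilon$. It then suffices to prove the estimate at scale $r=1$ and microstructure $\tilde\varepsilon$. Next, using a local version of Lemma \ref{extensiontheory} (in the spirit of \cite[Theorem 4.3]{OSY}), extend $\tilde u_\varepsilon$ to obtain $\tilde U_\varepsilon\in H^1(D_3;\mathbb{R}^d)$ which equals $\tilde u_\varepsilon$ on $D_3^{\tilde\varepsilon}$, vanishes on $\Delta_3\cap\tilde\varepsilon\omega$, and satisfies $\|\nabla\tilde U_\varepsilon\|_{L^2(D_3)}\lesssim\|\nabla\tilde u_\varepsilon\|_{L^2(D_3^{\tilde\varepsilon})}$.

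Then, via a Fubini/co-area averaging over $\rho\in[1,2]$ I would select a good slice $\rho_*$ so that the trace of $\tilde U_\varepsilon$ on $\partial D_{\rho_*}\setminus\Delta_{\rho_*}$ is controlled in the $H^1$-trace norm by the global $H^1$-energy of $\tilde U_\varepsilon$ on $D_2$. Define $\tilde v\in H^1(D_{\rho_*};\mathbb{R}^d)$ as the unique weak solution of
\begin{equation*}
\mathcal{L}_0(\tilde v)=0\text{ in }D_{\rho_*},\qquad \tilde v=\tilde U_\varepsilon\text{ on }\partial D_{\rho_*}\setminus\Delta_{\rho_*},\qquad \tilde v=0\text{ on }\Delta_{\rho_*}.
\end{equation*}
Applying Theorem \ref{thm:2.1} with $F=0$, $\Omega=D_{\rho_*}$, and $g$ the trace of $\tilde U_\varepsilon$ yields $\|\tilde u_\varepsilon-\tilde v\|_{L^{2d/(d-1)}(D_{\rho_*}^{\tilde\varepsilon})}\lesssim\tilde\varepsilon^{1/2}\|g\|_{H^1(\partial D_{\rho_*})}$; H\"older's inequality together with $D_1^{\tilde\varepsilon}\subset D_{\rho_*}^{\tilde\varepsilon}$ produces the desired $L^2$-bound on $D_1^{\tilde\varepsilon}$, and unscaling restores the factor $(\varepsilon/r)^{1/2}$.

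The main obstacle is the trace/selection step: one must show that on the good slice $\partial D_{\rho_*}\setminus\Delta_{\rho_*}$ the $H^1$-trace norm of $\tilde U_\varepsilon$ is dominated by the pure $L^2$-average $\bigl(\dashint_{D_3^{\tilde\varepsilon}}|\tilde u_\varepsilon|^2\bigr)^{1/2}$, with no gradient term appearing on the right-hand side. This forces a careful interplay of the Fubini averaging, the extension inequality from Lemma \ref{extensiontheory}, and the boundary Caccioppoli inequality \eqref{pri:5.2} applied on a slightly smaller half-ball inside $D_3^{\tilde\varepsilon}$, so that the gradient of $\tilde u_\varepsilon$ is ultimately absorbed into the $L^2$-norm of $\tilde u_\varepsilon$ itself.
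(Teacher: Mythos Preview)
Your overall strategy (rescale, extend off the holes, select a good radius by Fubini, invoke Theorem~\ref{thm:2.1} with $F=0$, and close with Caccioppoli) is exactly the skeleton of the paper's argument. However, there is a genuine gap in the step where you pass from Theorem~\ref{thm:2.1} to your $\tilde v$.

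The function $\tilde v$ you define satisfies $\tilde v=0$ on all of $\Delta_{\rho_*}$, but the homogenized solution $u_0$ that Theorem~\ref{thm:2.1} actually compares $u_\varepsilon$ with has boundary data $g=\tilde U_\varepsilon$ on \emph{all} of $\partial D_{\rho_*}$, including $\Delta_{\rho_*}$. The extension $\tilde U_\varepsilon$ is only guaranteed to vanish on $\Delta_{\rho_*}\cap\tilde\varepsilon\omega$; on the pieces of $\Delta_{\rho_*}$ that lie inside the holes $\Delta_{\rho_*}\setminus\tilde\varepsilon\omega$ it is the (generically nonzero) extension. Hence $u_0\neq\tilde v$, and the estimate you extract from Theorem~\ref{thm:2.1} bounds $\|u_\varepsilon-u_0\|$, not $\|u_\varepsilon-\tilde v\|$. (Attempting instead to set $g=0$ on $\Delta_{\rho_*}$ and $g=\tilde U_\varepsilon$ on the lateral part would produce a $g$ that is in general not in $H^1(\partial D_{\rho_*})$, for the same reason: jumps across holes meeting the corner edge.)

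The paper closes this gap with an extra correction step. It first builds $v_h$ with $v_h=\tilde u_\varepsilon$ on all of $\partial D_t$ (so Theorem~\ref{thm:2.1} applies cleanly), then builds $v$ with the bottom data zeroed out, and estimates $w=v_h-v$ as follows: since $w$ solves $\mathcal L_0(w)=0$ in $D_t$ with boundary data supported on $\Delta_t\setminus\Gamma_\varepsilon$ (the boundary holes), a trace/Poincar\'e argument on each $\varepsilon$-sized hole gives $\int_{\Delta_t\setminus\Gamma_\varepsilon}|\tilde u_\varepsilon|^2\,dS\lesssim\varepsilon\|u_\varepsilon\|_{L^2(D_3^\varepsilon)}^2$, and then the nontangential maximal function estimate for $\mathcal L_0$ on Lipschitz domains converts this into $\|w\|_{L^2(D_t)}\lesssim\varepsilon^{1/2}\|u_\varepsilon\|_{L^2(D_3^\varepsilon)}$. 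This is precisely the missing ingredient in your plan; the ``main obstacle'' you flag (controlling the lateral $H^1$-trace via Caccioppoli) is correct but is the easier of the two issues.
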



\begin{proof}
By rescaling one may assume $r=1$.
First, we should extend $u_\varepsilon$
from $H^{1}(D_2^{\varepsilon})$
to $H^1(D_3)$, denoted by $\tilde{u}_\varepsilon$. To do so,
let $\varphi$ be a smooth cut-off function satisfying
$\varphi=1$ on $D_2$ and $\varphi=0$ outside $D_{5/2}$ with
$|\nabla\varphi|\lesssim 1$, and $\varphi u_\varepsilon = 0$
on $\partial D_{5/2}^{\varepsilon}$. Then one may apply
the extension result of Lemma $\ref{extensiontheory}$ to
$\varphi u_\varepsilon$ and we have
$\|\tilde{u}_\varepsilon\|_{H^1(D_3)}\lesssim
\|\varphi u_\varepsilon\|_{H^1(D_{5/2}^\varepsilon)}$.
This together with boundary Caccioppoli's
inequality $\eqref{pri:5.2}$ gives
\begin{equation}\label{f:5.2}
\|\tilde{u}_\varepsilon\|_{H^1(D_3)}
\lesssim
\|\varphi u_\varepsilon\|_{H^1(D_{5/2}^\varepsilon)}
\lesssim \|u_\varepsilon\|_{L^2(D_3^\varepsilon)}.
\end{equation}
Thus, there exists $t\in [5/4,3/2]$ such that
\begin{equation}\label{f:5.1}
\|\tilde{u}_\varepsilon\|_{H^{1}(\partial D_t
\setminus\Delta_t)}
\lesssim \|u_\varepsilon\|_{L^2(D_3)}
\quad
\text{and}
\quad \tilde{u}_\varepsilon = u_\varepsilon
\quad \text{in}~D_t^\varepsilon.
\end{equation}
Moreover, in terms of the radius $t$, we construct
$v_h\in H^1(D_t;\mathbb{R}^d)$ such that
$\mathcal{L}_0(v_h) = 0$ in $D_t$ and
$v_h=\tilde{u}_\varepsilon$ on $\partial D_t$.
It follows from the estimates $\eqref{pri:1.4}$ and
$\eqref{f:5.1}$ that
\begin{equation}\label{f:5.3}
\begin{aligned}
\|u_\varepsilon - v_h\|_{L^2(D_1^\varepsilon)}
\leq \|u_\varepsilon - v_h\|_{L^2(D_t^\varepsilon)}
\lesssim \varepsilon^{1/2}\|\tilde{u}_\varepsilon\|_{
H^1(\partial D_t\setminus\Delta_t)}
\lesssim \varepsilon^{1/2}\|u_\varepsilon\|_{
L^2(D_3)}.
\end{aligned}
\end{equation}
Then, we further construct the equation
$\mathcal{L}_\varepsilon(v) = 0$ in $D_t$ and
$v = \widetilde{u}_\varepsilon^0$ on $\partial D_t$,
where $\widetilde{u}_\varepsilon^0 = 0$ on $\Delta_t$ and
$\widetilde{u}_\varepsilon^0 = \tilde{u}_\varepsilon$ on
$\partial D_t\setminus\Delta_t$. Let $w=v_h-v$, and there
holds $\mathcal{L}_0(w) = 0$ in $D_t$ and
$w=\tilde{u}_\varepsilon-\widetilde{u}_\varepsilon^0$ on
$\partial D_t$.
Note that $\Delta_t\setminus\Gamma_\varepsilon$ represents
the holes intersected with the boundary set $\Delta_t$, which
are disconnected. For each connected component, one may appeal
to the trace theorem and its estimated constant would
rely on the diameter of the component
which is around the $\varepsilon$-scale. Thus, we obtain
the following computation:
\begin{equation}\label{f:5.10}
\begin{aligned}
\int_{\Delta_t\setminus\Gamma_\varepsilon}
|\tilde{u}_\varepsilon|^2dS
&\lesssim \frac{1}{\varepsilon}
\int_{O_\varepsilon\cap D_t}
|\tilde{u}_\varepsilon|^2dx
+\varepsilon
\int_{O_\varepsilon\cap D_t}
|\nabla \tilde{u}_\varepsilon|^2dx\\
&\lesssim \varepsilon
\int_{O_\varepsilon\cap D_t}
|\nabla \tilde{u}_\varepsilon|^2dx
\lesssim \varepsilon\int_{D_{3/2}}
|\nabla \tilde{u}_\varepsilon|^2dx
\lesssim \varepsilon\int_{D_3^\varepsilon}|u_\varepsilon|^2dx
\end{aligned}
\end{equation}
where we use Poincar\'e's inequality in the second step.
The last one
follows from the estimate $\eqref{f:5.2}$.
Then, on account of nontangential maximal function
estimates (see \cite[Theorem 3.6]{DKV}) that
\begin{equation}\label{f:5.4}
\|w\|_{L^2(D_t)}
\lesssim \|(w)^*\|_{L^2(\partial D_t)}
\lesssim \|\tilde{u}_\varepsilon\|_{L^2(\Delta_t)}
\lesssim \varepsilon^{1/2}\|u_\varepsilon\|_{
L^2(D_3)},
\end{equation}
where we refer the reader to $\eqref{def:5}$
for the notation $(w)^*$,
and we employ the estimate $\eqref{f:5.10}$ in the last
inequality.
Combining the estimates $\eqref{f:5.3}$ and $\eqref{f:5.4}$
we have
\begin{equation*}
\|u_\varepsilon - v\|_{L^2(D_1)}
\leq \|u_\varepsilon - v_h\|_{L^2(D_1)}
+ \|w\|_{L^2(D_1)}
\lesssim \varepsilon^{1/2}\|u_\varepsilon\|_{
L^2(D_3)},
\end{equation*}
and rescaling back leads to the desired result
$\eqref{pri:5.1}$. We have completed the proof.
\end{proof}

For the ease of the statement, we impose the following
notation:
\begin{equation}\label{eq:5.1}
\begin{aligned}
G(r,u) &:= \frac{1}{r}\inf_{M\in\mathbb{R}^{d\times d}}
\bigg\{
\Big(\dashint_{D_r} |u-Mx|^2 dx\Big)^{\frac{1}{2}}
+ r\|T(M)\|_{L^\infty(\Delta_r)}+\|Mx\|_{L^\infty(\Delta_r)}\bigg\},\\
G_\varepsilon(r,u) &:=
\frac{1}{r}\inf_{M\in\mathbb{R}^{d\times d}}\bigg\{
\Big(\dashint_{D_r^{\varepsilon}} |u-Mx|^2 dx\Big)^{\frac{1}{2}}
+ r\|T(M)\|_{L^\infty(\Delta_r)}
+\|Mx\|_{L^\infty(\Delta_r)}\bigg\},
\end{aligned}
\end{equation}
where we denote $T(M) := \nabla_{\text{tan}}(Mx)$,
and its component may be
written as $(n_jM_{ik}-n_iM_{jk})$ and $n=(n_1,\cdots,n_d)$
is the outward unit normal to $\partial\Omega$.
The notation $D_r, \Delta_r$ and $D_r^\varepsilon$ are
introduced in Subsection $\ref{notation}$.

\begin{lemma}[comparing at large-scales]\label{lemma:5.1}
Let $\varepsilon\leq r\leq 1$.
Suppose that $\omega$ satisfies the hypothesis \emph{(H2)}.
Assume that $v\in
H^{1}(D_{4r};\mathbb{R}^d)$ is a solution to
$\mathcal{L}_0(v) = 0$ in $D_{4r}$ with
$v = 0$ on $\Delta_{4r}$.
Then one may derive that
\begin{equation}\label{pri:5.7}
G(r,v) \lesssim G_{\varepsilon}(2r,v),
\end{equation}
where the up to constant depends on $\mu_0$, $\mu_1$, $d$,
$\mathfrak{g}^{\omega}$ and
the character of $\omega$.
\end{lemma}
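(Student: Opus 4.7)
\textbf{Proof plan for Lemma \ref{lemma:5.1}.} Let $M^\ast \in \mathbb{R}^{d\times d}$ be a near-minimizer in the definition of $G_\varepsilon(2r,v)$, and set $w := v - M^\ast x$. Since $\mathcal{L}_0$ has constant coefficients, $\mathcal{L}_0 w = 0$ in $D_{4r}$ and, because $v$ vanishes on $\Delta_{4r}$, one has $w = -M^\ast x$ on $\Delta_{4r}$. The two terms $\|T(M^\ast)\|_{L^\infty(\Delta_r)}$ and $\|M^\ast x\|_{L^\infty(\Delta_r)}$ appearing in $G(r,v)$ are bounded by the corresponding terms at scale $2r$ since $\Delta_r\subset\Delta_{2r}$. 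Thus it suffices to prove
\begin{equation*}
\bigg(\dashint_{D_r}|w|^2\bigg)^{1/2}
\lesssim \bigg(\dashint_{D_{2r}^\varepsilon}|w|^2\bigg)^{1/2}
+ \|M^\ast x\|_{L^\infty(\Delta_{2r})}
+ r\,\|T(M^\ast)\|_{L^\infty(\Delta_{2r})}.
\end{equation*}

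The plan is to decompose $\int_{D_r}|w|^2 = \int_{D_r^\varepsilon}|w|^2 + \sum_H \int_H|w|^2$, summing over the holes $H\subset D_r$, and to bound each hole contribution by a local integral over $\omega$. By hypothesis \emph{(H2)(c)}, around every hole $H$ with center $x_H$ there is a radius $c\varepsilon$, with $c=c(\mathfrak{g}^\omega,d)$, such that $B_{c\varepsilon}(x_H)$ contains no other hole and $|H|/|B_{c\varepsilon}|\leq 1/2$. For \emph{interior holes} (those with $B_{c\varepsilon}(x_H)\subset D_{4r}$ and away from $\Delta_{4r}$), the interior $L^\infty$-$L^2$ mean-value inequality for the constant-coefficient homogeneous system $\mathcal{L}_0w=0$ gives $\sup_H|w|^2 \lesssim \dashint_{B_{c\varepsilon}(x_H)}|w|^2$. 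Splitting $B_{c\varepsilon}(x_H)=H\cup(B_{c\varepsilon}(x_H)\cap\omega)$ on the right and absorbing the $\int_H|w|^2$ piece (which is allowed thanks to the volume ratio $|H|/|B_{c\varepsilon}|\leq 1/2$) yields $\int_H|w|^2\lesssim \int_{B_{c\varepsilon}(x_H)\cap\omega}|w|^2$.

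For \emph{boundary holes} (those whose enclosing ball meets a neighbourhood of $\Delta_{4r}$), I would replace the interior mean-value estimate by its boundary counterpart for $\mathcal{L}_0$ on a local half-ball of radius $c\varepsilon$ with affine Dirichlet datum $-M^\ast x$; after locally flattening $\Delta_{4r}$ through a $C^{1,\eta}$ chart, classical Agmon--Douglis--Nirenberg or reflection arguments give $\sup_H|w|^2 \lesssim \dashint_{D_{c\varepsilon}(x_H)}|w|^2 + \|M^\ast x\|_{L^\infty(\Delta_{c\varepsilon}(x_H))}^2 + (c\varepsilon)^2\|T(M^\ast)\|_{L^\infty(\Delta_{c\varepsilon}(x_H))}^2$, and the same absorption step as above produces $\int_H|w|^2 \lesssim \int_{B_{c\varepsilon}(x_H)\cap\omega}|w|^2 + \varepsilon^d\bigl(\|M^\ast x\|_{L^\infty(\Delta_{2r})}^2 + (c\varepsilon)^2\|T(M^\ast)\|_{L^\infty(\Delta_{2r})}^2\bigr)$. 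Because the holes are $\varepsilon\mathfrak{g}^\omega$-separated, the balls $\{B_{c\varepsilon}(x_H)\}$ have bounded overlap, so summing the interior contributions yields $\sum_H \int_H|w|^2 \lesssim \int_{D_{3r/2}^\varepsilon}|w|^2$. The cumulative boundary-hole contribution is controlled using the count $\lesssim (r/\varepsilon)^{d-1}$ of boundary holes and each one volume $\sim\varepsilon^d$, giving $\lesssim r^d\|M^\ast x\|_{L^\infty(\Delta_{2r})}^2 + r^{d+2}\|T(M^\ast)\|_{L^\infty(\Delta_{2r})}^2$. Combining with $\int_{D_r^\varepsilon}|w|^2\leq\int_{D_{2r}^\varepsilon}|w|^2$, dividing by $r^d$ and taking square roots gives the displayed estimate, hence $G(r,v)\lesssim G_\varepsilon(2r,v)$.

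The main obstacle will be the boundary-hole case: the naive interior mean-value bound fails, and one must carefully match the resulting local Dirichlet data with the quantities $\|M^\ast x\|_{L^\infty(\Delta_{2r})}$ and $\|T(M^\ast)\|_{L^\infty(\Delta_{2r})}$ featured in $G_\varepsilon(2r,v)$. Since $M^\ast x$ is affine and $T(M^\ast)$ is translation-invariant along $\Delta_{4r}$, the local boundary data at any $x_H$ is controlled by the global quantities up to a harmless factor $\varepsilon/r$, and the summation over $\sim(r/\varepsilon)^{d-1}$ boundary holes with individual volume $\sim\varepsilon^d$ produces exactly the correct powers of $r$. The remaining ingredients — scale-invariant boundary $L^\infty$--$L^2$ estimates for $\mathcal{L}_0$ after $C^{1,\eta}$-flattening, and the fact that constants are independent of $\varepsilon$ thanks to the translation invariance of the problem — are classical.
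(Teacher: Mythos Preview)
Your reduction to the displayed inequality for $w=v-M^\ast x$ is exactly the paper's starting point, and the boundary-term bookkeeping is fine. The gap lies in the absorption step for the interior holes. The mean-value bound reads $\int_H|w|^2 \le C_{\mathrm{mv}}\,\tfrac{|H|}{|B_{c\varepsilon}|}\int_{B_{c\varepsilon}}|w|^2$ with a constant $C_{\mathrm{mv}}>1$ depending on $\mu_0,\mu_1,d$; absorbing the hole piece therefore requires $C_{\mathrm{mv}}\,|H|/|B_{c\varepsilon}|<1$, not merely $|H|/|B_{c\varepsilon}|\le\tfrac12$. Enlarging $c$ to force this will in general make $B_{c\varepsilon}$ swallow several holes --- hypothesis~\textup{(H2)} imposes no upper bound on hole diameters relative to $\mathfrak{g}^\omega$ --- and then the hole-by-hole absorption no longer closes. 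The defect is repairable (e.g.\ via an annulus mean-value inequality, or an iteration), but as written it is a genuine gap. Your boundary-hole step also invokes a pointwise $L^\infty$--$L^2$ estimate after $C^{1,\eta}$ flattening, which is not among the lemma's hypotheses.

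The paper's argument sidesteps both issues and dispenses with the interior/boundary-hole dichotomy altogether. It introduces two nested cut-offs $\varphi_1^\varepsilon\le\varphi_2^\varepsilon$ supported in $\varepsilon$-neighbourhoods of the holes, with $\varphi_1^\varepsilon=1$ on the holes and, crucially, with $\nabla\varphi_1^\varepsilon,\nabla\varphi_2^\varepsilon$ supported in the \emph{solid} annuli around the holes. A Poincar\'e inequality at scale $\varepsilon$ on $\varphi_1^\varepsilon w$ gives
\[
\int_{D_r\setminus\varepsilon\omega}|w|^2 \;\le\; \int|\varphi_1^\varepsilon w|^2
\;\lesssim\; \varepsilon^2\int|\nabla(\varphi_1^\varepsilon w)|^2
\;\lesssim\; \int_{\mathrm{supp}(\nabla\varphi_1^\varepsilon)}|w|^2
+\varepsilon^2\int_{\mathrm{supp}(\varphi_1^\varepsilon)}|\nabla w|^2,
\]
and the first term on the right already lives in $D_{2r}^\varepsilon$. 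For the second term one tests $\mathcal{L}_0 w=0$ against $(\varphi_2^\varepsilon)^2 w$ on a region $\tilde D_r$ with $D_r\subset\tilde D_r\subset D_{2r}$ whose lateral boundary avoids $\mathrm{supp}(\varphi_2^\varepsilon)$; the resulting boundary Caccioppoli inequality~\eqref{pri:8.1} has its right-hand side supported on $\mathrm{supp}(\nabla\varphi_2^\varepsilon)\subset\varepsilon\omega$ together with the data $-M^\ast x$ on $\Delta_{2r}$, producing directly $\dashint_{D_{2r}^\varepsilon}|w|^2 + \|M^\ast x\|_{L^\infty(\Delta_{2r})}^2 + \varepsilon^2\|T(M^\ast)\|_{L^\infty(\Delta_{2r})}^2$. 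No absorption constant, no hole counting, and only Lipschitz regularity of $\partial\Omega$ is used.
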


\begin{proof}
Let $\tilde{v} = v-Mx$, and we have $\mathcal{L}_0(\tilde{v}) = 0$ in
$D_{4r}$ with $\tilde{v} = -Mx$ on $\Delta_{4r}$.
The proof is reduced to show there exists a constant,
in dependent of $\varepsilon$ and $r$, such that
\begin{equation}\label{f:5.11}
\dashint_{D_r}|\tilde{v}|^2
\leq C\dashint_{D_{2r}^\varepsilon}|\tilde{v}|^2
+ r^2\|T(M)\|_{L^\infty(\Delta_{2r})}^2 + \|Mx\|_{L^\infty(\Delta_{2r})}^2.
\end{equation}
If so, for any $M\in\mathbb{R}^{d\times d}$
one may have
\begin{equation*}
\begin{aligned}
\Big(\dashint_{D_r}|v-Mx|^2\Big)^{1/2}
&+ r\|T(M)\|_{L^\infty(\Delta_{r})} + \|Mx\|_{L^\infty(\Delta_{r})}\\
&\lesssim \Big(\dashint_{D_{2r}^\varepsilon}|v-Mx|^2\Big)^{1/2}
+r\|T(M)\|_{L^\infty(\Delta_{2r})} + \|Mx\|_{L^\infty(\Delta_{2r})},
\end{aligned}
\end{equation*}
and this implies
the desired estimate $\eqref{pri:5.7}$.
The reminder of the proof is
devoted to the estimate $\eqref{f:5.11}$.
Recalling that $\mathbb{R}^d\setminus \varepsilon\omega
=\cup_{k=1}^\infty H_k^\varepsilon$,
we impose two parameters $0<c_1<c_2<
(\mathfrak{g}^{\omega}/10)$.
In fact,
let $\varphi_1^{\varepsilon} = 1$ on $\mathbb{R}^d
\setminus(\varepsilon\omega)$ and $\varphi_1^{\varepsilon}(x) = 0$
whenever $\text{dist}(x,H_k^\varepsilon)>c_1\varepsilon\}$
for each $k$ and
$|\nabla\varphi_1^\varepsilon|\lesssim 1/\varepsilon$,
while we set $\varphi_2^{\varepsilon}(x) = 1$ if
$\text{dist}(x,H_k^\varepsilon)>c_1\varepsilon\}$ for each $k$,
and $\varphi_2^{\varepsilon}(x) = 0$
if $\text{dist}(x,H_k^\varepsilon)>c_2\varepsilon$ and
$|\nabla\varphi_2^\varepsilon|\lesssim 1/\varepsilon$.
These two are cut-off functions
whose supports are around the holes $\{H_k^\varepsilon\}$. Since
\begin{equation*}
 \dashint_{D_{r}}|\tilde{v}|^2 \leq
 \dashint_{D_{r}^\varepsilon}|\tilde{v}|^2
 +\dashint_{D_{r}\setminus\varepsilon\omega}|\tilde{v}|^2,
\end{equation*}
we just study the second term of the right-hand side above, and
\begin{equation*}
\begin{aligned}
\dashint_{D_{r}\setminus\varepsilon\omega}|\tilde{v}|^2
\leq \dashint_{D_r\cap\text{supp}(\varphi_1^\varepsilon)}
|\varphi_1^\varepsilon \tilde{v}|^2
&\lesssim \varepsilon^2
\dashint_{D_{r}\cap\text{supp}(\varphi_1^\varepsilon)}
|\nabla(\varphi_1^\varepsilon \tilde{v})|^2 \\
&\lesssim
\dashint_{D_{2r}^\varepsilon}
|\tilde{v}|^2
+ \varepsilon^2\dashint_{\tilde{D}_{r}
\cap\text{supp}(\varphi_2^\varepsilon)}
|\varphi_2^\varepsilon\nabla \tilde{v}|^2\\
&\lesssim
\dashint_{D_{2r}^\varepsilon}
|\tilde{v}|^2 + \|Mx\|_{L^\infty(\Delta_{2r})}^2
+\varepsilon^2\|T(M)\|_{L^\infty(\Delta_{2r})}^2,
\end{aligned}
\end{equation*}
where one may prefer the region $\tilde{D}_r$ such that
$D_r\subset\tilde{D}_r\subset D_{2r}$ and
$\varphi_2^\varepsilon v = 0$
on $\partial\tilde{D}_r\setminus\Delta_{2r}$, and the last inequality above
actually comes from the boundary Caccioppoli's inequality
$\eqref{pri:8.1}$.
This proved the stated estimate $\eqref{f:5.11}$ (by noting
that $\varepsilon\leq r\leq 1$),
and we have completed the whole proof.
\end{proof}

\begin{lemma}[decay estimates for homogenized equations]
\label{lemma:5.3}
Let $\varepsilon\leq r<1$ and $\Omega$ be
a bounded $C^{1,\eta}$ domain.
Let $v$ be the solution to $\mathcal{L}_0(v) = 0$ in
$D_{4r}$ with  $v=0$ on $\Delta_{4r}$.
Then for any $\theta\in(0,1)$ there holds
\begin{equation}\label{pri:5.8}
G_\varepsilon(\theta r,v)
\lesssim \theta^{\kappa}
G_\varepsilon(r,v)
\end{equation}
where $\kappa\in(0,1)$,
and the up to constant depends $\mu_0,\mu_1,d$,
$\mathfrak{g}^{\omega}$ and
the characters of $\omega$ and $\Omega$.
\end{lemma}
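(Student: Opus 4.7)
The plan is to establish a Campanato-type decay for the homogenized solution $v$ and then transfer that decay to the quenched excess functional via Lemma~\ref{lemma:5.1}. The starting point is that $\mathcal{L}_0$ has constant coefficients satisfying the full elasticity conditions $\eqref{a:2}$ (Lemma~\ref{lemma:2.7}), so on the $C^{1,\eta}$ domain $\Omega$ the standard boundary $C^{1,\eta'}$ regularity theory applies to any weak solution of $\mathcal{L}_0(v)=0$ with zero Dirichlet data on $\Delta_{4r}$, for some $\eta'\in(0,\eta]$.

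First I would rescale to $r=1$ and locally flatten $\Delta_4$ using the $C^{1,\eta}$ graph parametrizing $\partial\Omega$; this only affects constants through the character of $\Omega$. Schauder-type boundary estimates for constant-coefficient elliptic systems on $C^{1,\eta}$ domains then produce, for every $\rho\in(0,1/2]$, a constant matrix $M^*\in\mathbb{R}^{d\times d}$ (whose tangential part is dictated by $\nabla v$ at the base boundary point) such that
\begin{equation*}
\Big(\dashint_{D_\rho} |v-M^*x|^2\Big)^{1/2}
+\rho\|T(M^*)\|_{L^\infty(\Delta_\rho)}
+\|M^*x\|_{L^\infty(\Delta_\rho)}
\lesssim \rho^{1+\kappa}\,G(1,v),
\end{equation*}
with $\kappa=\min\{\eta',\eta\}$. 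Taking the infimum over $M$ on the left and dividing by $\rho$ yields the Campanato-type decay $G(\rho,v)\lesssim \rho^{\kappa}G(1,v)$ for $\rho\in(0,1/2]$, which rescales back to $G(\theta r,v)\lesssim \theta^{\kappa}G(r,v)$ for $\theta\in(0,1/2]$; the range $\theta\in(1/2,1)$ is trivial with a constant depending on $\theta^{-1}\leq 2$.

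Finally I would chain this decay with Lemma~\ref{lemma:5.1} and the trivial bound $G_\varepsilon\leq C\,G$, which holds because the density $|D_r^\varepsilon|/|D_r|$ is bounded below uniformly in $r\geq\varepsilon$ by the $1$-periodicity of $\omega$ together with hypothesis~(H2). The chain reads
\begin{equation*}
G_\varepsilon(\theta r,v)
\lesssim G(\theta r,v)
\lesssim \theta^{\kappa}\,G(r/2,v)
\lesssim \theta^{\kappa}\,G_\varepsilon(r,v),
\end{equation*}
where the last step invokes Lemma~\ref{lemma:5.1} at scale $r/2$.

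The main obstacle is the second step: controlling the boundary penalties $\rho\|T(M^*)\|_{L^\infty(\Delta_\rho)}$ and $\|M^*x\|_{L^\infty(\Delta_\rho)}$ when $\partial\Omega$ is only $C^{1,\eta}$. Because $\Delta_\rho$ is a $C^{1,\eta}$ graph over its tangent plane with deviation $O(\rho^{1+\eta})$, the affine function $M^*x$ will generally not vanish on $\Delta_\rho$ even for the optimal choice $M^*=\nabla v(0)$; the discrepancy is precisely of order $\rho^{1+\eta}$, and this must be absorbed into the same $\rho^{1+\kappa}$ Campanato bound as the interior term. Taking $\kappa=\min\{\eta',\eta\}\in(0,1)$ produces the required consistent decay rate and yields the stated estimate $\eqref{pri:5.8}$.
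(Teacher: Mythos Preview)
Your proposal is correct and follows essentially the same route as the paper: bound $G_\varepsilon(\theta r,v)$ by $G(\theta r,v)$, use boundary Schauder estimates for the constant-coefficient operator $\mathcal{L}_0$ to obtain Campanato-type decay, and then invoke Lemma~\ref{lemma:5.1} to convert $G$ back to $G_\varepsilon$. The paper packages the Schauder step slightly differently---it selects $M=\nabla v(x_0)$ at a boundary point $x_0$, which kills $T(M)$ outright since $\nabla_{\text{tan}}v=0$ on $\Delta_{4r}$, and then uses the invariance $[\nabla v]_{C^{0,\kappa}}=[\nabla(v-Mx)]_{C^{0,\kappa}}$ to bound the Schauder seminorm directly by the $G_\varepsilon$-functional at the larger scale---but this is only a cosmetic difference from your treatment of the boundary penalties via the $C^{1,\eta}$ graph deviation.
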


\begin{proof}
The desired result follows from Lemma $\ref{lemma:5.1}$ and
classical boundary Schauder estimates. The argument is the same
to that given in \cite[Lemma 7.1]{AS} and we provide a proof for
the sake of the completeness.
One may prefer $M=\nabla v(x)$ for any $x\in\Delta_{4r}$.
Then $T(M)=\nabla_{\text{tan}}v = 0$ (since $v=0$ on $\Delta_{4r}$),
and we have
\begin{equation*}
\begin{aligned}
G_\varepsilon(\theta r,v)
\leq G(\theta r,v)
&\leq C(\theta r)^\kappa
\big[\nabla v\big]_{C^{0,\kappa}(D_{\theta r})}
+\|v-\nabla v \cdot x\|_{L^\infty(D_{\theta r})}
\lesssim
(\theta r)^\kappa\big[\nabla v\big]_{C^{0,\kappa}(D_{r})}.
\end{aligned}
\end{equation*}
By noting that $[\nabla v]_{C^{0,\kappa}(D_{r})}
= [\nabla \tilde{v}]_{C^{0,\kappa}(D_{r})}$
and $\tilde{v} := v-Mx$ for any $M\in\mathbb{R}^{d\times d}$,
one may have
\begin{equation*}
r^\kappa\big[\nabla v\big]_{C^{0,\kappa}(D_{r})}
\lesssim \frac{1}{r}\bigg\{
\Big(\dashint_{D_{2r}}|\tilde{v}|^2dx\Big)^{1/2}
+\|Mx\|_{L^\infty(\Delta_{2r})}
\bigg\} + \|\nabla_{\text{tan}}(Mx)\|_{L^\infty(\Delta_{2r})},
\end{equation*}
since $\mathcal{L}_0(\tilde{v})=0$ in $D_{4r}$ and
$\tilde{v} = -Mx$ on $\Delta_{4r}$.
For $M\in\mathbb{R}^{d\times d}$
is arbitrary, the desired estimate $\eqref{pri:5.8}$
consequently follows
from the estimate $\eqref{pri:5.7}$,
and we have completed the proof.
\end{proof}

\begin{lemma}[iteration's inequality]
Suppose that $\mathcal{L}_{\varepsilon}$
and $\omega$ satisfy the hypothesises
\emph{(H1)} and \emph{(H2)}. Let
$u_\varepsilon$ be a weak solution to the equation
$\mathcal{L}_\varepsilon(u_\varepsilon) = 0$
in $D_{4}^\varepsilon$,
$\sigma_\varepsilon(u_\varepsilon) =0$ on $D_{4}
\cap\partial(\varepsilon\omega)$ and $u_\varepsilon = 0$
on $\Delta_{4}\cap\varepsilon\omega$.
Define the quantity
\begin{equation*}
\Phi(r) :=
\frac{1}{r}
\Big(\dashint_{D_r^{\varepsilon}}|u_\varepsilon|^2 dx
\Big)^{\frac{1}{2}}.
\end{equation*}
Then, there exists $\theta\in(0,1/4)$, such that
\begin{equation}\label{pri:5.10}
G_\varepsilon(\theta r,u_\varepsilon)
\leq \frac{1}{2}G_\varepsilon(r,u_\varepsilon)
+ C\Big(\frac{\varepsilon}{r}\Big)^{1/2}\Phi(r)
\end{equation}
holds for any $\varepsilon\leq r<1$.
\end{lemma}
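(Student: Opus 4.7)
The plan is the standard two-scale decomposition underlying Campanato iterations in large-scale regularity theory: approximate $u_\varepsilon$ at scale $r$ by a homogenized solution $v$, exploit the $C^{1,\kappa}$-type decay available for $v$ via Lemma~$\ref{lemma:5.3}$, and reassemble the estimate using the sublinearity of $G_\varepsilon$ in its second argument together with a quasi-monotonicity property in its first. The three lemmas just proved — the quenched $L^2$-error estimate (Lemma~$\ref{lemma:5.2}$), the comparison between $G$ and $G_\varepsilon$ (Lemma~$\ref{lemma:5.1}$), and the decay estimate (Lemma~$\ref{lemma:5.3}$) — supply exactly the tools needed.

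First I would apply Lemma~$\ref{lemma:5.2}$ at scale $r/4$ (permissible since $r/4\le 1/4$ and $u_\varepsilon$ is defined on $D_{r}^{\varepsilon}\subseteq D_{4}^{\varepsilon}$), obtaining $v\in H^1(D_{r/4};\mathbb{R}^d)$ that solves $\mathcal{L}_0 v=0$ in $D_{r/4}$ with $v=0$ on $\Delta_{r/4}$ and
$$
\Big(\dashint_{D_{r/4}^{\varepsilon}}|u_\varepsilon-v|^2\Big)^{1/2}\lesssim (\varepsilon/r)^{1/2}\Big(\dashint_{D_{3r/4}^{\varepsilon}}|u_\varepsilon|^2\Big)^{1/2}\lesssim r(\varepsilon/r)^{1/2}\Phi(r).
$$
Since $D_{r/4}=D_{4(r/16)}$, Lemma~$\ref{lemma:5.3}$ applied to $v$ at scale $r/16$ yields
$$G_\varepsilon(\theta r,v)\lesssim \theta^{\kappa}\, G_\varepsilon(r/16,v)\qquad\text{for every }\theta\in(0,1/16).$$

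Next I would record two elementary properties of $G_\varepsilon$, both immediate from the definition $\eqref{eq:5.1}$. \emph{Sublinearity}: $G_\varepsilon(s,u+w)\le G_\varepsilon(s,u)+G_\varepsilon(s,w)$ from the triangle inequality inside the infimum. \emph{Quasi-monotonicity}: $G_\varepsilon(r_1,w)\lesssim G_\varepsilon(r_2,w)$ whenever $r_1\le r_2\le C r_1$, obtained by testing the $r_1$-infimum with the $r_2$-minimizer $M$; the $L^2$-term costs a factor $(r_2/r_1)^{1+d/2}$, while the boundary terms are comparable since $\|T(M)\|_{L^{\infty}(\Delta_{r_1})}\le\|T(M)\|_{L^{\infty}(\Delta_{r_2})}$ and $\|Mx\|_{L^{\infty}(\Delta_r)}\sim r|M|_{\mathrm{op}}$. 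Chaining these with the error bound from Step~1 (taking $M=0$), one deduces
$$
G_\varepsilon(r/16,v)\lesssim G_\varepsilon(r/4,v)\le G_\varepsilon(r/4,u_\varepsilon)+G_\varepsilon(r/4,u_\varepsilon-v)\lesssim G_\varepsilon(r,u_\varepsilon)+(\varepsilon/r)^{1/2}\Phi(r).
$$

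Finally I would split $G_\varepsilon(\theta r,u_\varepsilon)\le G_\varepsilon(\theta r,v)+G_\varepsilon(\theta r,u_\varepsilon-v)$. The first summand is bounded by $C\theta^{\kappa}\bigl[G_\varepsilon(r,u_\varepsilon)+(\varepsilon/r)^{1/2}\Phi(r)\bigr]$ from the previous two displays; the second, taking $M=0$ and enlarging the domain of integration from $D_{\theta r}^{\varepsilon}$ to $D_{r/4}^{\varepsilon}$, is bounded by $C\theta^{-1-d/2}(\varepsilon/r)^{1/2}\Phi(r)$. Fixing $\theta\in(0,1/16)\subset(0,1/4)$ small enough that $C\theta^{\kappa}\le 1/2$ then yields $\eqref{pri:5.10}$. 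The principal obstacle is the careful bookkeeping of the four scales $r,\ r/4,\ r/16,\ \theta r$ during the transitions between $u_\varepsilon$ and $v$, together with the verification of the quasi-monotonicity of $G_\varepsilon$ — particularly the term $r^{-1}\|Mx\|_{L^{\infty}(\Delta_r)}$, which achieves the correct scaling only because of the non-degeneracy of the Lipschitz graph near the origin.
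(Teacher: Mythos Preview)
Your proposal is correct and follows exactly the approach the paper indicates: the paper's own proof simply reads ``The proof directly follows from Lemmas~$\ref{lemma:5.2}$ and~$\ref{lemma:5.3}$ and we omit the details,'' and you have supplied precisely those details via the standard Campanato splitting $G_\varepsilon(\theta r,u_\varepsilon)\le G_\varepsilon(\theta r,v)+G_\varepsilon(\theta r,u_\varepsilon-v)$. One minor remark: your application of Lemmas~$\ref{lemma:5.2}$ and~$\ref{lemma:5.3}$ at scales $r/4$ and $r/16$ formally requires $r\gtrsim\varepsilon$ with a constant larger than $1$, but for the remaining range $\varepsilon\le r\lesssim\varepsilon$ the inequality $\eqref{pri:5.10}$ is trivial since $(\varepsilon/r)^{1/2}\gtrsim 1$ and $G_\varepsilon(\theta r,u_\varepsilon)\lesssim_\theta\Phi(r)$ by taking $M=0$.
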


\begin{proof}
The proof directly follows from Lemmas
$\ref{lemma:5.2}$ and $\ref{lemma:5.3}$ and we omit the details.
\end{proof}

\noindent \textbf{The proof of Theorem \ref{thm:1.3}.}
The desired estimate $\eqref{pri:5.5}$ mainly follows from
$\eqref{pri:5.10}$ coupled with \cite[Lemma 8.5]{S}
and Caccioppoli's inequality $\eqref{pri:5.2}$.
For the scheme on interior Lipschitz estimates
had already been well stated
in the proof \cite[Theorem 1.1]{BR} in details,
we left these computations to the reader.
\qed

\section{Appendix}\label{sec:8}
\begin{lemma}[Caccioppoli's inequality with nonvanishing
boundary data]
Let $\Omega$ be a Lipschitz domain and
$0<r\leq 1$. Suppose $\mathcal{L}$ is an elliptic operator with
a constant coefficient and satisfies the condition $\eqref{eq:1.4}$.
Let $v$ be the solution to $\mathcal{L}(v) = 0$ in $D_{2r}$
and $v = g$ on $\Delta_{2r}$ with
$g\in H^1(\Delta_{2r};\mathbb{R}^d)$. Then we have
\begin{equation}\label{pri:8.1}
\Big(\dashint_{D_r}|\nabla v|^2dx\Big)^{1/2}
\lesssim \frac{1}{r}\bigg\{\Big(\dashint_{D_r}|v|^2dx\Big)^{1/2}
+ \Big(\dashint_{\Delta_{2r}}|g|^2dx\Big)^{1/2}
\bigg\} + \Big(\dashint_{\Delta_{2r}}
|\nabla_{\emph{tan}}g|^2dx\Big)^{1/2},
\end{equation}
where the constant depends on $\mu_0,\mu_1,d$ and the character of
$\Omega$.
\end{lemma}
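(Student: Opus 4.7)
The plan is to implement a standard Caccioppoli argument, modified to absorb the nonvanishing Dirichlet trace via an explicit bulk extension of $g$. After flattening $\Delta_{2r}$ by the bi-Lipschitz change of variables associated with $\vartheta$, I would construct an extension $G\in H^{1}(D_{2r};\mathbb{R}^{d})$ of $g$ by the normal-constant formula $G(x',t):=g(x')$. Since $D_{2r}$ has height $O(r)$ in the normal direction, Fubini combined with the Lipschitz parameterization yields the scale-consistent bounds
\begin{equation*}
\int_{D_{2r}}|\nabla G|^{2}\,dx \lesssim r\int_{\Delta_{2r}}|\nabla_{\text{tan}}g|^{2}\,dS,\qquad \int_{D_{2r}}|G|^{2}\,dx \lesssim r\int_{\Delta_{2r}}|g|^{2}\,dS,
\end{equation*}
with constants depending only on $M_{0}$. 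Setting $w:=v-G$, one has $w=0$ on $\Delta_{2r}$ in the trace sense, while the equation $\mathcal{L}(v)=0$ continues to hold weakly in $D_{2r}$.

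Next I would choose a cutoff $\eta\in C_{c}^{\infty}(\mathbb{R}^{d})$ with $\eta\equiv 1$ on $D_{r}$, $\text{supp}(\eta)\subset \overline{D_{3r/2}}$, and $|\nabla\eta|\lesssim r^{-1}$, so that $\eta^{2}w$ has vanishing trace on all of $\partial D_{2r}$: on $\Delta_{2r}$ because $w$ does, and on $\partial D_{2r}\setminus\Delta_{2r}$ because that piece lies outside $\overline{B(0,3r/2)}$ for the given geometry of $D_{2r}$. Testing $\mathcal{L}(v)=0$ against $\eta^{2}w$ and splitting $v=w+G$ yields
\begin{equation*}
\int_{D_{2r}}\eta^{2}\,A\nabla w\cdot\nabla w\,dx = -\int_{D_{2r}}\eta^{2}\,A\nabla G\cdot\nabla w\,dx - 2\int_{D_{2r}}\eta\,w\,(A\nabla v)\cdot\nabla\eta\,dx.
\end{equation*}
The symmetry $a_{ij}^{\alpha\beta}=a_{\alpha j}^{i\beta}$ makes only the symmetric part of $\nabla w$ contribute, so the elasticity hypothesis gives $A\nabla w\cdot\nabla w \geq \mu_{0}|e(w)|^{2}$. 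Cauchy-Schwarz together with Young's inequality then produces, for any $\delta>0$,
\begin{equation*}
\int_{D_{2r}}\eta^{2}|e(w)|^{2}\,dx \lesssim \delta\int_{D_{2r}}\eta^{2}|\nabla w|^{2}\,dx + C_{\delta}\int_{D_{2r}}\eta^{2}|\nabla G|^{2}\,dx + C_{\delta}\int_{D_{2r}}|\nabla\eta|^{2}|w|^{2}\,dx.
\end{equation*}

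The main obstacle is converting $|e(w)|$ back to $|\nabla w|$ at the level of the cutoff, since the elasticity lower bound is one-sided. Because $\eta w$, extended by zero outside, lies in $H^{1}(\mathbb{R}^{d};\mathbb{R}^{d})$, the global identity $\int_{\mathbb{R}^{d}}|\nabla u|^{2}=2\int_{\mathbb{R}^{d}}|e(u)|^{2}-\int_{\mathbb{R}^{d}}|\nabla\cdot u|^{2}$ applied to $u=\eta w$, after expanding products and absorbing cross terms with Young's inequality, yields
\begin{equation*}
\int_{D_{2r}}\eta^{2}|\nabla w|^{2}\,dx \lesssim \int_{D_{2r}}\eta^{2}|e(w)|^{2}\,dx + \int_{D_{2r}}|\nabla\eta|^{2}|w|^{2}\,dx.
\end{equation*}
Combining this with the previous bound, choosing $\delta$ small enough to absorb the $\nabla w$ term, and then invoking $|v|\leq|w|+|G|$ together with the extension estimates for $G$, I would arrive at
\begin{equation*}
\int_{D_{r}}|\nabla v|^{2}\,dx \lesssim \frac{1}{r^{2}}\int_{D_{2r}}|v|^{2}\,dx + r\int_{\Delta_{2r}}|\nabla_{\text{tan}}g|^{2}\,dS + \frac{1}{r}\int_{\Delta_{2r}}|g|^{2}\,dS.
\end{equation*}
Dividing by $|D_{r}|\sim r^{d}$ and noting $|\Delta_{2r}|\sim r^{d-1}$ converts every term to its averaged counterpart, matching $\eqref{pri:8.1}$. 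The delicate bookkeeping amounts to tracking the bi-Lipschitz distortion through the flattening and verifying that the two-sided ball scheme absorbs the $\nabla w$ contributions at scale $r$ cleanly for the elasticity system.
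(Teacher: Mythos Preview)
Your argument is correct and follows a genuinely different route from the paper's. The paper does not run a cutoff-testing argument at all: after rescaling to $r=1$, it extends $g$ by zero to $\tilde g\in H^1(\partial D_3)$ and \emph{solves} an auxiliary boundary value problem $\mathcal{L}(G)=0$ in $D_3$ with $G=\tilde g$ on $\partial D_3$; energy and trace estimates give $\|G\|_{H^1(D_3)}\lesssim \|g\|_{H^1(\Delta_2)}$, and since $w=v-G$ then satisfies $\mathcal{L}(w)=0$ in $D_2$ with $w=0$ on $\Delta_2$, the already-established homogeneous-boundary Caccioppoli inequality \eqref{pri:5.2} applies as a black box. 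Your approach trades the auxiliary PDE solve and the $H^{1/2}$ trace theory for an explicit normal-constant extension, which gives cleaner scale-separated control of $\|G\|_{L^2}$ and $\|\nabla G\|_{L^2}$ and is more self-contained; the price is that you must carry out the Caccioppoli testing by hand and invoke the first Korn inequality for $\eta w\in H^1_0(\mathbb{R}^d;\mathbb{R}^d)$ to pass from $e(w)$ back to $\nabla w$. Both routes yield the same estimate (and in fact both end with $\dashint_{D_{2r}}|v|^2$ rather than $\dashint_{D_r}|v|^2$ on the right-hand side, consistent with the paper's own computation).
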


\begin{proof}
By rescaling argument it is fine to assume $r=1$.
Let $\tilde{g}$ be the extension of $g$ such that
$\tilde{g} = g$ on $\Delta_{2}$ with $\tilde{g}\in H_0^1(\Delta_3)$,
and $\|\tilde{g}\|_{H^1(\partial D_3)}\lesssim
\|g\|_{H^1(\Delta_{2})}$. Then we construct an auxiliary function
$G$ satisfying
\begin{equation*}
\mathcal{L}(G) = 0 \quad\text{in} \quad D_3
\qquad  G=\tilde{g} \quad\text{on}\quad \partial D_3.
\end{equation*}
Then there holds
\begin{equation}\label{f:8.1}
\begin{aligned}
&\|G\|_{L^2(D_3)}
\lesssim\|\nabla G\|_{L^2(D_3)}
 \lesssim \|\tilde{g}\|_{H^{1/2}(\partial D_3)}
 \lesssim \|\tilde{g}\|_{H^1(\partial D_3)}
 \lesssim \|g\|_{H^1(\Delta_2)},
\end{aligned}
\end{equation}
where we use the fact that $G = 0$ on
$\partial D_3\setminus\Delta_3$ in the first inequality.
Let $w = v-G$, and then $\mathcal{L}(w) = 0$ in $D_2$
and $w=0$ on $\Delta_2$, it follows from the estimate
similar to $\eqref{pri:5.2}$ that
\begin{equation*}
 \|\nabla w\|_{L^2(D_1)}
 \lesssim \|w\|_{L^2(D_2)}
\end{equation*}
and therefore we have
\begin{equation*}
\begin{aligned}
\|\nabla v\|_{L^2(D_1)}
&\leq \|\nabla w\|_{L^2(D_1)} + \|\nabla G\|_{L^2(D_3)} \\
&\lesssim \|w\|_{L^2(D_1)} + \|\nabla G\|_{L^2(D_3)}
\lesssim \|v\|_{L^2(D_2)} + \|g\|_{H^1(\Delta_2)},
\end{aligned}
\end{equation*}
where we use the estimate $\eqref{f:8.1}$ in the last step.
Then rescaling back there holds the desired estimate
$\eqref{pri:8.1}$ and we have completed the proof.
\end{proof}

\begin{theorem}[weighted $W^{1,p}$ estimates
for mixed boundary conditions] \label{app:thm:1}
Let $|\frac{1}{p}-\frac{1}{2}|<\frac{1}{2d}+\epsilon$ with
$0<\epsilon\ll 1$, and $\rho\in A_p$. Assume that
$\Omega$ is a bounded Lipschitz domain, and
$\partial\Omega =(\partial\Omega)_D\cup(\partial\Omega)_N$
with $(\partial\Omega)_D\cap(\partial\Omega)_N=\emptyset$ satisfy
the same geometry conditions
as in \cite[Theorem 1.5]{B}.
Suppose that $\bar{a}$ is a constant coefficient
satisfying the ellipticity and symmetry conditions $\eqref{eq:1.4}$.
Let $u$ be associated with $f$ by
\begin{equation}\label{app:pde:1}
\left\{\begin{aligned}
\nabla\cdot\bar{a}\nabla u &= \nabla\cdot f
&\quad&\emph{in}~~\Omega,\\
\partial u/\partial\nu &= -n\cdot f &\quad&\emph{on}~~
(\partial\Omega)_N,\\
u &= 0 &\quad&\emph{on}~~
(\partial\Omega)_D,
\end{aligned}\right.
\end{equation}
in which
$\partial u/\partial\nu :=n\cdot\bar{a}\nabla u$ is
the conormal derivative of $u$.
Then, there holds the weighted estimate
\begin{equation}\label{app:pri:1}
\Big(\int_{\Omega}|\nabla u|^p\rho dx\Big)^{1/p}
\lesssim \Big(\int_{\Omega}|f|^p\rho dx\Big)^{1/p},
\end{equation}
where the up to constant depends on $\mu_0, \mu_1, p, d$
and the character of $\Omega$.
\end{theorem}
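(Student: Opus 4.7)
My plan is to obtain \eqref{app:pri:1} by applying a weighted version of Shen's real method (as alluded to in the Remark following Lemma~\ref{lemma:5.4} and in \cite{S4}) to the unweighted $W^{1,p}$ estimate for the mixed boundary problem \eqref{app:pde:1}, which is already available from \cite{B} in precisely the range $|\tfrac{1}{p}-\tfrac{1}{2}|<\tfrac{1}{2d}+\epsilon$. Since the statement is insensitive to $\varepsilon$ and the coefficient is constant, one is essentially reducing a weighted $W^{1,p}$ problem for a pure PDE on a Lipschitz domain to a local reverse H\"older inequality together with the $A_p$-machinery.

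First I would fix a fiducial exponent $p_0>2$ with $|\tfrac{1}{p_0}-\tfrac{1}{2}|<\tfrac{1}{2d}+\epsilon$, so that the unweighted estimate $\|\nabla u\|_{L^{p_0}(\Omega)}\lesssim\|f\|_{L^{p_0}(\Omega)}$ is at our disposal. Then, for each ball $B=B_r(x_0)$ with $x_0\in\overline{\Omega}$ and $|B|\leq c_0|\Omega|$, I would set up the standard local decomposition $u=v_B+w_B$, where $v_B$ solves the mixed problem with localized datum $f\,\chi_{4B\cap\Omega}$ and $w_B:=u-v_B$ satisfies the homogeneous equation $\nabla\cdot\bar{a}\nabla w_B=0$ in $4B\cap\Omega$ together with the original mixed boundary conditions on $4B\cap\partial\Omega$. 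Energy estimates give
\begin{equation*}
\Big(\dashint_{2B\cap\Omega}|\nabla v_B|^2\Big)^{1/2}
\lesssim\Big(\dashint_{4B\cap\Omega}|f|^2\Big)^{1/2},
\end{equation*}
while combining the unweighted $W^{1,p_0}$ estimate of \cite{B} with the standard boundary Caccioppoli inequality (cf.\ \eqref{pri:8.1}) yields
\begin{equation*}
\Big(\dashint_{2B\cap\Omega}|\nabla w_B|^{p_0}\Big)^{1/p_0}
\lesssim
\Big(\dashint_{4B\cap\Omega}|\nabla u|^2\Big)^{1/2}
+\Big(\dashint_{4B\cap\Omega}|f|^2\Big)^{1/2}.
\end{equation*}

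With these two ingredients, the case $p\geq 2$ follows by running the weighted good-$\lambda$ stopping-time argument already used in the proof of Theorem~\ref{thm:1.4}: one picks $s\in(1,2)$ such that $\rho\in A_{p/s}$ (possible by the self-improving property of $A_p$), replaces the localized maximal function in that proof by $\mathcal{M}_\Omega(|\nabla u|^s)$, and uses exactly the pair $(v_B,w_B)$ above in place of the decomposition used there. The range $p\leq 2$ is then recovered by a duality argument applied to the adjoint of \eqref{app:pde:1}, which has the same structure because $\bar a$ is symmetric, and by the standard identity $\rho\in A_p\Longleftrightarrow\rho^{1-p'}\in A_{p'}$.

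The main obstacle will be establishing the reverse H\"older inequality for $\nabla w_B$ uniformly across the four geometric cases for $B$ (interior, centered on $(\partial\Omega)_D$, centered on $(\partial\Omega)_N$, and straddling the interface $\overline{(\partial\Omega)_D}\cap\overline{(\partial\Omega)_N}$). This is precisely where the geometric hypotheses inherited from \cite[Theorem~1.5]{B} are essential: they ensure that one may locally reflect or flatten so as to reduce to a pure Dirichlet or pure Neumann problem on a Lipschitz domain, thereby inheriting the classical Dahlberg--Kenig--Shen $W^{1,p}$ range. Once this local regularity is in place, the remainder of the argument (extrapolation from $p_0$ to all $p$ with $|\tfrac1p-\tfrac12|<\tfrac{1}{2d}+\epsilon$, and the $A_p$ weighted upgrade) is routine, and I would merely sketch the necessary steps for the reader's convenience, as indicated in the introduction.
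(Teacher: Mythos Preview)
Your overall architecture matches the paper's proof exactly: decompose $u=v_B+w_B$, establish a local reverse H\"older inequality for the homogeneous piece $w_B$, feed this into the weighted good-$\lambda$ machinery of Theorem~\ref{thm:1.4}, and recover $p<2$ by duality. Where you diverge from the paper is in the \emph{source} of the reverse H\"older inequality, and this is the one genuine gap in your proposal.

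You assert that the unweighted $W^{1,p}$ estimate for the mixed problem in the range $|\tfrac1p-\tfrac12|<\tfrac{1}{2d}+\epsilon$ is ``already available from \cite{B}''. It is not: Brown's paper supplies the $L^2$ Rellich identity for the mixed problem (this is what the paper cites as \cite[Theorem~1.5]{B}), not a $W^{1,p}$ theory. The paper's Step~1 therefore does real work: it uses the Rellich estimate together with the nontangential maximal function bound to obtain the local reverse H\"older inequality at exponent $\bar p=\tfrac{2d}{d-1}$ directly (see \eqref{app:f:1}--\eqref{app:f:10}), and only then derives the unweighted $W^{1,p}$ estimate in Step~2 via Lemma~\ref{lemma:5.4}. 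Your phrase ``combining the unweighted $W^{1,p_0}$ estimate of \cite{B} with the standard boundary Caccioppoli inequality'' is thus circular as written, and your fallback suggestion of reducing to pure Dirichlet or Neumann by reflection or flattening is delicate on a genuine Lipschitz boundary (odd reflection for Dirichlet requires flatness; flattening produces variable coefficients). The paper's route through Rellich is both what \cite{B} actually provides and the cleanest way to handle all four boundary geometries uniformly.

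Once the reverse H\"older is in hand, your remaining steps (choice of $s<2$ with $\rho\in A_{p/s}$, the good-$\lambda$ argument, duality) coincide with the paper's Steps~3--4. Note that the $L^s$ bound for $v_B$ with $s<2$ that you need in the good-$\lambda$ argument also requires the unweighted $W^{1,s}$ estimate from Step~2 plus duality --- the paper makes this dependence explicit in \eqref{app:f:11}.
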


\begin{proof}

For the ease of the statement, we
impose the following notation:
$\Delta_{4r}^N:=(\partial\Omega)_N\cap\partial D_{4r}$;
$\Delta_{4r}^D:=(\partial\Omega)_D
\cap\partial D_{4r}$. The main idea is
actually parallel to that given for Theorem $\ref{thm:1.4}$,
and we provide a proof for the reader's convenience.
The proof is divided into four steps.

Step 1. We first show the reverse H\"older's inequality
 \begin{equation}\label{app:f:2}
\Big(\dashint_{D_r}|\nabla u|^{\bar{p}}\Big)^{1/\bar{p}}
\lesssim \Big(\dashint_{D_{2r}}|\nabla u|^{2}\Big)^{1/2}
\end{equation}
holds for $\bar{p}=\frac{2d}{d-1}$, provided $u$ satisfies
$\nabla\cdot \bar{a}\nabla u = 0$ in $D_{4r}$, and
$\partial u/\partial\nu = 0$ on
$\Delta_{4r}^N$ with $u = 0$ on $\Delta_{4r}^D$.
Based upon the Sobolev embedding theorem and duality arguments
(see for example \cite[Remark 9.3]{SZW4}),
for $t\in(1,2)$,
one may derive that
\begin{equation}\label{app:f:1}
\begin{aligned}
\Big(\int_{D_{tr}}|\nabla u|^{\bar{p}}dx\Big)^{1/\bar{p}}
& \lesssim \Big(\int_{\partial D_{tr}}
|(\nabla u)^{*}|^{2}dx\Big)^{1/2}\\
&\lesssim \Big(\int_{\partial D_{tr}\setminus\Delta_{4r}}
|\nabla u|^{2}dS\Big)^{1/2}
+\Big(\int_{\Delta_{tr}^D}
|\nabla_{\text{tan}} u|^{2}dS\Big)^{1/2}
+\Big(\int_{\Delta_{tr}^N}
|\frac{\partial u}{\partial\nu}|^2 dS\Big)^{1/2}\\
&\lesssim \Big(\int_{\partial D_{tr}\setminus\Delta_{4r}}
|\nabla u|^{2}dS\Big)^{1/2},
\end{aligned}
\end{equation}
where we employ the Rellich's estimate \cite[Theorem 1.5]{B}
in the second inequality
(we remark that the Rellich's estimate comes from
Rellich's identity, which is still valid for
elliptic systems with a symmetry coefficient),
and the last inequality is due to the assumption on
the boundary data. Then, squaring and integrating both sides of
$\eqref{app:f:1}$ with respect to $t\in(1,2)$, we obtain
\begin{equation}\label{app:f:10}
\Big(\int_{D_{r}}|\nabla u|^{\bar{p}}dx\Big)^{2/\bar{p}}
\lesssim \frac{1}{r}\int_{D_{2r}}
|\nabla u|^{2}dx,
\end{equation}
and this implies the desired inequality $\eqref{app:f:2}$.
Moreover, it follows from a self-improvement property that
there exits a small parameter $\epsilon>0$, depending on
$\mu_0,\mu_1,d$ and the character of $\Omega$, such that
the estimate $\eqref{app:f:2}$ is still true for
the new index $\bar{p}^+:=\frac{2d}{d-1}+\epsilon$.

Step 2. The real arguments lead to
$W^{1,p}$ estimates for $2\leq p<\bar{p}^{+}$,
i.e.,
\begin{equation}\label{app:f:5}
\Big(\int_{\Omega}|\nabla u|^p dx\Big)^{1/p}
\lesssim \Big(\int_{\Omega}|f|^pdx\Big)^{1/p}.
\end{equation}
To do so, we decompose the equation $\eqref{app:pde:1}$
as follows:
\begin{equation}\label{app:pde:2}
(\text{i})\left\{\begin{aligned}
\nabla\cdot\bar{a}\nabla v &= \nabla\cdot (I_Bf)
&\quad&\text{in}~~\Omega,\\
\partial v/\partial\nu &= -n\cdot (I_Bf) &\quad&\text{on}~~
(\partial\Omega)_N,\\
v &= 0 &\quad&\text{on}~~
(\partial\Omega)_D,
\end{aligned}\right.
\qquad
(\text{ii})\left\{\begin{aligned}
\nabla\cdot\bar{a}\nabla w &= \nabla\cdot g
&\quad&\text{in}~~\Omega,\\
\partial w/\partial\nu &= -n\cdot g &\quad&\text{on}~~
(\partial\Omega)_N,\\
w &= 0 &\quad&\text{on}~~
(\partial\Omega)_D,
\end{aligned}\right.
\end{equation}
where $g:=(1-I_B)f$, and $B:=B(x,r)$ with
$r>0$ and $x\in\bar{\Omega}$ are arbitrary.
Hence, it is not hard to see that $u=v+w$.
On the one hand, from the equation $(\text{i})$ one may
get
\begin{equation}\label{app:f:3}
\Big(\dashint_{\frac{1}{4}B\cap\Omega}|\nabla v|^2\Big)^{1/2}
\lesssim
\Big(\dashint_{B\cap\Omega}|f|^2\Big)^{1/2},
\end{equation}
where we merely employ the energy estimate for the solution of
$(\text{i})$. One the other hand,
 it is known by Step 1 that $w$ in $(\text{ii})$
 satisfies the estimate
\begin{equation}\label{app:f:4}
\begin{aligned}
 \Big(\dashint_{\frac{1}{4}B\cap\Omega}|\nabla w|^{\bar{p}^+}
 \Big)^{1/\bar{p}^+}
&\lesssim
  \Big(\dashint_{\frac{1}{2}B\cap\Omega}|\nabla w|^{2}
 \Big)^{1/2}\\
&\lesssim
  \Big(\dashint_{\frac{1}{2}B\cap\Omega}|\nabla u|^{2}
 \Big)^{1/2}
 + \Big(\dashint_{\frac{1}{2}B\cap\Omega}|\nabla v|^{2}
 \Big)^{1/2}\\
&\lesssim^{\eqref{app:f:3}}
  \Big(\dashint_{\frac{1}{2}B\cap\Omega}|\nabla u|^{2}
 \Big)^{1/2}
 + \Big(\dashint_{B\cap\Omega}|f|^2\Big)^{1/2}.
\end{aligned}
\end{equation}
Thus, for any $2\leq p<\bar{p}^+$,
the estimates $\eqref{app:f:3}$ and $\eqref{app:f:4}$
together with Lemma $\ref{lemma:5.4}$ leads to
\begin{equation*}
\Big(\int_{\Omega}|\nabla u|^{p}dx\Big)^{1/p}
\lesssim \Big(\int_{\Omega}|\nabla u|^2\Big)^{1/2}
+ \Big(\int_{\Omega}|f|^p\Big)^{1/p}
\lesssim \Big(\int_{\Omega}|f|^p\Big)^{1/p},
\end{equation*}
and the last step follows from energy estimate
and H\"older's inequality. This completes the proof
of $\eqref{app:f:5}$. Consequently, the duality argument
implies that the estimate
$\eqref{app:f:5}$ holds for any $p$ satisfying
$|\frac{1}{p}-\frac{1}{2}|<\frac{1}{2d}+\epsilon$.

Step 3. Let $U=|\nabla u|$ and $F=|f|$.
For $\frac{2d}{d+1}-\epsilon<s<p<\frac{2d}{d-1}+\epsilon$,
we claim that the next estimate
\begin{equation}\label{app:f:6}
\int_{\Omega} \big[\mathcal{M}_{\Omega}(U^s)(x)
\big]^{\frac{p}{s}}\rho(x) dx
\lesssim
\rho(\Omega)
\bigg(\dashint_{\Omega} U^s\bigg)^{\frac{p}{s}}
+
\int_{\Omega}
\big[\mathcal{M}_{\Omega}(F^s)(x)\big]^{\frac{p}{s}}\rho(x)
dx
\end{equation}
implies the desired estimate $\eqref{app:pri:1}$, where
the localized Hardy-Littlewood maximal operator
$\mathcal{M}_{\Omega}$ is defined by $\eqref{def:1}$, and
$\rho\in A_{p/s}$. On the one hand,
recalling the estimate $\eqref{f:5.12}$ we have
\begin{equation}\label{app:f:7}
\begin{aligned}
\Big(\int_{\Omega} \big[\mathcal{M}_{\Omega}(F^s)\big]^{\frac{p}{s}}\rho
dx\Big)^{1/p}
\lesssim
\Big(\int_{\Omega} |f|^p\rho dx\Big)^{1/p}.
\end{aligned}
\end{equation}
Moreover, it follows from Step 2 and
the computation given for $\eqref{f:5.15}$  that
\begin{equation}\label{app:f:8}
\begin{aligned}
\rho(\Omega)
\bigg(\dashint_{\Omega} |\nabla u|^s\bigg)^{\frac{p}{s}}
&\lesssim
\rho(\Omega)
\bigg(\dashint_{\Omega} |f|^s\bigg)^{\frac{p}{s}}\\
&\lesssim\rho(\Omega)\Big(\dashint_{\Omega}
|f|^p\rho dx\Big)\Big(\dashint_{\Omega}\rho^{-\frac{s}{p-s}}dx
\Big)^{\frac{p-s}{s}}
\lesssim \int_{\Omega} |f|^p\rho dx.
\end{aligned}
\end{equation}
One the other hand, in terms of
the Lebesgue's differentiation theorem, it is known that for
a.e. $x\in\Omega$ there holds
$|\nabla u(x)|^s = \lim_{r\to0}\dashint_{B(x,r)}|\nabla u|^s$,
and so $|\nabla u(x)|^s\lesssim
\mathcal{M}_\Omega(|\nabla u|^s)(x)$. This implies
\begin{equation}\label{app:f:9}
  \int_{\Omega}|\nabla u(x)|^p\rho(x) dx\lesssim
  \int_{\Omega}\big[\mathcal{M}_\Omega(|\nabla u|^s)(x)
  \big]^{\frac{p}{s}}\rho(x) dx,
\end{equation}
since $\rho\geq 0$. Plugging the estimates
$\eqref{app:f:7}, \eqref{app:f:8}$ and $\eqref{app:f:9}$
back into $\eqref{app:f:6}$ we have completed the arguments
for the claim and therefore obtained the stated estimate
$\eqref{app:pri:1}$. Hence, the remainder of the proof is
devoted to $\eqref{app:f:6}$.

Step 4. Show the estimate $\eqref{app:f:6}$. In fact, it may
be reduced to the so-called good-$\lambda$
inequality $\eqref{f:9}$, which finally relies on
the decomposition $u=v+w$ as in $\eqref{app:pde:2}$, as well as,
the following estimates
\begin{equation}\label{app:f:11}
        \Big(\dashint_{\frac{1}{4}B\cap\Omega}|\nabla v|^s\Big)^{1/s}
        \lesssim \Big(\dashint_{B\cap\Omega}|f|^s\Big)^{1/s}
\end{equation}
and
\begin{equation}\label{app:f:12}
\Big(\dashint_{\frac{1}{4}B\cap\Omega}
|\nabla w|^{\bar{p}^+}\Big)^{1/\bar{p}^+}
 \lesssim \Big(\dashint_{B\cap\Omega}|\nabla w|^s\Big)^{1/s}.
\end{equation}
The estimate $\eqref{app:f:11}$ follows from Step 2 directly,
while the estimate $\eqref{app:f:12}$ involves a little more
(in the case of $s<2$) compared with the estimate $\eqref{app:f:2}$.
Recalling the estimate $\eqref{app:f:10}$, a covering argument
leads to
\begin{equation*}
\Big(\int_{B(0,s)\cap\Omega}|\nabla w|^{\bar{p}}dx\Big)^{1/\bar{p}}
\leq \frac{C_0}{s^{\frac{
1}{2}}(t-s)^{\frac{1}{2}}}
\Big(\int_{B(0,t)\cap\Omega}
|\nabla w|^{2}dx\Big)^{1/2}
\end{equation*}
for any $0<s<t<1$. Then, applying the convexity argument
\cite[pp.173]{FS} to the above estimate we arrive at
\begin{equation}\label{app:f:13}
\Big(\int_{B(0,r_0)}|\nabla w|^2dx\Big)^{1/2}
\lesssim \Big(\int_{B(0,1)}|\nabla w|^{\underline{s}}dx
\Big)^{1/\underline{s}}
\end{equation}
for any $0<\underline{s}<2$, where $r_0\in(0,1)$ and the up to
constant depends $\underline{s},\bar{p}$ and $C_0$.
Thus, combining the estimates $\eqref{app:f:2}$ and
$\eqref{app:f:13}$ gives the
desired estimate $\eqref{app:f:12}$. Finally, repeating
the same arguments given for the good-$\lambda$
inequality $\eqref{f:9}$, we refer the reader to
the proof of Theorem $\ref{thm:1.4}$ and therefore omit these
details here. We have completed the whole proof.
\end{proof}

\begin{theorem}[layer $\&$ co-layer type estimates]
\label{app:thm:2}
Let $\Omega\subset\mathbb{R}^d$ be a bounded Lipschitz domain
and $0<\varepsilon\ll 1$.
Given $F\in L^2(\Omega;\mathbb{R}^d)$ and $g\in H^1(\partial\Omega;
\mathbb{R}^d)$, let $u_0$ be associated with $F$ and $g$ by
the equation $\eqref{pde:1.3}$, where
the coefficient $\widehat{A}$ satisfies the elasticity condition
$\eqref{a:2}$. Then there hold the following estimates.
\begin{enumerate}
  \item $L^p$ estimates:
  \begin{equation}\label{pri:9.3}
  \|\nabla u_0\|_{L^{\frac{2d}{d-1}}(\Omega)}
  \lesssim \Big\{\|F\|_{L^{\frac{2d}{d+1}}(\Omega)}
  +\|g\|_{H^1(\partial\Omega)}\Big\}.
  \end{equation}
  \item Radial maximal function estimates
  \begin{equation}\label{pri:9.4}
  \|\mathrm{M}_{\emph{r}}(\nabla u_0)\|_{L^{2}(\partial\Omega)}
  \lesssim
  \Big\{\|F\|_{L^{2}(\Omega)}
  +\|g\|_{H^1(\partial\Omega)}\Big\},
  \end{equation}
  where the operator $\mathrm{M}_{\emph{r}}$
  is defined by $\eqref{def:4}$.
  \item Layer type estimates
  \begin{equation}\label{pri:9.2}
\begin{aligned}
\|\nabla u_{0}\|_{L^{2}(O_{4\varepsilon})}
&\lesssim \varepsilon^{\frac{1}{2}}\Big\{\|g\|_{H^{1}(\partial\Omega)}
  +\|F\|_{L^{2}(\Omega)}\Big\};\\
\Big(\int_{O_{4\varepsilon}}
|\nabla u_{0}|^{2} \delta dx\Big)^{\frac{1}{2}}
&\lesssim \varepsilon
\Big\{\|g\|_{H^{1}(\partial\Omega)}
+\|F\|_{L^{2}(\Omega)}\Big\}.
\end{aligned}
\end{equation}
  \item Co-layer type estimates
   \begin{equation}\label{pri:9.1}
 \begin{aligned}
\|\nabla^2 u_{0}\|_{L^{2}(\Omega\backslash O_{4\varepsilon})}
&\lesssim \varepsilon^{-\frac{1}{2}}
\Big\{\|g\|_{H^{1}(\partial\Omega)}+
\|F\|_{L^{2}(\Omega)}\Big\};\\
\Big(\int_{\Omega\backslash O_{4\varepsilon}}
|\nabla^{2} u_{0}|^{2} \delta dx\Big)^{\frac{1}{2}}
&\lesssim \ln^{\frac{1}{2}}(1/\varepsilon)
\Big\{\|g\|_{H^{1}(\partial\Omega)}
+\|F\|_{L^{2}(\Omega)}\Big\};\\
\Big(\int_{\Omega\backslash O_{4\varepsilon}}
|\nabla u_{0}|^{2}\delta^{-1}dx\Big)^{\frac{1}{2}}
&\lesssim \ln^{\frac{1}{2}}(1/\varepsilon)
\Big\{\|g\|_{H^{1}(\partial\Omega)}
+\|F\|^2_{L^{2}(\Omega)}
\Big\}.
\end{aligned}
\end{equation}
\end{enumerate}
The up to constant depends only on
$\widehat{\mu}_{0}$, $\widehat{\mu}_{1}$, $d$ and
the character of $\Omega$.
\end{theorem}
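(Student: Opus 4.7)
The plan is to reduce all four groups of estimates to two fundamental inputs: the classical $L^{2}$-theory of Dahlberg--Kenig--Pipher--Verchota for constant-coefficient elliptic systems on Lipschitz domains, and an interior Caccioppoli inequality applied against the distance function through a dyadic decomposition. The symmetry condition on $\widehat{A}$ inherited from \eqref{a:2} is precisely what the Rellich identity requires in order to solve the $L^{2}$-Dirichlet/Neumann problem with nontangential bounds for the gradient, so everything downstream follows.

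First I would establish \eqref{pri:9.3} and \eqref{pri:9.4} simultaneously. The $L^{2}$-solvability with $H^{1}$-boundary data and an $L^{2}$ (or $L^{2d/(d+1)}$) body force yields
\[
\|(\nabla u_{0})^{*}\|_{L^{2}(\partial\Omega)} \lesssim \|F\|_{L^{2}(\Omega)} + \|g\|_{H^{1}(\partial\Omega)},
\]
where $(\nabla u_{0})^{*}$ is the nontangential maximal function of \eqref{def:5}. Since the radial maximal function of \eqref{def:4} is dominated pointwise by the nontangential one (after choosing a suitable aperture $N_{0}$), this yields \eqref{pri:9.4} directly. For \eqref{pri:9.3}, I would use the standard fact that a nontangential $L^{2}$-bound on $\nabla u_{0}$ controls the Lebesgue norm $L^{2d/(d-1)}(\Omega)$; this is the tangential Sobolev embedding on $\partial\Omega$ transferred to the interior via the Poisson-type representation, combined with a duality treatment of the source term $F \in L^{2d/(d+1)}$.

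Next, for the layer estimates \eqref{pri:9.2} I would apply the co-area formula in a tubular neighbourhood of $\partial\Omega$. For every $x \in O_{4\varepsilon}$ there is $Q \in \partial\Omega$ with $|x-Q| = \mathrm{dist}(x,\partial\Omega) \le 4\varepsilon$, whence $|\nabla u_{0}(x)| \le \mathrm{M}_{\mathrm{r}}(\nabla u_{0})(Q)$ after a routine cone adjustment. Parameterising $x = Q - t\,n(Q)$ with $t \in (0,4\varepsilon)$ gives
\[
\int_{O_{4\varepsilon}} |\nabla u_{0}|^{2}\,dx
\;\lesssim\; \int_{0}^{4\varepsilon}\!\int_{\partial\Omega}
\bigl[\mathrm{M}_{\mathrm{r}}(\nabla u_{0})(Q)\bigr]^{2}\,dS(Q)\,dt
\;\lesssim\; \varepsilon\,\|\mathrm{M}_{\mathrm{r}}(\nabla u_{0})\|_{L^{2}(\partial\Omega)}^{2},
\]
which together with \eqref{pri:9.4} proves the first inequality of \eqref{pri:9.2}. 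Inserting the additional weight $\delta(x) \sim t$ yields one further power of $\varepsilon$ and hence the second inequality.

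Finally, for the co-layer estimates \eqref{pri:9.1} I would decompose $\Omega \setminus O_{4\varepsilon}$ into dyadic shells $A_{k} := O_{2^{k+1}\varepsilon} \setminus O_{2^{k}\varepsilon}$, $k = 2,\dots,K$, with $K \sim \log_{2}(r_{0}/\varepsilon)$, so that $\delta(x) \sim 2^{k}\varepsilon$ on $A_{k}$. Interior Caccioppoli applied to $\mathcal{L}_{0}u_{0} = F$ on a Whitney covering of $A_{k}$ by balls of radius $c\,2^{k}\varepsilon$ gives
\[
\int_{A_{k}} |\nabla^{2} u_{0}|^{2}\,dx
\;\lesssim\; \frac{1}{(2^{k}\varepsilon)^{2}}\int_{A_{k-1}\cup A_{k}\cup A_{k+1}}|\nabla u_{0}|^{2}\,dx
+ \int_{A_{k-1}\cup A_{k}\cup A_{k+1}}|F|^{2}\,dx,
\]
and the layer argument above bounds the first integral by $2^{k}\varepsilon\,\|\mathrm{M}_{\mathrm{r}}(\nabla u_{0})\|_{L^{2}(\partial\Omega)}^{2}$. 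Summing without a weight produces the convergent geometric series $\sum_{k}(2^{k}\varepsilon)^{-1} \lesssim \varepsilon^{-1}$, which gives the first inequality of \eqref{pri:9.1}; weighting instead by $\delta \sim 2^{k}\varepsilon$ collapses the geometric series to a harmonic sum of length $K \sim \log(1/\varepsilon)$, accounting for the $\ln^{1/2}(1/\varepsilon)$ factor in the other two inequalities. The main obstacle is the bookkeeping of this logarithmic loss and quoting the $L^{2}$-theory in the exact form needed for constant-coefficient elasticity systems (rather than scalar equations); once the Rellich identity is verified under the symmetry $\widehat{a}_{ij}^{\alpha\beta} = \widehat{a}_{ji}^{\beta\alpha}$ inherited by $\widehat{A}$, every remaining step is classical.
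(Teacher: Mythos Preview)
Your overall architecture is right and matches the paper: Rellich/DKV nontangential theory for the boundary contribution, co-area for the layer estimates, and a dyadic-shell Caccioppoli argument for the co-layer estimates. The co-area argument for \eqref{pri:9.2} and the Whitney/dyadic computation for \eqref{pri:9.1} are exactly the standard ingredients the paper cites from \cite{S,Q1} without reproving, and your bookkeeping of the logarithmic loss is correct.

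There is, however, one genuine gap in your treatment of \eqref{pri:9.3}--\eqref{pri:9.4}. The Dahlberg--Kenig--Verchota estimate you invoke is for the \emph{homogeneous} Dirichlet problem; it does not directly give
\[
\|(\nabla u_0)^*\|_{L^2(\partial\Omega)}\lesssim \|F\|_{L^2(\Omega)}+\|g\|_{H^1(\partial\Omega)}
\]
when a body force $F\neq 0$ is present. Indeed, for a generic $H^1$ function the nontangential maximal function need not lie in $L^2(\partial\Omega)$, so the contribution of the particular solution cannot be absorbed into the DKV machinery. The paper closes this gap by the splitting $u_0=v+w$: here $v$ solves $-\nabla\cdot\widehat A\nabla v=\tilde F$ on all of $\mathbb R^d$ (with $\tilde F$ the zero extension of $F$), so that Riesz-potential and singular-integral bounds yield $\|\nabla v\|_{L^{2d/(d-1)}}+\|\nabla^2 v\|_{L^2}\lesssim\|F\|_{L^2}$; and $w$ solves the homogeneous problem with data $g-v|_{\partial\Omega}$, to which DKV applies. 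For \eqref{pri:9.4} on the $v$-piece the paper uses not a nontangential bound but the trace-type lemma $\|\mathrm M_{\mathrm r}(h)\|_{L^2(\partial\Omega)}\lesssim\|h\|_{H^1(\Omega)}$ applied to $h=\nabla v$, which is available precisely because $\nabla v\in H^1(\mathbb R^d)$. Once you insert this decomposition, your downstream arguments go through unchanged.
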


\begin{remark}\label{app:remark:1}
To the authors' best knowledge,
the first line of the estimates $\eqref{pri:9.2}$
and $\eqref{pri:9.1}$ were originally investigated by Z. Shen
in \cite[Theorems 2.6]{S}, while the second author
generalized his arguments to the weighted type
estimates (see \cite[Lemma 4.5]{Q1}).
\end{remark}

\begin{proof}
Here we merely show the proofs of $\eqref{pri:9.3}$
and $\eqref{pri:9.4}$ by
using a similar idea given for the estimates $\eqref{pri:9.2}$
and $\eqref{pri:9.1}$, which is inspired by
\cite[Theorems 2.6]{S} as we have mentioned in Remark
$\ref{app:remark:1}$.

Firstly, we decompose $u_{0}$ into two parts: $v$ and $w$, which satisfy the following systems:
\begin{equation}\label{PDE1}
  -\nabla\cdot(\widehat{A}\nabla v)=\tilde{F} \qquad\text{in}
 \quad\mathbb{R}^d,
\end{equation}
and
\begin{equation}\label{PDE2}
\left\{\begin{aligned}
  -\nabla\cdot(\widehat{A}\nabla w)&=0 &\qquad&\text{in~}
  \Omega,\\
  w&=g-v&\qquad&\text{on~}\partial\Omega,
 \end{aligned} \right.
\end{equation}
respectively. Let $\tilde{F}$ be a zero-extension of
$F$, satisfying $\tilde{F}=F$ in $\Omega$ and $\tilde{F}=0$ in $\mathbb{R}^d\backslash \Omega$.
In terms of $\eqref{PDE1}$, by the well-known fractional integral estimates and singular integral estimates for $v$ (see for example \cite{Stein}), we have that
\begin{equation}\label{eq:2.15}
\begin{aligned}
\|\nabla v\|_{L^{p'}(\mathbb{R}^d)}+
\|\nabla^2 v\|_{L^p(\mathbb{R}^d)}\leq C\|\tilde{F}\|_{L^p(\mathbb{R}^d)} \qquad\text{for~}
1<p<d,
\quad\frac{1}{p'}=\frac{1}{p}-\frac{1}{d}.
\end{aligned}
\end{equation}
It follows from the divergence theorem that
\begin{equation}\label{eq:2.17}
\begin{aligned}
  \int_{\partial\Omega}|\nabla v|^2dS
  &\leq C\bigg[ \int_{\Omega}|\nabla v|^2dx+\int_{\Omega}|\nabla v||\nabla^2 v|dx\bigg]\\
  &\lesssim \|\nabla v\|^{2}_{L^{\frac{2d}{d-1}}(\Omega)}
  +\|\nabla^2 v\|^{2}_{L^{\frac{2d}{d+1}}(\Omega)},
  \end{aligned}
\end{equation}
where we use H\"{o}lder's inequality.
Then we turn to the equation $\eqref{PDE2}$.
It follows from the
nontangential maximal function estimates for
$L^2$-regular problem in Lipschitz domains (see \cite{DKV}) that
\begin{equation*}
\begin{aligned}
  \|(\nabla w)^{*}\|_{L^{2}(\partial\Omega)}&\leq C \bigg(\|\nabla_{\text{tan}}g\|_{L^{2}(\partial\Omega)}
  +\|\nabla_{\text{tan}}v\|_{L^{2}(\partial\Omega)}\bigg)\\
  &\lesssim\|g\|_{H^{1}(\partial\Omega)}
  +\|\nabla v\|_{L^{2}(\partial\Omega)},
  \end{aligned}
\end{equation*}
where $(\nabla w)^{*}$ denotes the nontangential maximal
function of $\nabla w$ (see $\eqref{def:5}$ for its definition).
On account of \eqref{eq:2.17}, we have
\begin{equation}\label{eq:2.19}
\begin{aligned}
\|(\nabla w)^{*}\|_{L^{2}(\partial\Omega)}&\lesssim
\|g\|_{H^{1}(\partial\Omega)}+\|\nabla v\|_{L^{\frac{2d}{d-1}}(\Omega)}
  +\|\nabla^2 v\|_{L^{\frac{2d}{d+1}}(\Omega)}\\
  &\lesssim^{\eqref{eq:2.15}}\|g\|_{H^{1}(\partial\Omega)}
  +\|F\|_{L^{\frac{2d}{d+1}}(\Omega)}.
\end{aligned}
\end{equation}

Secondly, for any $(u)^{*}\in L^2(\partial\Omega)$,
the fractional integral coupled
with a duality argument gives the following inequality
\begin{equation}\label{f:9.1}
\|u\|_{L^{\frac{2d}{d-1}}(\Omega)}
\lesssim \|(u)^{*}\|_{L^2(\partial\Omega)}
\end{equation}
(see for example \cite[Remark 9.3]{SZW4}). In terms of
the radial maximal function estimate, it follows from
\cite[Lemma 2.24]{Q2} that
\begin{equation}\label{f:9.2}
 \|\mathrm{M}_{\text{r}}(u)\|_{L^2(\partial\Omega)}
 \lesssim \|u\|_{H^1(\Omega)}.
\end{equation}

Consequently, based upon the decomposition $u_0 = v+w$ above,
there hold
\begin{equation*}
\begin{aligned}
\|\nabla u_0\|_{L^{\frac{2d}{d-1}}(\Omega)}
&\leq \|\nabla v\|_{L^{\frac{2d}{d-1}}(\Omega)}
+ \|\nabla w\|_{L^{\frac{2d}{d-1}}(\Omega)}\\
&\lesssim^{\eqref{eq:2.15},\eqref{f:9.1}}
\|F\|_{L^{\frac{2d}{d+1}}(\Omega)}
+ \|(\nabla w)^*\|_{L^{2}(\partial\Omega)}\\
&\lesssim^{\eqref{eq:2.19}}
\|F\|_{L^{\frac{2d}{d+1}}(\Omega)} +
\|g\|_{H^1(\partial\Omega)},
\end{aligned}
\end{equation*}
and
\begin{equation*}
\begin{aligned}
\|\mathrm{M}_{\text{r}}(\nabla u_0)\|_{L^{2}(\partial\Omega)}
&\leq \|\mathrm{M}_{\text{r}}(\nabla v)\|_{L^{2}(\partial\Omega)}
+
\|\mathrm{M}_{\text{r}}(\nabla w)\|_{L^{2}(\partial\Omega)}\\
&\lesssim^{\eqref{f:9.2}}
\|\nabla v\|_{H^{1}(\Omega)}
+ \|(\nabla w)^*\|_{L^{2}(\partial\Omega)}\\
&\lesssim^{\eqref{eq:2.15},\eqref{eq:2.19}}
\|F\|_{L^{2}(\Omega)} +
\|g\|_{H^1(\partial\Omega)},
\end{aligned}
\end{equation*}
where we note the fact that
$\mathrm{M}_{\text{r}}(\nabla w)(Q)
\leq (\nabla w)^*(Q)$ for a.e. $Q\in\partial\Omega$.
This ends the proof.
\end{proof}

\begin{remark}
If $\partial\Omega\in C^1$, then, for any $1<q<\infty$,
the solution $w$ to $\eqref{PDE2}$
owns the nontangential maximal function estimates
$\|(\nabla w)^*\|_{L^q(\partial\Omega)}
\lesssim \|\nabla w\|_{L^q(\partial\Omega)}$. Thus,
for $2\leq p<\frac{2d}{d-2}$, there holds
\begin{equation}\label{pri:9.5}
  \|\nabla u_0\|_{L^{p}(\Omega)}
  \lesssim \Big\{\|F\|_{L^{2}(\Omega)}
  +\|g\|_{W^{1,\bar{p}}(\partial\Omega)}\Big\},
\end{equation}
where $\bar{p}=p-p/d$, and the proof is as the same as
that given for $\eqref{pri:9.3}$, and so is omitted.
\end{remark}

\begin{center}
\textbf{Acknowledgements}
\end{center}

Some referees had read the previous version
(arXiv:2001.06874v1) and kindly pointed out
a fatal mistake therein. The authors were grateful for
this, since it started a real ``improvement''.
The first author acknowledged the hospitality when
she visited the Max Planck Institute for Mathematics
in the Sciences in the winter of 2019. The second author
deeply appreciated Prof. Felix Otto and his lectures,
as well as financial support in CIMI (Toulouse),
for the source of the idea of Theorem $\ref{thm:1.2}$. 
Also, 
the research was supported
by the Young Scientists Fund of the National Natural Science
Foundation of China (Grant No. 11901262), and
supported by the Fundamental Research
Funds for the Central Universities (Grant No.
lzujbky-2019-21).


\noindent Li Wang\\
School of Mathematics and Statistics, Lanzhou University,
Lanzhou, 710000, China.\\
E-mail:lwang10@lzu.edu.cn\\

\noindent Qiang Xu\\
Max Planck Institute for Mathematics in the Sciences,
Inselstrasse 22, 04103 Leipzig, Germany\\
E-mail:qiangxu@mis.mpg.de\\

\noindent Peihao Zhao\\
School of Mathematics and Statistics, Lanzhou University,
Lanzhou, 710000, China.\\
E-mail:zhaoph@lzu.edu.cn\\


\begin{thebibliography}{000}

\bibitem{APMP}
E.Acerbi, V. Piat, G. Maso, D. Percivale,
An extension theorem from connected sets, and homogenization in general periodic domains. Nonlinear Anal. 18(1992), no.5, 481-496.

\bibitem{GA} G. Allaire, Homogenization and two-scale
convergence. SIAM J. Math. Anal. 23(1992), no.6, 1482-1518.

\bibitem{AD}
S. Armstrong, J.-P. Daniel,
Calderon-Zygmund estimates for stochastic homogenization,
J. Funct. Anal.  270 (2016),  no.1, 312-329.

\bibitem{AKM}
S. Armstrong, T. Kuusi, J.-C. Mourrat,
The additive structure of elliptic homogenization,
Invent. Math.  208 (2017),  no.3, 999-1154.

\bibitem{AM}
S. Armstrong, J.-C. Mourrat, Lipschitz regularity for elliptic equations with random coefficients,
Arch. Ration. Mech. Anal.  219 (2016),  no.1, 255-348.

\bibitem{AS}
S. Armstrong, Z. Shen, Lipschitz estimates in
almost-periodic homogenization,
Comm. Pure Appl. Math. 69(2016),  no.10, 1882-1923.

\bibitem{AL} M. Avellaneda, F. Lin,
Compactness methods in the theory of homogenization,
Comm. Pure Appl. Math. 40, (1987) 803-847.

\bibitem{BHCD}
H. Bahouri, Hajer; J.-Y. Chemin, R. Danchin,
Fourier Analysis and Nonlinear Partial Differential Equations,
Grundlehren der Mathematischen Wissenschaften
[Fundamental Principles of Mathematical Sciences],
343. Springer, Heidelberg, 2011.

\bibitem{BF}
F. Boyer,  P. Fabrie,
Mathematical tools for the study of the incompressible
Navier-Stokes equations and related models,
Applied Mathematical Sciences,
183. Springer, New York, 2013.

\bibitem{B}
R. Brown, The mixed problem for Laplace's equation
in a class of Lipschitz domains,
Comm. Partial Differential Equations
19 (1994), no.7-8, 1217-1233.

\bibitem{BW}
S.-S. Byun, L. Wang,
Elliptic equations with BMO coefficients in Reifenberg domains,
Comm. Pure Appl. Math.  57 (2004),  no.10, 1283-1310.

\bibitem{CP} L. Caffarelli, I. Peral, On $W^{1,p}$ estimates for elliptic equations in divergence form,
Comm. Pure Appl. Math. 51, 1-21 (1998).

\bibitem{C}
S.-K. Chua, Extension theorems on weighted Sobolev spaces,
Indiana Univ. Math. J.  41(1992), no.4, 1027-1076.

\bibitem{CDG} D. Cioranescu, A. Damlamian, G. Griso, The periodic unfolding method. Theory and applications to partial differential problems. Series in Contemporary Mathematics,3. Springer, Singapore, 2018.

\bibitem{CP} D. Cioranescu, J. Paulin,
Homogenization in open sets with holes,
J. Math. Anal. Appl. 71 (1979), no.2, 590-607.

\bibitem{DKV} B. Dahlberg, C. Kenig, G. Verchota,
Boundary value problems for the systems of elastostatics
in Lipschitz domains. Duke Math. J. 57 (1988), no.3, 795-818.

\bibitem{D}
J. Duoandikoetxea,  Fourier Analysis,
translated and revised from the 1995 Spanish original by David Cruz-Uribe, Graduate Studies in Mathematics, 29.
American Mathematical Society, Providence, RI, 2001.

\bibitem{DO}
M. Duerinckx, F. Otto,
Higher-order pathwise theory of flucuations in
stochastic homogenization,
Stoch PDE: Anal Comp (2019)
(https://doi.org/10.1007/s40072-019-00156-4).

\bibitem{DST}
R.-G. Duran, M. Sanmartino,  M. Toschi,
Weighted a priori estimates for the Poisson equation,
Indiana Univ. Math. J. 57 (2008), no.7, 3463-3478.

\bibitem{LCE1} L. Evans, R. Gariepy,
Measure Theory and Fine Properties of Functions (Rewised Edition),
CRC Press, Boca Raton, 2015.

\bibitem{FS}
C. Fefferman, E. Stein, $H^p$ spaces of several variables,
Acta Math. 129(1972),  no.3-4, 137-193.

\bibitem{Gloria_Neukamm_Otto_2015}
A. Gloria, S. Neukamm, F. Otto,
\newblock A regularity theory for random elliptic operators,
\newblock {\em ArXiv e-print arXiv:1409.2678v3}, 2015.

\bibitem{DPV}
E. Di Nezza, G. Palatucci, E. Valdinoci,
Hitchhiker's guide to the fractional Sobolev spaces,
Bull. Sci. Math. 136 (2012), no. 5, 521-573.


\bibitem{GSS}
J. Geng, Z. Shen, L. Song, Boundary Korn inequality and Neumann problems in homogenization of systems of elasticity. Arch. Ration. Mech. Anal. 224 (2017), no.3, 1205-1236.

\bibitem{GSS1}
J. Geng, Z. Shen, L. Song, Uniform $W^{1,p}$ estimates
for systems of linear elasticity in a periodic medium,
J. Funct. Anal.  262(2012),  no.4, 1742-1758.

\bibitem{JKO} V. Jikov, S. Kozlov, O. Oleinik,
Homogenization of differential operators and integral functionals. (English summary)
Translated from the Russian by G. A. Yosifian
[G.A. Iosif'yan]. Springer-Verlag, Berlin, 1994.

\bibitem{JO}
M. Josien, F. Otto,
The annealed Calderon-Zygmund estimates
as convenient tool in quantitative schochastic
homogenization, arXiv:2005.08811v1.

\bibitem{KLS} C. Kenig, F. Lin, Z. Shen,
Convergence rates in $L^{2}$ for elliptic homogenization
problems, Arch. Ration. Mech. Anal. 203(2012), no. 3, 1009-1036.

\bibitem{SZW4} C. Kenig, F. Lin, Z. Shen,
Homogenization of elliptic systems with Neumann boundary conditions,
J. Amer. Math. Soc. 26(2013), no.4, 901-937.

\bibitem{SZW24} C. Kenig, Z. Shen, Layer potential methods for elliptic homogenization problems,
Comm. Pure Appl. Math. 64(2011), no.1, 1-44.

\bibitem{Le}
J. Lehrb\"ack, Weighted Hardy inequalities beyond Lipschitz domains,
Proc. Amer. Math. Soc. 142 (2014), no.5, 1705-1715.

\bibitem{LV}
J. Lehrb\"ack, A. V\"ah\"akangas, In between the inequalities of Sobolev and Hardy,
J. Funct. Anal. 271 (2016), no.2, 330-364.

\bibitem{L}
A. Lunardi,  Interpolation Theory,
Third edition,
Appunti. Scuola Normale Superiore di Pisa (Nuova Serie),
16. Edizioni della Normale, Pisa, 2018.

\bibitem{N}
J. Necas, Sur une methode pour resoudre
les equations aux dérivées partielles du type elliptique, voisine de la variationnelle,
Ann. Scuola Norm. Sup. Pisa Cl. Sci. (3) 16 (1962), 305-326
(French).

\bibitem{OSY} O. Oleinik, A. Shamaev,  G. Yosifian,
Mathematical problems in elasticity and homogenization.
Studies in Mathematics and its Applications, 26. North-Holland Publishing Co., Amsterdam, 1992.

\bibitem{BR}
B. Russell, Homogenization in perforated domains and
interior Lipschitz estimates,
J. Differential Equations 263 (2017), no.6, 3396-3418.

\bibitem{S0} Z. Shen,
Periodic homogenization of elliptic systems.
Operator Theory: Advances and Applications,
269. Advances in Partial Differential Equations (Basel).
Birkhäuser/Springer, Cham, 2018.

\bibitem{S} Z. Shen, Boundary estimates in elliptic homogenization,
Anal. PDE 10, 653-694 (2017).

\bibitem{S4} Z. Shen,
Weighted $L^2$ estimates for elliptic
homogenization in Lipschitz domains,
arXiv:2004.03087v1.

\bibitem{S3} Z. Shen, Bounds of Riesz transforms on
$L^p$ spaces for second order elliptic operators,
 Ann. Inst. Fourier (Grenoble) 55, 173-197 (2005).

\bibitem{SZ1}
Z. Shen, J. Zhuge, Boundary
layers in periodic homogenization of Neumann problems,
Comm. Pure Appl. Math.  71  (2018),  no. 11, 2163–2219.

\bibitem{SZ}
Z. Shen, J. Zhuge, Convergence rates in periodic homogenization
of systems of elasticity. Proc. Amer. Math. Soc. 145 (2017),
no.3, 1187-1202.

\bibitem{Stein} E. Stein,
Singular Integrals and Differentiability Properties
of Functions, Princeton Mathematical Series,
No.30, Princeton University Press, 1970.

\bibitem{TS} T. Suslina, Homogenization
of the Neumann problem for elliptic systems
with periodic coefficients.(English summary)
SIAM J. Math. Anal. 45(2013), no.6, 3453-3493.

\bibitem{TS1}
T. Suslina, Homogenization of the Dirichlet problem for
elliptic systems: $L^2$-operator error estimates,
Mathematika 59 (2013), no.2, 463-476.

\bibitem{WXZ}
L. Wang, Q. Xu, P. Zhao,
Quantitative estimates for homogenization of nonlinear elliptic operators in perforated domains, Preprinted in arXiv (2020).

\bibitem{Q1} Q. Xu, Convergence rates for general
elliptic homogenization problems in Lipschitz domains,
SIAM J. Math. Anal. 48(2016), no.6, 3742-3788.

\bibitem{Q2} Q. Xu,
Uniform regularity estimates in homogenization theory of elliptic
systems with lower order terms on the Neumann boundary problem,
J. Differential Equations 261(2016), no.8, 4368-4423.

\bibitem{ZR}
V. Zhikov, M. Rychago, Homogenization of nonlinear elliptic equations of the second
order in perforated domains, Izv. Ross. Akad. Nauk, Ser. Mat 61, 69-89 (1997).

\end{thebibliography}
\end{document}